\newcommandx{\unsure}[2][1=]{\todo[linecolor=red,backgroundcolor=red!25,bordercolor=red,#1]{#2}}
\newcommandx{\change}[2][1=]{\todo[linecolor=blue,backgroundcolor=blue!25,bordercolor=blue,#1]{#2}}
\newcommandx{\needtocomplete}[2][1=]{\todo[linecolor=gray,backgroundcolor=gray!25,bordercolor=gray,#1]{#2}}
\newcommand\catalannumber[3]{
	\fill[white!25]  (#1) rectangle +(#2,#2);
	\fill[fill=lightgray]
	(#1)
	\foreach \dir in {#3}{
		\ifnum\dir=0
		-- ++(1,0)
		\else
		-- ++(0,1)
		\fi
	} |- (#1);
	\draw[help lines] (#1) grid +(#2,#2);
	\draw[dashed] (#1) -- +(#2,#2);
	\coordinate (prev) at (#1);
	\foreach \dir in {#3}{
		\ifnum\dir=0
		\coordinate (dep) at (1,0);
		\tikzset{label/.style={below}};
		\else
		\coordinate (dep) at (0,1);
		\tikzset{label/.style={left}};
		\fi
		\draw[line width=2pt,-stealth] (prev) --
		++(dep) coordinate (prev);
	};
}
\newcommand\genallcatalannumbers[6]{%
	\pgfmathtruncatemacro{\steps}{#4+#5}%
	\ifthenelse{#3>\steps}{%
		\ifthenelse{#5<#4}{%
			{%
				\ifthenelse{#4<#2}{%
					\pgfmathtruncatemacro{\nbzero}{#4+1}%
					\genallcatalannumbers{#1}{#2}{#3}{\nbzero}{#5}{#6,0}%
				}{}%
			}%
			{%
				\pgfmathtruncatemacro{\nbone}{#5+1}%
				\genallcatalannumbers{#1}{#2}{#3}{#4}{\nbone}{#6,1}%
			}%
		}{%
			\pgfmathtruncatemacro{\nbzero}{#4+1}%
			\genallcatalannumbers{#1}{#2}{#3}{\nbzero}{#5}{#6,0}%
		}%
	}{%
		\begin{tikzpicture}[myscale]
			\catalannumber{#1}{#2}{#6}
		\end{tikzpicture} %
	}%
}
\newtheorem{theorem}{Theorem}[section]
\newtheorem{claim}{Claim}[theorem]
\newtheorem{subclaim}{Subclaim}[claim]
\newtheorem{lemma}[theorem]{Lemma}
\newtheorem{fact}[theorem]{Fact}
\newtheorem{prop}[theorem]{Proposition}
\newtheorem{cor}[theorem]{Corollary}
\theoremstyle{definition}
\newtheorem{definition}[theorem]{Definition}
\newtheorem*{thma}{Theorem~A}
\newtheorem*{thmb}{Theorem~B}
\theoremstyle{remark}
\newtheorem{remark}[theorem]{Remark}
\newenvironment{subproof}[1][\proofname]{%
	\begin{proof}[#1]%
	}{%
	\end{proof}%
}
\DeclareMathOperator{\reg}{Reg}
\DeclareMathOperator{\card}{Card}
\DeclareMathOperator{\Inner}{in}
\DeclareMathOperator{\non}{non}
\DeclareMathOperator{\supp}{supp}
\DeclareMathOperator{\hu}{hu}
\DeclareMathOperator{\out}{out}
\DeclareMathOperator{\bd}{bd}
\DeclareMathOperator{\add}{add}
\DeclareMathOperator{\otp}{otp}
\DeclareMathOperator{\ch}{CH}
\DeclareMathOperator{\gch}{GCH}
\newcommand*\axiomfont[1]{\textsf{\textup{#1}}}
\newcommand\onto{\axiomfont{onto}}
\newcommand\pfa{\axiomfont{PFA}}
\DeclareMathOperator{\acc}{acc}
\DeclareMathOperator{\zfc}{ZFC}
\DeclareMathOperator{\cf}{cf}
\DeclareMathOperator{\im}{Im}
\DeclareMathOperator{\ns}{NS}
\def\s{\subseteq}
\def\br{\blacktriangleright}
\def\o1{\omega_1}
\begin{document}

\title{Cofinal types below $\aleph_\omega$}
\author{Roy Shalev}
\address{Department of Mathematics, Bar-Ilan University, Ramat-Gan 5290002, Israel.}
\urladdr{https://roy-shalev.github.io/}
\begin{abstract}
It is proved that for every positive integer $n$,
the number of non-Tukey-equivalent directed sets of cardinality $\leq \aleph_n$ 
is at least $c_{n+2}$, the $(n+2)$-Catalan number.
Moreover, the class $\mathcal D_{\aleph_n}$ of directed sets of cardinality $\leq \aleph_n$ 
contains an isomorphic copy of the poset of Dyck $(n+2)$-paths.
Furthermore, we give a complete description whether two successive elements in the copy contain another directed set in between or not.

\end{abstract}
\maketitle

\section{Introduction}
Motivated by problems in general topology,
Birkhoff \cite{MR1503323}, Tukey \cite{MR0002515}, and Day \cite{MR9970}
studied some natural classes of directed sets.
Later, Schmidt \cite{MR76836} and Isbell \cite{MR201316,MR294185} investigated uncountable directed sets under the Tukey order $<_T$.
In \cite{MR792822}, Todor\v{c}evi\'{c} showed that under $\pfa$ there are only five cofinal types in the class $\mathcal D_{\aleph_1}$ of all cofinal types of size $\leq\aleph_1$ under the Tukey order, namely $\{1,\omega,\omega_1,\omega\times \omega_1,[\omega_1]^{<\omega}\}$.
In the other direction, Todor\v{c}evi\'{c} showed that under $\ch$ there are $2^{\mathfrak c}$ many non-equivalent cofinal types in this class.
Later in \cite{MR1407459} this was extended to all transitive relations on $\omega_1$.
Recently, Kuzeljević and Todor\v{c}evi\'{c} \cite{kuzeljevic2021cofinal} initiated the study of the class $\mathcal D_{\aleph_2}$.
They showed in $\zfc$ that this class contains at least fourteen different cofinal types which can be constructed from two basic types of directed sets and their products: $(\kappa,\in )$ and $([\kappa]^{<\theta},\subseteq)$, where $\kappa\in\{1,\omega,\omega_1,\omega_2\}$ and $\theta\in\{\omega,\omega_1\}$.

In this paper, we extend the work of Todor\v{c}evi\'{c} and his collaborators and uncover a connection between the classes of the $\mathcal D_{\aleph_n}$'s and the Catalan numbers. 
Denote $V_k:=\{1, \omega_k, [\omega_k]^{<\omega_m} \mid 0\leq m<k\}$, $\mathcal F_n:=\bigcup_{k\leq n} V_k$ and finally let $\mathcal S_n$ be the set of all finite products of elements of $\mathcal F_n$. 
Recall (see Section~\ref{Section - Catalan}) that the $n$-Catalan number is equal to the cardinality of 
the set of all Dyck $n$-paths.
The set  $\mathcal K_n$ of all Dyck $n$-paths admits a natural ordering $\vartriangleleft$,
and the connection we uncover  is as follows.

\begin{thma} 
	The posets $(\mathcal S_n\slash{\equiv_T}, <_T)$ and $(\mathcal K_{n+2},\vartriangleleft)$ are isomorphic.
	In particular, the class $\mathcal D_{\aleph_n}$ has size at least the $(n+2)$-Catalan number.
\end{thma}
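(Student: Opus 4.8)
The plan is to route the whole statement through a canonical-form description of the Tukey types in $\mathcal S_n$. Re-index the non-trivial generators by the non-empty subintervals $I=[a,c]$ of the chain $\{0,1,\dots,n\}$, setting $D_{[a,c]}:=[\omega_c]^{<\omega_a}$ when $a<c$ and $D_{[a,a]}:=\omega_a$, so that $\mathcal F_n=\{1\}\cup\{D_I : I\text{ an interval in }\{0,\dots,n\}\}$. The first block of work is to pin down the basic Tukey behaviour of these generators: that $\add(D_{[a,c]})=\aleph_a$ and $\cof(D_{[a,c]})=\aleph_c$ (the cofinality computation for the genuine $[\omega_c]^{<\omega_a}$ is a short induction on $c$ using that a subset of $\omega_c$ of size $<\aleph_a$ is bounded, and the same induction exhibits a cofinal subset of $D_{[a,c]}$ of size $\aleph_c\le\aleph_n$); that each $D_I$ is idempotent, $D_I\times D_I\equiv_T D_I$; and that $D_{[a,c]}$ is the $<_T$-largest directed set $D$ with $\cof(D)\le\aleph_c$ and $\add(D)\ge\aleph_a$ — the usual ``$[\kappa]^{<\theta}$ is Tukey-top'' lemma, proved by sending $x\in[\omega_c]^{<\omega_a}$ to an upper bound, in $D$, of the $(<\aleph_a)$-indexed piece of a fixed cofinal enumeration of $D$. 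From the additivity/cofinality values together with the Tukey-top lemma one reads off that $D_I\le_T D_J\iff I\subseteq J$; in particular the distinct $\omega_k$'s are pairwise Tukey-incomparable.

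The second step is the normal-form theorem: every element of $\mathcal S_n$ is $\equiv_T$ to $D_{\mathcal A}:=\prod_{I\in\mathcal A}D_I$ for a unique antichain $\mathcal A$ in the inclusion order on intervals, and
\[D_{\mathcal A}\le_T D_{\mathcal B}\iff \forall I\in\mathcal A\ \exists J\in\mathcal B\ (I\subseteq J).\]
Existence of a normal form and the $(\Leftarrow)$ direction are routine: delete factors equal to $1$, absorb $D_I$ into $D_J$ whenever $I\subseteq J$ via idempotence, and use $\bigl(\prod_{J\in\mathcal B}D_J\bigr)^{k}\equiv_T\prod_{J\in\mathcal B}D_J$; uniqueness then follows from the displayed equivalence. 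What remains — and what I expect to be the crux of the whole argument — is the $(\Rightarrow)$ direction, which reduces to the single non-reduction assertion: \emph{if the interval $I$ is contained in no $J\in\mathcal B$, then $D_I\not\le_T\prod_{J\in\mathcal B}D_J$.} Split $\mathcal B$ according to whether $I\subseteq J$ fails at the left end ($a_J>a$) or the right end ($c_J<c$). If every $J$ fails on the right then $\cof\bigl(\prod_{J\in\mathcal B}D_J\bigr)<\aleph_c=\cof(D_I)$, and if every $J$ fails on the left then $\add\bigl(\prod_{J\in\mathcal B}D_J\bigr)>\aleph_a=\add(D_I)$, so both pure cases are immediate. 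The real work is the mixed case, where from a hypothetical Tukey map $f\colon D_I\to\prod_{J\in\mathcal B}D_J$ one must manufacture an unbounded family in $D_I=[\omega_c]^{<\omega_a}$ — concretely, a family of $\aleph_a$ singletons $\{\{\xi\}:\xi\in S\}$ with $|S|=\aleph_a$ — whose $f$-image is bounded. I anticipate this being carried out by a two-sided diagonalisation run along an increasing chain of elementary submodels, exploiting that the ``small-cofinality'' coordinates of $\prod_{J\in\mathcal B}D_J$ cannot absorb a set cofinal in $\omega_c$ while the ``large-additivity'' coordinates cannot absorb an $\aleph_a$-sized unbounded set; this is the step that generalises the non-reduction lemmas of Todor\v{c}evi\'c (for $n=1$) and Kuzeljevi\'c--Todor\v{c}evi\'c (for $n=2$), and is the technical heart of the theorem.

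The last step is pure combinatorics. By the normal-form theorem, $\mathcal A\mapsto(\text{the down-set of }\mathcal A\text{ in the interval-inclusion poset})$ identifies $(\mathcal S_n/{\equiv_T},<_T)$ with the poset of order ideals, ordered by containment, of the inclusion order on the subintervals of an $(n+1)$-element chain; that inclusion order is the root poset of type $A_{n+1}$, which has $\binom{n+2}{2}$ elements and exactly $c_{n+2}$ order ideals, and — by the correspondence recalled in Section~\ref{Section - Catalan} — its distributive lattice of order ideals is isomorphic to $(\mathcal K_{n+2},\vartriangleleft)$, with the empty antichain going to the minimal Dyck path and containment to $\vartriangleleft$. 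Composing gives the asserted isomorphism $(\mathcal S_n/{\equiv_T},<_T)\cong(\mathcal K_{n+2},\vartriangleleft)$. Finally, since by the cofinality computation of the first step and the finiteness of $\mathcal A$ every $D_{\mathcal A}$ is Tukey-equivalent to a directed set of cardinality $\le\aleph_n$, all of the types of the $D_{\mathcal A}$ lie in $\mathcal D_{\aleph_n}$, so $|\mathcal D_{\aleph_n}|\ge|\mathcal S_n/{\equiv_T}|=c_{n+2}$.
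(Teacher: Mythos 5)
Your overall architecture is sound and, modulo bookkeeping, is the same as the paper's: put each product into a canonical form, identify the resulting set of types with a Catalan-counted poset, and prove the order-isomorphism by a positive reduction lemma plus a negative non-reduction lemma. Your antichains-of-intervals normal form is an equivalent (and arguably cleaner) repackaging of the paper's column-height coding $\vec v_D$, and your reduction direction, uniqueness-from-comparability argument, and the final combinatorial identification with Dyck paths are all fine.

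The genuine gap is exactly where you flag it: the mixed case of the non-reduction assertion, namely that $D_I\not\le_T\prod_{J\in\mathcal B}D_J$ when $I=[a,c]$ is contained in no $J\in\mathcal B$ and $\mathcal B$ contains coordinates failing on each side. You do not prove this; you only ``anticipate'' a two-sided diagonalisation along a chain of elementary submodels, and since you yourself call this the technical heart of the theorem, the proposal as written does not establish Theorem~A. Moreover, the machinery you reach for is heavier than what is needed. The paper closes this case by pure pigeonhole on the finitely many coordinates, using the invariants $\out$ and $\Inner(\cdot,\kappa)$ of the bounded ideal: Lemma~\ref{Lemma - out(, kappa)} shows that if every coordinate either has cofinality $<\aleph_c$ or has $\non(\mathcal I_{\bd}(D_J))>\aleph_a$, then $\aleph_a\in\Inner(\mathcal I_{\bd}(\prod_J D_J),\aleph_c)$, i.e.\ every $\aleph_c$-sized subset of the product contains a bounded subset of size $\aleph_a$ (first pigeonhole the small-cofinality coordinates to be constant on an $\aleph_c$-sized subfamily, then let the large-$\non$ coordinates absorb any $\aleph_a$ of what remains); Lemma~\ref{Lemma - kappa, theta no tukey map} then kills any Tukey map from a directed set $F$ with $\out(\mathcal I_{\bd}(F))=\aleph_a$. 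The one ingredient your sketch does not mention and which is genuinely needed here is Fact~\ref{KT2021 - Lemma 2.3} (Kuzeljevi\'c--Todor\v{c}evi\'c): $[\omega_c]^{<\omega_a}$ has a cofinal subset of size $\aleph_c$ all of whose subsets of size $\aleph_a$ are unbounded. This is what lets one take $F$ to be that cofinal copy (so that \emph{every} $\aleph_a$-sized subset of $F$, not just a family of singletons, is unbounded) and run the two-case pigeonhole of Lemma~\ref{Lemma - kappa, theta no tukey map}; see the proof of Lemma~\ref{Lemma - path not < implies Tukey not <}. If you replace your elementary-submodel sketch with this finite pigeonhole argument, your proof goes through and coincides with the paper's.
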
 

A natural question which arises is whether an interval determined by two successive elements of $\left(\mathcal S_n\slash{\equiv_T}, {<_T}\right)$
forms an empty interval in $(\mathcal D_{ \aleph_n},<_T)$.
In \cite{kuzeljevic2021cofinal}, the authors showed that there are two intervals of $\mathcal S_2$
that are indeed empty in $\mathcal D_{\aleph_2}$,
and they also showed that consistently, under $\gch$ and the existence of a non-reflecting stationary subset of $E^{\omega_2}_\omega$, 
two intervals of $\mathcal S_2$ that are nonempty in $\mathcal D_{\aleph_2}$.

In this paper, we prove:
\begin{thmb} 
Assuming $\gch$, 	for every positive integer $n$,
all intervals of $\mathcal S_n$
that form an empty interval in $\mathcal D_{\aleph_n}$ are identified,
and counterexamples are constructed to the other cases.
\end{thmb}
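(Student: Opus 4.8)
The plan is to pass through the poset isomorphism of Theorem~A, reduce the statement to a finite bookkeeping problem about covering pairs, and then settle each case either by a ``squeezing'' lemma (for the empty intervals) or by an explicit construction under $\gch$ (for the counterexamples). First I would fix a normal form for the members of $\mathcal S_n$: using the $\zfc$ reductions $1\times D\equiv_T D$ and $\omega_j\times D\equiv_T D$ (whenever $\omega_j\leq_T D$), the identity $[\omega_k]^{<\omega_m}\times[\omega_k]^{<\omega_{m'}}\equiv_T[\omega_k]^{<\omega_{\min\{m,m'\}}}$, and the $\gch$ computations $\add([\omega_k]^{<\omega_m})=\aleph_m$ and $\cof([\omega_k]^{<\omega_m})=\aleph_k$, one represents each $\equiv_T$-class by the family of ``active'' basic types it contains --- precisely the bijection with $\mathcal K_{n+2}$ underlying Theorem~A, in which $\omega_k$ is identified with $[\omega_k]^{<\omega_k}$. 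Transporting the covering relation of $(\mathcal K_{n+2},\vartriangleleft)$ across the isomorphism, every covering pair $D_0\lessdot_T D_1$ of $\mathcal S_n$ is of exactly one of two kinds: \emph{(i)} $D_1\equiv_T D_0\times\omega_k$ with $\omega_k\not\leq_T D_0$ (a new product factor; this subsumes the bottom moves $1\lessdot_T\omega_k$), or \emph{(ii)} $D_1$ refines one factor $[\omega_k]^{<\omega_{m+1}}$ of a normal form of $D_0$ to $[\omega_k]^{<\omega_m}$ (a one-unit deepening of a cell of the Dyck path). I expect the clean statement of Theorem~B to be: an interval $(D_0,D_1)$ of $\mathcal S_n$ is empty in $\mathcal D_{\aleph_n}$ if and only if the covering pair is of kind~(i).

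For the pairs of kind~(i), I would prove a squeezing lemma: if $D_1\equiv_T D_0\times\omega_k$, then \emph{every} directed $E$ with $D_0\leq_T E\leq_T D_1$ satisfies $E\equiv_T D_0$ or $E\equiv_T D_1$ --- in $\zfc$, with no cardinality restriction. The engine is an ``increasing union of small pieces'' analysis: a monotone cofinal map $g\colon D_0\times\omega_k\to E$ decomposes $E=\bigcup_{\alpha<\omega_k}F_\alpha$ into the increasing family $F_\alpha=\{e\in E: e\leq g(d,\alpha)\text{ for some }d\in D_0\}$, where $d\mapsto g(d,\alpha)$ is a monotone cofinal map $D_0\to F_\alpha$, so $F_\alpha\leq_T D_0$. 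Either some $F_\alpha$ is cofinal in $E$, whence $D_0\leq_T E\leq_T F_\alpha\leq_T D_0$ and $E\equiv_T D_0$; or no $F_\alpha$ is cofinal, in which case choosing $e_\alpha\in E$ with $e_\alpha\notin\downarrow F_\alpha$ yields a Tukey map $\alpha\mapsto e_\alpha$ of $\omega_k$ into $E$ (if $b\in F_\gamma$ then $e_\alpha\not\leq b$ for all $\alpha\geq\gamma$), so $\omega_k\leq_T E$, hence $D_0\times\omega_k\leq_T E\leq_T D_0\times\omega_k$ and $E\equiv_T D_1$. The base case $D_0=1$ gives the bottom moves $1\lessdot_T\omega_k$. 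Note that $\gch$ is not used in the lemma itself; it enters only upstream, in guaranteeing --- via the arithmetic $\aleph_k^{<\aleph_m}=\aleph_k$ --- that $(\mathcal S_n/{\equiv_T},<_T)$ has the Dyck-path structure, so that ``kind~(i)'' is a well-defined, exhaustive case. This also recovers, in $\zfc$, the two empty intervals of $\mathcal S_2$ isolated in \cite{kuzeljevic2021cofinal}.

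For the pairs of kind~(ii) --- where $D_1\equiv_T D\times[\omega_k]^{<\omega_m}$ and $D_0\equiv_T D\times[\omega_k]^{<\omega_{m+1}}$, so that (since $\cof(D_0)=\cof(D_1)=\max\{\cof D,\aleph_k\}$) the task is to interpolate between $[\omega_k]^{<\omega_{m+1}}$ and $[\omega_k]^{<\omega_m}$ and then form the product with $D$ --- I would, under $\gch$, build the interpolant $E$ from the bounded subsets of $\omega_k$ together with a cofinal ``tower'' guided by a $\gch$-supplied combinatorial object on $\omega_k$ at cofinality $\omega_m$: walks on ordinals and coherent/oscillating sequences, in the style of Todor\v{c}evi\'{c}, for the moves with $m=0$, and a non-reflecting stationary subset of $E^{\omega_k}_{\omega_m}$ or a $\square$-like sequence, where $\gch$ makes one available, for $m\geq1$. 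The order on $E$ is arranged so that its $\add$ and $\cof$ lie in the intervals forced by $D_0\leq_T E\leq_T D_1$, while the stationarity/coherence woven into the tower blocks the two Tukey reductions --- a monotone cofinal map $E\to D_1$ and one $D_0\to E$ --- whose absence is exactly what makes $D_0<_T E<_T D_1$ strict. The extra factor $D$ is then handled by the same coordinatewise decomposition as in the squeezing lemma, so that forming the product with $D$ preserves the strict two-sided position. This extends, level by level in $\aleph_n$, the $\aleph_2$ counterexamples of \cite{kuzeljevic2021cofinal}.

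The main obstacle is twofold. First, pinning down the exact dichotomy: translating the covering relation of $\vartriangleleft$ correctly and verifying that \emph{every} pair of kind~(i) is empty while \emph{no} pair of kind~(ii) is requires the full combinatorial content of Theorem~A together with the structure theory of cofinal types below $[\omega_k]^{<\omega_m}$; one must in particular rule out that some directed set slips between a pair of kind~(i) through a pathology of higher products --- precisely what the squeezing lemma forecloses --- and, conversely, that each kind~(ii) interpolant really is strictly two-sided. Second, the kind~(ii) constructions are the technical core: one has to verify, uniformly over all $0\leq m<k\leq n$ and in the presence of the factor $D$, that a hypothetical Tukey reduction between $E$ and $D_1$ (resp.\ between $D_0$ and $E$) is forced into contradiction with the club filter on $\omega_k$ and with the object driving the tower --- and to secure all the requisite combinatorial objects from $\gch$ alone, at every $\aleph_k$ with $k\leq n$, rather than from stronger hypotheses such as $\square$. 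These two points are where I expect essentially all of the work to lie.
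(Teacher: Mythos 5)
Your overall architecture coincides with the paper's: pass to normal forms, identify covering pairs of $(\mathcal S_n/{\equiv_T},<_T)$ with covering pairs of the Dyck-path poset, and split them into the moves that adjoin a new factor $\omega_k$ (the paper's $0$-diagonal pairs, which are empty intervals) and the moves that refine a factor $[\omega_k]^{<\omega_{m+1}}$ to $[\omega_k]^{<\omega_m}$ (the $l$-diagonal pairs with $l\geq1$, which are consistently non-empty). That dichotomy is exactly the content of Theorem~\ref{Theorem - Intervals}.

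For the empty-interval half your route is genuinely different from, and in fact more general than, the paper's. The paper proves Theorem~\ref{Theorem - Gap} via Lemmas~\ref{lemma41} and~\ref{Lemma - 2nd gap}, the second of which needs the hypotheses $\cf(C)<\kappa\leq\cf(E)$ and $\kappa\in\Inner(\mathcal I_{\bd}(E),\kappa)$ and a delicate case analysis. Your squeezing argument needs no hypothesis on $D_0$ at all: the dichotomy ``some level set $E_\alpha$ is cofinal in $E$ (so $E\equiv_T E_\alpha\leq_T D_0$) versus no $E_\alpha$ is cofinal (so the witnesses $e_\alpha$ give $\kappa\leq_T E$, whence $D_0\times\kappa\leq_T E$ since finite products are least upper bounds)'' is exhaustive and each horn closes the interval. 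One presentational caveat: you phrase the decomposition in terms of a \emph{monotone} cofinal map $D_0\times\omega_k\to E$, and for arbitrary directed sets the upgrade from a convergent map to a monotone one is not free; but the same argument runs verbatim with a Tukey function $h\colon E\to D_0\times\omega_k$ and the preimages $E_\alpha:=\{e\mid \pi_{\omega_k}(h(e))\leq\alpha\}$, so this is cosmetic. With that fix the lemma is correct and it subsumes the paper's Section~\ref{Section - Gaps}.

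The non-empty half, however, has a genuine gap, and it is precisely where the paper's main technical work lies. You propose to drive the interpolants by ``a non-reflecting stationary subset of $E^{\omega_k}_{\omega_m}$ or a $\square$-like sequence, where $\gch$ makes one available'' --- but $\gch$ does \emph{not} supply non-reflecting stationary sets or square sequences; removing exactly these extra hypotheses (the $\aleph_2$-Souslin tree and the non-reflecting stationary subset of $E^{\omega_2}_\omega$ used in \cite{kuzeljevic2021cofinal}) is the point of Section~\ref{Section - No Gaps}. The paper's actual inputs are: for the $1$-diagonal pair at $\omega_1$, the family witnessing $\mathfrak b=\omega_1$ (a consequence of $\ch$) together with Fact~\ref{Fact - b=omega_1}; for the $1$-diagonal pairs at $\omega_k$ with $k>1$, the $\zfc$ coloring principle $\onto(\mathcal S,J^{\bd}[\theta^{++}],\theta^+)$ of Inamdar--Rinot plus an elementary-submodel argument under $2^\theta=\theta^+$ and $2^{\theta^+}=\theta^{++}$; and for the higher diagonals, Shelah's theorem that $2^\lambda=\lambda^+$ yields $\diamondsuit(S)$, hence the weak club principle $\clubsuit^\lambda_J(S,1)$, from which the directed set $D_{\mathcal C}$ is built and its non-equivalence with $[\lambda^+]^{<\theta}$ is proved by a $\Delta$-system argument (Claim~\ref{good claim}) rather than by non-reflection. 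Relatedly, your assertion that the extra factor $D$ ``is handled by the same coordinatewise decomposition, so that forming the product with $D$ preserves the strict two-sided position'' is not justified: strict Tukey inequalities are not preserved by products in general, and the conditions $\non(\mathcal I_{\bd}(E))>\theta$ and $\theta^+\in\Inner(\mathcal I_{\bd}(D),\theta^{++})$ appearing throughout Section~\ref{Section - No Gaps} are exactly what is needed to control this. You correctly flag both issues as the main obstacles, but as written the proposal would prove Theorem~B only from hypotheses strictly stronger than $\gch$.
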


\subsection{Organization of this paper} In Section~\ref{Section - Tools} we analyze the Tukey order of directed sets using characteristics of the ideal of bounded subsets.

In Section~\ref{Section - Catalan} we consider the poset $(\mathcal S_n\slash{\equiv_T}, <_T )$ and show it is isomorphic to the poset of good $(n+2)$-paths (Dyck paths) with the natural order. As a Corollary we get that the cardinality of $\mathcal D_{ \aleph_n}$ is greater than or equal to the Catalan number $c_{n+2}$.
Furthermore, we address the basic question of whether a specific interval in the poset $(\mathcal S_n\slash{\equiv_T}, <_T )$ is empty, i.e. considering an element $C$ and a successor of it $E$, is there a directed set $D\in \mathcal D_{\aleph_n}$ such that $C<_T D <_T E$?
We answer this question in Theorem~\ref{Theorem - Intervals} using results from the next two sections.

In Section~\ref{Section - Gaps} we present sufficient conditions on an interval of the poset $(\mathcal S_n\slash{\equiv_T}, <_T )$ which enable us to prove there is no directed set inside.

In Section~\ref{Section - No Gaps} we present cardinal arithmetic assumptions, enough to construct on specific intervals of the poset $(\mathcal S_n\slash{\equiv_T}, <_T )$ a directed set inside.

In Section~\ref{Section - 6} we finish with a remark about future research.

In the Appendix diagrams of the posets $(\mathcal S_2\slash{\equiv_T}, <_T )$ and $(\mathcal S_3\slash{\equiv_T}, <_T )$ are presented.

\subsection{Notation}\label{notationsubsection}
For a set of ordinals $C$, we write $\acc(C):=\{\alpha <\sup(C) \mid\sup(C\cap \alpha)=\alpha >0\}$.
For $\alpha<\gamma$ where $\alpha$ is a regular cardinal, denote $E^\gamma_\alpha:=\{ \beta<\gamma\mid \cf(\beta)=\alpha\}$.
The set of all infinite (resp. infinite and regular) cardinals below $\kappa$ is denoted by $\card(\kappa)$ (resp. $\reg(\kappa)$).
For a cardinal $\kappa$ we denote by $\kappa^+$ the successor cardinal of $\kappa$, and by $\kappa^{+n}$ the $n^{th}$-successor cardinal.
For a function $f:X\rightarrow Y$ and a set $A\subseteq X$, we denote $f"A:=\{f(x)\mid x\in A\}$.
For a set $A$ and a cardinal $\theta$, we write $[A]^{\theta}:=\{X\subseteq A \mid |X|=\theta\}$ and define $[A]^{\leq\theta}$ and $[A]^{<\theta}$ similarly.
For a sequence of sets $\langle A_i\mid i\in A \rangle$, let $\prod_{i\in I} D_i:=\{f:I\rightarrow \bigcup_{i\in I} D_i \mid \forall i\in I[f(i)\in D_i] \}$.

\subsection{Preliminaries}
A partial ordered set $(D,\leq_D)$ is \emph{directed} iff for every $x,y\in D$ there is $z\in D$ such that $x\leq_D z$ and $y\leq_D z$.
We say that a subset $X$ of a directed set $D$ is \emph{bounded} if there is some $d\in D$ such that $x\leq_D d$ for each $x\in X$.
Otherwise, $X$ is \emph{unbounded} in $D$.
We say that a subset $X$ of a directed $D$ is \emph{cofinal} if for every $d\in D$ there exists some $x\in X$ such that $d\leq_D x$.
Let $\cf(D)$ denote the minimal cardinality of a cofinal subset of $D$.
If $D$ and $E$ are two directed sets, we say that $f:D\rightarrow E$ is a \emph{Tukey function} if $f"X:=\{f(x)\mid x\in X\}$ is unbounded in $E$ whenever $X$ is unbounded in $D$.
If such a Tukey function exists we say that $D$ is \emph{Tukey reducible} to $E$, and write $D\leq_T E$.
If $D\leq_T E$ and $E\not\leq_T D$, we write $D<_T E$.
A function $g:E\rightarrow D$ is called a \emph{convergent/cofinal map} from $E$ to $D$ if for every $d\in D$ there is an $e_d\in E$ such that for every $c\geq e_d$ we have $g(c)\geq d$.
There is a convergent map $g:E\rightarrow D$ iff $D\leq_T E$.
Note that for a convergent map $g:E\rightarrow D$ and a cofinal subset $Y\subseteq E$, the set $g"Y$ is cofinal in $D$.
We say that two directed sets $D$ and $E$ are \emph{cofinally/Tukey equivalent} and write $D\equiv_T E$ iff $D\leq_T E$ and $D\geq_T E$.
Formally, a \emph{cofinal type} is an equivalence class under the Tukey order, we abuse the notation and call every representative of the class a cofinal type.
Notice that a directed set $D$ is cofinally equivalent to any cofinal subset of $D$.
In \cite{MR0002515}, Tukey proved that $D\equiv_T E$ iff there is a directed set $(X, \leq_X)$ such that both $D$ and $E$ are isomorphic to a cofinal subset of $X$.
We denote by $\mathcal D_\kappa$ the set of all cofinal types of directed sets of cofinality $\leq \kappa$.

Consider a sequence of directed sets $\langle D_i \mid i\in I \rangle$, we define the directed set which is the product of them $(\prod_{i\in I} D_i,\leq)$ ordered by everywhere-dominance, i.e. for two elements $d, e \in\prod_{i\in I} D_i$ we let $ d \leq  e$ if and only if $ d(i)\leq_{D_i} e(i)$ for each $i\in I$.
For $X\subseteq \prod_{i\in I} D_i$, let $\pi_{D_i}$ be the projection to the $i$-coordinate.
A simple observation {\cite[Proposition~2]{MR792822}} is that if $n$ is finite, then $D_1\times \dots \times D_n$ is the least upper bound of $D_1,\dots,D_n$ in the Tukey order.
Similarly, we define a $\theta$-support product $\prod^{\leq\theta}_{i\in I}D_i$; for each $i\in I$, we fix some element $0_{D_i}\in D_i$ (usually minimal).
Every element $v\in \prod^{\leq\theta}_{i\in I}D_i$ is such that $|\supp(v)|\leq \theta$, where $\supp(v):=\{ i\in I \mid v(i)\not = 0_{D_i}\}$.
The order is coordinate wise.

\section{Characteristics of directed sets}\label{Section - Tools}

We commence this section with the following two Lemmas which will be used throughout the paper.
 
\begin{lemma}[Pouzet, {\cite{Pouzet}}]\label{Lemma - cofinal set with no large bounded set}
	Suppose $D$ is a directed set such that $\cf(D)=\kappa$ is infinite, then there exists a cofinal directed set $P\subseteq D$ of size $\kappa$ such that every subset of size $\kappa$ of $P$ is unbounded
\end{lemma}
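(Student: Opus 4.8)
The plan is to refine an arbitrary cofinal set into $P$ by a transfinite recursion of length $\kappa$, arranging a single ``anti-bound'' property along the way. Fix a cofinal subset $\{d_\alpha \mid \alpha<\kappa\}$ of $D$, which exists because $\cf(D)=\kappa$. I will recursively choose $p_\alpha\in D$ for $\alpha<\kappa$ so that (i) $d_\alpha\leq_D p_\alpha$, and (ii) $p_\alpha\not\leq_D p_\xi$ for every $\xi<\alpha$.

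The recursive step is where the hypothesis $\cf(D)=\kappa$ is used: at stage $\alpha$ the set $\{p_\xi\mid\xi<\alpha\}$ has cardinality at most $|\alpha|<\kappa=\cf(D)$, so it is \emph{not} cofinal in $D$; hence there is some $d^\ast_\alpha\in D$ with $d^\ast_\alpha\not\leq_D p_\xi$ for all $\xi<\alpha$. Since $D$ is directed, I may then pick $p_\alpha\in D$ above both $d_\alpha$ and $d^\ast_\alpha$; this secures (i), and since $p_\alpha\geq_D d^\ast_\alpha$ while $d^\ast_\alpha\not\leq_D p_\xi$, it secures (ii) as well.

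It remains to verify that $P:=\{p_\alpha\mid\alpha<\kappa\}$ works. By (i) and the cofinality of $\{d_\alpha\}$, $P$ is cofinal in $D$; in particular $|P|=\kappa$ (so, by (ii), the $p_\alpha$ are pairwise distinct), and $P$ is directed, since any cofinal subset of a directed set is directed (given $p,q\in P$, take $r\in D$ above both and then $p'\in P$ above $r$). Finally, suppose $X\subseteq P$ has $|X|=\kappa$; write $X=\{p_\delta\mid\delta\in S\}$ with $|S|=\kappa$. If $X$ were bounded in $D$, say by $d$, then, using cofinality of $P$, it would be bounded by some $p_\gamma\in P$; but $|S|=\kappa>|\gamma+1|$ yields a $\delta\in S$ with $\delta>\gamma$, whence $p_\delta\leq_D p_\gamma$, contradicting (ii). So every $\kappa$-sized subset of $P$ is unbounded.

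The construction has no real obstacle beyond this one point: the elementary but essential observation that any subset of $D$ of size $<\cf(D)$ fails to be cofinal, which is exactly what licenses the choice of the ``escaping'' element $d^\ast_\alpha$ at each stage. (One could alternatively run the recursion with bookkeeping to build directedness into $P$ by hand, but this is unnecessary given the remark above.)
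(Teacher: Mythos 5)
Your proof is correct. It takes a genuinely different route to constructing $P$ than the paper does. The paper starts from an arbitrary enumeration $\{x_\alpha\mid\alpha<\kappa\}$ of a cofinal set and simply \emph{selects} the subset $P:=\{x_\alpha\mid \forall\beta<\alpha\,[x_\alpha\not<_D x_\beta]\}$; the price is that cofinality of $P$ is no longer automatic and requires a short minimality argument (given $d$, take the least $\alpha$ with $d<_D x_\alpha$ and, if $x_\alpha\notin P$, the least $\beta<\alpha$ with $x_\alpha<_D x_\beta$, and check that $x_\beta\in P$). You instead build $P$ by transfinite recursion, at each stage choosing an ``escaping'' element $d^\ast_\alpha$ witnessing that the initial segment $\{p_\xi\mid\xi<\alpha\}$ is not cofinal and putting $p_\alpha$ above both $d_\alpha$ and $d^\ast_\alpha$. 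This makes cofinality of $P$ immediate and cleanly isolates the one substantive use of the hypothesis $\cf(D)=\kappa$ (no set of size $<\cf(D)$ is cofinal), at the cost of an extra appeal to directedness and to choice at each stage. Both arguments converge on the same key property --- for $\beta<\alpha$ the $\alpha$-th element is not below the $\beta$-th --- and the same closing contradiction: a bound for a $\kappa$-sized subset of $P$ may be taken inside $P$ by cofinality and would then dominate some later element of that subset. It is worth noting that neither argument requires $\kappa$ to be regular, which matters here because the cofinality of a directed set can be singular (e.g.\ $[\aleph_\omega]^{<\omega}$), and your stage-$\alpha$ cardinality estimate $|\alpha|<\kappa$ correctly uses only that $\kappa$ is a cardinal.
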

\begin{proof}
	Let $X\subseteq D$ be a cofinal subset of cardinality $\kappa$ and let $\{x_\alpha \mid \alpha<\kappa\}$ be an enumeration of $X$.
	Let $P:=\{ x_\alpha \mid \alpha<\kappa\text{ and for all } \beta<\alpha[x_\alpha\not <_D x_\beta] \}$.
	We claim that $P$ is cofinal.
	In order to prove this, fix $d\in D$.
	As $X$ is cofinal in $D$, fix a minimal $\alpha<\kappa$ such that $d<_D x_\alpha$.
	If $x_\alpha\in P$, then we are done.
	If not, then fix some $\beta<\alpha$ minimal such that $x_\alpha<_D x_\beta$.
	We claim that $x_\beta\in P$, i.e. there is no $\gamma<\beta$ such that $x_\beta<_D x_\gamma$. 
	Suppose there is some $\gamma<\beta$ such that $x_\beta<_D x_\gamma$, then $x_\alpha <_D x_\gamma$, which is a contradiction to the minimality of $\beta$.
	Note that $d<_D x_\beta\in P$ as sought.
	As $P$ is cofinal in $D$, $\cf(D)=\kappa$, $P\subseteq X$ and $|X|=\kappa$, we get that $|P|=\kappa$.
	
	Finally, let us show that every subset of size $\kappa$ of $P$ is unbounded.
	Suppose on the contrary that $X\subseteq P$ is a bounded subset of $P$ of size $\kappa$.
	Fix some $x_\beta\in P$ above $X$ and $\beta<\alpha<\kappa$ such that $x_\alpha \in X$, but this is an absurd as $x_\alpha<_D x_\beta$ and $x_\alpha\in P$.
\end{proof}

\begin{fact}[Kuzeljević-Todor\v{c}evi\'{c}, {\cite[Lemma~2.3]{kuzeljevic2021cofinal}}]\label{KT2021 - Lemma 2.3}
	Let $\lambda\geq \omega$ be a regular cardinal and $n<\omega$ be positive.
	The directed set $[\lambda^{+n}]^{\leq \lambda}$ contains a cofinal subset $\mathfrak D_{[\lambda^{+n}]^{\leq \lambda}}$ of size $\lambda^{+n}$ with the property that every subset of $\mathfrak D_{[\lambda^{+n}]^{\leq \lambda}}$ of size $>\lambda$ is unbounded in $[\lambda^{+n}]^{\leq \lambda}$.
	In particular, $[\lambda^{+n}]^{\leq \lambda}$ belongs to $\mathcal D_{\lambda^{+n}}$, i.e. $\cf([\lambda^{+n}]^{\leq \lambda}) \leq \lambda^{+n}$.
\end{fact}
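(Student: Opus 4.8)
My plan is an induction on $n$ in which I carry the hypothesis: $[\lambda^{+n}]^{\le\lambda}$ admits a cofinal subfamily $\mathfrak D_n$, of size at most $\lambda^{+n}$, all of whose \emph{bounded} subsets have size $\le\lambda$ --- note that this last property is exactly "every subfamily of size $>\lambda$ is unbounded". For the base case I would run the induction from $n=0$ (a trivial base, not itself part of the claim) and take $\mathfrak D_0:=\{\lambda\}$, which is cofinal in $[\lambda]^{\le\lambda}=\mathcal P(\lambda)$ with all subsets of size $\le 1$. Equivalently, one can start at $n=1$ with the ordinals of $[\lambda,\lambda^+)$ viewed as subsets of $\lambda^+$, or, having noted that $\cf([\lambda^+]^{\le\lambda})=\lambda^+$, simply quote Lemma~\ref{Lemma - cofinal set with no large bounded set} for $\kappa=\lambda^+$, whose output is precisely a cofinal family whose bounded subsets have size $<\lambda^+$.

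For the inductive step, given $\mathfrak D_n$, the idea is to fix for each ordinal $\gamma$ with $\lambda^{+n}\le\gamma<\lambda^{+(n+1)}$ a bijection $e_\gamma\colon\lambda^{+n}\to\gamma$ and to set
\[
	\mathfrak D_{n+1}:=\{\,e_\gamma"A\cup\{\gamma\}\mid \lambda^{+n}\le\gamma<\lambda^{+(n+1)},\ A\in\mathfrak D_n\,\};
\]
the decisive device is the "tag" $\gamma$ appended to each set. The routine verifications will be: its members lie in $[\lambda^{+(n+1)}]^{\le\lambda}$; it is cofinal, because a given $X\in[\lambda^{+(n+1)}]^{\le\lambda}$ satisfies $X\subseteq\gamma$ for $\gamma:=\max\{\sup(X)+1,\lambda^{+n}\}<\lambda^{+(n+1)}$, so $e_\gamma^{-1}"X\in[\lambda^{+n}]^{\le\lambda}$ sits inside some $A\in\mathfrak D_n$ and hence $X\subseteq e_\gamma"A\subseteq e_\gamma"A\cup\{\gamma\}$; and $|\mathfrak D_{n+1}|\le\lambda^{+(n+1)}\cdot|\mathfrak D_n|\le\lambda^{+(n+1)}$, with the reverse inequality for $n+1\ge 1$ coming for free from cofinality, since a cofinal family covers $\lambda^{+(n+1)}$ and one cannot cover the regular cardinal $\lambda^{+(n+1)}$ by fewer than $\lambda^{+(n+1)}$ sets of size $\le\lambda$. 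This also yields the "in particular" clause.

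The main point --- and where I expect to need care --- is that bounded subfamilies of $\mathfrak D_{n+1}$ still have size $\le\lambda$. Given $\mathcal B\subseteq\mathfrak D_{n+1}$ bounded by some $Z\in[\lambda^{+(n+1)}]^{\le\lambda}$: every member of $\mathcal B$ has the form $e_\gamma"A\cup\{\gamma\}$ with $\gamma\in Z$, so only $\le\lambda$ values of $\gamma$ occur; and for each fixed such $\gamma$, a member $e_\gamma"A\cup\{\gamma\}\in\mathcal B$ forces $e_\gamma"A\subseteq Z$, i.e.\ $A\subseteq e_\gamma^{-1}"Z\in[\lambda^{+n}]^{\le\lambda}$, so by the induction hypothesis only $\le\lambda$ sets $A\in\mathfrak D_n$ can contribute. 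Hence $|\mathcal B|\le\lambda\cdot\lambda=\lambda$, closing the induction. The reason the tag is indispensable --- the heart of the difficulty --- is that without it a bounded $\mathcal B$ could harvest sets $e_\gamma"A$ from vastly different levels $\gamma$ that nevertheless all embed into one small $Z$, leaving no control over the $\gamma$-coordinate and hence none, via the inductive hypothesis, over $\mathcal B$; recording $\gamma$ turns "$\mathcal B$ has a small bound" into "$\mathcal B$ meets only $\le\lambda$ levels".
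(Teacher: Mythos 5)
Your proof is correct. Note that the paper itself offers no proof of this statement: it is imported verbatim as a Fact from \cite[Lemma~2.3]{kuzeljevic2021cofinal}, so there is nothing internal to compare against; your inductive construction with the bijections $e_\gamma$ and the appended tag $\gamma$ is essentially the standard argument for that lemma, and every step checks out --- cofinality, the cardinality computation (with the lower bound coming from regularity of $\lambda^{+(n+1)}$), and the key counting of a bounded subfamily as $\le\lambda$ tags times $\le\lambda$ sets per tag, the latter by the induction hypothesis applied to the bounded family $\{A\in\mathfrak D_n\mid A\subseteq e_\gamma^{-1}{}"(Z\cap\gamma)\}$. One small simplification you could record: since $e_\gamma{}"A\subseteq\gamma$, the tag satisfies $\gamma=\max\bigl(e_\gamma{}"A\cup\{\gamma\}\bigr)$, so the assignment $(\gamma,A)\mapsto e_\gamma{}"A\cup\{\gamma\}$ is in fact injective and the counting of members of $\mathcal B$ versus representing pairs needs no further care.
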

Recall that any directed set is Tukey equivalent to any of its cofinal subsets, hence $\mathfrak D_{[\lambda^{+n}]^{\leq \lambda}} \equiv_T [\lambda^{+n}]^{\leq \lambda}$.

As part of our analysis of the class $\mathcal D_{\aleph_n}$, we would like to find certain traits of directed sets which distinguish them from one another in the Tukey order.
This was done previously, in \cite{MR76836}, \cite{MR201316} and \cite{MR1407459}. 
We use that the language of cardinal functions of ideals.

\begin{definition}For a set $D$ and an ideal $\mathcal I$ over $D$, consider the following cardinal characteristics of $\mathcal I$:
	\begin{itemize}	
		\item $\add(\mathcal I):=\min\{\kappa \mid \mathcal A \subseteq I,~ |\mathcal A|=\kappa, ~\bigcup \mathcal A\notin \mathcal I\}$;
		\item $ \non(\mathcal I):=\min \{|X|\mid  X\subseteq D,~ X\notin \mathcal I  \}$;
		\item $  \out(\mathcal I):=\min\{ \theta\leq|D|^+\mid \mathcal I\cap[D]^{\theta}=\emptyset\}$;
		\item $\Inner(\mathcal I, \kappa) = \{ \theta\leq \kappa \mid \forall X\in [D]^\kappa~\exists Y\in [X]^\theta \cap \mathcal I \}$.

	\end{itemize}
\end{definition}

Notice that $\add(\mathcal I)\leq \non(\mathcal I) \leq \out(\mathcal I)$.

\begin{definition}
	For a directed set $D$, denote by $\mathcal I_{\bd}(D) $ the ideal of bounded subsets of $D$.
\end{definition}

\begin{prop}
	Let $D$ be a directed set. Then:
	\begin{enumerate}
		\item  $\non(\mathcal I_{\bd}(D))$ is the minimal size of an unbounded subset of $D$, so every subset of size less than $\non(\mathcal I_{\bd}(D))$ is bounded;
		\item If $\theta<\out(\mathcal I_{\bd}(D))$, then there exists in $D$ some bounded subset of size $\theta$. 
		\item If $\theta \geq \out(\mathcal I_{\bd}(D))$, then every subset $X$ of size $\theta$ is unbounded in $D$;
		\item If $\theta\in \Inner(\mathcal I_{\bd}(D),\kappa)$, then for every $X\in [D]^\kappa$ there exists some $B\in [X]^{\theta}$ bounded;
		\item For every $\theta<\add(\mathcal I_{\bd}(D))$ and a family $\mathcal A$ of size $\theta$ of bounded subsets of $D$, the subset $\bigcup \mathcal A$ is also bounded in $D$. \qed
	\end{enumerate}
\end{prop}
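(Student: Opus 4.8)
The plan is to verify each clause by directly unwinding the relevant cardinal characteristic from the preceding definition, specialized to the ideal $\mathcal I_{\bd}(D)$; the only structural input is the observation that $\mathcal I_{\bd}(D)$ really is an ideal on $D$, so I would record this first. Downward closure is immediate (a subset of a bounded set is bounded), and closure under finite unions is exactly where directedness of $D$ is used: if $d_1$ bounds $X_1$ and $d_2$ bounds $X_2$, then any $d\in D$ with $d\geq_D d_1$ and $d\geq_D d_2$ bounds $X_1\cup X_2$. I would also note that, for this particular ideal, ``$X\in\mathcal I_{\bd}(D)$'' is synonymous with ``$X$ is bounded in $D$'', which is what turns each clause into a translation exercise.

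Granting this: clause~(1) is the definition of $\non$ read off for $\mathcal I_{\bd}(D)$, since the subsets of $D$ lying outside $\mathcal I_{\bd}(D)$ are precisely the unbounded ones, and nothing of size below the minimum can be among them. Clause~(4) is literally the definition of $\Inner(\mathcal I_{\bd}(D),\kappa)$. For clauses~(2) and~(3) I would argue from the definition of $\out$: the set $\{\theta\leq|D|^+\mid \mathcal I_{\bd}(D)\cap[D]^{\theta}=\emptyset\}$ is nonempty (it contains $|D|^+$), so $\out(\mathcal I_{\bd}(D))$ is well defined; if $\theta<\out(\mathcal I_{\bd}(D))$ then $\theta\leq|D|$, hence $[D]^{\theta}\neq\emptyset$, and by minimality $\mathcal I_{\bd}(D)\cap[D]^{\theta}\neq\emptyset$, yielding a bounded set of size $\theta$; conversely, if $\theta\geq\out(\mathcal I_{\bd}(D))$ and some $X\in[D]^{\theta}$ were bounded, then shrinking $X$ to a subset of size $\out(\mathcal I_{\bd}(D))$ and invoking downward closure would contradict $\mathcal I_{\bd}(D)\cap[D]^{\out(\mathcal I_{\bd}(D))}=\emptyset$, so every $X\in[D]^{\theta}$ is unbounded. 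Clause~(5) is the definition of $\add$: a family of fewer than $\add(\mathcal I_{\bd}(D))$ members of the ideal cannot witness the defining minimum, so its union remains in $\mathcal I_{\bd}(D)$, i.e.\ remains bounded.

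I do not expect any genuine obstacle here: this is a bookkeeping lemma whose role is to set up the dictionary between the abstract invariants $\add,\non,\out,\Inner$ and boundedness in $D$, for use in later sections. The only points that deserve a moment's attention are the small cardinal-arithmetic step in~(2)--(3) (checking that $\theta<\out(\mathcal I_{\bd}(D))$ forces $[D]^{\theta}\neq\emptyset$) and the fact that each argument tacitly uses that $\mathcal I_{\bd}(D)$ is an ideal — in particular downward-closed — rather than merely some family of ``small'' subsets of $D$.
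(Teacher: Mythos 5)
Your proposal is correct, and it matches the paper's treatment: the proposition is stated with a terminal \qed and no written proof, precisely because it is the routine dictionary between $\add,\non,\out,\Inner$ and boundedness that you spell out. The two points you flag — that $\mathcal I_{\bd}(D)$ is downward closed (needed for clause (3)) and that $\theta<\out(\mathcal I_{\bd}(D))\leq|D|^+$ forces $[D]^{\theta}\neq\emptyset$ in clause (2) — are exactly the only places where anything beyond reading off the definitions is required.
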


Let us consider another intuitive feature of a directed set, containing information about the cardinality of hereditary unbounded subsets, this was considered previously by Isbell \cite{MR201316}.
\begin{definition}[Hereditary unbounded sets]
	For a directed set $D$, set $$ \hu(D):=\{ \kappa\in \card(|D|^+)\mid \exists X\in [D]^{\kappa}[\forall Y\in [X]^\kappa\text{ is unbounded}] \}.$$
\end{definition}

\begin{prop}
	Let $D$ be a directed set. Then:
	\begin{itemize}	
		\item If $\cf(D)$ is an infinite cardinal, then $\cf(D)\in  \hu(D)$;
		\item If $\out(\mathcal I_{\bd}(D))\leq\kappa \leq |D|$, then $ \kappa \in \hu(D)$;
		\item For an infinite cardinal $\kappa$ we have that $\non(\mathcal I_{\bd}(\kappa))=\cf(\kappa)$, $\out(\mathcal I_{\bd}(D))=\kappa$ and
		$\hu(\kappa)=\{\lambda\in \card(\kappa^+) \mid \lambda =\cf(\kappa)\}$;
		\item If $\kappa=\cf(D)=\non(\mathcal I_{\bd}(D))$, then $D\equiv_T\kappa$;
		\item For two infinite cardinals $\kappa>\theta$ we have that $\non(\mathcal I_{\bd}([\kappa]^{<\theta}))=\cf(\theta)$;
		\item For a regular cardinal $\kappa$ and a positive $n<\omega$, $\out(\mathcal I_{\bd}(\mathfrak D_{[\kappa^{+n}]^{\leq \kappa}}))>\kappa$ and $\hu(\mathfrak D_{[\kappa^{+n}]^{\leq \kappa}}) = \{ \kappa^{+(m+1)}\mid m< n\}$;
		\item If $\kappa=\cf(D)$ is regular, $\theta=\out(\mathcal I_{\bd}(D))=\non(\mathcal I_{\bd}(D))$ and $\theta^{+n}=\kappa$ for some $n<\omega$, then $D\equiv_T [\kappa]^{<\theta}$.\qed
	\end{itemize}
\end{prop}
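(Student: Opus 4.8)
The plan is to establish the two reductions $D\leq_T[\kappa]^{<\theta}$ and $[\kappa]^{<\theta}\leq_T D$ directly, by writing down Tukey maps, using only the reformulations of $\non(\mathcal I_{\bd}(D))$ and $\out(\mathcal I_{\bd}(D))$ recorded in the Propositions above. Two preliminary remarks: every finite subset of a directed set is bounded, so $\theta=\non(\mathcal I_{\bd}(D))\geq\omega$; and $|D|\geq\cf(D)=\kappa$, so there is an injection $e\colon\kappa\to D$. (The hypotheses that $\kappa$ is regular and that $\theta^{+n}=\kappa$ will in fact not be used in what follows; they serve only to locate $[\kappa]^{<\theta}$ in the relevant class, and in any case they follow a posteriori, since Tukey equivalence preserves cofinality.)

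For $D\leq_T[\kappa]^{<\theta}$: fix a cofinal sequence $\langle d_\alpha\mid\alpha<\kappa\rangle$ in $D$ and define $f\colon D\to[\kappa]^{<\theta}$ by $f(d):=\{\beta_d\}$, where $\beta_d:=\min\{\beta<\kappa\mid d\leq_D d_\beta\}$; this is well-defined by cofinality, and $\{\beta_d\}\in[\kappa]^{<\theta}$ since $\theta\geq\omega>1$. To see $f$ is a Tukey map, let $Y\subseteq D$ be unbounded. If $f"Y$ were bounded in $[\kappa]^{<\theta}$, then $A:=\{\beta_d\mid d\in Y\}$ would have $|A|<\theta=\non(\mathcal I_{\bd}(D))$, so $\{d_\beta\mid\beta\in A\}$ would be a bounded subset of $D$ (every subset of size $<\non(\mathcal I_{\bd}(D))$ is bounded), say with upper bound $d^*$; but then $d\leq_D d_{\beta_d}\leq_D d^*$ for every $d\in Y$, contradicting that $Y$ is unbounded. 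Hence $f"Y$ is unbounded, and $D\leq_T[\kappa]^{<\theta}$.

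For $[\kappa]^{<\theta}\leq_T D$: for each $a\in[\kappa]^{<\theta}$ the set $e"a$ has size $<\theta=\non(\mathcal I_{\bd}(D))$, hence is bounded in $D$; choose an upper bound $h(a)\in D$ of $e"a$. To see the resulting $h\colon[\kappa]^{<\theta}\to D$ is a Tukey map, let $\mathcal Y\subseteq[\kappa]^{<\theta}$ be unbounded, so that $\bigcup\mathcal Y\notin[\kappa]^{<\theta}$; choose pairwise distinct $\xi_i\in\bigcup\mathcal Y$ for $i<\theta$ and, for each $i$, some $a_i\in\mathcal Y$ with $\xi_i\in a_i$, noting that then $e(\xi_i)\leq_D h(a_i)$. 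If $h"\mathcal Y$ were bounded in $D$, then in particular $\{h(a_i)\mid i<\theta\}$ would have an upper bound $d^*$, so $\{e(\xi_i)\mid i<\theta\}$ would be a bounded subset of $D$ of size exactly $\theta$ (the $\xi_i$ being distinct and $e$ injective); this contradicts $\out(\mathcal I_{\bd}(D))=\theta$, since every subset of size $\theta\geq\out(\mathcal I_{\bd}(D))$ is unbounded. Hence $h"\mathcal Y$ is unbounded, and $[\kappa]^{<\theta}\leq_T D$. Combining the two reductions yields $D\equiv_T[\kappa]^{<\theta}$.

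I do not expect a genuine obstacle: both maps are short, and all the content is in correctly invoking the boundedness/unboundedness dictionary of the Propositions. The only points demanding a little care are the bookkeeping ones — that $\theta$ is an infinite cardinal and that $[\kappa]^{<\theta}$ really is the directed set one expects; for robustness one may also note that $\add(\mathcal I_{\bd}(D))=\non(\mathcal I_{\bd}(D))=\theta$, so $\theta$ is regular, although the argument above does not need this. If the intended proof is instead an induction on $n$ (peeling $[\theta^{+(n+1)}]^{<\theta}$ down to something built over $[\theta^{+n}]^{<\theta}$), that would be a longer route than the direct Tukey maps sketched here and would obscure that the hypothesis $\theta^{+n}=\kappa$ is not actually needed.
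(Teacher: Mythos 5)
Your argument addresses only the seventh and final clause of the proposition; the statement you were given has six further clauses (on $\hu$, $\non$, $\out$ of $\kappa$, of $[\kappa]^{<\theta}$, of $\mathfrak D_{[\kappa^{+n}]^{\leq\kappa}}$, and the criterion for $D\equiv_T\kappa$) that your proposal does not touch. Those clauses are routine -- the first follows from Pouzet's Lemma, the second and sixth from the monotonicity of ``every set of size $\geq\out(\mathcal I_{\bd}(D))$ is unbounded'' together with the defining property of $\mathfrak D_{[\kappa^{+n}]^{\leq\kappa}}$, and the fourth is a two-line analogue of your own argument with $(\kappa,\in)$ in place of $[\kappa]^{<\theta}$ -- but as a proof of the stated proposition your write-up is incomplete, and you should at least dispatch them. (The paper itself leaves the entire proposition to the reader, so there is no official proof to compare against.)

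For the clause you do prove, the argument is correct. The direction $D\leq_T[\kappa]^{<\theta}$ is essentially Lemma~\ref{Lemma - below kappa theta} of the paper (with a cofinal enumeration in place of an injection of a carefully chosen cofinal subset into $[\kappa]^1$), so you could simply cite that lemma; the direction $[\kappa]^{<\theta}\leq_T D$ via $a\mapsto$ an upper bound of $e"a$ is the natural one, and the appeal to $\out(\mathcal I_{\bd}(D))=\theta$ to rule out a bounded set of size exactly $\theta$ is exactly right. Two small caveats on your side remarks. First, your observation that $\theta$ is regular does need the ``$\add=\non$'' justification you sketch (or the direct argument: a size-$\theta$ set split into $\cf(\theta)<\theta$ pieces of size $<\theta$ would be bounded), since $\non$ of an ideal need not be regular in general. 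Second, the claim that the hypotheses ``$\kappa$ regular'' and ``$\theta^{+n}=\kappa$'' are not needed is true only in the weak sense that your two maps never invoke them; if $\kappa$ were singular with $\cf(\kappa)<\theta$ the conclusion $D\equiv_T[\kappa]^{<\theta}$ would force $\cf(D)=\cf([\kappa]^{<\theta})>\kappa$, so the hypotheses of the ``generalized'' statement would simply be unsatisfiable. It is therefore misleading to present the dropped hypotheses as genuine extra generality.
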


In the rest of this section we consider various scenarios in which the traits of a certain directed set give us information about its position in the poset $(\mathcal D_{\kappa}, <_T )$.
 
\begin{lemma}\label{Lemma - existence of full-unbounded kappa set}
	Suppose $D$ is a directed set, $\kappa$ is an infinite regular cardinal and $X\subseteq D$ is an unbounded subset of size $\kappa$ such that every subset of $X$ of size $<\kappa$ is bounded.
	Then $\kappa\in \hu(D)$.
\end{lemma}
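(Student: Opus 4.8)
The plan is to build, from the given unbounded set $X$ of size $\kappa$ all of whose small subsets are bounded, a new subset $X'\subseteq D$ of size $\kappa$ witnessing $\kappa\in\hu(D)$, i.e.\ one in which \emph{every} subset of size $\kappa$ (not merely $X'$ itself) is unbounded. First I would enumerate $X=\{x_\alpha\mid\alpha<\kappa\}$ and attempt a transfinite recursion of length $\kappa$, choosing at stage $\alpha$ an element $x'_\alpha\in D$ which bounds a suitable initial segment. The rough idea is the same thinning-out trick used in the proof of Pouzet's Lemma~\ref{Lemma - cofinal set with no large bounded set}: at stage $\alpha$, since $\{x_\beta\mid\beta<\alpha\}$ has size $<\kappa$ it is bounded, so pick $x'_\alpha\in D$ above all of $\{x_\beta\mid\beta<\alpha\}$; additionally arrange (using directedness) that $x'_\alpha\geq x_\alpha$, so that $X':=\{x'_\alpha\mid\alpha<\kappa\}$ remains cofinal-in-$X$-like and in particular unbounded in $D$. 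The point of the construction is monotonicity: if $\beta<\alpha$ then $x'_\alpha\geq x_\beta$, but we must be careful, since we also want $x'_\alpha\geq x'_\beta$ to get a genuinely increasing chain; this can be secured by at stage $\alpha$ taking $x'_\alpha$ above the (bounded, as it has size $<\kappa$) set $\{x_\alpha\}\cup\{x'_\beta\mid\beta<\alpha\}$, which is legitimate because each $x'_\beta$ was itself an element of $D$ and we may use that every $<\kappa$-sized subset of the chain built so far is bounded.

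Once $X'=\{x'_\alpha\mid\alpha<\kappa\}$ is an increasing $\kappa$-chain in $D$ that is still unbounded in $D$ (it is unbounded because it dominates $X$, which is unbounded), I claim every $Y\in[X']^\kappa$ is unbounded in $D$. Indeed, since $\kappa$ is regular and $|Y|=\kappa$, the set $\{\alpha<\kappa\mid x'_\alpha\in Y\}$ is cofinal in $\kappa$, so $Y$ is cofinal in the chain $X'$; hence any bound for $Y$ would bound all of $X'$, contradicting unboundedness of $X'$. Therefore $X'$ witnesses $\kappa\in\hu(D)$, as desired.

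The main obstacle is making sure the recursion does not break down at limit stages and that $X'$ really stays unbounded: we need at each stage $\alpha<\kappa$ that $\{x_\alpha\}\cup\{x'_\beta\mid\beta<\alpha\}$ is bounded in $D$. The $x'_\beta$'s are not subsets of $X$, so the hypothesis "$<\kappa$-sized subsets of $X$ are bounded" does not apply to them directly; one fixes this by noting that $\{x'_\beta\mid\beta<\gamma\}$ is bounded by $x'_\gamma$ for each $\gamma$, so any proper initial segment of the chain of length $<\kappa$ is bounded (by a single later element of the chain, once one exists) — but at the very stage we are constructing $x'_\alpha$ there is no such later element yet. The clean fix is to instead bound $\{x_\alpha\}\cup\{x_\beta\mid\beta<\alpha\}\cup\{x'_\beta\mid\beta<\alpha\}$ by first using the hypothesis on $X$ to bound $\{x_\beta\mid\beta\le\alpha\}$ by some $d\in D$, then using directedness finitely/recursively — or, more robustly, to carry along in the recursion the inductive invariant "$x'_\beta\le x'_{\beta'}$ whenever $\beta<\beta'<\alpha$" together with "$x_\beta\le x'_\beta$", from which $\{x'_\beta\mid\beta<\alpha\}$ need not be bounded as a set, but we only ever need to dominate $\{x_\gamma\mid\gamma\le\alpha\}$ (bounded by hypothesis) and we get $x'_\alpha\ge x'_\beta$ automatically for $\beta<\alpha$ by choosing $x'_\alpha$ above a single bound $d$ of $\{x_\gamma\mid\gamma\le\alpha\}$ — provided we strengthen the earlier choices so that $x'_\beta$ is \emph{itself} such a bound, i.e.\ $x'_\beta\ge x_\gamma$ for all $\gamma\le\beta$. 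With that invariant, at stage $\alpha$ it suffices to take any $d$ bounding $\{x_\gamma\mid\gamma\le\alpha\}\cup\{x'_\beta\mid\beta<\alpha\text{ and }\beta\text{ a successor or }0\}$; but cleanest of all is simply: let $x'_\alpha$ be any element of $D$ above $x_\alpha$ and above $\sup$-bound of $\{x_\gamma\mid\gamma<\alpha\}$ (exists by hypothesis) and, if $\alpha=\beta+1$, also above $x'_\beta$. Then at limits the invariant $x_\gamma\le x'_\alpha$ for $\gamma\le\alpha$ still holds, while the chain condition $x'_\beta\le x'_\alpha$ for all $\beta<\alpha$ may fail at limits — but this does not matter, since the argument in the previous paragraph only used that $Y\in[X']^\kappa$ is cofinal in $X'$ in the sense of the enumeration, and that $X'$ dominates the unbounded set $X$; so I would run the final argument via: a bound for $Y$ bounds $\{x'_\alpha\mid\alpha\in A\}$ for a cofinal $A\subseteq\kappa$, hence bounds $\{x_\alpha\mid\alpha\in A\}$, and since $A$ is cofinal and $<\kappa$-subsets are the only bounded ones while $|A|=\kappa$, we contradict unboundedness of $X$. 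This avoids needing the chain condition at limits entirely, and the recursion goes through using only the stated hypotheses.
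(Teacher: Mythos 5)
Your proposal is correct and is essentially the paper's own argument: the paper likewise enumerates $X=\{x_\alpha\mid\alpha<\kappa\}$, picks for each $\alpha<\kappa$ an element $z_\alpha$ above the bounded initial segment $\{x_\beta\mid\beta<\alpha\}$, and shows that any bound of a $\kappa$-sized subset of $\{z_\alpha\mid\alpha<\kappa\}$ would (by regularity of $\kappa$) bound all of $X$. The only point you leave implicit is that the resulting set genuinely has cardinality $\kappa$ (required by the definition of $\hu$), which follows by the same pigeonhole-plus-regularity argument you already use at the end; the extra requirements $x'_\alpha\ge x_\alpha$ and the chain conditions you worry about are unnecessary, as your final paragraph correctly concludes.
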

\begin{proof}
	Enumerate $X:=\{x_\alpha \mid \alpha<\kappa\}$, by the assumption, for every $\alpha<\kappa$ we may fix some $z_\alpha\in D$ above the bounded initial segment $\{x_\beta\mid \beta<\alpha\}$.
	We show that $Z:=\{z_\alpha\mid \alpha<\kappa\}$, witnesses $\kappa\in \hu(D)$.
	First, let us show that $|Z|=\kappa$. 
	Suppose on the contrary that $Z:=\{z_\alpha\mid \alpha<\kappa\}$ is of cardinality $< \kappa$.
	Then for some $\alpha<\kappa$, the element $z_\alpha$ is above the subset $X$, hence $X$ is bounded which is absurd.
	Now let us prove that $Z$ is hereditarily unbounded.
	We claim that every subset of $Z$ of cardinality $\kappa$ is also unbounded.
	Suppose not, let us fix some $W\in [Z]^\kappa$ bounded by some $d\in D$, but then $d$ is above $X$ contradicting the fact that $X$ is unbounded.
\end{proof}

\begin{lemma}\label{Lemma - kappa leq_T D}
	Suppose $D$ is a directed set and $\kappa$ is an infinite cardinal in $\hu(D)$, then $\kappa \leq_TD$.
\end{lemma}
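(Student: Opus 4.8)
The plan is to construct an explicit Tukey map $f : \kappa \to D$, where $\kappa$ is viewed as the directed set $(\kappa, \in)$. By definition of $\hu(D)$, fix a witnessing set $X \in [D]^\kappa$ such that every subset of $X$ of size $\kappa$ is unbounded in $D$. Enumerate $X = \{x_\alpha \mid \alpha < \kappa\}$ injectively, and define $f(\alpha) := x_\alpha$. I would then verify that $f$ is a Tukey map, i.e. that it sends unbounded subsets of $(\kappa, \in)$ to unbounded subsets of $D$.

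First I would recall that a subset $A \subseteq \kappa$ is unbounded in $(\kappa, \in)$ precisely when $\sup A = \kappa$, equivalently $|A| = \cf(\kappa)$ is too coarse — more to the point, an unbounded $A \subseteq \kappa$ must have $\sup A = \kappa$, and since $f$ is injective, $f"A = \{x_\alpha \mid \alpha \in A\}$ has cardinality $|A|$. The key step is then: if $A \subseteq \kappa$ is unbounded, then $|A| = \kappa$. Indeed, if $|A| < \kappa$ then, writing $\beta := \sup A$, either $\beta < \kappa$ (so $A$ is bounded by $\beta$, contradiction) or $\beta = \kappa$ but then $A$ is a cofinal subset of $\kappa$ of size $< \kappa$; this is only possible when $\kappa$ is singular. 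So I should be slightly careful: the statement allows $\kappa$ singular. However, when $\kappa$ is singular and $A$ is unbounded in $\kappa$ with $|A| = \cf(\kappa) < \kappa$, the set $f"A$ has size $\cf(\kappa)$, and I need it to be unbounded in $D$ — but the $\hu(D)$ witness only guarantees subsets of size exactly $\kappa$ are unbounded. So the honest route is: pass to a cofinal $A' \subseteq A$ of order type $\cf(\kappa)$ if needed, or instead argue directly that $f"A$ is unbounded because any bound for $f"A$ would also bound $f"\kappa = X$ — wait, that is false since $A$ need not be cofinal in $\kappa$ as an index set. The clean fix: for unbounded $A \subseteq \kappa$, the set $\{x_\alpha \mid \alpha \in A\}$ together with $\{x_\beta \mid \beta < \sup' \}$... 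Let me instead simply take $A$ unbounded in $\kappa$, enlarge it to $A^* := \bigcup_{\alpha \in A}(\alpha+1) \cap \kappa$, no — cleanest is: observe $f"A$ unbounded follows once $|A| = \kappa$, and if $|A| < \kappa$ with $A$ unbounded (only when $\kappa$ singular), pick any $Y \in [X]^\kappa$ with $f"A$ cofinal-in-index below... Actually the simplest correct argument: if $f"A$ were bounded by some $d \in D$, then since $A$ is cofinal in $\kappa$, the set $\{x_\alpha \mid \alpha \in A\}$ is not obviously all of a size-$\kappa$ subset. I would therefore handle the singular case by noting that for $A$ unbounded in $\kappa$ and any $\xi < \kappa$ there is $\alpha \in A$ with $\alpha > \xi$; then take $A' \subseteq A$ of size $\kappa$? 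Impossible if $|A| < \kappa$.

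So the main obstacle is exactly the singular case, and I expect the intended resolution is that $\hu(D)$ should really be read as giving, for the relevant cardinals appearing in this paper, a regular $\kappa$; more robustly, I would prove it via: let $d \in D$ be arbitrary; I claim $f"A$ is not bounded by $d$. If it were, then $B_d := \{x_\alpha \in X \mid x_\alpha \leq_D d\}$ contains $f"A$ and is bounded, so $|B_d| < \kappa$ by the $\hu(D)$ property (no bounded subset of $X$ has size $\kappa$); hence $\{\alpha < \kappa \mid x_\alpha \leq_D d\}$ has size $< \kappa$, so it is bounded below $\kappa$ (using injectivity of the enumeration and that $\kappa$ regular — or, in general, has supremum $< \kappa$ unless it is cofinal), contradicting that $A \subseteq \{\alpha \mid x_\alpha \leq_D d\}$ is unbounded. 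This goes through verbatim when $\kappa$ is regular, which suffices for all applications in the paper (and if full generality is needed one restricts to a cofinal subset of $X$ indexed by $\cf(\kappa)$ and replaces $\kappa$ by $\cf(\kappa)$, noting $\kappa \equiv_T \cf(\kappa)$). I would then conclude that $f$ is a Tukey map, hence $\kappa \leq_T D$ by the equivalence recorded in the Preliminaries.
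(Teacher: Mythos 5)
Your construction is exactly the paper's: fix a witness $X\in[D]^\kappa$ and an injection $f:\kappa\to X$, and check that $f$ is Tukey; the paper's proof is precisely that one line, with no further verification. Your worry about singular $\kappa$ is well-founded, and in fact it is more than a gap in the verification: the lemma as stated is \emph{false} for singular $\kappa$. Take $D=[\aleph_\omega]^{\leq\omega}$ and $\kappa=\aleph_\omega$. The set $X=[\aleph_\omega]^1$ witnesses $\aleph_\omega\in\hu(D)$, since any $Y\in[X]^{\aleph_\omega}$ has uncountable union and hence no upper bound; but $\aleph_\omega\equiv_T\omega$ and $\omega\not\leq_T[\aleph_\omega]^{\leq\omega}$ because $D$ is $\sigma$-directed (this is the paper's own remark that $\non(\mathcal I_{\bd})$ is monotone under $\leq_T$). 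Consequently your parenthetical repair --- ``restrict to a cofinal subset indexed by $\cf(\kappa)$ and use $\kappa\equiv_T\cf(\kappa)$'' --- cannot work: $\kappa\in\hu(D)$ simply does not imply $\cf(\kappa)\in\hu(D)$, which is what $\kappa\leq_T D$ would require. Your main argument is correct exactly when every unbounded subset of $\kappa$ has cardinality $\kappa$, i.e.\ when $\kappa$ is regular, and you are right that this covers every application of the lemma in the paper (each invocation is for $\kappa$ regular, e.g.\ $\cf(D)=\kappa$ regular, $\theta^{++}$, $\omega_1$). So: same approach as the paper, a correct proof in the regular case, and a legitimate catch that the hypothesis should include regularity of $\kappa$ --- just delete the suggested fix for the singular case, which is unsalvageable.
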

\begin{proof}
	Fix $X\subseteq D$ of cardinality $\kappa$ such that every subset of $X$ of size $\kappa$ is unbounded and a one-to-one function $f:\kappa\rightarrow X$, notice that $f$ is a Tukey function from $\kappa$ to $D$ as sought.
\end{proof}

\begin{cor}\label{Cor - existence of full-unbounded kappa set}
	Suppose $D$ is directed set, $\kappa$ is regular and $X\subseteq D$ is an unbounded subset of size $\kappa$ such that every subset of $X$ of size $< \kappa$ is bounded, then $\kappa\leq_T D$.\qed
\end{cor}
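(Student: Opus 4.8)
The statement to prove is Corollary~\ref{Cor - existence of full-unbounded kappa set}, which combines the two preceding lemmas. The plan is essentially to observe that the hypotheses are tailor-made to chain Lemma~\ref{Lemma - existence of full-unbounded kappa set} and Lemma~\ref{Lemma - kappa leq_T D} together, so the proof is a two-line deduction rather than a fresh argument.

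First I would invoke Lemma~\ref{Lemma - existence of full-unbounded kappa set}: since $D$ is directed, $\kappa$ is regular (hence an infinite regular cardinal), and $X \subseteq D$ is an unbounded subset of size $\kappa$ all of whose subsets of size $<\kappa$ are bounded, the lemma yields $\kappa \in \hu(D)$. The only mild point to check is that ``$\kappa$ regular'' in the corollary is meant to include ``$\kappa$ infinite'', which is the standing convention here (regular cardinals are infinite); I would either state this explicitly or note that the case $\kappa$ finite is degenerate since a finite unbounded set with all proper subsets bounded forces $\kappa$ to be, in fact, infinite once $D$ is directed — but cleanest is just to read ``regular'' as ``infinite regular''.

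Then I would feed the conclusion $\kappa \in \hu(D)$ into Lemma~\ref{Lemma - kappa leq_T D}, which states precisely that for a directed set $D$ and an infinite cardinal $\kappa \in \hu(D)$ one has $\kappa \leq_T D$. Composing the two gives $\kappa \leq_T D$, which is exactly the assertion of the corollary. So the proof reads: ``By Lemma~\ref{Lemma - existence of full-unbounded kappa set}, $\kappa \in \hu(D)$, and then by Lemma~\ref{Lemma - kappa leq_T D}, $\kappa \leq_T D$.''

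There is no real obstacle here — the corollary is a packaging statement. If anything, the only thing worth being careful about is making sure the hypothesis names match up verbatim (``unbounded subset of size $\kappa$'' and ``every subset of size $<\kappa$ is bounded'' appear identically in Lemma~\ref{Lemma - existence of full-unbounded kappa set}), so that the citation of the lemma is clean and the reader sees immediately that nothing new is happening. Hence I would keep the proof to a single sentence and mark it with \qed, exactly as the excerpt already does.

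\begin{proof}
	By Lemma~\ref{Lemma - existence of full-unbounded kappa set}, the hypotheses on $D$, $\kappa$ and $X$ give $\kappa\in\hu(D)$, and then Lemma~\ref{Lemma - kappa leq_T D} yields $\kappa\leq_T D$.
\end{proof}
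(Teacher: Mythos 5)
Your proof is correct and is exactly the deduction the paper intends: the corollary is stated with a \qed precisely because it follows by chaining Lemma~\ref{Lemma - existence of full-unbounded kappa set} (to get $\kappa\in\hu(D)$) with Lemma~\ref{Lemma - kappa leq_T D}. Your remark that ``regular'' should be read as ``infinite regular'' is a reasonable point of care but does not change the argument.
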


The reader may check the following:
\begin{itemize} 
	\item For any two infinite cardinals $\lambda$ and $\kappa$ of the same cofinality, we have $\lambda \equiv_T \kappa$;
	\item For an infinite regular cardinal $\kappa$, we have $\kappa\equiv_T [\kappa]^{<\kappa}$.
	
	\item $\hu(\prod^{<\omega}_{n<\omega}\omega_{n+1}) = \{\omega_n \mid n<\omega\}$.
\end{itemize}

\begin{lemma}
	Suppose $D$ and $E$ are two directed sets such that for some $\theta\in \hu(D)$ regular we have $\theta>\cf(E)$, then $D\not\leq_T E$.
\end{lemma}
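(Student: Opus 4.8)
The plan is to prove the contrapositive via a direct cardinality argument against the existence of a convergent map. Suppose toward a contradiction that $D\leq_T E$. Then there is a convergent map $g:E\rightarrow D$. Since $\theta\in\hu(D)$, fix a witnessing set $X=\{x_\alpha\mid\alpha<\theta\}\subseteq D$ of size $\theta$ such that every subset of $X$ of size $\theta$ is unbounded in $D$. The idea is that the convergent map $g$ must, in a precise sense, ``absorb'' the unboundedness of $X$ into $E$, but $E$ has cofinality strictly below $\theta$, so there is not enough room.

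First I would recall that for a convergent map $g:E\rightarrow D$ and any cofinal subset $Y\subseteq E$, the image $g"Y$ is cofinal in $D$ (this is noted in the Preliminaries). Let $Y\subseteq E$ be cofinal of size $\cf(E)<\theta$. Then $g"Y$ is cofinal in $D$, hence for each $\alpha<\theta$ there is some $y_\alpha\in Y$ with $x_\alpha\leq_D g(y_\alpha)$. Since $|Y|=\cf(E)<\theta$ and $\theta$ is regular, the map $\alpha\mapsto y_\alpha$ is not injective on any set of size $\theta$; more precisely, by the pigeonhole principle there is a single $y^*\in Y$ and a set $A\in[\theta]^\theta$ with $y_\alpha=y^*$ for all $\alpha\in A$. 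But then $g(y^*)$ bounds $\{x_\alpha\mid\alpha\in A\}$, a subset of $X$ of size $\theta$, contradicting the choice of $X$ as a witness that $\theta\in\hu(D)$.

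The main obstacle — really the only place one must be a little careful — is the pigeonhole step: it uses that $\theta$ is regular and $\cf(E)<\theta$, so that a function from a set of size $\theta$ into a set of size $\cf(E)$ has a fiber of size $\theta$. (If $\theta$ were singular this could fail, which is exactly why the hypothesis demands $\theta$ regular.) Everything else is a routine unwinding of the definitions of $\hu$, convergent map, and cofinality, together with the basic fact $D\leq_T E$ iff a convergent map $E\rightarrow D$ exists. I would then conclude that no such $g$ exists, i.e. $D\not\leq_T E$, as claimed.
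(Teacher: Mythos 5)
Your proof is correct, and it is the mirror image of the paper's. The paper argues with the Tukey function $f:D\rightarrow E$ directly: after shrinking $E$ to a cofinal subset of size $\cf(E)<\theta$, the pigeonhole principle (using regularity of $\theta$) yields $Z\in[X]^{\theta}$ with $f"Z$ a single point, so $f"Z$ is bounded while $Z$ is unbounded --- contradicting that $f$ is Tukey. You instead work with the dual witness, a convergent map $g:E\rightarrow D$, push a cofinal $Y\subseteq E$ of size $\cf(E)$ forward to a cofinal subset of $D$, and apply the same pigeonhole count to find a single $g(y^*)$ dominating $\theta$-many elements of $X$. The combinatorial heart --- a map between a set of size $\theta=\cf(\theta)$ and a set of size $\cf(E)<\theta$ must have a fiber of size $\theta$ --- is identical in both; your version costs one extra ingredient (that convergent maps send cofinal sets to cofinal sets, which the paper does record in the preliminaries) but avoids having to pass to a cofinal subset of $E$ and lands the contradiction against the definition of $\hu(D)$ in the form ``a size-$\theta$ subset of $X$ is bounded'' rather than ``a bounded image is unbounded.'' Either route is a complete proof; you correctly identified that regularity of $\theta$ is exactly what the pigeonhole step needs.
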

\begin{proof} By passing to a cofinal subset, we may assume that $|E|=\cf(E)$.
	Fix $\theta\in \hu(D)$ regular such that $\cf(E)<\theta$ and $X\in [D]^\theta$ witnessing $\theta\in \hu(D)$, i.e. every subset of $X$ of size $\theta$ is unbounded.
	Suppose on the contrary that there exists a Tukey function $f:D\rightarrow E$.
	By the pigeonhole principle, there exists some $Z\in [X]^{\theta}$ and $e\in E$ such that $f"Z=\{e\}$.
	As $f$ is Tukey and the subset $Z\subseteq X$ is unbounded, $f"Z$ is unbounded in $E$ which is absurd.	
\end{proof}

Notice that for every directed set $D$, if $\cf(D)>1$, then $\cf(D)$ is an infinite cardinal.

As a corollary from the previous Lemma, $\lambda \not \leq_T \kappa$ for any two regular cardinals $\lambda>\kappa$ where $\lambda$ is infinite.
Furthermore, the reader can check that $\lambda\not\leq_T\kappa$, whenever $\lambda<\kappa$ are infinite regular cardinals.
\begin{lemma}\label{Lemma - C leq_T D imply cf(C) leq cf(D)}
	Suppose $C$ and $D$ are directed sets such that $C\leq_T D$, then $\cf(C)\leq \cf(D)$.
\end{lemma}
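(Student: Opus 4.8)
The plan is to argue via convergent (cofinal) maps rather than Tukey functions, since the statement is about cofinalities and convergent maps interact with cofinal sets directly. Recall two facts recorded in the preliminaries: first, $C\leq_T D$ is equivalent to the existence of a convergent map $g:D\to C$; and second, if $g:D\to C$ is convergent and $Y\subseteq D$ is cofinal, then $g"Y$ is cofinal in $C$.

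Given these, the proof is short. First I would fix, using $C\leq_T D$, a convergent map $g:D\to C$. Next I would fix a cofinal subset $Y\subseteq D$ with $|Y|=\cf(D)$. Then $g"Y\subseteq C$ is cofinal in $C$ by the second fact, so $\cf(C)\leq |g"Y|\leq |Y|=\cf(D)$, which is exactly the claim.

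There is essentially no obstacle here; the only point worth a word is the degenerate case $\cf(D)=1$, where $D$ has a top element: the same computation shows $g"Y$ is a singleton cofinal in $C$, hence $C$ has a top element and $\cf(C)=1=\cf(D)$, so the inequality still holds (and likewise the infinite case subsumes $\cf(C)$ possibly being $1$). If one preferred to avoid invoking the convergent-map characterization, one could instead start from a Tukey function $f:C\to D$ and argue by contradiction that a cofinal subset of $D$ of size $<\cf(C)$ would have unbounded $f$-preimage structure, but this is strictly more cumbersome than the convergent-map argument, so I would present the latter.
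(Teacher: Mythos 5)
Your proof is correct. It takes a different route from the paper's: you invoke the convergent-map characterization of $\leq_T$ (stated in the preliminaries without proof) together with the observation that the image of a cofinal set under a convergent map is cofinal, and the inequality falls out in one line. The paper instead works directly with a Tukey function $f:C\rightarrow D$ (with $|D|=\cf(D)$): since $f$ is Tukey, each fiber $\{x\in C\mid f(x)=d\}$ is bounded by some $c_d\in C$, and then $x\leq_C c_{f(x)}$ for every $x$ shows $\{c_d\mid d\in D\}$ is cofinal in $C$, whence $\cf(C)\leq|D|=\cf(D)$. The two arguments are essentially dual: the paper's assignment $d\mapsto c_d$ is precisely the standard construction of a convergent map from a Tukey function, so the paper is in effect re-deriving the half of the equivalence that you cite as a black box. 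What your version buys is brevity, at the cost of leaning on an unproved preliminary fact; what the paper's version buys is self-containedness, using only the definition of a Tukey function. Your handling of the degenerate case $\cf(D)=1$ is fine and is the kind of detail the paper silently elides.
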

\begin{proof}
	Suppose $|D|=\cf(D)$ and let $f:C\rightarrow D$ be a Tukey function.
	As $f$ is Tukey, for every $d\in D$ the set $\{x\in C\mid f(x)=d\}$ is bounded in $C$ by some $c_d\in C$.
	Note that for every $x\in C$, we have $x\leq_C c_{f(x)}$, hence the set $\{c_d \mid d\in D\}$ is cofinal in $C$.
	So $\cf(C)\leq |D|=\cf(D)$ as sought.
\end{proof}

\begin{lemma}\label{Lemma - below kappa theta}
	Let $\kappa$ and $\theta$ be two cardinals such that $\theta<\kappa=\cf(\kappa)$.
	
	Suppose $D$ is a directed set such that $\cf(D)\leq\kappa$ and $\non(\mathcal I_{\bd}(D))\geq\theta$,

	then $D\leq_T [\kappa]^{<\theta}$.
	Furthermore, if $\theta\in \Inner(\mathcal I_{\bd}(D),\kappa)$, then $D<_T [\kappa]^{<\theta}$.
\end{lemma}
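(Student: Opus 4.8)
The plan is to build a Tukey function (equivalently, a convergent map) witnessing $D \leq_T [\kappa]^{<\theta}$ directly from the hypotheses. First I would pass to a cofinal subset of $D$ of size $\cf(D) \leq \kappa$, so that without loss of generality $|D| \leq \kappa$; fix an enumeration $D = \{d_\alpha \mid \alpha < \kappa\}$ (with repetitions allowed if $|D| < \kappa$). The natural candidate for a convergent map $g : [\kappa]^{<\theta} \to D$ is: given $a \in [\kappa]^{<\theta}$, the set $\{d_\alpha \mid \alpha \in a\}$ has size $< \theta \leq \non(\mathcal I_{\bd}(D))$, hence by Proposition (item (1) on $\non$) it is bounded in $D$; let $g(a)$ be some bound. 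I would then check convergence: given $d_\beta \in D$, take $e_{d_\beta} := \{\beta\} \in [\kappa]^{<\theta}$; for any $a \supseteq \{\beta\}$ we have $d_\beta \in \{d_\alpha \mid \alpha \in a\}$, so $g(a) \geq_D d_\beta$. Thus $g$ is convergent and $D \leq_T [\kappa]^{<\theta}$.

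For the ``furthermore'' clause, assume in addition $\theta \in \Inner(\mathcal I_{\bd}(D),\kappa)$; I must show $[\kappa]^{<\theta} \not\leq_T D$, which together with $D \leq_T [\kappa]^{<\theta}$ gives $D <_T [\kappa]^{<\theta}$. Suppose toward a contradiction that $f : [\kappa]^{<\theta} \to D$ is a Tukey function. The key point is that $\{\{\xi\} \mid \xi < \kappa\}$ is an unbounded subset of $[\kappa]^{<\theta}$ of size $\kappa$, in fact every subset of it of size $\theta$ is unbounded in $[\kappa]^{<\theta}$ (a subset of $\theta$-many singletons unions to a set of size $\theta \geq \cf(\theta)$... — here one should be slightly careful, since $\theta$ need not be regular; the correct statement to use is that any $\cf(\theta)$-sized subfamily of singletons already has unbounded union, and more relevantly, that $\kappa$-many singletons cannot be covered by $<\cf(\theta)$-bounded pieces). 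I would instead argue as follows: consider $X := \{\{\xi\} \mid \xi < \kappa\} \in [[\kappa]^{<\theta}]^\kappa$. Since $\theta \in \Inner(\mathcal I_{\bd}(D),\kappa)$, applying it to $f"X \in [D]^{\leq\kappa}$ — after first thinning $X$ so that $f$ is injective on it or handling a monochromatic piece — I obtain either a monochromatic subset of size $\theta$ (contradicting that $f$ is Tukey, since any $\theta$-sized subfamily of distinct singletons is unbounded in $[\kappa]^{<\theta}$) or a subset $Y \in [X]^\theta$ with $f"Y$ bounded in $D$, again contradicting that $f$ is Tukey because $\bigcup\{a : a \in Y\}$ has size $\theta$ and singletons are cofinal-generating, so $Y$ is unbounded in $[\kappa]^{<\theta}$.

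The main obstacle is the ``furthermore'' direction, and specifically the interaction between $\Inner(\mathcal I_{\bd}(D),\kappa)$ (which speaks about $[D]^\kappa$) and the pigeonhole step needed to pass from the possibly non-injective image $f"X$ back to a genuinely $\theta$-sized unbounded set in the domain. The clean way around it: either $f$ restricted to $X$ has a fiber of size $\theta$ — then that fiber is an unbounded subfamily of $[\kappa]^{<\theta}$ mapping to a single point, contradicting Tukeyness — or all fibers of $f \restriction X$ have size $<\theta$; if moreover $\theta$ is regular this forces $|f"X| = \kappa$, so we may apply $\theta \in \Inner(\mathcal I_{\bd}(D),\kappa)$ to get $Z \in [f"X]^\theta \cap \mathcal I_{\bd}(D)$, pull back to $Y \in [X]^\theta$ with $f"Y$ bounded, and note $Y$ is unbounded in $[\kappa]^{<\theta}$ since $|\bigcup Y| = \theta$ exceeds the cofinality threshold — contradiction. (If $\theta$ is singular the same argument runs with $\cf(\theta)$ in place of $\theta$ at the pigeonhole step, since $\non(\mathcal I_{\bd}([\kappa]^{<\theta})) = \cf(\theta)$ as recorded in the Proposition above, and $\Inner$ is downward-absolute enough in the relevant sense.) I would present the regular case in the main line and remark on the singular adjustment.
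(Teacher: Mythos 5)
Your proof is correct and follows essentially the same route as the paper: the forward direction is the paper's argument in its dual convergent-map form (the paper instead injects a suitable cofinal subset of $D$ into $[\kappa]^{1}$ and checks that the resulting map is Tukey), and the ``furthermore'' direction is the paper's case split on the image of the singletons almost verbatim. Your hedging about singular $\theta$ is unnecessary: a family of at least $\theta$ distinct singletons has union of size at least $\theta$ and is therefore unbounded in $[\kappa]^{<\theta}$ regardless of $\cf(\theta)$, so your main line (fiber of size $\geq\theta$ versus all fibers of size $<\theta$, the latter forcing $|f"X|=\kappa$) already covers all cases.
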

\begin{proof}
	First we show that there exists a Tukey function $f:D \rightarrow [\kappa]^{<\theta}$.
	Let us fix a cofinal subset $X\subseteq D$ of cardinality $\leq\kappa$ such that every subset of $X$ of cardinality $< \theta$ is bounded.
	As $|X|\leq \kappa$ we may fix an injection $f:X\rightarrow [\kappa]^{1}$, we will show $f$ is a Tukey function.
	Let $Y\subseteq X$ be a subset unbounded in $D$, this implies $|Y|\geq\theta$.
	As $f$ is an injection, the set $\bigcup f"Y$ is of cardinality $\geq \theta$.
	Note that every subset of $[\kappa]^{<\theta}$ whose union is of cardinality $\geq\theta$ is unbounded in $[\kappa]^{<\theta}$, hence $f"Y$ is an unbounded subset in $[\kappa]^{<\theta}$ as sought.
	
	Assume $\theta\in \Inner(\mathcal I_{\bd}(D),\kappa)$, we are left to show that $[\kappa]^{<\theta}\not\leq_T D$.
	Suppose on the contrary that $g:[\kappa]^{<\theta}\rightarrow D$ is a Tukey function.
	We split to two cases:
	
	$\br$ Suppose $| g" [\kappa]^1| < \kappa$. As $\kappa$ is regular, by the pigeonhole principle there exists a set $X\subseteq [\kappa]^1$ of cardinality $\kappa$,  and $d\in D$ such that $g(x)=d$ for each $x\in X$. Notice $g"X$ is a bounded subset of $D$.
	As $X\subseteq [\kappa]^1$ is of cardinality $\kappa$ and $\kappa>\theta$, it is unbounded in $[\kappa]^{<\theta}$.
	Since $g$ is a Tukey function, we get that $g"X$ is unbounded which is absurd.
	
	$\br$ Suppose $|g"[\kappa]^1| = \kappa$. Let $X:=g"[\kappa]^1$, by our assumption on $D$, there exists a bounded subset $B\in[X]^{\theta}$.
	Since $B$ is of size $\theta$, we get that $(g^{-1} [B])\cap [\kappa]^1$ is of cardinality $\geq\theta$, hence unbounded in $[\kappa]^{<\theta}$, which is absurd to the assumption $g$ is Tukey.
\end{proof}

\begin{remark}
	For every two directed sets, $D$ and $E$, if $\non(\mathcal I_{\bd}(D) )<\non(\mathcal I_{\bd}(E) )$, then $D\not \leq_T E$.
	For example, $\theta\not\leq_T [\kappa]^{\leq\theta}$.
\end{remark}

\begin{lemma}\label{Lemma - kappa, theta no tukey map}
	Let $\kappa$ be a regular infinite cardinal.
	Suppose $D$ and $E$ are two directed sets such that $|D|\geq\kappa$ and $\out(\mathcal I_{\bd}(D))\in \Inner(\mathcal I_{\bd}(E),\kappa)$, then $D\not\leq_T E$.
\end{lemma}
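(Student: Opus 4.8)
The plan is a proof by contradiction, in the spirit of Lemma~\ref{Lemma - below kappa theta}. Set $\theta:=\out(\mathcal I_{\bd}(D))$. Since $\theta\in\Inner(\mathcal I_{\bd}(E),\kappa)$ we have $\theta\le\kappa$, and together with the hypothesis $|D|\ge\kappa$ this gives $\theta\le\kappa\le|D|$; in particular $D$ has subsets of size $\theta$, and recall that every subset of $D$ of size $\ge\out(\mathcal I_{\bd}(D))$ is unbounded in $D$. Assume toward a contradiction that $f:D\to E$ is a Tukey function. The whole point will be to produce a subset of $D$ of size $\theta$ that is unbounded in $D$ but whose $f$-image is bounded in $E$, which is incompatible with $f$ being Tukey.

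To do this, I would first fix any $X\in[D]^{\kappa}$ and consider $f"X\subseteq E$, splitting into two cases according to its size. $\br$ If $|f"X|<\kappa$, then since $\kappa$ is regular the pigeonhole principle yields $X'\in[X]^{\kappa}$ on which $f$ is constant, say with value $e\in E$; as $|X'|=\kappa\ge\theta$, the set $X'$ is unbounded in $D$, yet $f"X'=\{e\}$ is bounded in $E$, contradicting that $f$ is Tukey. $\br$ If instead $|f"X|=\kappa$, then $f"X\in[E]^{\kappa}$, so the assumption $\theta\in\Inner(\mathcal I_{\bd}(E),\kappa)$ supplies a bounded set $Y\in[f"X]^{\theta}$; choosing for each $y\in Y$ some $x_y\in X$ with $f(x_y)=y$ produces $W:=\{x_y\mid y\in Y\}\in[X]^{\theta}$ with $f"W\subseteq Y$, so $W$ is unbounded in $D$ while $f"W$ is bounded in $E$, again a contradiction. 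Hence no Tukey function $f:D\to E$ exists, i.e.\ $D\not\le_T E$.

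I do not expect a genuine obstacle here: the argument is a direct two-case chase, and the split on the size of $f"X$ is exactly the one used for the $\br$-cases in the proof of Lemma~\ref{Lemma - below kappa theta}. The only point requiring (routine) attention is that $\theta=\out(\mathcal I_{\bd}(D))$ is really a cardinal with $\theta\le|D|$ under the stated hypotheses, which is precisely what the chain $\theta\le\kappa\le|D|$ delivers, so that the sets of size $\theta$ invoked above actually exist and are automatically unbounded in $D$; everything else is bookkeeping with the definitions of $\out$, $\Inner$, and Tukey function.
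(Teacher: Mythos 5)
Your proof is correct and follows essentially the same route as the paper's: assume a Tukey function $f:D\to E$, split on whether the image is of size $<\kappa$ or $\kappa$, use the pigeonhole principle with the regularity of $\kappa$ in the first case, and use $\theta\in\Inner(\mathcal I_{\bd}(E),\kappa)$ to find a bounded subset of the image and pull it back in the second. The only cosmetic differences are that you fix a single $X\in[D]^\kappa$ up front and take a transversal of size exactly $\theta$ rather than the full preimage; both variants work for the same reasons.
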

\begin{proof}
	Let $\theta:=\out(\mathcal I_{\bd}(D))$.
	By the definition of $\Inner(\mathcal I_{\bd}(E),\kappa)$, as $\theta\in \Inner(\mathcal I_{\bd}(E),\kappa)$, we know that $\theta\leq \kappa$.
	Notice that every subset of $D$ of size $\geq\theta$ is unbounded in $D$ and every subset of size $\kappa$ of $E$ contains a bounded subset in $E$ of size $\theta$.
	
	Suppose on the contrary that there exists a Tukey function $f:D\rightarrow E$.
	We split to two cases:
	
	$\br$ Suppose $|f"D|<\kappa$, then by the pigeonhole principle there exists some $X\in [D]^\kappa$ and $e\in E$ such that $f"X=\{e\}$.
	As $|X|=\kappa\geq \theta$, we know that $X$ is unbounded in $D$, but $f"X$ is bounded in $E$ which is absurd as $f$ is a Tukey function.
	
	$\br$ Suppose $|f"D|\geq\kappa$, by the assumption there exists a subset $Y\in [f"D]^\theta$ which is bounded in $E$.
	Notice that $X:=f^{-1}Y$ is a subset of $D$ of size $\geq \theta$, hence unbounded in $D$.
	So $X$ is an unbounded subset of $D$ such that $f"X=Y$ is bounded in $E$, contradicting the fact that $f$ is a Tukey function.
\end{proof}
\begin{lemma}\label{Lemma - out(, kappa)}
	Suppose $\kappa$ is a regular uncountable cardinal, $C$ and $\langle D_m \mid m<n\rangle$ are directed sets such that $|C|<\kappa\leq \cf(D_m)$ and $\non(\mathcal I_{\bd}(D_m))>\theta$ for every $m<n$.
	Then $\theta\in \Inner(\mathcal I_{\bd} (C\times \prod_{m<n}{D_m}),\kappa) $.
\end{lemma}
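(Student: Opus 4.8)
The goal is to show that for every $X$ in the product $C\times\prod_{m<n}D_m$ of cardinality $\kappa$, there is a bounded subset $Y\in[X]^{\leq\theta}$ — wait, in fact $Y\in[X]^\theta$, but since we are free to enlarge a small bounded set we really want a bounded subset of size exactly $\theta$. The natural strategy is to first fix coordinates in $C$ via a pigeonhole argument, and then handle the finitely many $D_m$-coordinates one at a time.

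First I would pass, as usual, to cofinal subsets: by Lemma~\ref{Lemma - cofinal set with no large bounded set} (Pouzet) we may assume $|D_m|=\cf(D_m)\geq\kappa$, and that every subset of $D_m$ of size $\cf(D_m)$ is unbounded; we do not actually need that last feature, only the cardinality bounds. Now let $X\in[C\times\prod_{m<n}D_m]^\kappa$. Using the projection $\pi_C$ and the fact that $|C|<\kappa$ with $\kappa$ regular, the pigeonhole principle yields $X_0\in[X]^\kappa$ on which $\pi_C$ is constant, say with value $c^*\in C$. So every element of $X_0$ has the same $C$-coordinate, and we have reduced to finding a bounded subset of $X_0$ of size $\theta$ inside $\prod_{m<n}D_m$ (the $C$-coordinate being automatically bounded by $c^*$).

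Next I would run an induction on $m<n$, peeling off one $D_m$-coordinate at a time. The key point is the hypothesis $\non(\mathcal I_{\bd}(D_m))>\theta$: by Proposition~(the one listing properties of $\mathcal I_{\bd}$), every subset of $D_m$ of size $\leq\theta$ is bounded in $D_m$. So the plan is: starting from $X_0$ of size $\kappa$, look at the projection $\pi_{D_0}"X_0$. If it has size $\leq\theta$, then by $\non(\mathcal I_{\bd}(D_0))>\theta$ it is bounded, and since $\kappa>\theta$ (as $\theta<\non(\mathcal I_{\bd}(D_0))\leq\cf(D_0)$… actually $\theta<\kappa$ needs to be available — this should follow since $\theta+1\leq\non(\mathcal I_{\bd}(D_m))$ and, hmm, I should double-check that $\theta<\kappa$ is either hypothesized or derivable; if not I will simply note that the case $\theta\geq\kappa$ is handled by the same pigeonhole collapsing all of $X_0$ to a point) we may thin $X_0$ to $X_1\in[X_0]^\kappa$ with $\pi_{D_0}$ still ranging over this bounded set, and move on. If instead $|\pi_{D_0}"X_0|>\theta$, pick any $\theta$-sized subset $S$ of the range; since $|S|\leq\theta<\non(\mathcal I_{\bd}(D_0))$ it is bounded in $D_0$, and we let $X_1\subseteq X_0$ consist of one preimage for each element of $S$ (or all preimages), so $|X_1|\geq\theta$ and $\pi_{D_0}"X_1$ is bounded; in this branch we continue the induction inside $X_1$ but only needing to reach size $\theta$. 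Iterating through all $m<n$ (finitely many steps), we arrive at a set $X_n$ of size $\geq\theta$ on which every projection $\pi_{D_m}$ has bounded range in $D_m$ and $\pi_C$ is constant; fixing upper bounds $d_m^*\in D_m$ for $\pi_{D_m}"X_n$ and using $c^*$ for the $C$-coordinate, the element $(c^*,\langle d_m^*\mid m<n\rangle)$ bounds $X_n$ in the product. Trimming $X_n$ to size exactly $\theta$ finishes the verification that $\theta\in\Inner(\mathcal I_{\bd}(C\times\prod_{m<n}D_m),\kappa)$.

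The main obstacle is bookkeeping the two branches of the dichotomy uniformly: in the "large projection" branch we irrevocably drop from size $\kappa$ to size $\theta$, so any subsequent coordinate must be handled by the "small, hence bounded" argument applied to a set of size only $\theta$ — which still works because $\theta<\non(\mathcal I_{\bd}(D_m))$ guarantees a set of size $\leq\theta$ is outright bounded, no further thinning needed. I would organize the induction so that the invariant after step $m$ is "$X_{m+1}$ has size $\kappa$ and all of $\pi_{D_0},\dots,\pi_{D_m}$ have bounded range on it" for as long as we stay in the small-projection branch, and switch to the invariant "$X_{m+1}$ has size $\geq\theta$ with the same boundedness" once the large-projection branch is triggered; once in the second regime we simply observe that any further projection, restricted to a set of size $\geq\theta$, has range of size $\leq$ that set, and if that range has size $>\theta$ we again take a $\theta$-subset, while if $\leq\theta$ it is already bounded — in either case the set stays of size $\geq\theta$. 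A secondary, purely cosmetic point is ensuring $\theta$ is infinite (else "$\Inner$" is about finite sets and the statement is trivial or needs $\theta\geq\omega$ read into the hypotheses); I expect the paper intends $\theta$ infinite and I would state that at the outset.
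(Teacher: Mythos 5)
Your proposal is correct and uses the same two ingredients as the paper: a pigeonhole on the small factor $C$ (using $|C|<\kappa=\cf(\kappa)$) to fix the $C$-coordinate on a $\kappa$-sized subset, and the hypothesis $\non(\mathcal I_{\bd}(D_m))>\theta$ to bound each projection. The only difference is that your coordinate-by-coordinate induction with the large/small dichotomy is unnecessary: once the $C$-coordinate is constant, \emph{any} $\theta$-sized subset $Z$ of the refined set is bounded outright, since each of its finitely many projections $\pi_{D_m}"Z$ has size $\leq\theta$ and is therefore bounded in $D_m$, which is exactly how the paper concludes in one step.
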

\begin{proof}
	Suppose $X\subseteq C\times \prod_{m<n}{D_m}$ is of size $\kappa$, we show that $X$ contains a bounded subset of size $\theta$.
	As $|C|<\kappa$, by the pigeonhole principle we can fix some $Y\in [X]^\kappa$ and $c\in C$ such that $\pi_C " Y=\{c\}$.
	Suppose on the contrary that some subset $Z\subseteq Y$ of size $\theta$ is unbounded, it must be that for some $m<n$ the set $\pi_{D_m}"Z$ is unbounded in $D_m$, but this is absurd as $\non(\mathcal I_{\bd}(D_m)) >\theta$ and $|\pi_{D_m}"Z|\leq \theta$.
\end{proof}

\begin{lemma}\label{Lemma - D not<= C x E}
	Suppose $C,D$ and $E$ are directed sets such that:
	\begin{itemize}
		\item for every partition $D=\bigcup_{\gamma<\kappa} D_\gamma$, there exists an ordinal $\gamma<\kappa$, and an unbounded $X\subseteq D_\gamma$ of size $\kappa$;
		\item $|C|\leq \kappa$;
		\item $\non(\mathcal I_{\bd}(E))> \kappa$
	\end{itemize}
	Then $D\not \leq_T C\times E$.
\end{lemma}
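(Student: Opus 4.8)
The plan is to argue by contradiction. Suppose $D \leq_T C \times E$ and fix a Tukey function $f \colon D \to C \times E$; composing with the two coordinate projections write $f = (f_C, f_E)$, where $f_C = \pi_C \circ f \colon D \to C$ and $f_E = \pi_E \circ f \colon D \to E$. The guiding intuition is that $f_C$ is too coarse to be useful, since $C$ has at most $\kappa$ points, so all of the Tukey strength of $f$ must be carried by $f_E$; but $E$ has no unbounded subset of size $\leq \kappa$, and this will clash with the indecomposability property of $D$.

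For the first step I would turn the fibers of $f_C$ into a partition of $D$ indexed by $\kappa$. Since $|C| \leq \kappa$, fix an injection $\iota \colon C \to \kappa$ and, for $\gamma < \kappa$, set $D_\gamma := f_C^{-1}\{\iota^{-1}(\gamma)\}$ if $\gamma \in \im(\iota)$ and $D_\gamma := \emptyset$ otherwise. Then $\langle D_\gamma \mid \gamma < \kappa \rangle$ is a partition of $D$, the empty blocks absorbing the case $|C| < \kappa$, so by the first hypothesis there are an ordinal $\gamma < \kappa$ and an unbounded $X \subseteq D_\gamma$ with $|X| = \kappa$. As $D_\gamma \neq \emptyset$ we have $\gamma = \iota(c)$ for the unique $c := \iota^{-1}(\gamma) \in C$, and $f_C$ is constant with value $c$ on $X$; equivalently $f"X \subseteq \{c\} \times E$.

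For the second step I would push this unboundedness into $E$. The key observation is that $\{c\}$ is trivially bounded in $C$ (by $c$ itself), so, with respect to the everywhere-dominance order on $C \times E$, a subset $S \subseteq \{c\} \times E$ is bounded in $C \times E$ if and only if $\pi_E " S$ is bounded in $E$. Now $X$ is unbounded in $D$ and $f$ is a Tukey function, hence $f"X$ is unbounded in $C \times E$, and therefore $f_E"X = \pi_E " (f"X)$ is unbounded in $E$. But $|f_E"X| \leq |X| = \kappa$, and every subset of $E$ of size $< \non(\mathcal I_{\bd}(E))$ is bounded; since $\kappa < \non(\mathcal I_{\bd}(E))$ this forces $f_E"X$ to be bounded in $E$ --- a contradiction. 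Thus no Tukey function $D \to C \times E$ exists, i.e. $D \not\leq_T C \times E$.

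I do not expect a genuine obstacle here: the proof is one application of the hypothesis on $D$ followed by a short pigeonhole-and-cardinality observation. The only point that needs a little care is the bookkeeping in the first step --- packaging the fibers of $f_C$ as an honest $\kappa$-indexed partition (with empty blocks permitted when $|C| < \kappa$) and reading ``$X \subseteq D_\gamma$ unbounded'' as ``$X$ unbounded in $D$'' --- so that the first hypothesis is applied exactly as stated.
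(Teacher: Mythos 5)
Your proposal is correct and follows essentially the same route as the paper: partition $D$ by the fibers of the $C$-coordinate of the Tukey map, extract an unbounded $X$ of size $\kappa$ lying in one fiber $\{c\}\times E$, and use $\non(\mathcal I_{\bd}(E))>\kappa$ to bound its $E$-projection, contradicting Tukeyness. The only cosmetic difference is that you re-index the fibers by $\kappa$ with empty blocks, whereas the paper applies the hypothesis directly to a partition into $\leq\kappa$ pieces; both readings are fine.
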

\begin{proof}
	Suppose on the contrary, that there exists a Tukey function $h:D \rightarrow C \times E$.
	For $c\in C$, let $D_c:=\{x\in D \mid \exists e\in E [h(x)=(c,e)]\}$.
	Since $h$ is a function, $D:=\bigcup_{c\in C} D_c$ is a partition to $\leq\kappa$ many sets.
	By the assumption, there exists $c\in C$ and an unbounded subset $X\subseteq D_c$ of cardinality $\kappa$.
	Enumerate $X=\{x_\xi \mid \xi<\kappa\}$ and let $e_\xi\in E$ be such that $h(x_\xi) =(c,e_\xi)$, for each $\xi<\kappa$. 
	As $\non(\mathcal I_{\bd}(E))> \kappa$, there exists some upper bound $e\in E$ to the set $\{e_\xi \mid \xi<\kappa\}$.
	Since $X$ is unbounded and $h$ is Tukey, $h"X=\{(c,e_\xi )\mid \xi<\kappa\}$ must be unbounded, which is absurd as $(c,e)$ is bounding it.
\end{proof}

Note that the Lemma is also true when the partition of $D$ is of size less than $\kappa$.

\section{The Catalan structure}\label{Section - Catalan}

The sequence of Catalan numbers $\langle c_n \mid n<\omega \rangle=\langle 1,1,2,5,14,42,\dots\rangle$ is an ubiquitous sequence of integers with many characterizations, for a comprehensive review of the subject, we refer the reader to Stanley's book \cite{MR3467982}.
One of the many representations of $c_n$, is the number of good $n$-paths (Dyck paths), where a \emph{good $n$-path} is a monotonic lattice path along the edges of a grid with $n\times n$ square cells, which do not pass above the diagonal.
A \emph{monotonic path} is one which starts in the lower left corner, finishes in the upper right corner, and consists entirely of edges pointing rightwards or upwards.
An equivalent representation of a good $n$-path, which we will consider from now on, is a vector $\vec p$ of the columns' heights of the path (ignoring the first trivial column), i.e. a vector $\vec p=\langle p_0,\dots ,p_{n-2}\rangle$ of length $n-1$ of $\leq$-increasing numbers satisfying $0\leq p_k\leq k+1$, for every $0\leq k\leq n-2$.
We consider the poset $(\mathcal K_n,\vartriangleleft)$ where $\mathcal K_n$ is the set of all good $n$-paths and the relation $\vartriangleleft$ is defined such that $\vec a \vartriangleleft \vec b$ if and only if the two paths are distinct and for every $k$ with $0\leq k\leq n-2$ we have $b_k\leq a_k$, in other words, the path $\vec b$ is below the path $\vec a$ (allowing overlaps).
Notice that for two distinct good $n$-paths $\vec a$ and $\vec b$, either $\vec a \not \vartriangleleft \vec b$ or $\vec b \not \vartriangleleft \vec a$.
A good $n$-path $\vec b$ is an immediate successor of a good $n$-path $\vec a$ if $\vec a\vartriangleleft \vec b$ and $\vec a-\vec b$ is a vector with value $0$ at all coordinates except one of them which gets the value $1$.

Suppose $\vec a$ and $\vec b$ are two good $n$-paths where $\vec b$ is an immediate successor of $\vec a$.
Let $i\leq n-2$ be the unique coordinate on which $\vec a$ and $\vec b$ are different and $a_i$ be the value of $\vec a$ on this coordinate, i.e. $a_i=b_i+1 $.
We say that the pair $(\vec a, \vec b)$ is on the $k$-diagonal if and only if $i+1-a_i=k$ and $\vec b$ is an immediate successor of $\vec a$.
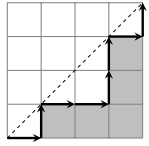
\begin{figure}[h]
		\adjustbox{scale=0.45, center}{\begin{tikzcd}
				\catalannumber{	0,0}{4}{0,1,0,0,1,1,0,1}
		\end{tikzcd}}
		\caption{The good $4$-path $\langle 1,1,3\rangle$.}
\end{figure}

In this section we show the connection between the Catalan numbers and cofinal types.
Let us fix $n<\omega$.
Recall that for every $k<\omega$, we set $V_k:=\{1, \omega_k, [\omega_k]^{<\omega_m} \mid 0\leq m<k\}$, $\mathcal F_n:=\bigcup_{k\leq n} V_k$ and let $\mathcal S_n$ be the set of all finite products of elements in $\mathcal F_n$.
Our goal is to construct a coding which gives rise to an order-isomorphism between $(\mathcal S_n\slash{\equiv_T}, <_T)$ and $(\mathcal K_{n+2},\vartriangleleft)$.

To do that, we first consider a ``canonical form" of directed sets in $\mathcal S_n$.
By Lemma~\ref{Lemma - below kappa theta} the following hold:
\begin{enumerate}[label=(\alph*)]
	\item\label{Clause - Catalan 1} for all $0\leq l<m<k<\omega$ we have $1<_T \omega_k<_T [\omega_k]^{<\omega_m} <_T [\omega_k]^{<\omega_l} $.
	\item\label{Clause - Catalan 2} for all $0\leq l\leq t<m\leq k<\omega$ with $(l,k)\neq(t,m)$ we have $[\omega_m]^{<\omega_t} <_T [\omega_k]^{<\omega_l}$ and $\omega_m <_T [\omega_k]^{<\omega_l}$.
\end{enumerate}

Notice that \ref{Clause - Catalan 1} implies $(V_k,<_T)$ is linearly ordered.
A basic fact is that for two directed sets $C$ and $D$ such that $C\leq_T D$, we have $C\times D \equiv_T D$.
Hence, for every $D\in \mathcal S_n$ we can find a sequence of elements $\langle D^k \mid k\leq n \rangle$, where $D^k\in V_k$ for every $k\leq n$, such that $D\equiv_T \prod_{k\leq n}D^k$.
As we are analyzing the class $\mathcal D_{\aleph_n}$ under the Tukey relation $<_T$, two directed sets which are of the same $\equiv_T$-equivalence class are indistinguishable, so from now on we consider only elements of this form in $\mathcal S_n$.

We define a function $\mathfrak F:\mathcal S_n \rightarrow \mathcal S_n$ as follows:
Fix $D\in \mathcal S_n$ where $D=\prod_{k\leq n} D^k$.
Next we construct a sequence $\langle D_k \mid k\leq n \rangle$ by reverse recursion on $k\leq n$.
At the top case, set $D_n:=D^n$.
Next, for $0\leq k< n$. If by \ref{Clause - Catalan 2}, we get that $D^{k} <_T D^{m}$ for some $k<m\leq n$, then set $D_{k}:=1$.
Else, let $D_{k}:=D^{k}$.
Finally, let $\mathfrak F(D) := \prod_{k\leq n} D_{k}$.
Notice that we constructed $\mathfrak F(D)$ such that $\mathfrak F(D) \equiv_T D$.
We define $\mathcal T_n:=\im(\mathfrak F)$.

\medskip\noindent\textbf{The coding.} 
	We encode each product $D\in \mathcal T_n$ by an $(n+2)$-good path $\vec v_D:=\langle v_0,\dots, v_{n}\rangle$.
	Recall that $D:=\prod_{k\leq n} D_k$, where $D_k\in V_k$ for every $k\leq n$.
	We define by reverse recursion on $0\leq k \leq n$, the elements of the vector $\vec v_D$ such that $v_k\leq k+1$ as follows:
	Suppose one of the elements of $\langle [\omega_k]^{<\omega},\dots, [\omega_k]^{<\omega_{k-1}} ,\omega_k \rangle$ is equal to $D_k$, then let $v_{k}$ be its coordinate (starting from $0$).
	Suppose this is not the case, then if $k=n$, we let $v_{k}:=n+1$ else $v_{k}:=\min \{ v_{k+1}, k+1\}$.

	Notice that by \ref{Clause - Catalan 2}, if $0\leq i<j\leq n$, then $v_i \leq v_j$.
	Hence, every element $D\in \mathcal T_n$ is encoded as a good $(n+2)$-path.

	To see that the coding is one-to-one, suppose $C,D\in \mathcal T_n$ are distinct.
	Let $k:=\max \{ i\leq n \mid C_i\neq D_i\}$.
	We split to two cases:
	
	$\br$ Suppose both $C_k$ and $D_k$ are not equal to $1$, then clearly the column height of $\vec v_C$ and $\vec v_D$ are different at coordinate $k+1$.
	
	$\br$ Suppose one of them is equal to $1$, say $C_k$, then $D_k\neq 1$.
	Let $\vec v_C:=\langle v^C_0,\dots v^C_n\rangle$ and $\vec v_D:=\langle v^D_0,\dots v^D_n\rangle$.
	Suppose $k=n$, then clearly $v^D_n < v^C_n$.
	Suppose $k<n$, then $v^D_i = v^C_i$ for $k<i\leq n$.
	By the coding, $v^D_k<k+1$ and by \ref{Clause - Catalan 2} $v^D_k<v^D_{k+1}=v^C_{k+1}$, but $v^C_k:=\min\{ k+1, v^C_{k+1}\}$.
	Hence $v^D_k < v^C_k$ as sought.
	
	To see that the coding is onto, let us fix a good $(n+2)$-path $\vec v:=\langle v_0, \dots, v_n \rangle$.
	We construct $\langle D_k \mid k\leq n \rangle$ by reverse recursion on $k\leq n$.
	At the top case, set $D_n$ to be the $v_n$ element of the vector $\langle [\omega_n]^{<\omega},\dots, [\omega_n]^{<\omega_{n-1}} ,\omega_n, 1\rangle$.
	For $k<n$, if $v_k=v_{k+1}$, let $D_{k}:=1$.
	Else, let $D_k$ be the $k$th element of the vector $\langle [\omega_k]^{<\omega},\dots, [\omega_k]^{<\omega_{k-1}} ,\omega_k, 1\rangle$.
	Let $D=\prod_{k\leq n} D_k$, notice that as $\vec v$ represents a good $(n+2)$-path we have $D=\mathfrak F(D)$, hence $D \in \mathcal T_n$.
	Furthermore, $\vec v_D =\vec v$, hence the coding is onto as sought.
	As a Corollary we get that $|\mathcal T_n|=c_{n+2}$.

In Figure~2  we present all good $4$-paths and the corresponding types in $\mathcal T_2$ they encode.

\begin{lemma}\label{Lemma - path < implies Tukey <}
	Suppose $C,D\in \mathcal T_n$ and $\vec v_D \vartriangleleft \vec v_C$, then $D\leq_T C$.
\end{lemma}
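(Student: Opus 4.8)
Recall that $C$ and $D$, being elements of $\mathcal T_n$, are of the form $C=\prod_{k\le n}C_k$ and $D=\prod_{k\le n}D_k$ with $C_k,D_k\in V_k$ for each $k\le n$. The plan is to lean on the fact that a finite product of directed sets is their least upper bound in the Tukey order (\cite[Proposition~2]{MR792822}): by that property, in order to conclude $D=\prod_{k\le n}D_k\le_T C$ it suffices to prove $D_k\le_T C$ for every $k\le n$; and since $C_m\le_T C$ for each $m\le n$, it is in turn enough to show that for every $k\le n$ with $D_k\neq 1$ there is some $m$ with $k\le m\le n$ such that $D_k\le_T C_m$ (for $D_k=1$ there is nothing to prove).

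So fix $k\le n$ with $D_k\neq 1$. By the coding, $(v_D)_k$ is the coordinate of $D_k$ inside $\langle[\omega_k]^{<\omega},\dots,[\omega_k]^{<\omega_{k-1}},\omega_k\rangle$, whence $(v_D)_k\le k$; set $j:=(v_C)_k$, so that $j\le(v_D)_k\le k$ because $\vec v_D\vartriangleleft\vec v_C$. Now I walk up the path $\vec v_C$ from column $k$: let $m$ be the largest index with $k\le m\le n$ and $(v_C)_m=j$. As $\vec v_C$ is non-decreasing and $(v_C)_k=(v_C)_m=j$, we get $(v_C)_i=j$ for every $k\le i\le m$, and since moreover $j\le k\le i$, the inverse coding forces $C_i=1$ for all $k\le i<m$. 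On the other hand $C_m\neq1$: for $m<n$ we have $(v_C)_{m+1}>j=(v_C)_m$ and $j<m+1$, while for $m=n$ we have $(v_C)_n=j\le n<n+1$; in either case the inverse coding makes $C_m$ the coordinate-$j$ entry of $\langle[\omega_m]^{<\omega},\dots,[\omega_m]^{<\omega_{m-1}},\omega_m\rangle$, that is, $C_m=[\omega_m]^{<\omega_j}$ when $j<m$ and $C_m=\omega_m$ when $j=m$ (the latter forcing $j=k=m$, since $j\le k\le m$).

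It then remains to read $D_k\le_T C_m$ off the clauses \ref{Clause - Catalan 1}--\ref{Clause - Catalan 2}, together with the elementary monotonicities $\omega_k\le_T\omega_m$ and $[\omega_k]^{<\omega_j}\le_T[\omega_m]^{<\omega_j}$ for $j<k\le m$ (the latter being an instance of \ref{Clause - Catalan 2} when $k<m$, and a trivial equality when $k=m$). If $(v_D)_k<k$, then $D_k=[\omega_k]^{<\omega_{(v_D)_k}}\le_T[\omega_k]^{<\omega_j}\le_T[\omega_m]^{<\omega_j}=C_m$, where the first inequality is \ref{Clause - Catalan 1} (a larger inner exponent gives a Tukey-smaller set, and $j\le(v_D)_k$). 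If $(v_D)_k=k$, then $D_k=\omega_k$; when $j<m$ this yields $D_k=\omega_k\le_T\omega_m<_T[\omega_m]^{<\omega_j}=C_m$ by \ref{Clause - Catalan 1}, and when $j=m$ we already saw $k=m$ and $D_k=\omega_k=C_m$. Hence $D_k\le_T C_m\le_T C$ in every case, so $D=\prod_{k\le n}D_k\le_T C$, as wanted.

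I expect the only genuinely delicate point to be the middle paragraph: pinning down the ``flat segment'' $C_k=\dots=C_{m-1}=1$ of constant path-height $j$, and excluding the degenerate top possibility $C_n=1$ — which is exactly what the hypothesis $D_k\neq1$, equivalently $j\le(v_D)_k\le k$, is for. Once the right factor $C_m$ has been identified, the Tukey comparisons are immediate from the ordering facts \ref{Clause - Catalan 1}--\ref{Clause - Catalan 2} already established via Lemma~\ref{Lemma - below kappa theta}.
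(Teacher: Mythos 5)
Your overall strategy is the same as the paper's: reduce to showing $D_k\le_T C$ factor by factor, and for $D_k\neq 1$ locate the last column $m$ with $k\le m\le n$ on which $\vec v_C$ still has height $j=v^C_k$, so that $C_m$ is the unique non-trivial factor of $C$ sitting at the end of that flat segment. Your identification of $C_m$ via the bijectivity of the coding is correct, and so is the first of your two concluding cases.

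There is, however, one genuinely false step: the ``elementary monotonicity'' $\omega_k\le_T\omega_m$ for $k\le m$. For $k<m$ this fails: a Tukey map $f:\omega_k\to\omega_m$ would send an unbounded (hence cofinal, hence of size $\omega_k$) subset of $\omega_k$ to a set of size at most $\omega_k<\cf(\omega_m)$, which is bounded in $\omega_m$; the paper itself records that distinct infinite regular cardinals are Tukey-incomparable. You invoke this in the case $D_k=\omega_k$, $j<m$, via the chain $\omega_k\le_T\omega_m<_T[\omega_m]^{<\omega_j}$, and that case does arise (e.g.\ $D=\omega_1$, $C=[\omega_2]^{<\omega_1}$, so $k=j=1$, $m=2$). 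The target $\omega_k\le_T[\omega_m]^{<\omega_j}$ is nevertheless true, but it needs a different justification. When $j<k<m$ it is exactly the second assertion of Clause~\ref{Clause - Catalan 2} (take $t=k-1$ there), so no intermediate step through $\omega_m$ is needed at all. When $j=k<m$ neither clause literally applies, and you should argue separately, e.g.\ use $\omega_k\equiv_T[\omega_k]^{<\omega_k}$ together with the fact that the inclusion of $[\omega_k]^{<\omega_k}$ into $[\omega_m]^{<\omega_k}$ is Tukey (in either poset a family is bounded iff its union has size $<\omega_k$); equivalently, $\alpha\mapsto\{\alpha\}$ is a Tukey map from $\omega_k$ to $[\omega_m]^{<\omega_k}$. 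This identification $D_k\equiv_T[\omega_k]^{<\omega_j}$ is in effect how the paper's own proof handles the case $C_k=1$. So the lemma survives, but the detour through $\omega_m$ must be deleted and replaced as above.
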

\begin{proof}
	Let $D = \prod_{k\leq n} D_k$ and $C=\prod_{k\leq n} C_k$.
	Note that if $D_k\leq_T C$ for every $k\leq n$, then $D\leq_T C$ as sought.
	Fix $k\leq n$, if $D_k=1$, then clearly $D_k\leq C$.
	Suppose $D_k\neq 1$, we split to two cases:

	$\br$ Suppose $C_k\neq 1$. As $v^C_k <v^D_k$ and by \ref{Clause - Catalan 1} we have $D_k \leq_T C_k\leq_T C$ as sought.
	
	$\br$ Suppose $C_k=1$, let $m:=\max\{i\leq n \mid k< i,~v^C_i= v^C_k\}$.
	As $v^C_i \leq i+1$, by the coding $m$ is well-defined and $v^C_m= v^C_k\leq k<m$.
	Notice that $C_m=[\omega_m]^{<\omega_p}$ where $p=v^C_m$ and $D_k\equiv_T [\omega_k]^{<\omega_p}$.
	So by~\ref{Clause - Catalan 2}, $D_k\leq_T C_m\leq_T  C$ as sought.
\end{proof}

\begin{lemma}\label{Lemma - path not < implies Tukey not <}
	Suppose $C,D\in \mathcal T_n$ and $\vec v_D \not \vartriangleleft \vec v_C$, then $D\not \leq_T C$.
\end{lemma}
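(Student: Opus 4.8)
The statement to prove is the converse of Lemma~\ref{Lemma - path < implies Tukey <}: if $\vec v_D \not\vartriangleleft \vec v_C$ then $D \not\leq_T C$. Since $C,D\in\mathcal T_n$ are encoded as good $(n+2)$-paths, the hypothesis $\vec v_D\not\vartriangleleft\vec v_C$ means (recalling that $\vartriangleleft$ on distinct paths fails in exactly one direction) that there is some coordinate $k\leq n$ on which $\vec v_C$ sits strictly \emph{above} $\vec v_D$, i.e. $v^D_k < v^C_k$. The plan is to locate such a $k$ and extract from it a hereditarily-unbounded cardinal of $D$ that $C$ cannot accommodate, then invoke the earlier obstruction lemmas (Lemma~\ref{Lemma - kappa leq_T D}, Lemma~\ref{Lemma - kappa, theta no tukey map}, and the cofinality/$\out$/$\non$ computations in the Propositions of Section~\ref{Section - Tools}).

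First I would choose $k$ to be the \emph{largest} coordinate witnessing $v^D_k < v^C_k$; the point of taking it maximal is that for all $i>k$ we have $v^D_i = v^C_i$ (they cannot cross the other way since $\vec v_D\not\vartriangleleft\vec v_C$ already forces the inequality to go only one way among the coordinates where the paths differ, and by the definition of the coding and Clause~\ref{Clause - Catalan 2} the profiles are monotone). Since $v^D_k < v^C_k \leq k+1$, the $k$-th entry $D_k$ is one of $[\omega_k]^{<\omega},\dots,[\omega_k]^{<\omega_{k-1}},\omega_k$ — in particular $D_k\neq 1$ and $D_k$ is a genuine witness of a hereditarily unbounded cardinal, namely $\omega_k\in\hu(D_k)\subseteq\hu(D)$ (using that $\hu$ only grows under products, which follows from the Propositions in Section~\ref{Section - Tools} together with Lemma~\ref{Lemma - kappa leq_T D}). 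The goal is then to show $\omega_k\not\leq_T C$ together with a matching inner-structure fact, enough to kill any Tukey map $D\to C$.

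The main work is the case split on what $C_k$ is. If $C_k\neq 1$, then since $v^C_k < v^D_k$ is false — rather $v^C_k < v^D_k$ reversed, so $v^C_k \leq v^D_k$, and in fact $v^C_k < v^D_k$ cannot happen; wait: we have $v^D_k < v^C_k$, so $C_k$ lies strictly \emph{lower} on the $V_k$ chain, meaning $C_k <_T D_k$ in the linear order of $V_k$ from Clause~\ref{Clause - Catalan 1} — so here $D_k\not\leq_T C_k$, and I must rule out that some \emph{other} coordinate $C_m$ of $C$ absorbs $D_k$. For $m>k$ we have $v^C_m=v^D_m\geq v^D_k$ hmm this needs care — the right statement is that $C_m\in V_m$ with index $v^C_m$, and by Clause~\ref{Clause - Catalan 2} one checks $D_k\leq_T C_m$ would force $v^C_m \geq$ (something $> v^D_k$), contradicting maximality of $k$; for $m<k$ trivially $C_m\in V_m$ is too small to absorb anything at level $k$ since $\hu$-sets at level $k$ are not present below. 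The cleanest route: show $\out(\mathcal I_{\bd}(D))$ or the relevant $\Inner$-parameter of $D$ at the cardinal $\omega_k$ (or $\omega_k^+$, according to whether $D_k=\omega_k$ or $D_k=[\omega_k]^{<\omega_p}$) is incompatible with $C$ via Lemma~\ref{Lemma - kappa, theta no tukey map} applied with $\kappa$ an appropriate regular cardinal $\leq\omega_n$, reading off $\out(\mathcal I_{\bd}(C))$ from $\vec v_C$ using the Propositions. The case $C_k=1$ is where $C$ ``hides'' its level-$k$ content in a higher coordinate $C_m$ with $m>k$ and $v^C_m = v^C_k = v^D_k$ — but then by maximality of $k$ we'd have $v^D_m = v^C_m = v^D_k \leq v^D_k$, yet $v^D$ is monotone and $D_m$ at level $m>k$ with index $v^D_k \leq k < m$ makes $D$ itself contain an even bigger $\hu$-witness at level $m$, and one plays the same $\Inner$ vs. $\out$ game one level up; this reduces to the previous case, so no genuinely new argument is needed.

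The main obstacle I anticipate is purely bookkeeping: correctly translating ``$v^D_k < v^C_k$ at the maximal such $k$'' into the precise numerical inequality between $\out(\mathcal I_{\bd}(D))$ (or its $\Inner$-spectrum) and the corresponding parameter of $C$, so that exactly one of Lemmas~\ref{Lemma - kappa leq_T D}, \ref{Lemma - kappa, theta no tukey map}, \ref{Lemma - D not<= C x E} applies cleanly — in particular handling the boundary between $D_k=\omega_k$ (a plain regular cardinal, use that $\omega_k\not\leq_T$ anything of smaller cofinality) and $D_k=[\omega_k]^{<\omega_p}$ (use the $\Inner$-parameter $\omega_p\in\Inner(\mathcal I_{\bd}(C),\omega_k)$-type computations from Lemma~\ref{Lemma - out(, kappa)}). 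Once the correct $\kappa$ and $\theta$ are identified for each branch, the contradiction with the hypothetical Tukey function is immediate from the cited lemmas, so the proof is short modulo this careful case analysis.
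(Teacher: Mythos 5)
There is a genuine gap. Your pivot claim --- that $\vec v_D \not\vartriangleleft \vec v_C$ ``forces the inequality to go only one way among the coordinates where the paths differ'', so that above the maximal $k$ with $v^D_k<v^C_k$ the two paths agree --- is false: two good paths can cross. For instance, in $\mathcal T_2$ take $D=[\omega_1]^{<\omega}$ and $C=[\omega_2]^{\leq\omega}$, so $\vec v_D=\langle 0,0,3\rangle$ and $\vec v_C=\langle 1,1,1\rangle$; the maximal $k$ with $v^D_k<v^C_k$ is $k=1$, yet $v^D_2=3\neq 1=v^C_2$. The same example defeats your headline obstruction of exhibiting $\omega_k\in\hu(D)$ with $\omega_k\not\leq_T C$: here $\omega_1\in\hu(C)$, so $\omega_1\leq_T C$ (and likewise for $D=\omega\times[\omega_2]^{<\omega_1}$, $C=\omega_2$, where the maximal coordinate is $k=2$ and $\omega_2\leq_T C$ trivially). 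The cardinal attached to the witnessing coordinate is in general \emph{not} rejected by $C$; the only thing that separates $D$ from $C$ is the finer mismatch between $\out(\mathcal I_{\bd}(\cdot))$ of a suitable factor of $D$ and the $\Inner(\cdot,\cdot)$ spectrum of $C$, which you mention only as a fallback (``the cleanest route'') and never actually set up. Also, your inference that $v^D_k\leq k$ already forces $D_k\neq 1$ is invalid as stated ($D_0=1$ with $v^D_0=0$ above); it happens to hold at the maximal coordinate, but only via a maximality argument you do not give.

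For comparison, the paper takes the \emph{minimal} $i$ with $v^C_i>v^D_i$, sets $p:=v^D_i$ and $r:=\max\{k\mid v^D_k=v^D_i\}$, and uses $r$ to locate the genuine non-trivial factor $D_r=[\omega_r]^{<\omega_p}$ (since $D_i$ itself may equal $1$). From this it builds $F\leq_T D$ with $\cf(F)=\omega_i$ and $\out(\mathcal I_{\bd}(F))=\omega_p$. The step you are missing is that one does not need the coordinates of $\vec v_C$ above $i$ to agree with those of $\vec v_D$; one only needs the monotonicity of the good path $\vec v_C$, which gives $v^C_m\geq v^C_i>p$ for all $m\geq i$, hence $\non(\mathcal I_{\bd}(C_m))>\omega_p$ for every non-trivial factor $C_m$ with $m\geq i$. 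Lemma~\ref{Lemma - out(, kappa)} then yields $\omega_p\in\Inner(\mathcal I_{\bd}(E),\omega_i)$ for $E\equiv_T C$, and Lemma~\ref{Lemma - kappa, theta no tukey map} (or Lemma~\ref{Lemma - C leq_T D imply cf(C) leq cf(D)} when all $C_m$ with $m\geq i$ are trivial) finishes. A maximal-coordinate variant of this can be made to work by the same $\out$/$\Inner$ mechanism, but not for the reason you give, and your treatment of the case $C_k=1$ collapses because it leans on the false agreement claim.
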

\begin{proof}
	Let $D=\prod_{k\leq n} D_k$, $C=\prod_{k\leq n} C_k$, $\vec v_C:=\langle v_0^C,\dots, v_n^C\rangle$ and $\vec v_D:=\langle v_0^D,\dots, v_n^D\rangle$
	As $\vec v_D \not \vartriangleleft \vec v_C$, we can define $i=\min\{k\leq n \mid v^C_k>v^D_k\}$.
	
	Let $p:=v^D_i$ and $r=\max\{k\leq n \mid v_i^D = v_k^D\}$, notice that $p\leq i$.
	We define a directed set $F$ such that $F\leq_T D$.

	$\br$ Suppose $p=i$ and let $F=\omega_i$.
	If $r=i$, then clearly $F=D_i$ and $F\leq_T D$ as sought.
	Else, by the coding $D_r=[\omega_r]^{<\omega_p}$.
	By Lemma~\ref{Lemma - below kappa theta}, we have $F\leq_T D$ as sought.
	
	$\br$ Suppose $p<i$ and let $F=\mathfrak D_{[\omega_i]^{<\omega_{p}}}$.
	By the coding $D_r=[\omega_r]^{<\omega_p}$ and by Clause~\ref{Clause - Catalan 2}, we have $F\leq_T D$ as sought.
	
	Notice that $\out(\mathcal I_{\bd}(F)) = \omega_p$ and $\cf(F)=\omega_i$.
	As $F\leq_T D$, it is enough to verify that $F\not\leq_T C$.
	
	As $\vec v_C$ is a good $(n+2)$-path, we know that $v_k^C>p$ for every $k\geq i$.
	Consider $A:=\{i\leq k\leq n \mid C_k \neq 1  \}$.
	We split to two cases:
	
	$\br$ Suppose $A=\emptyset$. 
	Then $\cf(\prod_{k\leq n} C_k) <\omega_i$.
	As $\cf(F)=\omega_i$, by Lemma~\ref{Lemma - C leq_T D imply cf(C) leq cf(D)} we have that $F\not \leq_T \prod_{k\leq n} C_k$ as sought.
	
	$\br$ Suppose $A\neq \emptyset$.
	Let $E:=\prod_{k<i} C_k \times \prod_{k\in A} C_k$
	Notice that $\cf(\prod_{k<i} C_k)<\omega_i$, $\prod_{i\leq k \leq n} C_k \equiv_T \prod_{k\in A} C_k$ and $C\equiv_T E$.
	Furthermore, for each $k\in A$, we have $\non(\mathcal I_{bd}(C_k))>\omega_p$.
	By Lemma~\ref{Lemma - out(, kappa)}, we have $\omega_p\in \Inner(\mathcal I_{\bd}(E),\omega_i)$.
	Recall $\out(\mathcal I_{\bd}(F)) = \omega_p$.
	By Lemma~\ref{Lemma - kappa, theta no tukey map}, we get that $F\not\leq_T E$, hence $F\not \leq_T C$ as sought.
\end{proof}

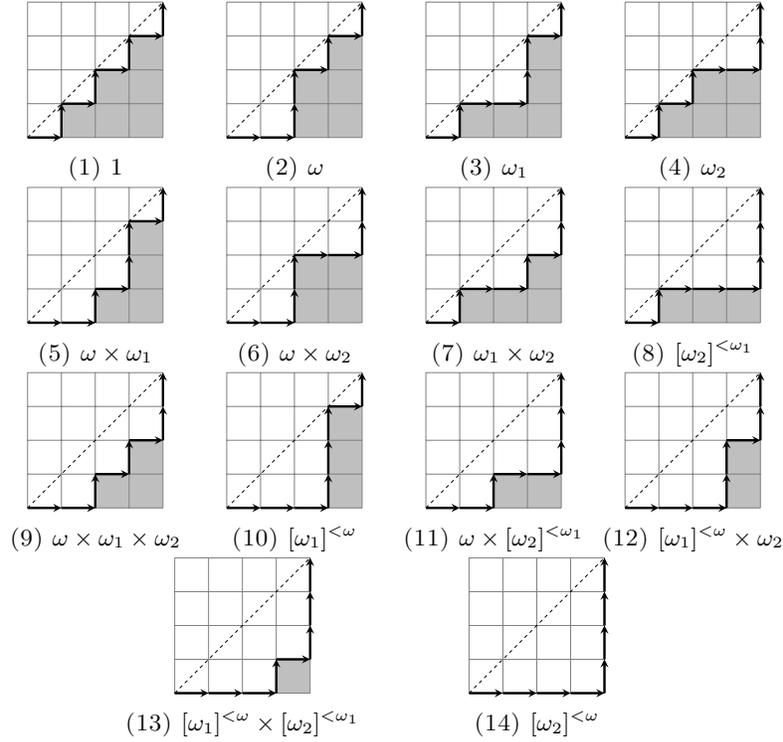
\begin{figure}[h]\label{Figure - good 4-paths}
	\begin{subfigure}[h]{0.20\textwidth}
		\adjustbox{scale=0.45, center}{\begin{tikzcd}
				\catalannumber{	0,0}{4}{0,1,0,1,0,1,0,1}
		\end{tikzcd}}
		\caption{$1$}
	\end{subfigure}
	\begin{subfigure}[h]{0.20\textwidth}
		\adjustbox{scale=0.45, center}{\begin{tikzcd}
				\catalannumber{	0,0}{4}{0,0,1,1,0,1,0,1}
		\end{tikzcd}}
		\caption{$\omega$}
	\end{subfigure}
	\begin{subfigure}[h]{0.20\textwidth}
		\adjustbox{scale=0.45, center}{\begin{tikzcd}
				\catalannumber{0,0}{4}{0,1,0,0,1,1,0,1}
		\end{tikzcd}}
		\caption{$\omega_1$}
	\end{subfigure}
	\begin{subfigure}[h]{0.20\textwidth}
		\adjustbox{scale=0.45, center}{\begin{tikzcd}
				\catalannumber{0,0}{4}{0,1,0,1,0,0,1,1}
		\end{tikzcd}}
		\caption{$\omega_2$}
	\end{subfigure}
	\begin{subfigure}[h]{0.20\textwidth}
		\adjustbox{scale=0.45, center}{\begin{tikzcd}
				\catalannumber{0,0}{4}{0,0,1,0,1,1,0,1}
		\end{tikzcd}}
		\caption{$\omega\times\omega_1$}
	\end{subfigure}
	\begin{subfigure}[h]{0.20\textwidth}
		\adjustbox{scale=0.45, center}{\begin{tikzcd}
				\catalannumber{0,0}{4}{0,0,1,1,0,0,1,1}
		\end{tikzcd}}
		\caption{$\omega\times\omega_2$}
	\end{subfigure}
	\begin{subfigure}[h]{0.20\textwidth}
		\adjustbox{scale=0.45, center}{\begin{tikzcd}
				\catalannumber{0,0}{4}{0,1,0,0,1,0,1,1}
		\end{tikzcd}}
		\caption{$\omega_1\times\omega_2$}
	\end{subfigure}
	\begin{subfigure}[h]{0.20\textwidth}
		\adjustbox{scale=0.45, center}{\begin{tikzcd}
				\catalannumber{0,0}{4}{0,1,0,0,0,1,1,1}
		\end{tikzcd}}
		\caption{$[\omega_2]^{<\omega_1}$}
	\end{subfigure}
	\begin{subfigure}[h]{0.20\textwidth}
		\adjustbox{scale=0.45, center}{\begin{tikzcd}
				\catalannumber{0,0}{4}{0,0,1,0,1,0,1,1}
		\end{tikzcd}}
		\caption{$\omega\times\omega_1\times\omega_2$}
	\end{subfigure}
	\begin{subfigure}[h]{0.20\textwidth}
		\adjustbox{scale=0.45, center}{\begin{tikzcd}
				\catalannumber{0,0}{4}{0,0,0,1,1,1,0,1}
		\end{tikzcd}}
		\caption{$[\omega_1]^{<\omega}$}
	\end{subfigure}
	\begin{subfigure}[h]{0.20\textwidth}
		\adjustbox{scale=0.45, center}{\begin{tikzcd}
				\catalannumber{0,0}{4}{0,0,1,0,0,1,1,1}
		\end{tikzcd}}
		\caption{$\omega\times [\omega_2]^{<\omega_1}$}
	\end{subfigure}
	\begin{subfigure}[h]{0.20\textwidth}
		\adjustbox{scale=0.45, center}{\begin{tikzcd}
				\catalannumber{0,0}{4}{0,0,0,1,1,0,1,1}
		\end{tikzcd}}
		\caption{$[\omega_1]^{<\omega}\times \omega_2$}
	\end{subfigure}
	\centering{
		\begin{subfigure}[h]{0.3\textwidth}
			\adjustbox{scale=0.45, center}{\begin{tikzcd}
					\catalannumber{0,0}{4}{0,0,0,1,0,1,1,1}
			\end{tikzcd}}
			\caption{$[\omega_1]^{<\omega}\times [\omega_2]^{<\omega_1}$}
		\end{subfigure}
		\begin{subfigure}[h]{0.30\textwidth}
			\adjustbox{scale=0.45, center}{\begin{tikzcd}
					\catalannumber{0,0}{4}{0,0,0,0,1,1,1,1}
			\end{tikzcd}}
			\caption{$ [\omega_2]^{<\omega}$}
		\end{subfigure}
	}
	\caption{All good $4$-paths and the corresponding types in $\mathcal T_2$ they encode.}
\end{figure}

\begin{theorem}
	The posets $(\mathcal T_n, <_T)$ and $(\mathcal K_{n+2},\vartriangleleft)$ are isomorphic.
\end{theorem}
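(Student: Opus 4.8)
The plan is to show that the coding $D\mapsto\vec v_D$ constructed above is itself the desired isomorphism. We have already verified that $D\mapsto \vec v_D$ is a bijection from $\mathcal T_n$ onto $\mathcal K_{n+2}$, so the only thing left is to check that this map and its inverse are order-preserving, i.e. that for distinct $C,D\in\mathcal T_n$ one has $D<_T C$ if and only if $\vec v_D\vartriangleleft \vec v_C$.

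First I would record the elementary structural facts on each side. On the path side, $\vartriangleleft$ is the strict order induced by coordinatewise $\leq$ on length-$(n+1)$ vectors, restricted to distinct good paths; hence it is irreflexive and transitive, it is antisymmetric in the strong sense that $\vec v_D\vartriangleleft \vec v_C$ forbids $\vec v_C\vartriangleleft \vec v_D$, and, as already noted after the definition of $\mathcal K_n$, for any two distinct good paths at least one of $\vec v_D\not\vartriangleleft\vec v_C$, $\vec v_C\not\vartriangleleft\vec v_D$ holds. On the Tukey side, $<_T$ is transitive and irreflexive for trivial reasons.

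The equivalence then follows by combining Lemmas~\ref{Lemma - path < implies Tukey <} and~\ref{Lemma - path not < implies Tukey not <}. If $\vec v_D\vartriangleleft\vec v_C$, then Lemma~\ref{Lemma - path < implies Tukey <} gives $D\leq_T C$, while $\vec v_C\not\vartriangleleft\vec v_D$ together with Lemma~\ref{Lemma - path not < implies Tukey not <} gives $C\not\leq_T D$; hence $D<_T C$. Conversely, if $\vec v_D\not\vartriangleleft\vec v_C$ then Lemma~\ref{Lemma - path not < implies Tukey not <} gives $D\not\leq_T C$, so in particular $D\not<_T C$. This proves the biconditional for distinct $C,D\in\mathcal T_n$. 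It also shows that distinct members of $\mathcal T_n$ are never Tukey-equivalent: if $\vec v_D\neq\vec v_C$ then one of the two non-$\vartriangleleft$ relations holds, and Lemma~\ref{Lemma - path not < implies Tukey not <} then witnesses $D\not\leq_T C$ or $C\not\leq_T D$. Consequently $(\mathcal T_n,<_T)$ is a genuine strict partial order and $D\mapsto\vec v_D$ is an order isomorphism onto $(\mathcal K_{n+2},\vartriangleleft)$.

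I do not expect a real obstacle here: all the substantive work is already contained in the bijectivity of the coding and in Lemmas~\ref{Lemma - path < implies Tukey <} and~\ref{Lemma - path not < implies Tukey not <}. The only points needing care are bookkeeping: making sure the strictness conventions on the two sides are matched correctly, and — for the transition to Theorem~A — observing that $\mathcal T_n$ meets each $\equiv_T$-class of $\mathcal S_n$ exactly once, which is immediate from $\mathfrak F(D)\equiv_T D$ for every $D\in\mathcal S_n$ together with the "no collapse" fact just established. This is what licenses replacing $(\mathcal T_n,<_T)$ by $(\mathcal S_n/{\equiv_T},<_T)$ and concluding $|\mathcal D_{\aleph_n}|\geq c_{n+2}$.
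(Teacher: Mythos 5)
Your proposal is correct and follows exactly the paper's route: the isomorphism is the coding $C\mapsto\vec v_C$, whose bijectivity was established beforehand, and order-preservation in both directions is exactly the combination of Lemmas~\ref{Lemma - path < implies Tukey <} and~\ref{Lemma - path not < implies Tukey not <} (the paper states this in one line; you merely spell out which lemma handles which direction). Your closing observation that distinct members of $\mathcal T_n$ are never Tukey-equivalent, and hence that $\mathcal T_n$ meets each $\equiv_T$-class of $\mathcal S_n$ exactly once, is likewise the second half of the paper's own proof.
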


\begin{proof}
	Define $f$ from $(\mathcal T_n,<_T)$ to $(\mathcal K_{n+2},\vartriangleleft)$, where for $C\in \mathcal T_n$, we let $f(C):=\vec v_C$.
	By Lemmas~\ref{Lemma - path < implies Tukey <} and~\ref{Lemma - path not < implies Tukey not <}, this is indeed an isomorphism of posets.
	
	Furthermore, we claim that $\mathcal T_n$ contains one unique representative from each equivalence class of $(\mathcal S_n,\equiv_T)$.
	Recall that the function $\mathfrak F$ is preserving Tukey equivalence classes.
	Consider two distinct $C,D\in \mathcal T_n$.
	As the coding is a bijection, $\vec v_C$ and $\vec v_D$ are different.
	Notice that either $\vec v_C \not \vartriangleleft \vec v_D$ or $\vec v_D \not \vartriangleleft \vec v_C$, hence by Lemma~\ref{Lemma - path not < implies Tukey not <}, $C\not \equiv_T D$ as sought.
\end{proof}

Consider the poset $(\mathcal T_n, <_T)$, clearly $1$ is a minimal element and by Lemma~\ref{Lemma - below kappa theta}, $[\omega_n]^{<\omega}$ is a maximal element.
By the previous Theorem, the set of immediate successors of an element $D$ in the poset $(\mathcal T_n, <_T)$, is the set of all directed sets $C\in \mathcal T_n$ such that $\vec v_C$ is an $\vartriangleleft $-immediate successor of $\vec v_D$.

\begin{lemma}
	Suppose $G,H\in \mathcal T_n$, $H$ is an immediate successor of $G$ in the poset $(\mathcal T_n, <_T)$ and $(\vec v_G, \vec v_H)$ are on the $l$-diagonal.
	Then there are $C, E, M, N$ directed sets such that:
	\begin{itemize}
		\item $G\equiv_T C\times M\times E$ and $H\equiv_T C\times N \times E$;
		\item for some $k\leq n$, $\cf(N)=\omega_k$, $|C|<\omega_k$ and either $E\equiv_T 1$ or $\non(\mathcal I_{\bd}(E))>\omega_{k-l}$.
	\end{itemize}

	Furthermore,
	\begin{itemize}
		\item If $l=0$, then $M=1$ and $N=\omega_k$.
		\item If $l=1$, then $k>1$ and $M=\omega_k$ and $N=[\omega_k]^{<\omega_{k-1}}$.
		\item If $l>1$, then $k>l$ and $M=[\omega_k]^{<\omega_{k-l+1}}$ and $N=[\omega_k]^{<\omega_{k-l}}$.
	\end{itemize}
\end{lemma}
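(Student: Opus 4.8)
The plan is to read off the required decomposition directly from the coding $\vec v_G = \langle v^G_0,\dots,v^G_n\rangle$ and $\vec v_H=\langle v^H_0,\dots,v^H_n\rangle$. Since $H$ is an immediate successor of $G$ on the $l$-diagonal, there is a unique coordinate $i\le n$ with $v^G_i = v^H_i+1$ and all other coordinates agree, and by definition of the $l$-diagonal we have $i+1-v^G_i = l$, i.e. $v^G_i = i+1-l$. First I would identify the ``active'' cardinal index $k$: let $k:=\max\{ j\le n \mid v^G_j=v^G_i\}$, so that by the coding $G_k$ is a genuine (non-$1$) member of $V_k$, namely $G_k=\omega_k$ if $v^G_i=k$ and $G_k=[\omega_k]^{<\omega_{v^G_i}}$ otherwise; similarly $H_k$ is the member of $V_k$ with coordinate $v^G_i-1=v^H_i$. (One checks $k$ is well-defined: since $\vec v_G$ is a good path, $v^G_i\le i+1$, and $k\ge i$.) Then $M:=G_k$, $N:=H_k$, and since both paths agree on every coordinate $j$ with $v^G_j\ne v^G_i$, the factors $G_j=H_j$ for all $j\ne k$ that are relevant to the coordinates $>k$ coincide, while the coordinates $<k$ with the same $v$-value contribute only the trivial $1$ by the coding. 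Concretely I would set $C:=\prod_{j<i} G_j$ and $E:=\prod_{j>k} G_j$, and observe that the remaining factors of $G$ (indices $i\le j\le k$) all lie in $V_j$ for $j\le k$, are $<_T M$ by Clauses~\ref{Clause - Catalan 1} and~\ref{Clause - Catalan 2}, and hence are absorbed: $\prod_{i\le j\le k}G_j\equiv_T M$, and likewise for $H$ with $N$. Thus $G\equiv_T C\times M\times E$ and $H\equiv_T C\times N\times E$.

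Next I would verify the cardinal bookkeeping. We have $\cf(N)=\omega_k$ in all three cases ($N$ is either $\omega_k$ or $[\omega_k]^{<\omega_m}$ for some $m<k$, and both have cofinality $\omega_k$ as recorded in the Proposition on $\hu$ and $\cf$). For $|C|<\omega_k$: $C=\prod_{j<i}G_j$ where each $G_j\in V_j\subseteq\{1,\omega_j,[\omega_j]^{<\omega_m}\}$ with $j<i\le k$, so $|G_j|\le\omega_j<\omega_k$, and a finite product of sets each of size $<\omega_k$ has size $<\omega_k$. For $E$: if $i\le k=n$ then $E$ is the empty product, so $E\equiv_T 1$; otherwise $E=\prod_{j>k}G_j$, and I claim each such $G_j$ that is not $1$ satisfies $\non(\mathcal I_{\bd}(G_j))>\omega_{k-l}$. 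Indeed, since $\vec v_G$ is a good path and $j>k$, we have $v^G_j> v^G_k=v^G_i=i+1-l$; also $v^G_i=i+1-l$ with $i\ge l$ forces (via $\vec v_G$ good) a bound that gives $v^G_j\ge$ something $>k-l$; more cleanly, $G_j$ is $\omega_j$ or $[\omega_j]^{<\omega_p}$ with $p=v^G_j> v^G_k$, and by the Proposition $\non(\mathcal I_{\bd}(\omega_j))=\omega_j>\omega_{k-l}$ (as $j>k>k-l$) while $\non(\mathcal I_{\bd}([\omega_j]^{<\omega_p}))=\omega_p$; so the point is to check $p>k-l$, which follows from $p=v^G_j>v^G_k=i+1-l$ together with $i\ge k-1$ being false in general — here I would instead argue directly that $v^G_k\ge k-l+1$ is the relevant inequality, giving $p\ge k-l+1>k-l$. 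Hence $\non(\mathcal I_{\bd}(E))=\min_j\non(\mathcal I_{\bd}(G_j))>\omega_{k-l}$ by the product formula for $\non$ of a finite product.

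Finally, the ``Furthermore'' clauses are just the explicit computation of $M=G_k$ and $N=H_k$ from $l$. If $l=0$ then $v^G_i=i+1$, which forces $k=i$ (a good path cannot have two consecutive columns of equal height unless... — here $v^G_k=k+1$ is maximal so indeed $k=i$), $H_k$ has coordinate $k$, i.e. $N=\omega_k$, and $G_k$ has coordinate $k+1$, which in $V_k$ is the ``$1$'' slot, so $M=1$; one must also note $k\ge 1$ since $v^G_i=i+1-l=i+1\ge 1$ and an immediate successor exists. If $l=1$ then $v^G_i=i$, so $v^G_k=v^H_k+1$ with $v^H_k=v^G_i-1$; then $M=G_k$ has coordinate $k$, i.e. $M=\omega_k$, $N=H_k$ has coordinate $k-1$, i.e. $N=[\omega_k]^{<\omega_{k-1}}$, and $k>1$ since the coordinate $k-1$ must be $\ge 0$ and $\ge$ the needed value. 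If $l>1$ then $v^G_i=i+1-l\le k+1-l<k$, so $M=G_k=[\omega_k]^{<\omega_{k-l+1}}$ and $N=H_k=[\omega_k]^{<\omega_{k-l}}$, with $k>l$ forced by $v^G_k=k-l+1\ge 1$. \textbf{The main obstacle} I anticipate is the absorption step: carefully justifying that all the factors of $G$ with indices strictly between $i$ and $k$ (those on coordinates sharing the value $v^G_i$) are Tukey-below $M=G_k$ and hence disappear in the product, and symmetrically for $H$; this requires invoking Clause~\ref{Clause - Catalan 2} in the case $M$ or $N$ is of the form $[\omega_k]^{<\omega_p}$, and being precise about why the intermediate factors (which are $[\omega_j]^{<\omega_p}$ for $j<k$ with the same $p$) satisfy $[\omega_j]^{<\omega_p}<_T[\omega_k]^{<\omega_p}$, together with the case bookkeeping for $l=0$ where $M=1$ and there simply are no such intermediate non-trivial factors.
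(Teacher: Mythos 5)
Your overall strategy --- reading the decomposition off the coding --- is the right one and is essentially what the paper does, but your identification of the ``active'' index is wrong, and this breaks the proof. In the coding, path coordinate $j$ corresponds directly to the factor $D_j\in V_j$ of the product: the value $v_j$ selects an element of $V_j$, with $D_j=1$ exactly when $v_j=\min\{v_{j+1},j+1\}$ (resp.\ $v_n=n+1$). Hence when $\vec v_H$ differs from $\vec v_G$ only at coordinate $i$, the factor that changes is the one at index $i$: the factor of $H$ at index $i$ is the $v^H_i$-th element of $\langle[\omega_i]^{<\omega},\dots,\omega_i,1\rangle$, an element of $V_i$, and the correct value of $k$ in the lemma is $k=i$, the differing coordinate itself. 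Your $k:=\max\{j\le n\mid v^G_j=v^G_i\}$ is the \emph{top} of the run of equal heights in $\vec v_G$, which can be strictly larger than $i$ (note $i$ is always the \emph{bottom} of its run, since $\vec v_H$ being a good path forces $v^G_{i-1}\le v^G_i-1$), and your $N$, an element of $V_k$ for that larger $k$, need not be Tukey below $H$ at all.

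Concretely, take $n=2$, $G=\omega_1$ and $H=\omega\times\omega_1$, so $\vec v_G=\langle 1,1,3\rangle$ and $\vec v_H=\langle 0,1,3\rangle$; here $i=0$, $l=0$, and the correct data are $k=0$, $C=1$, $M=1$, $N=\omega$, $E=\omega_1$. Your recipe gives $k=1$ and $N=[\omega_1]^{<\omega}$, but $[\omega_1]^{<\omega}\not\le_T\omega\times\omega_1$, so $H\not\equiv_T C\times N\times E$ for any $C,E$; and your $l=0$ clause would instead give $N=\omega_k=\omega_1$, whence $C\times N\times E\equiv_T C\times E\equiv_T G$, contradicting $G<_T H$. (This also explains the tension you noticed between your general construction, where $M=G_k\neq 1$, and the $l=0$ clause, where $M$ must be $1$: it disappears once $k=i$, since $v^G_i=i+1$ then puts the $i$-th factor of $G$ in the ``$1$'' slot of $V_i$; your attempted argument that $l=0$ forces $k=i$ fails precisely on the example above.) The paper's proof takes $k$ to be the differing coordinate and, for \emph{every} $j\le n$, lets $M_j$ be the $v^G_j$-th element of $\langle[\omega_j]^{<\omega},\dots,\omega_j,1\rangle$; then $\prod_{j\le n}M_j\equiv_T G$ (the non-canonical factors are absorbed via \ref{Clause - Catalan 1} and \ref{Clause - Catalan 2}, which also disposes of the ``main obstacle'' you flag), and one sets $C:=\prod_{j<k}M_j$, $E:=\prod_{j>k}M_j$, $M:=M_k$, $N:=N_k$. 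If you replace your $k$ by $i$, the rest of your bookkeeping ($|C|<\omega_k$, the bound $\non(\mathcal I_{\bd}(E))>\omega_{k-l}$ via $v^G_j\ge v^G_i=i+1-l$ for $j>i$, and the three cases for $M,N$) goes through essentially as in the paper.
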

\begin{proof}
	As $H$ is an immediate successor of $G$ in the poset $(\mathcal T_n, <_T)$, we know that $\vec v_H$ is an immediate successor of  $\vec v_H$ in $(\mathcal K_{n+2},\vartriangleleft)$.
	Let $k$ be the unique $k\leq n$ such that $v^k_G = v^k_H +1$.
	
	Let $\vec v_G:=\langle v^0_G, \dots, v^n_G \rangle$ be a good $(n+2)$-path coded by $G$.
	We construct $\langle M_i \mid i\leq n \rangle$ by letting $M_i$ be the $i$th element of the vector $\langle [\omega_i]^{<\omega},\dots, [\omega_i]^{<\omega_{i-1}} ,\omega_i, 1\rangle$ for every $i\leq n$.
	Notice that $G\equiv_T \prod_{i\leq n} M_i$.
	Similarly, we may construct $\langle N_i \mid i\leq n \rangle$ such that $H\equiv_T \prod_{i\leq n}	N_i$.
	Clearly, $M_i=N_i$ for every $i\neq k$.

	Let $C:=\prod_{i<k} M_i$ and $E:=\prod_{i>k} M_i$.
	Notice that $|C|=\cf(C)<\omega_k$ and either $E\equiv_T 1$ or $\non(\mathcal I_{\bd}(E))>\omega_{k-l}$.
	Moreover, $G\equiv_T C\times M_k \times E$ and $H\equiv_TC\times N_k \times E$.
	We split to cases:
	\begin{itemize}
		\item If $l=0$, then $v^k_H=k+1$, hence $M_k=1$ and $N_k=\omega_k$;
		\item If $l=1$, then $v^k_H=k$, hence $M_k=\omega_k$ and $N_k=[\omega_k]^{<\omega_{k-1}}$.

		\item If $l>1$, then $v^k_H=k-l+1$, hence $M_k=[\omega_k]^{<\omega_{k-l+1}}$ and $N_k=[\omega_k]^{<\omega_{k-l}}$.

	\end{itemize}
\end{proof}	

\begin{theorem}\label{Theorem - Intervals}
	Suppose $G,H\in \mathcal T_n$, $H$ is an immediate successor of $G$ in the poset $(\mathcal T_n, <_T)$ and $(\vec v_G, \vec v_H)$ are on the $l$-diagonal.
	\begin{itemize}
		\item If $l=0$, then there is no directed set $D\in \mathcal D_{\aleph_n}$ such that $G<_T D<_T H$;
		\item If $l>0$, then consistently there exist a directed set $D\in \mathcal D_{\aleph_n}$ such that $G<_T D<_T H$.
	\end{itemize}
\end{theorem}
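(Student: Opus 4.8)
The plan is to break the theorem into the two bulleted cases and handle them via the structure lemma just proved, together with the Tukey-order tools of Section~\ref{Section - Tools}. For the case $l=0$, the previous lemma gives $G\equiv_T C\times E$ and $H\equiv_T C\times\omega_k\times E$ where $|C|<\omega_k$ and either $E\equiv_T 1$ or $\non(\mathcal I_{\bd}(E))>\omega_k$. Suppose toward a contradiction that $D\in\mathcal D_{\aleph_n}$ satisfies $G<_T D<_T H$. The idea is that $\omega_k$ must ``already be present'' in $D$ in a strong way: since $\omega_k\leq_T H$ but also $D<_T H$ forces $\cf(D)\leq\cf(H)\leq\omega_k$ by Lemma~\ref{Lemma - C leq_T D imply cf(C) leq cf(D)}, and since $G<_T D$ with $\cf(G)$ involving $\omega_k$ (via the first $N$-coordinate), we get $\cf(D)=\omega_k$. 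First I would show $\omega_k\leq_T D$: because $G<_T D$ and $\omega_k\leq_T G$ (as $\omega_k$ appears as a factor, or via Lemma~\ref{Lemma - below kappa theta} if $k$ is a limit-type index — actually here $N=\omega_k$ is literally a factor of $H$ but we must see it is $\leq_T G$; instead use that $\cf(D)=\omega_k$ is regular so $\omega_k\equiv_T\cf(D)\leq_T D$). Then I would argue $D\leq_T C\times E\equiv_T G$: the point is that $D<_T H= C\times\omega_k\times E$ together with $\omega_k\leq_T D$ lets us ``factor out'' the $\omega_k$, using a convergent map $H\to D$ and the fact that any unbounded subset of $D$ of size $\omega_k$ maps to something unbounded; the hereditarily-unbounded analysis (Lemmas~\ref{Lemma - existence of full-unbounded kappa set}, \ref{Lemma - kappa leq_T D}) shows $D\equiv_T \omega_k\times D'$ for some $D'\leq_T C\times E$, whence $D\leq_T\omega_k\times C\times E\equiv_T C\times\omega_k\times E \equiv_T H$ but more tightly $D\leq_T C\times E\equiv_T G$, contradicting $G<_T D$. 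This last factoring is where I expect the real work to lie; it is essentially a ``cancellation'' lemma for $\omega_k$ in the Tukey order under the cardinality constraints $|C|<\omega_k$ and $\non(\mathcal I_{\bd}(E))>\omega_k$, and it is plausibly the content of the analogous Kuzeljević–Todor\v{c}evi\'{c} argument for $\mathcal S_2$.

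For the case $l>0$, I would construct, under $\gch$ plus a suitable non-reflecting-stationary-set hypothesis, an explicit $D$ with $G<_T D<_T H$, and this is deferred to Section~\ref{Section - No Gaps}; here I would only state which cardinal-arithmetic assumption is invoked and cite the forthcoming construction. The structure lemma tells us $G\equiv_T C\times M\times E$, $H\equiv_T C\times N\times E$ with $(M,N)$ one of $(\omega_k,[\omega_k]^{<\omega_{k-1}})$ or $([\omega_k]^{<\omega_{k-l+1}},[\omega_k]^{<\omega_{k-l}})$, $k>l\geq1$. The natural candidate is $D\equiv_T C\times D^\ast\times E$ where $D^\ast$ is a directed set on $\omega_k$ strictly between $M$ and $N$ in the Tukey order — built from a non-reflecting stationary subset $S\subseteq E^{\omega_k}_{\omega_{k-l}}$ (or $E^{\omega_k}_\omega$ when $l=1$ relative to the right cofinality) by taking, say, the directed set of closed bounded subsets of $\omega_k$ that do not ``see'' too much of $S$, or a generalized club-guessing object; one checks $\non(\mathcal I_{\bd}(D^\ast))$ and $\out(\mathcal I_{\bd}(D^\ast))$ land strictly between those of $M$ and $N$ and then applies Lemmas~\ref{Lemma - below kappa theta}, \ref{Lemma - kappa, theta no tukey map}, \ref{Lemma - D not<= C x E} to separate $D$ from both $G$ and $H$. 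The heart is producing $D^\ast$ with exactly the right ideal characteristics and verifying $M<_T D^\ast<_T N$; non-reflection is what prevents $D^\ast$ from collapsing down to $M$.

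**The main obstacle** I anticipate is the cancellation step in the $l=0$ case: showing that $G<_T D<_T H=C\times\omega_k\times E$ with the stated cardinality bounds is genuinely impossible requires more than the black-box lemmas — one needs to analyze a convergent map $g:H\to D$ and a Tukey map $D\to H$ simultaneously and argue that the $\omega_k$-component of $H$ can be ``absorbed,'' which is the kind of argument that is short to believe and fiddly to write. I would isolate it as a standalone lemma of the form: \emph{if $|C|<\kappa$, $\kappa$ regular, $\non(\mathcal I_{\bd}(E))>\kappa$ (or $E\equiv_T 1$), and $C\times E\leq_T D\leq_T C\times\kappa\times E$, then $D\equiv_T C\times E$ or $D\equiv_T C\times\kappa\times E$} — proved by first getting $\cf(D)\in\{\cf(C\times E),\kappa\}$ from Lemma~\ref{Lemma - C leq_T D imply cf(C) leq cf(D)}, then if $\cf(D)=\kappa$ using $\kappa\in\hu(D)$ and a pigeonhole/convergent-map argument to peel off a $\kappa$-factor, landing back in the case $\cf(D)<\kappa$, which forces $D\leq_T C\times E$ via Lemma~\ref{Lemma - D not<= C x E} read contrapositively. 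Everything else is bookkeeping with the paths-to-types dictionary.
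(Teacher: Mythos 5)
Your treatment of the $l=0$ case is essentially the paper's: the standalone cancellation lemma you isolate (``if $|C|<\kappa$, $\kappa$ regular, $\non(\mathcal I_{\bd}(E))>\kappa$ or $E\equiv_T 1$, and $C\times E\leq_T D\leq_T C\times\kappa\times E$, then $D$ is equivalent to one of the endpoints'') is exactly Theorem~\ref{Theorem - Gap}, and the paper proves it by the same two-pronged strategy (either $\kappa\leq_T D$, or the $\kappa$-coordinate of a Tukey map $D\to C\times\kappa\times E$ can be dropped). Two caveats on your sketch of that lemma: when $E\not\equiv_T 1$ one has $\cf(D)=\cf(E)\geq\kappa$ always, so the dichotomy cannot be on $\cf(D)$ as in your outline; the paper instead splits on whether $D$ contains an unbounded set of size $\kappa$ all of whose smaller subsets are bounded (Lemma~\ref{Lemma - 2nd gap}). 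Also, ``Lemma~\ref{Lemma - D not<= C x E} read contrapositively'' does not yield $D\leq_T C\times E$ --- the contrapositive of ``hypotheses imply $D\not\leq_T C\times E$'' gives you nothing in the direction you need; the positive reduction $D\leq_T C\times E$ has to be extracted by showing $\pi_{C\times E}\circ g$ is Tukey, which is where the fiddly work you anticipated actually lives.

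The genuine gap is in the $l>0$ case: you propose a single construction, a ladder-system/club-guessing directed set $D^\ast$ on a (non-reflecting) stationary $S\subseteq E^{\omega_k}_{\omega_{k-l}}$, for all $l\geq 1$. This cannot work for $l=1$. There $M=\omega_k$ and $N=[\omega_k]^{<\omega_{k-1}}=[\omega_k]^{\leq\omega_{k-2}}$, and any directed set of the form $D_{\mathcal C}=\{Y\in[\omega_k]^{\leq\theta}\mid\forall\alpha\in S\,[|Y\cap C_\alpha|<\theta]\}$ with $\theta=\omega_{k-2}$ contains $[\omega_k]^{<\theta}$ and, having cardinality $\omega_k$ under your arithmetic hypotheses, satisfies $[\omega_k]^{\leq\theta}\leq_T D_{\mathcal C}$ (this is Claim~\ref{Claim - D_C from below}); so it collapses onto the top endpoint $N$ rather than landing strictly below it. Your fallback criterion --- choose $D^\ast$ so that $\non(\mathcal I_{\bd}(D^\ast))$ and $\out(\mathcal I_{\bd}(D^\ast))$ lie strictly between those of $M$ and $N$ --- is vacuous here, since $\non(\mathcal I_{\bd}(M))=\omega_k$ and $\non(\mathcal I_{\bd}(N))=\omega_{k-1}$ are adjacent cardinals. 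The $l=1$ intervals require genuinely different witnesses: for $k>1$ a directed set $D_c$ built from a coloring $c:[\theta^{++}]^2\to\theta^+$ witnessing an Ulam-type ``onto'' principle, whose separation from the endpoints comes from a partition property and from $\theta^+\in\Inner(\mathcal I_{\bd}(D_c),\theta^{++})$ rather than from $\non/\out$ (Corollary~\ref{Cor - theta^+ x theta^++ < D < [theta^++]^{<theta+}}); and for $k=1$ the interval $\omega\times\omega_1$ to $[\omega_1]^{<\omega}$, which your plan does not address at all, needs the $\mathfrak b=\omega_1$ scale construction of Theorem~\ref{Theorem - omega x omega_1 < D < [omega_1]^{<omega}}. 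Your club-guessing idea is the right one only for $l>1$, where it is carried out in Corollary~\ref{Corollary - directed set D_{mathcal C}} (and there the paper derives the guessing principle from $\gch$ via Shelah's diamond theorem, so the non-reflection hypothesis --- harmless for a consistency claim but unnecessary --- can be dropped).
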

\begin{proof}
	Let $C,E, M, N$ be as in the previous Lemma, so $G\equiv_T C\times M\times E$, $H\equiv_T C\times N \times E$ and for some $k\leq n$, $|C|<\omega_k$ and either $E\equiv_T 1$ or $\non(\mathcal I_{\bd}(E))>\omega_{k-l}$.
	We split to three cases:
	\begin{itemize}
		\item Suppose $l=0$, then $G\equiv_T C\times E $ and $H\equiv_T C\times \omega_k \times E$, by Theorem~\ref{Theorem - Gap} there is no directed set $D$ such that $G<_T D <_T H$.
		
		\item Suppose $l=1$, then $k\geq 1$ and $N=[\omega_k]^{<\omega_{k-1}}$ and $M=\omega_k$.
		\begin{itemize}
			\item Suppose $k=1$, then under the assumption $\mathfrak b =\omega_1$, by Theorem~\ref{Theorem - omega x omega_1 < D < [omega_1]^{<omega}} there exists a directed set $D$ such that $G<_T D <_T H$.
			\item Suppose $k>1$, then under the assumption $2^{\aleph_{k-2}} = \aleph_{k-1}$ and $2^{\aleph_{k-1}}= \aleph_k$, by Corollary~\ref{Cor - theta^+ x theta^++ < D < [theta^++]^{<theta+}} there exists a directed set $D$ such that $G<_T D <_T H$.
		\end{itemize}
		\item Suppose $l>1$, then $k\geq 2$.
		Let $\theta=\omega_{k-l}$ and $\lambda = \omega_{k-1}$.
		Notice $N=[\omega_k]^{\leq\theta}$ and $M=[\omega_k]^{<\theta}$.
		In Corollary~\ref{Corollary - directed set D_{mathcal C}} below, we shall show that under the assumption $\lambda^{\theta}<\lambda^+$ and $ \clubsuit^{\omega_{k-1}}_{J}(S,1) $ for some stationary set $S\subseteq E^{\omega_k}_{\theta}$, 
		there exists a directed set $D$ such that $G<_T D <_T H$.\qedhere
	\end{itemize}
\end{proof}

\section{Empty intervals in $D_{\aleph_n}$}\label{Section - Gaps}
Consider two successive directed sets in the poset $(\mathcal T_n , <_T)$, we can ask whether there exists some other directed set in between in the Tukey order.
The following Theorem give us a scenario in which there is a no such directed set.

\begin{theorem}\label{Theorem - Gap}
	Let $\kappa$ be a regular cardinal.
	Suppose $C$ and $E$ are two directed sets such that $\cf(C)<\kappa$ and either $E\equiv_T 1$ or $\kappa \in \Inner(\mathcal I_{\bd}(E),\kappa)$ and $\kappa\leq \cf(E)$.
	Then there is no directed set $D$ such that $C\times E <_T D <_T C\times \kappa\times E$.
\end{theorem}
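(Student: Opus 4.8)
The plan is to suppose toward a contradiction that some directed set $D$ satisfies $C\times E<_T D<_T C\times\kappa\times E$, and to derive a contradiction by analyzing how a convergent map from $C\times\kappa\times E$ onto $D$ interacts with the $\kappa$-factor. The first step is to reduce to a convenient form: using Lemma~\ref{Lemma - C leq_T D imply cf(C) leq cf(D)} and passing to cofinal subsets, note $\cf(D)\le\cf(C\times\kappa\times E)$, and that $\cf(C)<\kappa\le\cf(E)$ (when $E\not\equiv_T 1$) forces $\cf(C\times\kappa\times E)=\cf(E)$, so we may assume $|C|=\cf(C)$ and $|E|=\cf(E)$. The key structural fact to extract is that since $D\not\le_T C\times E$ but $D\le_T C\times\kappa\times E$, the failure must come from the $\kappa$ coordinate: there is an unbounded subset of $D$ of size $\kappa$ every $<\kappa$-subset of which is bounded. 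Concretely, fix a convergent map $g:C\times\kappa\times E\to D$; its restriction to a cofinal subset is onto a cofinal subset of $D$, and I want to locate inside $D$ a copy of $\kappa$ that is ``isolated'' in the sense of Corollary~\ref{Cor - existence of full-unbounded kappa set}.

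The heart of the argument is the following dichotomy applied to $D$. Either every unbounded subset of $D$ of size $\kappa$ has an unbounded subset of size $<\kappa$ — in which case I will argue $D\le_T C\times E$, contradicting $C\times E<_T D$; or there is an unbounded $X\subseteq D$ of size $\kappa$ all of whose $<\kappa$-subsets are bounded — in which case by Corollary~\ref{Cor - existence of full-unbounded kappa set} we get $\kappa\le_T D$, and I will argue that $D\equiv_T C\times\kappa\times E$, contradicting $D<_T C\times\kappa\times E$. For the first horn: a Tukey function $f:D\to C\times\kappa\times E$ together with the hypothesis $\kappa\in\Inner(\mathcal I_{\bd}(E),\kappa)$ and $|C|<\kappa$ lets me show (in the spirit of Lemmas~\ref{Lemma - out(, kappa)} and~\ref{Lemma - kappa, theta no tukey map}, or of Lemma~\ref{Lemma - D not<= C x E}) that $\out(\mathcal I_{\bd}(D))\le\kappa$ is impossible unless the $\kappa$-factor is genuinely used; absent an isolated $\kappa$-copy, the $\kappa$-coordinate of $f$ can be ``absorbed'' — any unbounded set maps to something unbounded already in $C\times E$ after thinning — giving a Tukey map $D\to C\times E$. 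For the second horn: given the isolated copy $X$ of $\kappa$ in $D$, combine a convergent map $C\times E\to D$ restricted appropriately with the embedding $\kappa\hookrightarrow X$ to build a convergent map $C\times\kappa\times E\to D$ whose image is cofinal, witnessing $C\times\kappa\times E\le_T D$.

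The main obstacle I anticipate is making the first horn fully rigorous: showing that if $D\le_T C\times\kappa\times E$ and no isolated $\kappa$-sized unbounded family exists in $D$, then in fact $D\le_T C\times E$. The subtlety is that a Tukey function $f:D\to C\times\kappa\times E$ need not have bounded range in the $\kappa$-coordinate, and the hypothesis $\kappa\in\Inner(\mathcal I_{\bd}(E),\kappa)$ with $\kappa\le\cf(E)$ is precisely what is needed to ``trade'' a $\kappa$-indexed antichain-like configuration in $D$ for a bounded configuration in $E$ — essentially re-running the pigeonhole-and-$\Inner$ argument of Lemma~\ref{Lemma - kappa, theta no tukey map} internally. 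The case $E\equiv_T 1$ should be handled separately and is easier: there one is asking for no $D$ with $C<_T D<_T C\times\kappa$, and since $\cf(C)<\kappa$ with $\kappa$ regular, every unbounded subset of $C\times\kappa$ either reflects to an unbounded subset of $C$ or projects onto a cofinal subset of $\kappa$, which pins $D$ to one of the two endpoints by a direct argument using Lemma~\ref{Lemma - existence of full-unbounded kappa set} and Lemma~\ref{Lemma - kappa leq_T D}.
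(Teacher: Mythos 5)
Your overall architecture coincides with the paper's: the case $E\equiv_T 1$ is handled separately (Lemma~\ref{lemma41}, by the same two subcases according to whether $\cf(D)<\kappa$ or $\cf(D)=\kappa$), and the general case rests on exactly the dichotomy you state --- either $D$ contains an unbounded $X$ of size $\kappa$ all of whose subsets of size $<\kappa$ are bounded (whence $\kappa\leq_T D$ by Corollary~\ref{Cor - existence of full-unbounded kappa set} and then $D\equiv_T C\times\kappa\times E$), or not. Your second horn and your treatment of $E\equiv_T 1$ match the paper and are fine.

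The gap is in the first horn, and you have correctly located it but not filled it. The assertion that, absent an isolated copy of $\kappa$, ``any unbounded set maps to something unbounded already in $C\times E$ after thinning'' does not follow from re-running the pigeonhole-and-$\Inner$ argument on the given unbounded set. Writing $h:=\pi_{C\times E}\circ g$ for a Tukey $g:D\rightarrow C\times\kappa\times E$, the problematic configuration is an unbounded $X\subseteq D$ of size $\lambda>\kappa$ containing no unbounded subset of smaller cardinality, for which $h"X$ is bounded: such an $X$ cannot be thinned to a small unbounded set (by its defining property), and the pigeonhole on $C$ together with $\kappa\in\Inner(\mathcal I_{\bd}(E),\kappa)$ applied to $X$ itself only produces subsets of $X$ of size $\kappa<\lambda$, all of which are bounded in $D$, so no contradiction arises directly. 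The paper's resolution is an additional construction you do not anticipate: since $h"X$ is bounded, each initial segment $U_\alpha:=X\cap g^{-1}(C\times(\alpha+1)\times E)$ has $g"U_\alpha$ bounded, hence (as $g$ is Tukey) $U_\alpha$ is bounded by some $y_\alpha\in D$; the set $Y=\{y_\alpha\mid\alpha<\kappa\}$ is a \emph{new} unbounded set of size exactly $\kappa$, to which the first-horn hypothesis does apply. Shrinking $Y$ to some $Z\in[Y]^{\kappa}$ with $\pi_{C\times E}\circ g"Z$ bounded (pigeonhole on $C$ plus the $\Inner$ hypothesis on $E$), one extracts an unbounded $W\in[Z]^{<\kappa}$ whose image under $g$ is bounded in all three coordinates, contradicting that $g$ is Tukey. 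Without this passage from $X$ to the set of bounds $Y$, the first horn does not close.
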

\begin{proof} By the upcoming Lemmas \ref{lemma41} and \ref{Lemma - 2nd gap}.
\end{proof}

\begin{lemma}\label{lemma41}
	Let $\kappa$ be a regular cardinal.
	Suppose $C$ is a directed set such that $\cf(C)<\kappa$, then there is no directed set $D$ such that $C <_T D <_T C \times \kappa$.
\end{lemma}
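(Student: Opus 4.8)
We must show that if $\cf(C) < \kappa$ with $\kappa$ regular, then no directed set $D$ satisfies $C <_T D <_T C \times \kappa$. The natural strategy is to assume such a $D$ exists and derive a contradiction by extracting structural information from the two strict inequalities. Since $D <_T C \times \kappa$, fix a Tukey function $h : D \to C \times \kappa$; equivalently, work with a convergent map $g : C \times \kappa \to D$. Since $C <_T D$, in particular $C \leq_T D$, and since $D \not\leq_T C$ we know $D$ is genuinely ``larger'' than $C$. The key point should be that $\kappa \in \hu(D)$: if we can show this, then by Lemma~\ref{Lemma - kappa leq_T D} we get $\kappa \leq_T D$, and combined with $C \leq_T D$ and the fact (quoted in the preliminaries, {\cite[Proposition~2]{MR792822}}) that $C \times \kappa$ is the least upper bound of $C$ and $\kappa$, we obtain $C \times \kappa \leq_T D$, contradicting $D <_T C \times \kappa$.

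So the heart of the argument is to produce a witness for $\kappa \in \hu(D)$, i.e.\ a subset $X \in [D]^\kappa$ every $\kappa$-sized subset of which is unbounded. First I would use $D <_T C \times \kappa$ to get a convergent map $g : C \times \kappa \to D$; then for each $c \in C$ consider the $\kappa$-chain $\{ g(c, \alpha) \mid \alpha < \kappa \}$. Fixing a cofinal $\subseteq$-small subset $C_0 \subseteq C$ of size $\cf(C) < \kappa$, the set $Y := g``(C_0 \times \kappa)$ is cofinal in $D$ (image of a cofinal set under a convergent map), and has size $\leq \cf(C)\cdot\kappa = \kappa$. Now I want to find inside $Y$ — or rather inside $D$ — a set witnessing $\kappa \in \hu(D)$. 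The alternative, cleaner route: since $D \not\leq_T C$, by Lemma~\ref{Lemma - C leq_T D imply cf(C) leq cf(D)} applied in the contrapositive direction we cannot have $\cf(D) \leq \cf(C)$ in a way that collapses things; more usefully, since $C \leq_T D$ but it is not the case that $D \equiv_T C$, there must be an unbounded subset of $D$ that is ``spread out'' along the $\kappa$-direction. Concretely: if every $\kappa$-sized subset of $D$ coming from a single $g$-fiber-chain were bounded, one could push a cofinal map back and show $D \leq_T C$.

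The technical core, and the step I expect to be the main obstacle, is the following dichotomy argument. Using the convergent map $g : C\times\kappa \to D$: either (i) for some $c \in C$ the chain $\langle g(c,\alpha) : \alpha<\kappa\rangle$ is cofinal in $D$, in which case $D$ has cofinality $\leq \kappa$ and is the increasing union of $\kappa$ many bounded pieces, and one argues that $D \leq_T \kappa$, whence $D <_T C \times \kappa$ would force $D \equiv_T \kappa$ or similar, and then $C \leq_T D \equiv_T \kappa$ forces $\cf(C) = \kappa$ (contradiction) unless $C \leq_T \kappa$, which again gives $C\times\kappa \equiv_T\kappa \equiv_T D$, contradicting strictness; or (ii) no such fiber is cofinal, and then since $C_0\times\kappa$ has a cofinal image, one can select for each $c \in C_0$ an element ``beyond'' the whole $c$-chain and diagonalize across $C_0$ (using $|C_0| < \kappa = \cf(\kappa)$) to produce a strictly increasing $\kappa$-sequence in $D$ whose range, by a Fodor/pigeonhole-style argument, witnesses $\kappa\in\hu(D)$. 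I would then invoke Lemma~\ref{Lemma - existence of full-unbounded kappa set}: it suffices to exhibit an unbounded $X \in [D]^\kappa$ all of whose $<\kappa$-sized subsets are bounded, and such an $X$ is exactly what the diagonalization in case (ii) yields (the initial segments of the increasing $\kappa$-sequence are bounded precisely because $\kappa$ is regular and $\cf(C)<\kappa$ lets the relevant partial products stay bounded). Care is needed to ensure the constructed sequence genuinely has size $\kappa$ and is unbounded — this is where the hypothesis $\cf(C)<\kappa$ is used decisively, to prevent the $C$-coordinate from ``absorbing'' the unboundedness. Once $\kappa \in \hu(D)$ is established, the contradiction $C\times\kappa \leq_T D <_T C\times\kappa$ closes the proof.
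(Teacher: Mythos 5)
Your endgame is right and matches the paper's: if $\kappa\in\hu(D)$ then $\kappa\leq_T D$ by Lemma~\ref{Lemma - kappa leq_T D}, and together with $C\leq_T D$ and the least-upper-bound property of finite products this yields $C\times\kappa\leq_T D$, contradicting $D<_T C\times\kappa$. The gap is that $\kappa\in\hu(D)$ cannot be established in general. By Lemma~\ref{Lemma - C leq_T D imply cf(C) leq cf(D)} one only gets $\cf(C)\leq\cf(D)\leq\kappa$, and if $\cf(D)<\kappa$ then $\kappa\notin\hu(D)$: any $\kappa$-sized subset of $D$ meets some single element of a cofinal set of size $<\kappa$ in $\kappa$-many points (regularity of $\kappa$), so it contains a bounded $\kappa$-sized subset. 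Your dichotomy on whether some fiber $g"(\{c\}\times\kappa)$ is cofinal does not isolate this problematic case, and neither branch handles it: in case (i) the inference from ``$D\leq_T\kappa$ and $D<_T C\times\kappa$'' to ``$D\equiv_T\kappa$'' is unjustified --- you have only reduced the lemma to the special instance $C<_T D<_T\kappa$, not refuted it --- and in case (ii) the proposed diagonalization would produce a bounded or trivial sequence precisely when $\cf(D)<\kappa$, which is the scenario you must worry about.

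What is missing is the paper's treatment of the case $\cf(D)<\kappa$. There one may assume $|D|=\cf(D)<\kappa$ and take a Tukey function $g:D\to C\times\kappa$; since $\kappa$ is regular, $\pi_\kappa\circ g$ has range contained in some $\alpha<\kappa$, hence $\pi_C\circ g$ is already a Tukey function from $D$ to $C$ (an unbounded $X\subseteq D$ has $g"X$ unbounded in $C\times\kappa$ but bounded in the $\kappa$-coordinate, so it must be unbounded in the $C$-coordinate), giving $D\leq_T C$ and contradicting $C<_T D$. In the complementary case $\cf(D)=\kappa$ no diagonalization is needed: Pouzet's Lemma~\ref{Lemma - cofinal set with no large bounded set} already provides a cofinal copy of $D$ in which every $\kappa$-sized subset is unbounded, i.e.\ $\kappa\in\hu(D)$ for free, and your endgame applies verbatim.
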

\begin{proof}
	Suppose $D$ is a directed set such that $C <_T D <_T C \times \kappa$.
	Let us assume $D$ is a directed set of size $\cf(D)$ such that every subset of $D$ of size $\cf(D)$ is unbounded in $D$.
	By Lemma~\ref{Lemma - C leq_T D imply cf(C) leq cf(D)} we get that $\cf(C) \leq \cf(D)\leq \kappa$.
	We split to two cases:
	
	$\br$ Suppose $\cf(C)\leq \cf(D) <\kappa$.
	Let $g:D\rightarrow C\times \kappa$ be a Tukey function.
	As $|D|=\cf(D)<\kappa$ and $\kappa$ is regular there exists some $\alpha<\kappa$ such that $g"D \subseteq C\times \alpha$.
	We claim that $\pi_C \circ g$ is a Tukey function from $D$ to $C$, hence $D\leq_T C$ which is absurd.
	Suppose $X\subseteq D$ is unbounded in $D$, as $g$ is a Tukey function, we know that $g"X$ is unbounded in $C\times \kappa$.
	But as $(\pi_{\kappa}\circ g)"X$ is bounded by $\alpha$, we get that $(\pi_{C}\circ g)" X$ is unbounded in $C$ as sought.
	
	$\br$ Suppose $\cf(D)=\kappa$, notice that $\kappa \in \hu(D)$ is regular so by Lemma~\ref{Lemma - kappa leq_T D} we get that $\kappa\leq_T D$.
	We also know that $C\leq_T D$, thus $\kappa\times C \leq_T D$ which is absurd.
\end{proof}

Note that $\non(\mathcal I_{\bd}(E)) > \kappa$ implies that $\kappa \in \Inner(\mathcal I_{\bd}(E),\kappa)$.
\begin{lemma}\label{Lemma - 2nd gap}
	Let $\kappa$ be a regular cardinal.
	Suppose $C$ and $E$ are two directed sets such that $\cf(C)<\kappa\leq \cf(E)$ and $\kappa \in \Inner(\mathcal I_{\bd}(E),\kappa)$. 
	Then there is no directed set $D$ such that $C\times E <_T D <_T C\times \kappa\times E$.
\end{lemma}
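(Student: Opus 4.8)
The plan is to argue by contradiction: suppose $D$ is a directed set with $C\times E <_T D <_T C\times\kappa\times E$, and, as usual, assume $D$ has size $\cf(D)$ with every subset of size $\cf(D)$ unbounded (using Lemma~\ref{Lemma - cofinal set with no large bounded set}). Since $C\times E\leq_T D$, Lemma~\ref{Lemma - C leq_T D imply cf(C) leq cf(D)} gives $\cf(D)\geq\cf(C\times E)\geq\cf(E)\geq\kappa$. The natural dividing line is whether $\kappa\in\hu(D)$ or not, and the two halves of the argument will mirror the two cases of Lemma~\ref{lemma41}.

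First I would treat the case where some unbounded subset of $D$ of size $\kappa$ has all its $<\kappa$-sized subsets bounded — more precisely, I expect to need the statement that $\kappa\notin\hu(D)$ forces a favorable structure on $D$. If $\kappa\notin\hu(D)$, then I claim $D\leq_T C\times E$, contradicting $C\times E<_T D$. To see this, fix a Tukey function $g:D\to C\times\kappa\times E$. Partition $D$ according to the $C$-coordinate of $g$: since $|C|=\cf(C)<\kappa$ and $\non(\mathcal I_{\bd}(D))\geq\kappa$ (every subset of $D$ of size $<\cf(D)$, hence of size $<\kappa$, is bounded), one shows $\pi_C\circ g$ is already "almost" Tukey, so the real content is in the $\kappa$-coordinate. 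Here is where $\kappa\notin\hu(D)$ is used: it should let me replace the $\kappa$-coordinate of $g$ by a bounded-below map, i.e. conclude that for each unbounded $X\subseteq D$, the projection $(\pi_\kappa\circ g)"X$ is bounded in $\kappa$, so that $(\pi_C\times\pi_E)\circ g$ witnesses $D\leq_T C\times E$. This step — extracting that $D$ is Tukey-below $C\times E$ from the failure of $\kappa\in\hu(D)$ — is the one I expect to require the most care, because it is not a pure pigeonhole argument; it needs the interplay between $\cf(C)<\kappa$, $\non(\mathcal I_{\bd}(D))\geq\kappa$, and the hereditary-unboundedness hypothesis, possibly invoking Lemma~\ref{Lemma - D not<= C x E} with the partition of $D$ induced by $g$.

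For the complementary case, suppose $\kappa\in\hu(D)$. Then by Lemma~\ref{Lemma - kappa leq_T D}, $\kappa\leq_T D$, and since also $C\times E\leq_T D$ and $D$ is directed, $C\times\kappa\times E\leq_T D$ (finite products are least upper bounds, by \cite[Proposition~2]{MR792822}); combined with $D\leq_T C\times\kappa\times E$ this gives $D\equiv_T C\times\kappa\times E$, contradicting $D<_T C\times\kappa\times E$. This half is short and formal once $\kappa\in\hu(D)$ is in hand. The hypothesis $\kappa\in\Inner(\mathcal I_{\bd}(E),\kappa)$ together with $\kappa\leq\cf(E)$ is what should be feeding the first case: it guarantees that $C\times\kappa\times E$ genuinely sits strictly above $C\times E$ and that the obstruction to $D\leq_T C\times E$ really is a "$\kappa$-worth" of unboundedness, so that if $D$ fails to carry a hereditary-unbounded $\kappa$-set then it collapses below $C\times E$. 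I would also record the remark preceding the lemma — that $\non(\mathcal I_{\bd}(E))>\kappa$ suffices for $\kappa\in\Inner(\mathcal I_{\bd}(E),\kappa)$ — so the lemma applies in the form needed by Theorem~\ref{Theorem - Gap}.
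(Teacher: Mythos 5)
Your overall architecture --- dichotomize on whether $\kappa\in\hu(D)$, conclude $D\equiv_T C\times\kappa\times E$ in the positive case via Lemma~\ref{Lemma - kappa leq_T D} and the least-upper-bound property of finite products, and aim for $D\leq_T C\times E$ in the negative case --- is sound and close in spirit to the paper's proof, whose first case instead assumes the existence of an unbounded $X\in[D]^{\kappa}$ all of whose ${<}\kappa$-sized subsets are bounded (a formally stronger condition, sufficient by Corollary~\ref{Cor - existence of full-unbounded kappa set}). The positive half of your argument is complete. The gap is in the negative half, which is where all the content lies, and the one concrete mechanism you propose there does not work. The intermediate claim that $\kappa\notin\hu(D)$ yields ``$(\pi_\kappa\circ g)"X$ is bounded in $\kappa$ for every unbounded $X\subseteq D$'' is false: composing a Tukey map into $C\times E$ with an arbitrary surjection onto $\kappa$ in the middle coordinate produces a Tukey $g$ whose $\kappa$-projection is unbounded on essentially every large set. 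What you actually need is weaker --- that $h:=\pi_{C\times E}\circ g$ sends unbounded sets to unbounded sets --- and the argument for it is not a massaging of $g$ but a construction: if $X\subseteq D$ is unbounded while $h"X$ is bounded, then $\pi_{\kappa}\circ g"X$ is cofinal in $\kappa$; setting $X_\alpha:=X\cap g^{-1}(C\times\{\alpha\}\times E)$ and $U_\alpha:=\bigcup_{\beta\leq\alpha}X_\beta$, each $g"U_\alpha$ is bounded, hence each $U_\alpha$ is bounded by some $y_\alpha\in D$, and (by regularity of $\kappa$ and unboundedness of $X$) the set $\{y_\alpha\mid\alpha<\kappa\}$ has size $\kappa$ with all of its size-$\kappa$ subsets unbounded. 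This is exactly the engine of the paper's second main case, and it is absent from your proposal. Moreover, Lemma~\ref{Lemma - D not<= C x E} points in the wrong direction --- its conclusion is $D\not\leq_T C\times E$ --- so it cannot be the tool that delivers $D\leq_T C\times E$.

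For what it is worth, once the construction above is supplied, your dichotomy actually shortcuts the paper's argument: the set $\{y_\alpha\mid\alpha<\kappa\}$ immediately contradicts $\kappa\notin\hu(D)$, whereas the paper, working under the weaker hypothesis that every unbounded $\kappa$-sized set has an unbounded ${<}\kappa$-sized subset, must further refine this set using $|C|<\kappa$ together with $\kappa\in\Inner(\mathcal I_{\bd}(E),\kappa)$ (this is the only place that hypothesis is used concretely) and then extract a small unbounded subset to reach a contradiction. But as written your proposal neither supplies the construction nor uses the hypotheses on $E$ anywhere, and you flag the crucial step yourself as one you do not know how to carry out; so the proof is incomplete.
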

\begin{proof}
	Suppose $D$ is a directed set such that $C\times E \leq_T D \leq_T C\times \kappa\times E$, we will show that either $D\equiv_T C\times E$ or $D\equiv_T C\times \kappa \times E$.
	We may assume that every subset of D of size $\cf(D)$ is unbounded and $|C|=\cf(C)$.
	By Lemma~\ref{Lemma - C leq_T D imply cf(C) leq cf(D)}, we have that $\cf(E)=\cf(D)$.
	
Suppose first there exists some unbounded subset $X\in [D]^{\kappa}$ such that every subset $Y\in [X]^{<\kappa}$ is bounded.
	By Corollary~\ref{Cor - existence of full-unbounded kappa set}, this implies that $\kappa\leq_T D$.
	But as $C\times E\leq_T D$ and  $D \leq_T C\times \kappa\times E$, this implies that $C\times \kappa \times E \equiv_T D$ as sought.
	
Hereafter, suppose for every unbounded subset $X\in [D]^{\kappa}$ there exists some subset $Y\in [X]^{<\kappa}$ unbounded.
	Let $g:D\rightarrow C\times \kappa\times E$ be a Tukey function. 
	Define $h:=\pi_{C\times E} \circ g$.
Now, there are two main cases to consider:		
	\begin{itemize}
		\item[$\br$] Suppose every unbounded subset $X\subseteq D$ of size $\lambda>\kappa$ which contain no unbounded subset of smaller cardinality is such that $h"X$ is unbounded in $C\times E$.
		
		We show that $h$ is Tukey, it is enough to verify that for every cardinal $\omega\leq \mu\leq \kappa$ and every unbounded subset $X\subseteq D$ of size $\mu$ which contain no unbounded subset of smaller cardinality is such that $h"X$ is unbounded in $C\times E$.
		
		As $g$ is Tukey, the set $g"X$ is unbounded in $C\times \kappa \times E$.
		Notice that if the set $\pi_{C\times E}\circ g" X$ is unbounded, then we are done.
		Assume that $\pi_{C\times E}\circ g" X$ is bounded, then $\pi_{\kappa} \circ g"X$ is unbounded.
		
		$\br\br$ Suppose $|X|<\kappa$. As $|g"X|<\kappa$, we have that $\pi_{\kappa} \circ g"X$ is bounded, which is absurd.
		
		$\br\br$ Suppose $ |X|=\kappa$, by the case assumption there exists some $Y \in [X]^{<\kappa}$ unbounded in $D$.
		But this is absurd as the assumption on $X$ was that $X$ contains no subset of size smaller than $|X|$ which is unbounded.	
		
		$\br\br$ Suppose $|X|>\kappa$, by the case assumption, $h"X$ is unbounded in $C\times E$ as sought.
		
		\item[$\br$] Suppose for some unbounded subset $X\subseteq D$ of size $\lambda>\kappa$ which contains no unbounded subset of smaller cardinality is such that $h"X$ is bounded in $C\times E$.
		 As $g$ is Tukey, $\pi_{\kappa}\circ g"X$ is unbounded.
		
		Let $X_\alpha := X\cap g^{-1} (C\times \{\alpha\}\times E )$ and $U_\alpha:=\bigcup_{\beta\leq \alpha} X_\beta$ for every $\alpha<\kappa$.
		As $g$ is Tukey and $g"U_\alpha$ is bounded, we get that $U_\alpha$ is also bounded by some $y_\alpha\in D$.
		Let $Y:=\{y_\alpha \mid \alpha<\kappa\}$.
		We claim that $Y$ is of cardinality $\kappa$.
		If it wasn't, then by the pigeonhole principle as $\kappa$ is regular there would be some $\alpha<\kappa$ such that $y_\alpha$ bounds the set $X$ in $D$ and that is absurd.
		Similarly, as $X$ is unbounded, the set $Y$ and also every subset of it of size $\kappa$ must be unbounded.
		
		Next, we aim to get $Z\in [Y]^\kappa$ such that $\pi_{C\times E}\circ g"Z$ is bounded by some $(c,e)\in C\times E$.
		This can be done as follows:
		As $|C|<\kappa$ and $\kappa$ is regular, by the pigeonhole principle, there exists some $Z_0\in [Y]^\kappa$ and $c\in C$ such that $g"Z_0 \subseteq \{c\}\times \kappa \times E$.
		Similarly, if $|\pi_{E}\circ g" Z_0|<\kappa$, by the pigeonhole principle, there exists some $Z\in [Z_0]^\kappa$ and $e\in E$ such that $g"Z \subseteq \{c\}\times \kappa \times \{e\}$.
		Else, if $|\pi_{E}" Z_0|=\kappa$, then as $\kappa \in\Inner(\mathcal I_{\bd}(E),\kappa )$ for some $B\in [\pi_{E}\circ g" Z_0]^{\kappa}$ and $e\in E$, $B$ is bounded in $E$ by $e$. 
		Fix some $Z\in [Z_0]^\kappa$ such that $g"Z\subseteq \{c\}\times \kappa \times B$.
		
		Note that $Z$ is a subset of $Y$ of size $\kappa$, hence, unbounded in $D$.
		By the assumption, there exists some subset $W\in [Z]^{<\kappa}$ unbounded in $D$.
		Note that as $\kappa$ is regular, for some $\alpha<\kappa$, $\pi_{\kappa} \circ g"W\subseteq \alpha$.
		As $g$ is Tukey, the subset $g"W\subseteq \{c\}\times \kappa \times E$ is unbounded in $C\times \kappa \times E$, but this is absurd as $g"W$ is bounded by $(c,\alpha,e)$.
	\qedhere
	\end{itemize}
\end{proof}

\section{Non-empty intervals}\label{Section - No Gaps}

In this section we consider three types of intervals in the poset $(\mathcal T_n, <_T)$ and show each one can consistently have a directed set inside.

\subsection{Directed set between $  \theta^+ \times \theta^{++}$ and $[\theta^{++}]^{\leq \theta}$}

In {\cite[Theorem~1.1]{kuzeljevic2021cofinal}}, the authors constructed a directed set between  $\omega_1\times \omega_2$ and $[\omega_2]^{\leq \omega}$ under the assumption $2^{\aleph_0}=\aleph_1$, $2^{\aleph_1}=\aleph_2$ and the existence of an $\aleph_2$-Souslin tree.
In this subsection we generalize this result while waiving the assumption concerning the Souslin tree.
The main corollary of this subsection is:
\begin{cor}\label{Cor - theta^+ x theta^++ < D < [theta^++]^{<theta+}}
	Assume $\theta$ is an infinite cardinal such that $2^\theta = \theta^+$, $2^{\theta^+}= \theta^{++}$.
	Suppose $C$ and $E$ are directed sets such that $\cf(C)\leq \theta^+$ and either $\non(\mathcal I_{\bd} (E))>\theta^+$ or $E\equiv_T 1$.
	Then there exists a directed set $D$ such that $ C\times \theta^+ \times \theta^{++}\times E <_T C\times D\times E <_T C\times[\theta^{++}]^{\leq \theta}\times E.$
\end{cor}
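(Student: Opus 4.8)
The plan is to reduce the full statement with the parameters $C$ and $E$ to the case $C\equiv_T 1$ and $E\equiv_T 1$, that is, to the existence of a directed set $D$ with $\theta^+\times\theta^{++}<_T D<_T[\theta^{++}]^{\leq\theta}$, and then to harvest the general case by a product argument. First I would construct, under $2^\theta=\theta^+$ and $2^{\theta^+}=\theta^{++}$, the core directed set $D$ sitting strictly between $\theta^+\times\theta^{++}$ and $[\theta^{++}]^{\leq\theta}$; this is the substantive content of the subsection (presumably the object $D_{\mathcal C}$ referenced in the body), and I would invoke whatever main construction theorem of this subsection produces it. The two key invariants I would pin down for this $D$ are $\cf(D)=\theta^{++}$, $\non(\mathcal I_{\bd}(D))=\theta^+$, $\out(\mathcal I_{\bd}(D))=\theta^{++}$, and, crucially, that $\theta^+\notin\Inner(\mathcal I_{\bd}(D),\theta^{++})$ while $\theta^+\in\Inner(\mathcal I_{\bd}([\theta^{++}]^{\leq\theta}),\theta^{++})$ — this asymmetry is what separates $D$ from both endpoints, with the lower separation $\theta^+\times\theta^{++}<_T D$ coming from $\theta^+\in\hu(D)$ together with $\theta^{++}=\cf(D)\in\hu(D)$ (Lemma~\ref{Lemma - kappa leq_T D}), and the upper separation $D<_T[\theta^{++}]^{\leq\theta}$ from Lemma~\ref{Lemma - below kappa theta} (giving $\leq_T$) plus a failure of $[\theta^{++}]^{\leq\theta}\leq_T D$ witnessed by the $\Inner$-discrepancy via Lemma~\ref{Lemma - kappa, theta no tukey map}.

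Next I would multiply through by $C$ and $E$. The $\leq_T$-chain $C\times\theta^+\times\theta^{++}\times E\leq_T C\times D\times E\leq_T C\times[\theta^{++}]^{\leq\theta}\times E$ is immediate from monotonicity of $\times$ under $\leq_T$. For the strictness on the left: if $C\times D\times E\leq_T C\times\theta^+\times\theta^{++}\times E$ held, I would derive a contradiction by noting that $\theta^{++}\in\hu(D)\subseteq\hu(C\times D\times E)$, while $\cf(C\times\theta^+\times\theta^{++}\times E)$; here I must be a little careful since $\cf(E)$ could be large, so instead of a cofinality count I would use the structural separation: the product $C\times\theta^+\times\theta^{++}\times E$ is $\equiv_T$ to itself and the point is that $D\not\leq_T \theta^+\times\theta^{++}$ relativizes — concretely, take an unbounded $X\in[D]^{\theta^{++}}$ all of whose $\theta^+$-sized subsets are unbounded, push it into $C\times\theta^+\times\theta^{++}\times E$ by a would-be Tukey map, use $|C|\leq\theta^+<\theta^{++}$ and $\cf(\theta^+\times\theta^{++})=\theta^{++}$ — no, the clean route is Lemma~\ref{Lemma - D not<= C x E}: $D$ has the property that in any partition into $\theta^+$ pieces some piece contains an unbounded set of size $\theta^{++}$ (because $\non(\mathcal I_{\bd}(D))=\theta^+$ forces... ) — actually the simplest is to absorb $C$ and the $\theta^+$ factor into a single directed set of cofinality $\leq\theta^+$ and apply Lemma~\ref{Lemma - D not<= C x E} with $\kappa=\theta^+$, $E$-role played by $\theta^{++}\times E$ (which has $\non>\theta^+$ only if $\non(\mathcal I_{\bd}(\theta^{++}))=\theta^{++}>\theta^+$, true), to conclude $D\not\leq_T C\times\theta^+\times(\theta^{++}\times E)$; then since $C\times D\times E\geq_T D$ this kills the reduction.

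For the strictness on the right I would argue that $C\times[\theta^{++}]^{\leq\theta}\times E\not\leq_T C\times D\times E$: suppose a Tukey function existed; since $\theta^+\in\Inner(\mathcal I_{\bd}([\theta^{++}]^{\leq\theta}),\theta^{++})$ this passes to the product on the left (the $\Inner$ property of a factor is inherited by a product with a factor of small cofinality, exactly Lemma~\ref{Lemma - out(, kappa)} applied with the roles arranged so $C$ and the other factors are ``small'' relative to $\theta^{++}$ — here one needs $\cf(C)\leq\theta^+<\theta^{++}$ and $\non$ of the $D$ and $E$ factors $>\theta^+$, the latter holding for $E$ by hypothesis and for $D$ since $\non(\mathcal I_{\bd}(D))=\theta^+$... wait, that is not $>\theta^+$) so I would instead directly use that $\out(\mathcal I_{\bd}(\text{LHS}))\geq\theta^{++}$ lands inside $\Inner$ of the RHS via Lemma~\ref{Lemma - kappa, theta no tukey map} with $\kappa=\theta^{++}$, after checking $\out(\mathcal I_{\bd}(C\times D\times E))$ computations and that the relevant $\Inner$ witnesses survive the product — this bookkeeping of which $\Inner$/$\out$/$\non$ inequalities survive multiplication by $C$ and $E$ is where I expect the real work to be, and I would organize it as a short lemma on products before assembling the chain. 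The main obstacle, then, is not the existence of the separating $D$ (that is the construction theorem cited from this subsection) but the \emph{relativization}: verifying that all four separating inequalities are preserved after taking the product with the arbitrary $C$ (of cofinality $\leq\theta^+$) and the arbitrary $E$ (with $\non>\theta^+$ or trivial), for which the toolbox of Section~\ref{Section - Tools}, especially Lemmas~\ref{Lemma - kappa, theta no tukey map}, \ref{Lemma - out(, kappa)} and \ref{Lemma - D not<= C x E}, is exactly tailored.
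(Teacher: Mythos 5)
Your overall architecture matches the paper's: construct a core directed set $D$ from the cardinal arithmetic (the paper's Theorem~\ref{Theorem - directed set from coloring}, producing $D_c$ from a coloring), and then relativize the four inequalities to the products with $C$ and $E$ (the paper's Theorem~\ref{theorem - D between theta x times+ and [theta++]<=theta}). Your treatment of the left-hand strict inequality via Lemma~\ref{Lemma - D not<= C x E} --- absorbing $C\times\theta^+$ into one factor of size $\leq\theta^+$ and $\theta^{++}\times E$ into one factor with $\non>\theta^+$ --- is exactly the paper's argument, and you are right that the needed partition property is a separate clause of the construction rather than a consequence of $\non(\mathcal I_{\bd}(D))=\theta^+$.

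However, the right-hand separation --- which is where the real work lies --- is wrong as sketched, in two ways. First, your ``key asymmetry'' is inverted: the construction gives $\theta^+\in\Inner(\mathcal I_{\bd}(D),\theta^{++})$ (every $\theta^{++}$-sized subset of $D$ contains a bounded subset of size $\theta^+$; this is the hard elementary-submodel lemma of the subsection), whereas the canonical cofinal subset $\mathfrak D_{[\theta^{++}]^{\leq\theta}}$ satisfies $\out(\mathcal I_{\bd}(\mathfrak D_{[\theta^{++}]^{\leq\theta}}))\leq\theta^+$, so that $\theta^+\notin\Inner(\mathcal I_{\bd}([\theta^{++}]^{\leq\theta}),\theta^{++})$ (the family $[\theta^{++}]^1$ already witnesses this). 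With the invariants as you state them, Lemma~\ref{Lemma - kappa, theta no tukey map} would yield $D\not\leq_T[\theta^{++}]^{\leq\theta}$ and destroy the chain you are trying to build. Second, even with the invariants corrected, Lemma~\ref{Lemma - kappa, theta no tukey map} cannot simply be applied to the full products: once you multiply by an $E$ with $\non(\mathcal I_{\bd}(E))>\theta^+$, the left-hand side $C\times\mathfrak D_{[\theta^{++}]^{\leq\theta}}\times E$ acquires bounded subsets of size $\theta^+$ (for instance $\{c\}\times\{d\}\times Y$ with $Y\in[E]^{\theta^+}$ bounded), so its $\out$ is no longer $\leq\theta^+$ and the hypothesis of that lemma fails. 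This is precisely why the paper proves the bespoke Lemma~\ref{Lemma - not <_T cofinal unbounded >theta and >theta bounded}: assuming $D\times E\equiv_T J\times E$, it passes to a common directed superset, transfers a cofinal subset of $J$ all of whose $>\theta$-sized subsets are unbounded into a $\theta^{++}$-sized subset of $D$, and then uses $\theta^+\in\Inner(\mathcal I_{\bd}(D),\theta^{++})$ together with the boundedness of $\theta^+$-sized subsets of $E$ to reach a contradiction. Your plan correctly flags the product bookkeeping as the main obstacle but underestimates it: the $\out$/$\Inner$ inequalities do not survive multiplication by $E$, and a genuinely new argument is required at that point.
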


The result follows immediately from Theorems~\ref{theorem - D between theta x times+ and [theta++]<=theta} and \ref{Theorem - directed set from coloring}.
First we prove the following required Lemma.

\begin{lemma}\label{Lemma - not <_T cofinal unbounded >theta and >theta bounded}
	Suppose $\theta$ is a infinite cardinal and $D,J, E$ are three directed sets such that:
	\begin{itemize}
		\item $\cf(D)=\cf(J)=\theta^{++}$;
		\item $\theta^+ \in \Inner(\mathcal I_{\bd}(D), \theta^{++})$ and $\out(\mathcal I_{\bd}(J))\leq\theta^+$;
		\item $\non(\mathcal I_{\bd}(E))>\theta^+$ or $E\equiv_T 1$.
		\item $D\times E\leq_T J\times E$.
	\end{itemize}
	Then $J\times E\not \leq_T D\times E$. 
	In particular, $D\times E <_T J\times E$.
\end{lemma}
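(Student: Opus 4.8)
The plan is to argue by contradiction: assume there is a Tukey function $g\colon J\times E\to D\times E$, and derive a contradiction from the incompatibility between the "inner" behavior of $D$ at $\theta^+$ (every $\theta^{++}$-sized subset of $D$ contains a bounded $\theta^+$-sized piece) and the "out" behavior of $J$ at $\theta^+$ (every $\theta^+$-sized subset of $J$ is unbounded). First I would reduce to a cofinal copy: replace $D$ by a cofinal subset of size $\theta^{++}$ witnessing $\theta^+\in\Inner(\mathcal I_{\bd}(D),\theta^{++})$ and replace $J$ by a cofinal subset of size $\theta^{++}$ witnessing $\out(\mathcal I_{\bd}(J))\le\theta^+$, i.e. every subset of $J$ of size $\theta^+$ is unbounded in $J$ (using Lemma~\ref{Lemma - cofinal set with no large bounded set} to arrange, if needed, that $\cf(J)=\theta^{++}$ is attained with every $\theta^{++}$-sized subset unbounded). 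Since $\cf(J)=\theta^{++}$, fix inside $J$ an unbounded subset $X$ of size $\theta^{++}$ all of whose subsets of size $\theta^+$ are unbounded — this is exactly what $\out(\mathcal I_{\bd}(J))\le\theta^+$ gives us.

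Next I would transport the picture through $g$. Consider the set $X\times\{e_0\}\subseteq J\times E$ for a fixed $e_0\in E$ (or, if $E\equiv_T 1$, just $X$). This is an unbounded subset of $J\times E$ of size $\theta^{++}$ with the property that every subset of size $\theta^+$ of it is still unbounded in $J\times E$ (since unboundedness of the $J$-coordinate suffices). Applying $g$ and projecting, I would like to conclude that $\pi_D\circ g$ restricted to this set already behaves like a Tukey function, but the obstruction is the $E$-coordinate of the target: $g$ could "hide" unboundedness in the $E$-part of $D\times E$. To handle this I would use the hypothesis $\non(\mathcal I_{\bd}(E))>\theta^+$: take $Y\in[X]^{\theta^{++}}$, enumerate it, and for each of the $\theta^{++}$ points look at the $E$-coordinate of its image; by $\non(\mathcal I_{\bd}(E))>\theta^+$ every $\theta^+$-sized collection of these $E$-coordinates is bounded in $E$. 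Since $\theta^+\in\Inner(\mathcal I_{\bd}(D),\theta^{++})$, the $D$-coordinates of $g"Y$ contain a bounded subset $B\in[\,\cdot\,]^{\theta^+}$; let $W\subseteq Y$ be the corresponding preimage, of size $\theta^+$. Then the $D$-coordinate of $g"W$ is bounded in $D$, and the $E$-coordinate of $g"W$ is a $\theta^+$-sized subset of $E$, hence bounded in $E$ (when $E\not\equiv_T 1$; trivially when $E\equiv_T 1$). Therefore $g"W$ is bounded in $D\times E$, while $W$ is an unbounded subset of $J\times E$ of size $\theta^+$ — contradicting that $g$ is Tukey.

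There is a bookkeeping subtlety I would be careful about: the $D$-coordinates of $g"Y$ may not be distinct, so "contains a bounded subset of size $\theta^+$" must be read as "there is $Z\in[Y]^{\theta^{++}}$ whose $D$-images form a set, possibly smaller, that is bounded" — but in fact $\Inner$ is stated for subsets, so I would first thin $Y$ to make $\pi_D\circ g$ either constant on a $\theta^{++}$-sized subset (in which case that constant value bounds a huge unbounded set, done immediately) or injective on a $\theta^{++}$-sized subset, and only then apply $\Inner(\mathcal I_{\bd}(D),\theta^{++})$ to the $\theta^{++}$-sized image. The same dichotomy (constant vs. injective) should be run on the $E$-coordinate before invoking $\non(\mathcal I_{\bd}(E))>\theta^+$. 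The main obstacle is precisely this interplay of the two coordinates of the target together with non-injectivity of $g$; once the pigeonhole/thinning is organized correctly, combining $\Inner$ for $D$, $\non$ for $E$, and $\out$ for $J$ is routine. Finally, since we already assume $D\times E\le_T J\times E$, the conclusion $J\times E\not\le_T D\times E$ upgrades this to $D\times E<_T J\times E$, as claimed.
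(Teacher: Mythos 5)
Your proof is correct, but it takes a genuinely different route from the paper's. You argue directly from a hypothetical Tukey function $g\colon J\times E\to D\times E$: after a pigeonhole dichotomy ($\pi_D\circ g$ is either constant or injective on a $\theta^{++}$-sized subset of $X\times\{e_0\}$ --- valid since $\theta^{++}$ is regular), you apply $\theta^+\in\Inner(\mathcal I_{\bd}(D),\theta^{++})$ to the $D$-coordinates and $\non(\mathcal I_{\bd}(E))>\theta^+$ to the $E$-coordinates to extract a $\theta^+$-sized $W$ that is unbounded in $J\times E$ (via $\out(\mathcal I_{\bd}(J))\le\theta^+$) yet has bounded image, contradicting Tukeyness. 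The paper instead uses the fourth hypothesis $D\times E\le_T J\times E$ together with the assumption $J\times E\le_T D\times E$ to get $D\times E\equiv_T J\times E$, invokes Tukey's theorem to realize both as cofinal subsets of a common directed set $X$, and then chooses above each $a$ in a suitable cofinal $A\subseteq J$ a pair $(x_a,e_a)\in D\times E$ with the $x_a$ pairwise distinct; a bounded $\theta^+$-sized subfamily of $\{x_a\mid a\in A\}$ then yields a set $K\times\{e\}$ that is simultaneously bounded and unbounded in $X$. Your version is more self-contained (no appeal to the common-cofinal-superset characterization) and actually proves the stronger statement that $J\times E\not\le_T D\times E$ already follows from the first three bullets alone; the hypothesis $D\times E\le_T J\times E$ is needed only for the final ``in particular.'' One small imprecision: in the constant-fiber case the constant value bounds only the $D$-coordinate, so you must still shrink to a $\theta^+$-sized subset to bound the $E$-coordinate --- but that is exactly the shrinking your main argument already performs, so nothing is lost.
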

\begin{proof}
	Notice that $D$ is a directed set such that every subset of size $\theta^{++}$ contains a bounded subset of size $\theta^+$.
	Let us fix a cofinal subset $A\subseteq J$ of size $\theta^{++}$ such that every subset of $A$ of size $>\theta$ is unbounded in $J$.
	
	Suppose on the contrary that $J\times E\leq_T D\times E$.
	As $D\times E\leq_T J\times E$ we get that $D\times E\equiv_T J\times E$, hence there exists some directed set $X$ such that both $D\times E$ and $J\times E$ are cofinal subsets of $X$.
	
	We may assume that $D$ has an enumeration $D:=\{d_\alpha \mid \alpha<\theta^{++}\}$ such that for every $\beta<\alpha<\theta^{++}$ we have $d_\alpha \not <d_\beta$.
	Fix some $e\in E$.
	Now, for each $a\in A$ take a unique $x_a\in D$ and some $e_a\in E$ such that $(a,e)\leq_X (x_a,e_a)$.
	To do that, enumerate $A=\{a_\alpha \mid \alpha<\theta^{++}\}$. Suppose we have constructed already the increasing sequence 
	$\langle \nu_\beta \mid \beta<\alpha \rangle$ of elements in $\theta^{++}$.
	Pick some $\xi<\theta^{++}$ above $\{ \nu_\beta \mid \beta<\alpha \}$.
	As $D\times E$ is a directed set we may fix some $(x_{a_\alpha},e_a):=(d_{\nu_\alpha},e_a)\in D\times E$ above $(a_\alpha,e)$ and $(d_\xi,e)$. 
	
	Set $T=\{x_a\mid a\in A\}$, since $A\times E$ is cofinal in $X$, the set $T\times E$  is also cofinal in $X$ and $D\times E$.
	As $|T|=\theta^{++}$ we get that there exists some subset $B\in [T]^{\theta^+}$ bounded in $D$.
	Let $c\in D$ be such that $b\leq c$ for each $b\in B$.
	Consider the set $K=\{a\in A \mid x_a \in B \}$.
	Since either $\non(\mathcal I_{\bd}(E))>\theta^+$ or $E\equiv_T 1$, as $\{e_a\mid a\in K\}$ is of size $\leq\theta^+$, it is bounded in $E$ by some $\tilde e\in E$.
	So $P:=\{(x_a, e_a)\mid a\in K\}$ is bounded in $X$.
	Since $B$ is of size $>\theta$, the set $K$ is also of size $>\theta$.
	Thus, by the assumption on $A$, the set $K\times\{e\}$ is unbounded in $J\times E$, but also in $X$ because $J\times E$ is a cofinal subset of $X$.
	Then, for each $a\in K$ we have $(a,e)\leq_X  (x_a,e_a) \leq_X (c,\tilde e)$, contradicting the unboundness of $K\times \{e\}$ in $J\times E$.
\end{proof}

\begin{theorem}\label{theorem - D between theta x times+ and [theta++]<=theta}
 	Suppose $\theta$ is an infinite cardinal and $C,D,E$ are directed sets such that:
	\begin{enumerate}[label=(\arabic*)]
		\item $\cf(D)=\theta^{++}$;
		\item\label{Clause - unbounded set of size theta}
		For every partition $D=\bigcup_{\gamma<\theta^+} D_\gamma$, there is an ordinal $\gamma<\theta^+$, and an unbounded $K\subseteq D_\gamma$ of size $\theta^+$;
		
		\item\label{Clause - bounded lambda set}$\theta^+ \in \Inner(\mathcal I_{\bd}(D), \theta^{++})$ and $\non(\mathcal I_{\bd}(D))=\theta^+$;
		\item  $\non(\mathcal I_{\bd}(E))>\theta^+$ or $E\equiv_T 1$;
		\item $C$ is a directed set such that $\cf(C)\leq\theta^+$.
	\end{enumerate}
	Then $ C\times \theta^+ \times \theta^{++}\times E <_T C\times D\times E <_T C\times[\theta^{++}]^{\leq \theta}\times E.$
\end{theorem}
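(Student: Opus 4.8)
The plan is to establish the four relations
\begin{enumerate}
\item[(i)] $C\times\theta^+\times\theta^{++}\times E\leq_T C\times D\times E$;
\item[(ii)] $C\times D\times E\not\leq_T C\times\theta^+\times\theta^{++}\times E$;
\item[(iii)] $C\times D\times E\leq_T C\times[\theta^{++}]^{\leq\theta}\times E$;
\item[(iv)] $C\times[\theta^{++}]^{\leq\theta}\times E\not\leq_T C\times D\times E$,
\end{enumerate}
so that (i)+(ii) yield the first strict inequality and (iii)+(iv) the second. As every directed set is Tukey equivalent to its cofinal subsets, I may replace $C$ by a cofinal subset with $|C|\leq\theta^+$; if $\cf(C)=\theta^+$ I choose it, via Pouzet's Lemma~\ref{Lemma - cofinal set with no large bounded set}, so that every subset of $C$ of size $\theta^+$ is unbounded (this property is vacuous when $\cf(C)<\theta^+$). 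I also fix, using Fact~\ref{KT2021 - Lemma 2.3}, a cofinal subset $\mathfrak D:=\mathfrak D_{[\theta^{++}]^{\leq\theta}}$ of $[\theta^{++}]^{\leq\theta}$ of size $\theta^{++}$ all of whose subsets of size $>\theta$ are unbounded; thus $\mathfrak D\equiv_T[\theta^{++}]^{\leq\theta}$, $\cf(\mathfrak D)=\theta^{++}$, and $\out(\mathcal I_{\bd}(\mathfrak D))\leq\theta^+$.

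The two reducibility claims are immediate. For (i): since $\theta^{++}=\cf(D)$ is an infinite cardinal, $\theta^{++}\in\hu(D)$, so $\theta^{++}\leq_T D$ by Lemma~\ref{Lemma - kappa leq_T D}; and from $\non(\mathcal I_{\bd}(D))=\theta^+$ we obtain an unbounded $X\in[D]^{\theta^+}$ all of whose subsets of size $<\theta^+$ are bounded, whence $\theta^+\leq_T D$ by Corollary~\ref{Cor - existence of full-unbounded kappa set}. A finite product being the least upper bound of its factors, $\theta^+\times\theta^{++}\leq_T D$, and (i) follows by monotonicity of products. For (iii): apply Lemma~\ref{Lemma - below kappa theta} with $(\theta^+,\theta^{++})$ in the role of $(\theta,\kappa)$; the needed hypotheses $\cf(D)\leq\theta^{++}$ and $\non(\mathcal I_{\bd}(D))\geq\theta^+$ hold, so $D\leq_T[\theta^{++}]^{<\theta^+}=[\theta^{++}]^{\leq\theta}$, giving (iii).

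For (ii), suppose toward a contradiction that $C\times D\times E\leq_T C\times\theta^+\times\theta^{++}\times E$; then $D\leq_T(C\times\theta^+)\times(\theta^{++}\times E)$. Now $|C\times\theta^+|\leq\theta^+$, and $\non(\mathcal I_{\bd}(\theta^{++}\times E))>\theta^+$: indeed $\non(\mathcal I_{\bd}(A\times B))=\min\{\non(\mathcal I_{\bd}(A)),\non(\mathcal I_{\bd}(B))\}$ for directed $A,B$ (a subset of $A\times B$ is bounded iff both its projections are), while $\non(\mathcal I_{\bd}(\theta^{++}))=\theta^{++}$ and, by hypothesis~(4), $\non(\mathcal I_{\bd}(E))>\theta^+$ (the case $E\equiv_T1$ being trivial). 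Applying Lemma~\ref{Lemma - D not<= C x E} with $\kappa:=\theta^+$, whose first hypothesis is precisely hypothesis~(2) of the theorem, we get $D\not\leq_T(C\times\theta^+)\times(\theta^{++}\times E)$, a contradiction; with (i) this proves the first strict inequality.

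The substantive part is (iv). I apply Lemma~\ref{Lemma - not <_T cofinal unbounded >theta and >theta bounded} with $(C\times D,\,C\times\mathfrak D,\,E)$ in the role of $(D,J,E)$. Then $\cf(C\times D)=\cf(C\times\mathfrak D)=\theta^{++}$, the third hypothesis of that lemma is hypothesis~(4) here, and $(C\times D)\times E\leq_T(C\times\mathfrak D)\times E$ is exactly (iii), as $\mathfrak D\equiv_T[\theta^{++}]^{\leq\theta}$. There remain two cardinal-characteristic checks. First, $\theta^+\in\Inner(\mathcal I_{\bd}(C\times D),\theta^{++})$: given $S\in[C\times D]^{\theta^{++}}$, since $|C|\leq\theta^+<\theta^{++}$ some $c\in C$ has $\{d:(c,d)\in S\}\in[D]^{\theta^{++}}$, which by hypothesis~(3) contains a bounded $B\in[D]^{\theta^+}$, so $\{c\}\times B\subseteq S$ is bounded of size $\theta^+$. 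Second, $\out(\mathcal I_{\bd}(C\times\mathfrak D))\leq\theta^+$: let $S\in[C\times\mathfrak D]^{\theta^+}$; if $|\pi_{\mathfrak D}"S|>\theta$, then $\pi_{\mathfrak D}"S$, and hence $S$, is unbounded; otherwise $|\pi_{\mathfrak D}"S|\leq\theta<\cf(\theta^+)$, so some $d_0$ has $\{c:(c,d_0)\in S\}\in[C]^{\theta^+}$, which is unbounded in $C$ by our choice of $C$, so $S$ is again unbounded. Thus Lemma~\ref{Lemma - not <_T cofinal unbounded >theta and >theta bounded} gives $(C\times\mathfrak D)\times E\not\leq_T(C\times D)\times E$, which is (iv), and with (iii) the second strict inequality follows. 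I expect (iv) to be the crux: the factor $C$ cannot in general be absorbed into $D$ or into $[\theta^{++}]^{\leq\theta}$, so it must be carried through Lemma~\ref{Lemma - not <_T cofinal unbounded >theta and >theta bounded}, which is what makes the preliminary passage to the well-behaved cofinal copies of $C$ (Pouzet) and of $[\theta^{++}]^{\leq\theta}$ (Fact~\ref{KT2021 - Lemma 2.3}) necessary, and the two displayed computations are where this pays off.
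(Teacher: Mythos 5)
Your proof is correct and follows essentially the same route as the paper's: the two reductions come from Lemma~\ref{Lemma - kappa leq_T D}, Corollary~\ref{Cor - existence of full-unbounded kappa set} and Lemma~\ref{Lemma - below kappa theta}, the first non-reduction from Lemma~\ref{Lemma - D not<= C x E} applied to $D\not\leq_T(C\times\theta^+)\times(\theta^{++}\times E)$, and the second from Lemma~\ref{Lemma - not <_T cofinal unbounded >theta and >theta bounded} with $(C\times D,\,C\times\mathfrak D_{[\theta^{++}]^{\leq\theta}},\,E)$ in the roles of $(D,J,E)$. The only difference is that you spell out the verifications of $\theta^+\in\Inner(\mathcal I_{\bd}(C\times D),\theta^{++})$ and $\out(\mathcal I_{\bd}(C\times\mathfrak D))\leq\theta^+$, which the paper asserts without detail.
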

\begin{proof} As $\cf(D)=\theta^{++}$, we may assume that every subset of $D$ of size $\theta^{++}$ is unbounded.
	\begin{claim}
		$\theta^+\times \theta^{++}\leq_T D$.
	\end{claim}
	\begin{subproof}
		As $\cf(D)=\theta^{++}$, we get by Lemma~\ref{Lemma - kappa leq_T D} that $\theta^{++}\leq_T D$.
		Let $K$ be an unbounded subset of $D$ of size $\theta^+$, as every subset of size $\theta$ is bounded, by Corollary~\ref{Cor - existence of full-unbounded kappa set} we get that $\theta^+\leq_T D$.
		Hence, $\theta^+\times \theta^{++} \leq_T D$ as sought.
	\end{subproof}
	\begin{claim}
		$D \leq_T [\theta^{++}]^{\leq \theta}$.
	\end{claim}
	\begin{subproof}
		As $\cf(D)=\theta^{++}$ and $\non(\mathcal I_{\bd}(D))=\theta^+$, by Lemma~\ref{Lemma - below kappa theta}, $D \leq_T [\theta^{++}]^{\leq \theta}$ as sought.
	\end{subproof}
	Notice this implies that  $  C\times \theta^+ \times \theta^{++}\times E \leq _T  C\times  D\times E \leq_T  C\times [\theta^{++}]^{\leq \theta}\times E.$

	By Lemma~\ref{Lemma - D not<= C x E}, as $|C\times \theta^+|\leq \theta^+$, $\non(\mathcal I_{\bd}(\theta^{++}\times E))>\theta^+$ and Clause~\ref{Clause - unbounded set of size theta} we get that $D\not\leq_T C\times \theta^+\times \theta^{++}\times E$.
	\begin{claim}
		$C\times [\theta^{++}]^{\leq \theta}\times E \not \leq_T C\times D\times E$.
	\end{claim}
	\begin{subproof}
		Recall that $\mathfrak D_{ [\theta^{++}]^{\leq \theta}}\equiv_T  [\theta^{++}]^{\leq \theta}$. 
		Notice that following:
		\begin{itemize}
			\item $\cf(C\times D)=\cf(C\times \mathfrak D_{[\theta^{++}]^{\leq\theta}})=\theta^{++}$;
			\item By Clause~\ref{Clause - bounded lambda set} we have that $\theta^+ \in \Inner(\mathcal I_{\bd}(C\times D), \theta^{++})$ and 
			$\out(\mathcal I_{\bd}(C\times \mathfrak D_{ [\theta^{++}]^{\leq \theta}}))\leq \theta^{+}$;
			\item $\non(\mathcal I_{\bd}(E))>\theta^+$ or $E=1$;

			\item $C\times D\times E\leq_T C\times \mathfrak D_{[\theta^{++}]^{\leq\theta}}\times E$.
		\end{itemize}
	So by Lemma~\ref{Lemma - not <_T cofinal unbounded >theta and >theta bounded} we are done.\qedhere
	\end{subproof}\qedhere
\end{proof}

We are left with proving the following Theorem, in which we define a directed set $D_c$ using a coloring $c$.

\begin{theorem}\label{Theorem - directed set from coloring}
	Suppose $\theta$ is an infinite cardinal such that $2^\theta= \theta^+$ and $2^{\theta^+}=\theta^{++}$. Then there exists a directed set $D$ such that:
	\begin{enumerate}[label=(\arabic*)]
		\item $\cf(D)=\theta^{++}$.
		\item For every partition $D=\bigcup_{\gamma<\theta^+} D_\gamma$, there is an ordinal $\gamma<\theta^+$, and an unbounded $K\subseteq D_\gamma$ of size $\theta^+$;
		\item $\theta^+ \in \Inner(\mathcal I_{\bd}(D), \theta^{++})$ and $\non(\mathcal I_{\bd}(D))=\theta^+$.
	\end{enumerate}
\end{theorem}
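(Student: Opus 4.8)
The plan is to realize $D$ as a coloring refinement of the product $\theta^{++}\times\theta^{+}$. Using the cardinal arithmetic, I would first produce a subadditive coloring $c\colon[\theta^{++}]^{2}\to\theta^{+}$, that is, one with $c(\alpha,\gamma)\le\max\{c(\alpha,\beta),c(\beta,\gamma)\}$ and $c(\alpha,\beta)\le\max\{c(\alpha,\gamma),c(\beta,\gamma)\}$ for all $\alpha<\beta<\gamma$, carrying two further features specified below, and then set $D:=D_{c}$, the set $\theta^{++}\times\theta^{+}$ ordered by declaring $(\alpha,i)\le(\beta,j)$ iff $\alpha\le\beta$, $i\le j$, and $c(\alpha,\beta)\le j$ whenever $\alpha<\beta$. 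Subadditivity makes $\le$ transitive, and given $(\alpha,i),(\beta,j)$ with $\alpha\le\beta$, the element $(\beta,k)$ with $k:=\max\{i,j,c(\alpha,\beta)\}<\theta^{+}$ dominates both; thus $D$ is directed. Since $|D|=\theta^{++}$ we have $\cf(D)\le\theta^{++}$, and since any upper bound of $(\alpha,0)$ has first coordinate $\ge\alpha$, every cofinal subset must have first coordinates cofinal in $\theta^{++}$; hence $\cf(D)=\theta^{++}$, which is Clause~(1).

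For the easy half of Clause~(3): any $S=\{(\alpha_{\xi},i_{\xi})\mid\xi<\theta\}\subseteq D$ is bounded, since for $\beta:=\sup_{\xi<\theta}\alpha_{\xi}+1<\theta^{++}$ and $j:=\sup_{\xi<\theta}\max\{i_{\xi},c(\alpha_{\xi},\beta)\}<\theta^{+}$ (by regularity of $\theta^{+}$), the element $(\beta,j)$ bounds $S$; hence $\non(\mathcal I_{\bd}(D))\ge\theta^{+}$. The first feature required of $c$, which I will call \emph{column unboundedness}, is that there be cofinally many $\beta<\theta^{++}$ with $\cf(\beta)=\theta^{+}$ admitting a set $A_{\beta}\subseteq\beta$ that is cofinal in $\beta$, of order type $\theta^{+}$, and such that $\alpha\mapsto c(\alpha,\beta)$ is finite to one on $A_{\beta}$. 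For such $\beta$, the set $U_{\beta}:=\{(\alpha,c(\alpha,\beta))\mid\alpha\in A_{\beta}\}$ is \emph{strongly unbounded}, meaning every one of its $\theta^{+}$ sized subsets is unbounded: a bound $(\gamma,j)$ of such a subset would force $\gamma\ge\beta$ and then, by subadditivity, $c(\alpha,\beta)\le\max\{j,c(\beta,\gamma)\}<\theta^{+}$ for $\theta^{+}$ many $\alpha\in A_{\beta}$, against finite to oneness. In particular $U_{\beta}$ is an unbounded set of size $\theta^{+}$, so $\non(\mathcal I_{\bd}(D))=\theta^{+}$. The second feature, \emph{capturing}, is that for every $W\in[\theta^{++}]^{\theta^{++}}$ there are $\beta<\theta^{++}$ and $W'\in[W\cap\beta]^{\theta^{+}}$ with $\sup_{\alpha\in W'}c(\alpha,\beta)<\theta^{+}$. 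Capturing gives $\theta^{+}\in\Inner(\mathcal I_{\bd}(D),\theta^{++})$: given $X\in[D]^{\theta^{++}}$, thin it out so that all second coordinates equal a fixed $i^{*}$ and the first coordinates $W$ are pairwise distinct; applying capturing to $W$ yields $\beta$, $W'$, and $j^{*}:=\sup_{\alpha\in W'}c(\alpha,\beta)<\theta^{+}$, and then $(\beta,\max\{i^{*},j^{*}\})$ bounds the subset of $X$ of size $\theta^{+}$ whose first coordinates lie in $W'$.

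Clause~(2) I would prove by contradiction: suppose $D=\bigcup_{\gamma<\theta^{+}}D_{\gamma}$ with every $D_{\gamma}$ having all of its $\theta^{+}$ sized subsets bounded. The map $\alpha\mapsto$(the $\gamma$ with $(\alpha,0)\in D_{\gamma}$) is a function $\theta^{++}\to\theta^{+}$, so there are $\gamma^{*}<\theta^{+}$ and $Z\in[\theta^{++}]^{\theta^{++}}$ with $(\alpha,0)\in D_{\gamma^{*}}$ for all $\alpha\in Z$. I would arrange, as a third property of $c$ tied to a coherent scaffold on $E^{\theta^{++}}_{\theta^{+}}$, a \emph{spread} property guaranteeing some $Z'\in[Z]^{\theta^{+}}$ with $\alpha\mapsto c(\alpha,\gamma)$ unbounded in $\theta^{+}$ for every $\gamma\ge\sup Z'$; then $\{(\alpha,0)\mid\alpha\in Z'\}$ is a subset of $D_{\gamma^{*}}$ of size $\theta^{+}$ that is unbounded in $D$ (any bound would be some $(\gamma,j)$ with $\gamma\ge\sup Z'$ and $c(\alpha,\gamma)\le j$ for all $\alpha\in Z'$), contradicting the choice of $D_{\gamma^{*}}$. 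The point of running the recursion along a coherent scaffold and keeping the $A_{\beta}$ coherent is precisely to prevent the local modifications that secure capturing from swallowing a whole $\theta^{+}$ block of any prescribed $\theta^{++}$ set.

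It remains to construct $c$, and this is where I expect the main difficulty to lie: capturing wants $D$ thin, funnelling a $\theta^{+}$ block of every large family under a single element, whereas column unboundedness and spread want a rigid scaffold of strongly unbounded $\theta^{+}$ sets that no partition of $D$ into $\theta^{+}$ pieces can shatter, and the two requirements are in tension. One natural route is to build $c$ by recursion on $\beta<\theta^{++}$, in effect choosing a $C$-sequence and taking $c$ to be the associated walk, diagonalizing at stage $\beta$ against the $\beta$-th requirement in a fixed enumeration of all capturing challenges. Here $2^{\theta^{+}}=\theta^{++}$ is exactly what yields $|[\theta^{++}]^{\theta^{+}}|=\theta^{++}$, so that every $\theta^{+}$ sized set that must be captured can be handled at its own stage and the recursion closes off in $\theta^{++}$ steps, while $2^{\theta}=\theta^{+}$ is used to control the size $\le\theta$ data tracked at points of cofinality $\theta^{+}$ (note $(\theta^{+})^{\theta}=2^{\theta}=\theta^{+}$). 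This follows the blueprint of \cite{kuzeljevic2021cofinal}, with the coloring $c$ extracted from the cardinal arithmetic playing the role of their $\aleph_{2}$-Souslin tree.
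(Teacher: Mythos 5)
Your reduction of the three clauses to properties of a coloring is fine as far as it goes: granting a subadditive $c\colon[\theta^{++}]^2\to\theta^+$ with column unboundedness, capturing, and spread, your verifications of directedness, $\cf(D)=\theta^{++}$, $\non(\mathcal I_{\bd}(D))=\theta^+$, $\theta^+\in\Inner(\mathcal I_{\bd}(D),\theta^{++})$ and the partition clause are essentially correct. The genuine gap is that the coloring itself is never produced, and producing it is the entire content of the theorem. You acknowledge the tension yourself, and the closing paragraph only gestures at a recursion ``diagonalizing against capturing challenges'' without explaining how the values $c(\cdot,\beta)$ chosen at stage $\beta$ can simultaneously (i) respect the two-sided subadditivity constraints already imposed by $c\restriction[\beta]^2$, (ii) be finite-to-one on a cofinal $A_\beta\subseteq\beta$ of order type $\theta^+$, and (iii) be bounded on a prescribed cofinal $W'\subseteq\beta$ of size $\theta^+$. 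Requirement (i) is the serious one: a fully subadditive, locally small coloring $[\lambda^+]^2\to\lambda$ for $\lambda=\theta^+$ regular is exactly Todor\v{c}evi\'{c}'s $\rho$-function, whose existence is tied to square-type coherence at $\theta^+$ and is not known to follow from $2^\theta=\theta^+$ and $2^{\theta^+}=\theta^{++}$ alone; so even the scaffold of your construction may need hypotheses beyond those of the theorem.

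The paper sidesteps all of this with a different directed set. It fixes a coloring $c\colon[\theta^{++}]^2\rightarrow\theta^+$ with the ZFC-provable $\onto$-property of Inamdar--Rinot (for every $S\in[\theta^{++}]^{\theta^{++}}$ and unbounded $B$, some $\delta\in S$ has $c"(\{\delta\}\circledast B)=\theta^+$), and takes $D_c$ to be the family of $X\in[\theta^{++}]^{\le\theta^+}$ whose colour trace $\{c(\delta,\beta)\mid\beta\in X\setminus(\delta+1)\}$ is non-stationary in $\theta^+$ for every $\delta$, ordered by inclusion. No subadditivity is needed there: $[\theta^{++}]^{\le\theta}\subseteq D_c$ gives $\non(\mathcal I_{\bd}(D_c))\ge\theta^+$, the $\onto$-property gives the partition clause (a full colour trace is stationary, hence witnesses unboundedness), and $\theta^+\in\Inner(\mathcal I_{\bd}(D_c),\theta^{++})$ comes from an elementary-submodel argument using only $2^\theta=\theta^+$, with $2^{\theta^+}=\theta^{++}$ entering only to bound $|D_c|$. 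To salvage your route you would have to either construct your coloring from the stated hypotheses or weaken subadditivity to something ZFC-available and then re-verify that your order is still transitive.
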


The rest of this subsection is dedicated  to proving Theorem~\ref{Theorem - directed set from coloring}. The arithmetic hypothesis will only play a role later on.
Let $\theta$ be an infinite cardinal.
For two sets of ordinals $A$ and $B$, we denote $A\circledast B:= \{(\alpha,\beta )\in A\times B\mid \alpha<\beta\}$.
Recall that by \cite[Corollary~7.3]{paper47}, $\onto(\mathcal S,J^{\bd}[\theta^{++}],\theta^+)$ holds
for $\mathcal S:=[\theta^{++}]^{\theta^{++}}$. This means that we may fix a coloring $c:[\theta^{++}]^2\rightarrow \theta^+$ such that for every $S\in\mathcal S$ and unbounded $B\s\theta^{++}$, there exists $\delta\in S$ such that $c"(\{\delta\}\circledast B)=\theta^+$.

We fix some $S\in\mathcal S$. For our purpose, it will suffice to assume that $S$ is nothing but the whole of $\theta^{++}$.
Let $$D_{c}:=\{ X\in [\theta^{++}]^{\leq \theta^{+}} \mid \forall \delta\in S[ \{c(\delta,\beta) \mid \beta\in X\setminus(\delta+1)\}\in \ns_{\theta^+} ]\}.$$

Consider $D_{c}$ ordered by inclusion, and notice that $D_{c}$ is a directed set since $\ns_{\theta^+}$ is an ideal.

\begin{prop} The following hold:
	\begin{itemize}
		\item $[\theta^{++}]^{\leq\theta} \subseteq D_{c} \subseteq [\theta^{++}]^{\leq \theta^+}$.
		\item $\non(\mathcal I_{\bd}(D_c))\geq\add(\mathcal I_{\bd}(D_c))\geq\theta^+$, i.e. every family of bounded subsets of $D_{c}$ of size $< \theta^+$ is bounded.
		\item If $2^{\theta^+}=\theta^{++}$, then $|D_c|=\theta^{++}$, and hence $D_c\in\mathcal D_{\theta^{++}}$.\qed
	\end{itemize}
\end{prop}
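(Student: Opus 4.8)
The plan is to treat the three bullets essentially independently; the only ingredient beyond bookkeeping is the $\theta^+$-completeness of $\ns_{\theta^+}$, which holds because $\theta^+$ is a regular uncountable cardinal.

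For the first bullet, the inclusion $D_c\subseteq[\theta^{++}]^{\leq\theta^+}$ is immediate from the definition of $D_c$. For $[\theta^{++}]^{\leq\theta}\subseteq D_c$, I would fix $X\in[\theta^{++}]^{\leq\theta}$ and $\delta\in S$ and observe that $\{c(\delta,\beta)\mid\beta\in X\setminus(\delta+1)\}$ is the image of a set of size $\leq\theta$, hence has size $\leq\theta<\theta^+$; since $\theta^+$ is regular this set is bounded below $\theta^+$, so it lies in $\ns_{\theta^+}$, and therefore $X\in D_c$.

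For the second bullet, the key claim is that $D_c$ is $\theta^+$-directed: given any $\langle X_i\mid i<\mu\rangle$ from $D_c$ with $\mu<\theta^+$, the set $Y:=\bigcup_{i<\mu}X_i$ again lies in $D_c$ and is an upper bound for $\{X_i\mid i<\mu\}$ in $(D_c,\subseteq)$. Indeed $|Y|\leq\theta^+$, and for each $\delta\in S$ one has $\{c(\delta,\beta)\mid\beta\in Y\setminus(\delta+1)\}=\bigcup_{i<\mu}\{c(\delta,\beta)\mid\beta\in X_i\setminus(\delta+1)\}$, a union of $\mu<\theta^+$ members of $\ns_{\theta^+}$, which by $\theta^+$-completeness of $\ns_{\theta^+}$ is again nonstationary; hence $Y\in D_c$. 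From this, $\add(\mathcal I_{\bd}(D_c))\geq\theta^+$ follows by the standard argument: given a family $\mathcal B$ of bounded subsets of $D_c$ with $|\mathcal B|<\theta^+$, pick for each $B\in\mathcal B$ an upper bound $Y_B\in D_c$; then $\{Y_B\mid B\in\mathcal B\}$ has size $<\theta^+$, hence is bounded by some $Z\in D_c$, and this $Z$ bounds $\bigcup\mathcal B$. Finally $\non(\mathcal I_{\bd}(D_c))\geq\add(\mathcal I_{\bd}(D_c))$ is the general inequality recorded after the definition of these cardinal characteristics.

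For the third bullet I would assume $2^{\theta^+}=\theta^{++}$ and compute $|D_c|\leq|[\theta^{++}]^{\leq\theta^+}|=(\theta^{++})^{\theta^+}\leq(2^{\theta^+})^{\theta^+}=2^{\theta^+}=\theta^{++}$, while $|D_c|\geq\theta^{++}$ since by the first bullet $D_c$ contains every singleton from $\theta^{++}$; hence $|D_c|=\theta^{++}$, so $\cf(D_c)\leq\theta^{++}$ and $D_c\in\mathcal D_{\theta^{++}}$. I do not expect a genuine obstacle here: the only steps needing a little care are the invocation of $\theta^+$-completeness of $\ns_{\theta^+}$ in the second bullet and the arithmetic identity $(\theta^{++})^{\theta^+}=\theta^{++}$ under the stated hypothesis in the third.
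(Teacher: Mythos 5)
Your proof is correct and is precisely the routine argument the paper intends (the proposition is stated with no proof, and the paper's only hint is the remark that $D_c$ is directed ``since $\ns_{\theta^+}$ is an ideal''); your sharpening of that remark to the $\theta^+$-completeness of $\ns_{\theta^+}$ is exactly what is needed for the second bullet, and the other two bullets are handled as one would expect.
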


\begin{lemma}\label{D_C - Lemma 5.3}
	For every partition $D_{c}=\bigcup_{\gamma<\theta^+} D_\gamma$, there is an ordinal $\gamma<\theta^+$, and an unbounded $E\subseteq D_\gamma$ of size $\theta^+$.
\end{lemma}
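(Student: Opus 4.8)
The plan is to exploit the defining property of the coloring $c$, namely that $\onto(\mathcal S, J^{\bd}[\theta^{++}], \theta^+)$ holds for $\mathcal S = [\theta^{++}]^{\theta^{++}}$, which (with $S = \theta^{++}$) says: for every unbounded $B \subseteq \theta^{++}$ there exists $\delta < \theta^{++}$ with $c"(\{\delta\} \circledast B) = \theta^+$. Fix a partition $D_c = \bigcup_{\gamma < \theta^+} D_\gamma$. The idea is to pick, by recursion along $\theta^{++}$, an increasing sequence of ordinals $\langle \beta_\xi \mid \xi < \theta^{++} \rangle$ and singletons (or small sets) $\{\beta_\xi\} \in D_c$; since $[\theta^{++}]^{\leq \theta} \subseteq D_c$ by the preceding Proposition, every such singleton indeed lies in $D_c$, hence in exactly one piece $D_{\gamma(\xi)}$ of the partition. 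This gives a function $\xi \mapsto \gamma(\xi)$ from $\theta^{++}$ to $\theta^+$; since $\theta^{++}$ is regular and $\theta^+ < \theta^{++}$, by the pigeonhole principle there is a single $\gamma < \theta^+$ and a set $W \in [\theta^{++}]^{\theta^{++}}$ with $\gamma(\xi) = \gamma$ for all $\xi \in W$. The set $B := \{\beta_\xi \mid \xi \in W\}$ is then an unbounded subset of $\theta^{++}$, and $\{\{\beta\} \mid \beta \in B\} \subseteq D_\gamma$.

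Now I would extract from $B$ an unbounded $E \subseteq D_\gamma$ of size $\theta^+$ that is unbounded \emph{in $D_c$}. The natural candidate is $E := \{\{\beta\} \mid \beta \in B'\}$ for a suitably chosen $B' \in [B]^{\theta^+}$; the point is to arrange that $\bigcup E = B'$ is not a member of $D_c$, i.e., that $B'$ violates the defining clause of $D_c$, so that no element of $D_c$ can contain $B'$. By the $\onto$ property applied to the unbounded set $B$, fix $\delta < \theta^{++}$ with $c"(\{\delta\} \circledast B) = \theta^+$. Then choose $B' \subseteq B \setminus (\delta + 1)$ of size $\theta^+$ such that $\{c(\delta, \beta) \mid \beta \in B'\} = \theta^+$ — this is possible because the map $\beta \mapsto c(\delta, \beta)$ is surjective onto $\theta^+$ from $B \setminus(\delta+1)$, so pick one preimage for each colour. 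Since $\{c(\delta,\beta) \mid \beta \in B'\} = \theta^+ \notin \ns_{\theta^+}$, we get $B' \notin D_c$; consequently the subset $E = \{\{\beta\} \mid \beta \in B'\} \subseteq D_\gamma$ is unbounded in $D_c$ (any upper bound $X \in D_c$ would have to contain $B'$, forcing $B' \in D_c$, a contradiction, using that $D_c$ is ordered by $\subseteq$ and closed downwards within $[\theta^{++}]^{\leq\theta^+}$). Finally $|E| = |B'| = \theta^+$, as required.

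The one subtlety I would be careful about is the claim ``any upper bound of $E$ in $D_c$ contains $B'$'': an upper bound $X$ of $E$ satisfies $\{\beta\} \subseteq X$ for every $\beta \in B'$, hence $B' \subseteq X$; and then $B'$, being a subset of $X \in D_c$ of size $\theta^+$, would itself satisfy the defining clause (each set $\{c(\delta,\beta) \mid \beta \in B' \setminus(\delta+1)\}$ is a subset of the corresponding set for $X$, hence nonstationary) — but we engineered $B'$ so that this set equals $\theta^+$, contradiction. So I should phrase $D_c$'s downward-closure observation cleanly: if $X \in D_c$ and $Y \subseteq X$ with $|Y| \leq \theta^+$, then $Y \in D_c$, because nonstationarity is inherited by subsets. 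This makes the contradiction airtight. The main obstacle — really the only nonroutine point — is choosing $B'$ so that the colour set is \emph{all} of $\theta^+$ while keeping $|B'| = \theta^+$; this is exactly what the $\onto$ hypothesis on $c$ delivers, so the argument goes through.
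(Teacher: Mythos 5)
Your proposal is correct and follows essentially the same route as the paper: pigeonhole the singletons $[\theta^{++}]^1\subseteq D_c$ into one piece $D_\gamma$ of the partition, invoke the $\onto$ property of $c$ to extract a $\theta^+$-sized set $B'$ of ordinals on which $\beta\mapsto c(\delta,\beta)$ hits every colour, and conclude that $\{\{\beta\}\mid\beta\in B'\}$ is unbounded in $D_c$ because any upper bound would contain $B'$ and thereby violate the nonstationarity clause defining $D_c$. The only difference is cosmetic: the paper takes $b$ to be all of $[\theta^{++}]^1\cap D_\gamma$ directly rather than building it by recursion, and it leaves implicit the downward-closure observation that you spell out.
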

\begin{proof}
	As $[\theta^{++}]^1$ is a subset of $D_{c}$, the family $\{D_{\gamma }\mid \gamma<\theta^+\}$ is a partition of the set $[\theta^{++}]^1$ to at most $\theta^+$ many sets. As $\theta^+<\theta^{++}=\cf(\theta^{++})$, by the pigeonhole principle we get that for some $\gamma<\theta^+$ and $b\in [\theta^{++}]^{\theta^{++}}$, we have $[b]^1 \subseteq D_{\gamma}$.
	Notice that by the assumption on the coloring $c$, there exists some $\delta\in S$ and $\delta<b'\in [b]^{\theta^+}$ such that $c"(\delta \circledast b')=\theta^+$.
	Clearly the set $E:=[b']^1$ is a subset of $ D_{\gamma}$ of size $\theta^+$ which is unbounded in $D_{c}$.
\end{proof}

\begin{lemma}\label{D_C - Lemma 5.4}
	Suppose $2^{\theta}=\theta^+$, then $\theta^+ \in  \Inner(\mathcal I_{\bd}(D_{c}),\theta^{++})$.
\end{lemma}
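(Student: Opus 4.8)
The statement unpacks, via the definition of $\Inner$, to: for every family $\{X_\alpha\mid\alpha<\theta^{++}\}\subseteq D_c$ there is $A\in[\theta^{++}]^{\theta^+}$ with $\{X_\alpha\mid\alpha\in A\}$ bounded in $D_c$. Note first that $D_c$ is downward closed (a subset of a member has smaller colour‑sets, so is again a member), and that if some $Z\supseteq\bigcup_{\alpha\in A}X_\alpha$ lies in $D_c$ then $\bigcup_{\alpha\in A}X_\alpha$ lies in $D_c$ as well; since $|\bigcup_{\alpha\in A}X_\alpha|\le\theta^+$ automatically, the task reduces to: \emph{find $A\in[\theta^{++}]^{\theta^+}$ with $\bigcup_{\alpha\in A}X_\alpha\in D_c$}. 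I would also first dispose of the degenerate case in which $\langle X_\alpha\rangle$ takes at most $\theta^+$ distinct values: then by regularity of $\theta^{++}$ some value is attained $\theta^{++}$ times, and the corresponding $A$ works trivially. So from now on assume the $X_\alpha$ are pairwise distinct.

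Two facts about $\ns_{\theta^+}$ drive the argument. First, $\ns_{\theta^+}$ is $\theta^+$‑complete, so a union of at most $\theta$ members of $D_c$ is again in $D_c$; consequently, along any $\subseteq$‑increasing chain of length $\theta^+$ of members of $D_c$ every proper initial segment's union is in $D_c$, and only the union at the top can fail — the whole difficulty is to make the colour‑sets glue. Second, $\ns_{\theta^+}$ is normal, hence closed under diagonal unions: if $\langle A_i\mid i<\theta^+\rangle$ are non‑stationary then so is $\{\nu<\theta^+\mid\exists i<\nu\,(\nu\in A_i)\}$. The plan is therefore to produce $A$ together with an enumeration $A=\{\mu_i\mid i<\theta^+\}$ such that, for every $\delta$ relevant to testing $V:=\bigcup_iX_{\mu_i}\in D_c$, the colour‑set $S_{\mu_i,\delta}:=\{c(\delta,\beta)\mid\beta\in X_{\mu_i}\setminus(\delta+1)\}$ satisfies $S_{\mu_i,\delta}\cap(i+1)=\emptyset$; then $\bigcup_iS_{\mu_i,\delta}$ is exactly the diagonal union of the non‑stationary sets $S_{\mu_i,\delta}$, hence non‑stationary, whence $V\in D_c$.

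To set this up I would use $2^\theta=\theta^+$ (so $(\theta^+)^\theta=\theta^+$) to build a continuous $\in$‑increasing chain $\langle M_i\mid i<\theta^+\rangle$ of elementary submodels of $(H(\chi),\in,<^*)$ with $c,\langle X_\alpha\rangle\in M_0$, $\langle M_j\mid j\le i\rangle\in M_{i+1}$, and each $M_i$ closed under $\theta$‑sequences of its elements. Writing $\gamma_i:=\sup(M_i\cap\theta^{++})$ and $\rho_i:=\sup(M_i\cap\theta^+)$, the sequences $\langle\gamma_i\rangle$ and $\langle\rho_i\rangle$ are strictly increasing and continuous, $\gamma:=\sup_i\gamma_i$ has cofinality $\theta^+$, and $C^*:=\{\rho_i\mid i<\theta^+\}$ is a club in $\theta^+$. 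At stage $i$ one chooses $\mu_i\in M_{i+1}\cap\theta^{++}$ above every ordinal of $M_i$ and — crucially — chosen, if at all possible, so that $X_{\mu_i}$ realizes only ``large'' colours over the ordinals already in play; since $\mu_i\in M_{i+1}$ one gets $X_{\mu_i}\in M_{i+1}$, hence $\sup X_{\mu_i}<\gamma_{i+1}$, so $V\subseteq\gamma$ and only $\delta<\gamma$ are relevant. For such $\delta$ one has $\delta\in M_{i+1}$ for all sufficiently large $i$, so the $<^*$‑least club disjoint from $S_{\mu_i,\delta}$ lies in $M_{i+1}$ and therefore contains $\rho_j$ for every $j>i$; the remaining point is to combine this ``upward'' information with the choices of the $\mu_i$ so as to force $S_{\mu_i,\delta}\cap(i+1)=\emptyset$, i.e.\ to confine the small colours $c(\delta,\beta)$ for $\beta\in X_{\mu_i}$ above $\rho_i$ — using elementarity, and again the cardinal arithmetic, to see that enough ``generic'' candidates $X_\alpha$ remain available at each stage.

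The main obstacle is exactly this last gluing step. A priori the $\theta^+$‑many non‑stationary colour‑sets $\{S_{\mu_i,\delta}\}_i$ need bear no relation to one another: $\bigcap_iE_{\mu_i,\delta}$ need not contain a club, and the naïve diagonal intersection of the avoiding clubs controls $\bigcup_iS_{\mu_i,\delta}$ only \emph{below} the diagonal. Moreover a single chain of models cannot be at once ``small at $\theta^+$'' (so that the club‑catching trick on $\theta^+$ applies) and ``an initial segment at $\theta^{++}$'' (so that every relevant $\delta$ is captured inside the models). Overcoming this — presumably by a nested, two‑dimensional construction, or by exploiting the diagonal‑union closure of $\ns_{\theta^+}$ together with a tight bookkeeping of which colours $X_{\mu_i}$ is permitted to realize — is where the real content of the lemma lies, and it is precisely where $2^\theta=\theta^+$ is needed: both to close the models under $\theta$‑sequences (so that the $\le\theta$‑sized pieces of the $X_{\mu_i}$ reflect) and to control the relevant clubs and reflected types.
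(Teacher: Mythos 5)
Your reduction of the lemma to ``find $A\in[\theta^{++}]^{\theta^+}$ with $\bigcup_{\alpha\in A}X_\alpha\in D_c$'' is correct, as is your diagnosis that the whole difficulty is making the non-stationary colour-sets glue. But the proposal stops exactly where the proof has to happen, and the gluing mechanism you sketch --- choose $\mu_i$ so that $S_{\mu_i,\delta}\cap(i+1)=\emptyset$ for every relevant $\delta$, then invoke closure of $\ns_{\theta^+}$ under diagonal unions --- is not attainable in general. Nothing prevents every member of the given family from containing one fixed $\beta_0>\delta$ with $c(\delta,\beta_0)=0$ (membership in $D_c$ only requires each colour-set to be non-stationary, and a non-stationary set of size $\theta^+$ may meet every initial segment of $\theta^+$); then $0\in S_{\mu_i,\delta}$ for every possible choice of $\mu_i$, and your requirement fails already at $i=0$. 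You correctly flag this gluing step as ``the main obstacle'' and ``where the real content of the lemma lies'', but that is an admission that the key idea is missing, not a proof.

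The paper's resolution is a reflection argument absent from your sketch. For each $X\in D_c$ and $\gamma\in S$ fix a club $G^X_\gamma\subseteq\theta^+$ disjoint from the colour-set $N^X_\gamma$. Take a \emph{single} elementary submodel $M\prec H_\chi$ of size $\theta^+$ with $M^\theta\subseteq M$ (this is where $2^\theta=\theta^+$ enters) containing the enumerated family $\langle T_\gamma\mid\gamma<\theta^{++}\rangle$, and set $\delta:=M\cap\theta^{++}$. The member $T_\delta$ acts as a limit template: each trace $G^{T_\delta}_{s_\zeta}\cap\eta$ is a $\le\theta$-sized subset of $M$, hence an element of $M$, so the set of indices $\gamma$ whose clubs agree with those of $T_\delta$ on that trace is definable in $M$ and contains $\delta$, hence is stationary and meets $\delta$ cofinally. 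This yields $\delta_\xi\to\delta$ with the coherence $G^{T_{\delta_\xi}}_{s_\zeta}\cap\eta_\xi=G^{T_\delta}_{s_\zeta}\cap\eta_\xi$ for $\zeta\le\xi$, and then for each $t=s_\zeta\in S\cap\delta$ the single club $G^{T_\delta}_{s_\zeta}\cap\bigl(\bigcap_{\xi\le\zeta}G^{T_{\delta_\xi}}_{s_\zeta}\bigr)\cap\bigl(\triangle_{\xi<\theta^+}G^{T_{\delta_\xi}}_{s_\zeta}\bigr)$ misses $\bigcup_\xi N^{T_{\delta_\xi}}_t$: the reflected club handles the colours below $\eta_\xi$ and the diagonal intersection handles those above. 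In effect, coherence replaces your unattainable demand ``all colours of $X_{\mu_i}$ at $\delta$ exceed $i$'' by the attainable ``the colours of $T_{\delta_\xi}$ at $t$ below $\eta_\xi$ avoid one fixed club depending only on $t$''. Your closing worry --- that one chain of models cannot simultaneously control $\theta^+$ and $\theta^{++}$ --- is resolved the same way: a single $\theta$-closed model of size $\theta^+$ suffices, because the clubs are reflected through their $\le\theta$-sized initial segments rather than caught by a chain of models.
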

\begin{proof} We follow the proof of \cite[Lemma~5.4]{kuzeljevic2021cofinal}.

	Let $D'$ be a subset of $D_{c}$ of size $\theta^{++}$ we will show it contains a bounded subset of size $\theta^+$, let us enumerate it as $\{T_\gamma \mid \gamma <\theta^{++}\}$.

	Let, for each $X\in D_{c}$ and $\gamma\in S$, $N^X_\gamma$ denote the non-stationary set $\{c(\gamma,\beta)\mid \beta\in X\setminus(\gamma+1)\}$, and let $G^X_\gamma$ denote a club in $\theta^+$ disjoint from $N^X_\gamma$.
	
	As $2^{\theta}=\theta^+$ we may fix a sufficiently large regular cardinal $\chi$, and an elementary submodel $M\prec H_{\chi}$ of cardinality $\theta^+$ containing all the relevant objects and such that $M^{\theta} \subseteq M$.
	Denote $\delta=M\cap \theta^{++}$, notice $\delta \in E^{\theta^{++}}_{\theta^+}$.
	Fix an increasing sequence $\langle \gamma_\xi \mid \xi <\theta^+\rangle$ in $\delta$ such that $\sup\{\gamma_\xi \mid \xi<\theta^+\}=\delta$.
	Enumerate $\delta\cap S=\{s_\xi \mid \xi<\theta^+\}$.
	In order to simplify notation, let $G^\gamma_\xi$ denote the set $G^{T_\gamma}_{s_\xi}$ for each $\gamma<\theta^{++}$ and $\xi<\theta^+$.
	
	We construct by recursion on $\xi<\theta^+$ three sequences $\langle \delta_\xi \mid \xi<\theta^+\rangle$,  $\langle \Gamma_\xi \mid \xi<\theta^+\rangle$ and  $\langle \eta_\xi \mid \xi<\theta^+\rangle$ with the following properties:
	\begin{enumerate}[label=(\arabic*)]
		\item $\langle \delta_\xi \mid \xi<\theta^+\rangle$ is an increasing sequence converging to $\delta$;
		\item $\langle \Gamma_\xi\mid \xi<\theta^+ \rangle$ is a decreasing $\subseteq$-chain of stationary subsets of $\theta^{++}$ each one containing $\delta$ and definable in $M$;
		\item $\langle \eta_\xi \mid \xi<\theta^+ \rangle$ is an increasing sequence of ordinals below $\theta^+$;
		\item $G^\delta_{\zeta} \cap \eta_{\mu} = G^{\delta_{\mu}}_{\zeta}\cap \eta_{\mu} $ for $\zeta\leq \mu<\theta^+$.
	\end{enumerate}
	
	$\br$ Base case: Let $\eta_0$ be the first limit point of $G^\delta_0$.
	Notice that $G^\delta_0\cap \eta_0$ is an infinite set of size $\leq\theta$ below $\delta$, hence it is inside of $M$.
	Let $$\Gamma_0:=\{\gamma<\theta^{++} \mid G^\delta_0\cap \eta_0 = G^\gamma_0\cap \eta_0\}.$$
	Since $\delta\in \Gamma_0$, the set $\Gamma_0$ is stationary in $\theta^{++}$.
	Let $\delta_0:=\min(\Gamma_0)$.
	
	$\br$ Suppose $\xi_0<\theta^+$, and that $\delta_\xi$, $\Gamma_\xi$ and $\eta_\xi$ have been constructed for each $\xi<\xi_0$.
	Let $\eta_{\xi_0}$ be the first limit point of $G^\delta_{\xi_0}\setminus \sup\{\eta_\xi \mid \xi<\xi_0\}$.
	Consider the set 
	$$ \Gamma_{\xi_0}=\large \{\gamma \in \bigcap_{\xi<\xi_0}\Gamma_\xi \mid \forall \xi\leq \xi_0 [G^\delta_\xi\cap \eta_{\xi_0} = G^\gamma_{\xi}\cap \eta_{\xi_0}]\large \}.$$
	Since $\Gamma_{\xi_0}$ belongs to $M$, and since $\delta\in \Gamma_{\xi_0}$, it must be that $\Gamma_{\xi_0}$ is stationary in $\theta^{++}$.
	Since $\Gamma_{\xi_0}$ is cofinal in $\theta^{++}$ and belongs to $M$, the set $\delta\cap \Gamma_{\xi_0}$ is cofinal in $\delta$. 
	Define $\delta_{\xi_0}$ be the minimal ordinal in $\delta\cap \Gamma_{\xi_0}$ greater than both $\sup\{\delta_\xi \mid \xi<\xi_0\}$ and $\gamma_{\xi_0}$.
	It is clear from the construction that conditions $(1-4)$ are satisfied.
	
	The following claim gives us the wanted result.
	\begin{claim}
		The set $\{T_{\delta_\xi}\mid \xi<\theta^+\}$ is a subset of $D'$ of size $\theta^+$ which is bounded in $D_{c}$.
	\end{claim}
	\begin{subproof}
		As the order on $D_{c}$ is $\subseteq$, it suffices to prove that the union $T=\bigcup_{\xi<\theta^+} T_{\delta_\xi}\in D_{c}$.
		Since, for each $\xi<\theta^+$, both $\delta_\xi$ and $\langle T_\gamma \mid \gamma<\theta^{++} \rangle$ belong to $M$, it must be that $T_{\delta_\xi}\in M$.
		Since $\theta^+\in M$ and $M\models |T_{\delta_\xi}|\leq \theta^+$, we have $T_{\delta_\xi}\subseteq M$.
		Thus $T\subseteq M$ and furthermore $T\subseteq \delta$.
		This means that, in order to prove that $T\in D_{c}$, it is enough to prove that for each $t \in S\cap \delta$, the set $\{c(t,\beta)\mid \beta\in T\setminus(t+1)\}$ is non-stationary in $\theta^+$.
		Fix some $t\in S\cap \delta$.
		Let $\zeta<\theta^+$ be such that $s_\zeta = t$.
		Define 
		$$ G:=G^\delta_\zeta \cap (\bigcap_{\xi\leq \zeta} G^{\delta_\xi}_\zeta)\cap (\triangle_{\xi<\theta^+} G^{\delta_\xi}_\zeta).$$
		Since the intersection of $< \theta^+$-many clubs in $\theta^+$ is a club, and since diagonal intersection of $\theta^+$ many clubs is a club, we know that $G$ is a club in $\theta^+$.
		
		We will prove that $G\cap \{c(t,\beta)\mid \beta\in T\setminus(t+1)\}=\emptyset$.
		Suppose $\alpha<\theta^+$ is such that $\alpha\in G \cap  \{c(t,\beta)\mid \beta\in T\setminus(t+1)\}$.
		This means that $\alpha\in G$ and that for some $\mu<\theta^+$ and $\beta\in T_{\delta_\mu}\setminus(t+1)$ we have $\alpha = c(t,\beta)$.
		So $\alpha \in N^{T_{\delta_\mu}}_{t}$.
		Note that this implies that $\alpha\notin G^{\delta_\mu}_\zeta$.
		Let us split to three cases:
		
		$\br$ Suppose $\mu\leq \zeta$, then since $\alpha\in \bigcap_{\xi\leq \zeta} G^{\delta_\xi}_\zeta$, we have that $\alpha\in G^{\delta_\mu}_\zeta$ which is clearly contradicting $\alpha\notin G^{\delta_\mu}_\zeta$.
		
		$\br$ Suppose $\mu>\zeta$ and $\alpha<\eta_\mu$.
		Then by $(4)$, we have that $G^\delta_\zeta \cap \eta_\mu =G^{\delta_\mu}_\zeta \cap \eta_\mu$.
		As $\alpha \not \in  G^{\delta_\mu}_\zeta$ and $\alpha<\eta_\mu$, it must be that $\alpha\notin G^\delta_\zeta$.
		Recall that $\alpha \in G$, but this is absurd as $G\subseteq G^\delta_\zeta$ and $\alpha\notin G^\delta_\zeta$.
		
		$\br$ Suppose $\mu>\zeta$ and $\alpha\geq \eta_\mu\geq\mu$.
		As $\alpha\in G$, we have that $\alpha\in \triangle_{\xi<\theta^+} G^{\delta_\xi}_\zeta$.
		As $\alpha>\mu$, we get that $\alpha\in G^{\delta_\mu}_\zeta$ which is clearly contradicting $\alpha\notin G^{\delta_\mu}_\zeta$.\qedhere
	\end{subproof}\qedhere
\end{proof}

\subsection{Directed set between $\omega\times\omega_1$ and $[\omega_1]^{<\omega}$}
As mentioned in \cite{MR3247063}, by the results of Todor\v{c}evi\'{c} \cite{MR980949}, it follows that under the assumption $\mathfrak b=\omega_1$ there exists a directed set of size $\omega_1$ between the directed sets $\omega\times\omega_1$ and $[\omega_1]^{<\omega}$.
In this subsection we spell out the details of this construction.

For two functions $f,g\in {}^{\omega}\omega$, we define the order $<^*$ by $f<^*g$ iff the set $\{n<\omega \mid g(n)\geq f(n)\}$ is finite.
Furthermore, by $f\lhd g$ we means that there exists $m<\omega$ such that for all $n<m$ we have $f(n)\leq g(n)$ and $f(k)<g(k)$ whenever $m\leq k<\omega$.
Assuming $f\leq^* g$, we let $\Delta(f,g):=\min\{m<\omega \mid \forall n\geq m [f(n)\leq g(n)]\}$.

The following fact is a special case of {\cite[Theorem~1.1]{MR980949}} in the case $n=0$, for complete details we give the proof as suggested by the referee.
\begin{fact}[Todor\v{c}evi\'{c}, {\cite[Theorem~1.1]{MR980949}}]\label{Fact - b=omega_1}
	Suppose $A$ is an uncountable sequence of ${}^{\omega}\omega$ of increasing functions which are $<^*$-increasing and $\leq^*$-unbounded, then there are $f,g\in A$ such that $f\lhd g$.
\end{fact}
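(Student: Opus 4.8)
The plan is to prove Fact~\ref{Fact - b=omega_1} by a $\Delta$-system / counting argument on the values $f\rest\Delta$ and the ``crossing point'' $\Delta(f,g)$, exploiting unboundedness exactly once.

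\textbf{Setup.} Let $A=\{f_\xi\mid\xi<\omega_1\}$ witness the hypotheses: each $f_\xi$ is nondecreasing, $\xi<\eta$ implies $f_\xi<^*f_\eta$, and $A$ is $\leq^*$-unbounded. For $f\leq^*g$ recall $\Delta(f,g)=\min\{m\mid\forall n\geq m\,(f(n)\leq g(n))\}$; note that $f\lhd g$ asks for the slightly stronger conclusion that $f(n)<g(n)$ strictly for all $n\geq\Delta(f,g)$ and $f(n)\leq g(n)$ below it, which for \emph{increasing} integer functions is a mild bookkeeping matter: passing to $g+1$ or to a final segment handles the strictness, and the ``$\leq$ below $m$'' part is arranged by choosing the index carefully. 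So the real content is: find $\xi<\eta$ with $f_\xi\leq^* f_\eta$ and, after shrinking to a common value on an initial segment, the inequality already being everywhere on that segment.

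\textbf{Main step.} First I would argue by contradiction: suppose no such pair exists. For each pair $\xi<\eta$ we have $f_\xi<^*f_\eta$, hence $m_{\xi\eta}:=\Delta(f_\xi,f_\eta)$ is defined, but by assumption it must be that $f_\xi$ and $f_\eta$ ``cross'' on $[0,m_{\xi\eta})$, i.e. $f_\xi(n)>f_\eta(n)$ for some $n<m_{\xi\eta}$. Now refine the uncountable set: by the pigeonhole principle there is an uncountable $B\subseteq\omega_1$ and a fixed $m<\omega$ such that $m_{\xi\eta}=m$ for all $\xi<\eta$ in $B$ --- wait, that over-reaches; instead, for each $\xi$ consider the function $\xi\mapsto f_\xi\rest m$ as $m$ ranges, and use that there are only countably many finite sequences of naturals of each fixed length. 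The cleaner route: for each $\xi$, the sequence $\langle f_\xi(n)\mid n<\omega\rangle$ is increasing, so $f_\xi$ is determined by its (finite or infinite) set of ``jump values''. I would instead use unboundedness directly: since $A$ is $\leq^*$-unbounded there is no single $g$ dominating all of $A$, which we will contradict by producing a cofinal-in-$\leq^*$ well-ordered chain of length $\omega_1$ if the Fact fails --- a chain of length $\omega_1$ in $({}^\omega\omega,<^*)$ that is $\lhd$-antichain-like would, after a $\Delta$-system lemma on the finite ``exceptional sets'' $\{n<m_{\xi\eta}\mid f_\xi(n)>f_\eta(n)\}$, force the root to be a fixed finite set $r$, and then outside $r$ the functions are $\lhd$-comparable, giving the pair.

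\textbf{Key steps, in order.} (1) Assume the conclusion fails. (2) For $\xi<\eta$ in $\omega_1$ let $r_{\xi\eta}:=\{n<\Delta(f_\xi,f_\eta)\mid f_\xi(n)>f_\eta(n)\}$, a nonempty finite set. (3) Apply the $\Delta$-system lemma (for finite sets, hence ZFC, no arithmetic needed) to get an uncountable $B$ and a finite root $r$ with $r_{\xi\eta}\cap r_{\xi'\eta'}=r$ for distinct pairs; further thin so that all $f_\xi\rest(\max(r)+1)$ agree (only countably many possibilities) and so that $\min(r_{\xi\eta}\setminus r)$ is pushed above $\max r$ uniformly. (4) On $B$, the agreement on the initial segment forces $r_{\xi\eta}\subseteq r$ eventually, hence $r_{\xi\eta}=r$; but on $r$ the functions agree, so $f_\xi(n)=f_\eta(n)$ there, contradicting $f_\xi(n)>f_\eta(n)$ for $n\in r_{\xi\eta}$ --- so actually $r_{\xi\eta}=\emptyset$, contradiction. (5) Conclude, then massage $\Delta(f_\xi,f_\eta)$ and strictness to get genuine $f_\xi\lhd f_\eta$, invoking the increasing hypothesis to upgrade $\leq$ to $<$ past the crossing point and using $f_{\eta}+1$ if necessary (which stays in the same $\leq^*$-class and remains increasing).

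\textbf{Main obstacle.} The delicate point is step (3)--(4): making the $\Delta$-system thinning interact correctly with the varying thresholds $\Delta(f_\xi,f_\eta)$, since the exceptional set $r_{\xi\eta}$ lives below a pair-dependent bound. The fix is to first thin $A$ so that there is a single $m^*$ with $\Delta(f_\xi,f_\eta)\leq m^*$ for uncountably many pairs --- but that can fail, so instead one observes that $r_{\xi\eta}$, being a set of genuine ``crossings'' of two increasing functions, has bounded size (at most the number of distinct values, which need not be bounded...). The honest resolution, and the one I expect the paper's referee-supplied proof to take, is to not bound $|r_{\xi\eta}|$ at all but to use the increasing-ness to note that if $f_\xi(n)>f_\eta(n)$ and $f_\xi,f_\eta$ agree below $n$ and are increasing, one derives a contradiction from $f_\xi<^* f_\eta$ by a direct interval-pushing argument; this is where the real work lies and where I would spend the most care. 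If that proves awkward, the fallback is the chain/unboundedness argument: a failure of the Fact yields a $<^*$-increasing $\leq^*$-unbounded $\omega_1$-sequence that is an $\lhd$-antichain, which is exactly the hypothesis of the minimal-walks coloring from \cite{MR980949}, and one cites that the existence of such is equivalent to $\mathfrak b=\omega_1$ --- but since the Fact is stated to follow from \cite[Theorem~1.1]{MR980949} with $n=0$, I would ultimately just unwind that special case.
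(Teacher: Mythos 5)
Your proposal has genuine gaps at its core, and the self-acknowledged ``fallback'' does not repair them. The first problem is step (3): the $\Delta$-system lemma applies to the uncountable family $\{r_{\xi\eta}\}$, but this family is indexed by \emph{pairs} $\{\xi,\eta\}$, so the lemma only returns an uncountable set of \emph{pairs} whose exceptional sets form a $\Delta$-system --- not an uncountable $B\subseteq\omega_1$ all of whose pairs do. Upgrading the former to the latter is a partition relation of the shape $\omega_1\rightarrow(\omega_1)^2_{\aleph_0}$, which fails in $\zfc$ by Sierpi\'nski's coloring; the same obstruction kills the companion thinning ``so that all $f_\xi\rest(\max(r)+1)$ agree and $\min(r_{\xi\eta}\setminus r)$ is pushed above $\max r$ uniformly'', since the quantity being controlled again depends on the pair. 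Second, even granting all of that, step (4) asserts that agreement on an initial segment ``forces $r_{\xi\eta}\subseteq r$ eventually''; there is no reason for this, as $r_{\xi\eta}$ may well contain crossings above $\max(r)$ where the functions are not required to agree, so the contradiction never materializes. Third, and most tellingly, your main line never uses the $\leq^*$-unboundedness of $A$ --- it appears only in the circular fallback (``cite that the existence of such is equivalent to $\mathfrak b=\omega_1$''). Unboundedness is an essential hypothesis and must do real work in any correct proof.

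For comparison, the paper's proof is an elementary-submodel argument: fix a countable $M\prec(H_{\omega_2},\in)$ with $A\in M$, let $\delta=M\cap\omega_1$, stratify the functions indexed above $\delta$ according to $n=\Delta(g_\delta,g_\beta)$ and pick $n$ with $B_n$ uncountable; unboundedness of $\{g_\beta\mid\beta\in B_n\}$ yields a \emph{least} coordinate $m$ at which $\sup\{g_\beta(m)\mid\beta\in B_n\}=\omega$, and minimality of $m$ gives a single finite stem $t\in{}^m\omega$ below uncountably much of this blow-up. Elementarity then reflects the stem to some $\alpha\in M\cap\omega_1$ with $t\subseteq g_\alpha$, and choosing $\beta\in B_n^t$ with $g_\beta(m)>g_\alpha(k+n)$ (where $k=\Delta(g_\alpha,g_\delta)$) lets one verify $g_\alpha\lhd g_\beta$ directly in three cases, using that the functions are increasing. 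This use of unboundedness to locate the coordinate $m$ and the stem $t$ is exactly the ingredient your argument is missing.
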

\begin{proof}
	Let $A:=\{g_\alpha\mid \alpha<\omega_1\}$ be an uncountable sequence of increasing functions of ${}^{\omega}\omega$ which are $<^*$-increasing and $\leq^*$-unbounded.
	
	Let us fix a countable elementary sub-model $M\prec (H_{\omega_2},\in)$ with $A\in M$.
	Let $\delta:=\omega_1\cap M$, $B:=\omega_1\setminus(\delta+1)$ and write $B_n:=\{\beta\in B\mid \Delta(g_\delta,g_\beta)=n \}$.
	As $B=\bigcup_{n<\omega}B_n$, let us fix some $n<\omega$ such that $B_n$ is uncountable.
	As $\{g_\alpha\mid \alpha\in B_n\}$ is unbounded, we get that the set $K:=\{m<\omega \mid \sup\{g_\beta(m)\mid \beta\in B_n\} = \omega\}$ is non-empty, so consider the minimal element, $m:=\min(K)$.
	For $t\in{}^{m}\omega$, denote $B^t_n:=\{\beta\in B_n \mid t\subseteq g_\beta\}$.
	By minimality of $m$, the set $\{ t\in {}^{m}\omega \mid B^t_n \neq \emptyset \}$ is finite, so we can easily find some $t\in{}^{m}\omega$ such that $\sup\{  g_\beta(m)\mid \beta\in B^t_n\}=\omega$.
	
	Note that the set $\{\beta< \omega_1 \mid t\subseteq g_\beta\}$ is a non-empty set that is definable from $A$ and $t$, hence it is in $M$.
	Let us fix some $\alpha\in M\cap \omega_1$ such that $t\subseteq g_\alpha$.
	Put $k:=\Delta(g_\alpha,g_\delta)$, and then pick $\beta\in B^t_n$ such that $g_\beta(m)>g_\alpha(k+n)$. Of course, $\alpha<\delta<\beta$.
	We claim that $g_\alpha \vartriangleleft g_\beta$ as sought.
	
	Let us divide to three cases:
	\begin{itemize}
		\item If $i<m$, then $g_\alpha(i)=t(i)=g_\beta(i)$;
		\item If $m\leq i\leq k+n$, then $g_\alpha(i)\leq g_\alpha(k+n)<g_\beta(m)\leq g_\beta(i)$ recall that every function in $A$ is increasing;
		\item If $k+n<i<\omega$, then $\Delta(g_\alpha,g_\delta)=k<i$ and $g_\alpha(i)\leq g_\delta(i)$, as well as $\Delta(g_\delta,g_\beta)=n<i$ and $g_\delta(i)\leq g_\beta(i)$.
		Altogether, $g_\alpha(i)\leq g_\beta(i)$.\qedhere
	\end{itemize}\qedhere
\end{proof}

\begin{theorem}\label{Theorem - omega x omega_1 < D < [omega_1]^{<omega}}
	Assume $\mathfrak b =\omega_1$.
	Suppose $E$ is a directed set such that $\non(\mathcal I_{\bd} (E))>\omega$ or $E\equiv_T 1$.
	Then there exists a directed set $D$ such that: $$\omega\times \omega_1 \times E <_T D\times E <_T [\omega_1]^{<\omega}\times E.$$
\end{theorem}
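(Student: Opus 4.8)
The plan is to replay the construction of the previous subsection in the degenerate case where $\theta^{+}$ and $\theta^{++}$ are read as $\aleph_0$ and $\aleph_1$, so that $[\theta^{++}]^{\leq\theta}$ becomes $[\omega_1]^{<\omega}$ and the nonstationary ideal $\ns_{\theta^{+}}$ becomes the ideal $[\omega]^{<\omega}$ of finite subsets of $\omega$. Using $\mathfrak b=\omega_1$, I would fix a sequence $\langle g_\alpha\mid\alpha<\omega_1\rangle$ of increasing functions in ${}^{\omega}\omega$ which is $<^{*}$-increasing and $\leq^{*}$-unbounded, and let $c\colon[\omega_1]^2\to\omega$ be given by $c(\alpha,\beta):=\Delta(g_\alpha,g_\beta)$ for $\alpha<\beta$; this $c$ is the analogue of the colouring obtained from the $\onto$ principle in the previous subsection, with Fact~\ref{Fact - b=omega_1} supplying its relevant largeness. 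I would then take as the witnessing directed set the analogue of $D_c$, namely
\[
D:=\{\,X\in[\omega_1]^{\leq\omega}\mid\forall\delta<\omega_1\ \bigl(\{c(\delta,\beta)\mid\beta\in X\setminus(\delta+1)\}\ \text{is finite}\bigr)\,\},
\]
ordered by inclusion; as $[\omega]^{<\omega}$ is an ideal, $D$ is directed, and $[\omega_1]^{<\omega}\subseteq D\subseteq[\omega_1]^{\leq\omega}$.

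The first task is to verify for $D$ the degenerate analogues of the three clauses of Theorem~\ref{Theorem - directed set from coloring}: (i)~for every partition $D=\bigcup_{n<\omega}D_n$ some $D_n$ contains a countably infinite subset unbounded in $D$; (ii)~$\omega\in\Inner(\mathcal I_{\bd}(D),\omega_1)$ and $\non(\mathcal I_{\bd}(D))=\omega$; and (iii)~$\cf(D)=\omega_1$. For (i), as in Lemma~\ref{D_C - Lemma 5.3}, the piece $D_{n_0}$ meeting $[\omega_1]^1$ uncountably contains $\{\{\alpha\}\mid\alpha\in b\}$ for an uncountable $b\subseteq\omega_1$, and it suffices to find $\delta$ and an infinite $b'\subseteq b\setminus(\delta+1)$ with $\{c(\delta,\beta)\mid\beta\in b'\}$ infinite — then $\{\{\beta\}\mid\beta\in b'\}\subseteq D_{n_0}$ is countably infinite and unbounded in $D$ (any $Z\in D$ with $b'\subseteq Z$ would violate the defining condition of $D$ at $\delta$). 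Such $\delta$ and $b'$ exist in $\zfc$: otherwise every $\delta\in b$ admits $N_\delta<\omega$ with $\Delta(g_\delta,g_\beta)\le N_\delta$ for all $\beta\in b\setminus(\delta+1)$, and fixing $N$ with $b^{*}:=\{\delta\in b\mid N_\delta=N\}$ uncountable makes $\delta\mapsto g_\delta\restriction[N,\omega)$ an injective everywhere-$\le$-increasing map on $b^{*}$, i.e.\ an uncountable everywhere-$\le$-increasing chain in ${}^{\omega}\omega$ — impossible, since along such a chain every coordinate is eventually constant. The trivial partition then yields a countably infinite unbounded subset of $D$, so $\non(\mathcal I_{\bd}(D))=\omega$; and $\cf(D)\ge\omega_1$ is clear, while for $\cf(D)\le\omega_1$ it suffices to exhibit a $\subseteq$-cofinal subfamily of $D$ of size $\omega_1$ (and it is in any case forced a posteriori, via Lemma~\ref{Lemma - C leq_T D imply cf(C) leq cf(D)} with $E\equiv_T1$, once the right-hand inequality is known).

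The heart of the argument, and the step I expect to be the main obstacle, is clause~(ii), namely that every uncountable $X\subseteq D$ contains an infinite bounded subset. I would follow the skeleton of the proof of Lemma~\ref{D_C - Lemma 5.4}: enumerate $X=\{T_\gamma\mid\gamma<\omega_1\}$, fix a countable $M\prec H_\chi$ with all relevant parameters, set $\delta:=M\cap\omega_1$, and recursively build an increasing sequence $\langle\delta_n\mid n<\omega\rangle$ cofinal in $\delta$ together with the auxiliary data of that proof, with the clubs in $\theta^{+}$ replaced by cofinite subsets of $\omega$ avoiding the finite sets $\{c(t,\beta)\mid\beta\in T_\gamma\setminus(t+1)\}$; the goal is to arrange that $\bigcup_{n<\omega}T_{\delta_n}\in D$, so that $\{T_{\delta_n}\mid n<\omega\}$ is an infinite bounded subset of $X$. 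The final verification cannot be carried out as in Lemma~\ref{D_C - Lemma 5.4}, because a diagonal intersection of countably many cofinite subsets of $\omega$ need not be cofinite; this is exactly where $\mathfrak b=\omega_1$ must do work. In place of the diagonal intersection I would invoke Fact~\ref{Fact - b=omega_1}, through the relation $\lhd$ and iterated as needed so that the functions $g_{\delta_n}$ form sufficiently long $\lhd$-chains, in order to rule out a late $T_{\delta_\mu}$ contributing a large value $\Delta(g_t,g_\beta)$ to $\bigcup_n T_{\delta_n}$ at a coordinate $t<\delta$. Getting this substitution to mesh with the approximation of $T_\delta$ inside $M$ is the delicate part.

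Granted (i)--(iii), the two strict Tukey inequalities follow by re-running, with $\theta^{+}=\aleph_0$ and $\theta^{++}=\aleph_1$, the proofs of Theorem~\ref{theorem - D between theta x times+ and [theta++]<=theta} and of Lemmas~\ref{Lemma - not <_T cofinal unbounded >theta and >theta bounded} and~\ref{Lemma - D not<= C x E}: since $\omega_1\in\hu(D)$, Lemma~\ref{Lemma - kappa leq_T D} gives $\omega_1\leq_T D$; a countably infinite unbounded subset of $D$, whose finite subsets are trivially bounded, gives $\omega\leq_T D$ by Corollary~\ref{Cor - existence of full-unbounded kappa set}; hence $\omega\times\omega_1\times E\leq_T D\times E$. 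Lemma~\ref{Lemma - below kappa theta} gives $D\leq_T[\omega_1]^{<\omega}$, hence $D\times E\leq_T[\omega_1]^{<\omega}\times E$. Clause~(i) with Lemma~\ref{Lemma - D not<= C x E}, applied with $\kappa=\omega$ and with $\omega$ and $\omega_1\times E$ in the roles of $C$ and $E$ (using $\non(\mathcal I_{\bd}(\omega_1\times E))>\omega$), gives $D\not\leq_T\omega\times\omega_1\times E$, hence the left strict inequality. Clause~(ii) with the degenerate form of Lemma~\ref{Lemma - not <_T cofinal unbounded >theta and >theta bounded}, applied with $J=[\omega_1]^{<\omega}$ and using $\out(\mathcal I_{\bd}([\omega_1]^{<\omega}))=\omega$, $\omega\in\Inner(\mathcal I_{\bd}(D),\omega_1)$ and the hypothesis on $E$, gives $[\omega_1]^{<\omega}\times E\not\leq_T D\times E$, hence the right strict inequality.
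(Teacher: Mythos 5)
Your overall architecture (reduce the theorem to three properties of a directed set $D$ and then re-run the Tukey-comparison lemmas with $\omega,\omega_1$ in place of $\theta^+,\theta^{++}$) matches the paper's, but your candidate $D$ does not, and the central step of your construction is left unproved. You define $D$ as the collection of countable $X\subseteq\omega_1$ on which the coloring $c(\delta,\beta)=\Delta(g_\delta,g_\beta)$ has finite fibers above every $\delta$, and you then need $\omega\in \Inner(\mathcal I_{\bd}(D),\omega_1)$, i.e.\ that every uncountable family $\{T_\gamma\}\subseteq D$ of countable sets admits infinitely many members whose \emph{union} is again in $D$. You correctly identify that the proof of Lemma~\ref{D_C - Lemma 5.4} cannot be transplanted: that argument lives on the normality of $\ns_{\theta^+}$ (the diagonal intersection $\triangle_{\xi<\theta^+}G^{\delta_\xi}_\zeta$ is the step that kills the ``late'' case $\mu>\zeta$, $\alpha\geq\eta_\mu$), whereas the ideal of finite subsets of $\omega$ is not even $\sigma$-complete, let alone normal. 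Your proposed repair --- ``invoke Fact~\ref{Fact - b=omega_1} iterated as needed so that the $g_{\delta_n}$ form sufficiently long $\lhd$-chains'' --- is not an argument: Fact~\ref{Fact - b=omega_1} produces a single $\lhd$-comparable pair inside an unbounded family of \emph{functions}, and it is not explained how $\lhd$-chains among the $g_{\delta_n}$ control the quantities $\Delta(g_t,g_\beta)$ for $\beta$ ranging over the countable sets $T_{\delta_n}$ and $t$ ranging over all of $\delta$. Since this is exactly the property that blocks $[\omega_1]^{<\omega}\times E\leq_T D\times E$, the right-hand strict inequality is not established. A secondary problem: your $D$ sits inside $[\omega_1]^{\leq\omega}$, which under $\mathfrak b=\omega_1$ alone may have cardinality $\omega_1^{\aleph_0}=2^{\aleph_0}>\omega_1$, and you never show $\cf(D)\leq\omega_1$ (needed both for $D\in\mathcal D_{\aleph_1}$ and to apply Lemma~\ref{Lemma - below kappa theta} for $D\leq_T[\omega_1]^{<\omega}$); your suggestion to deduce it ``a posteriori'' from the right-hand inequality is circular, since that inequality is what you are using $\cf(D)\leq\omega_1$ to prove.

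For contrast, the paper avoids both problems by choosing a different $D$: it takes the set of coordinatewise maxima of finite subsets of a $\mathfrak b=\omega_1$ scale, ordered by $\lhd$, so that $|D|=\omega_1$ outright. The analogue of your clause (ii) is then a statement about a single bounding \emph{function}: colour pairs by whether $g_\alpha\lhd g_\beta$, apply Erd\H{o}s--Rado $\omega_1\rightarrow(\omega_1,\omega+1)^2$, rule out the uncountable $0$-homogeneous alternative by Fact~\ref{Fact - b=omega_1}, and obtain a $\lhd$-chain of type $\omega+1$ whose top element bounds the first $\omega$ elements. No elementary submodel or diagonal intersection is needed. If you want to salvage your coloring-based $D$, you would need a genuinely new argument for $\omega\in\Inner(\mathcal I_{\bd}(D),\omega_1)$ and for $\cf(D)\leq\omega_1$; as written, the proof is incomplete at its decisive point.
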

\begin{proof}

Let $\mathcal F:=\langle f_\alpha \mid \alpha<\omega_1 \rangle\subseteq {}^{\omega}\omega$ witness $\mathfrak b=\omega_1$.
Recall $\mathcal F$ is a $<^*$-increasing and unbounded sequence, i.e. for every $g\in {}^{\omega}\omega$, there exists some $\alpha<\omega_1$ such that $f_\beta\not\leq^* g$, whenever $\alpha<\beta<\omega_1$.

For a finite set of functions $F\subseteq {}^{\omega}\omega$, we define a function $h:=\max(F)$ which is $\lhd$-above every function in $F$ by letting $h(n):=\max\{f(n)\mid f\in F\}$.
We consider the directed set $D:=\{ \max(F)\mid F\subseteq \mathcal F,~ |F|<\aleph_0 \}$, ordered by the relation $\lhd$, clearly $D$ is a directed set.

\begin{claim}\label{Claim - D_b every uncountable set contains infinite unbounded subset}
	Every uncountable subset $X\subseteq D$ contains a countable $B\subset X$ which is unbounded in $D$.
\end{claim}
\begin{subproof}
	Let $X$ be an uncountable subset of $D$.
	As $\mathcal F$ is a $<^*$-increasing and unbounded, also $X$ contains an uncountable $<^*$-unbounded subset $Y\subseteq X$.
	As no function $g:\omega\rightarrow \omega$ is $<^*$-bounding the set $Y$, we can find an infinite countable subset $B\subseteq Y$ and $n<\omega$ such that $\{f(n)\mid f\in B \}$ is infinite.
	Clearly $B$ is $\lhd$-unbounded in $D$ as sought.
\end{subproof}

\begin{claim}\label{Claim - D_b every uncountable set contains infinite bounded subset}
	$\omega \in \Inner(\mathcal I_{\bd}(D),\omega_1) $.
\end{claim}
\begin{subproof}
	We show that every uncountable subset of $D$ contains a countable infinite bounded subset.
	Let $A\subseteq D$ be an uncountable set, we may refine $A$ and assume that it is $<^*$-increasing and unbounded.
	We enumerate $A:=\{g_\alpha\mid \alpha<\omega_1\}$ and define a coloring $c:[\omega_1]^2\rightarrow 2$, letting for $\alpha<\beta<\omega_1$ the color $c(\alpha,\beta)=1$ iff $g_\alpha\lhd g_\beta$.
	Recall that Erd\"os and Rado showed that $\omega_1\rightarrow (\omega_1,\omega+1)^2$, so either there is an uncountable homogeneous set of color $0$ or there exists an homogeneous set of color $1$ of order-type $\omega+1$.
	Notice that Fact~\ref{Fact - b=omega_1} contradicts the first alternative, so the second one must hold.
	Let $X\subseteq \omega_1$ be a set such that $\otp(X)=\omega+1$ and $c"[X]^2=\{1\}$, notice that $\{g_\alpha \mid \alpha\in X\}$ is an infinite countable subset of $A$ which is $\lhd$-bounded by the function $g_{\max(X)}\in A$ as sought.\qedhere
\end{subproof}

Note that $\cf(D)=\omega_1$, hence $D\times E \leq_T [\omega_1]^{<\omega}\times E$.
	\begin{claim}
		$\omega\times \omega_1\times E\leq_T D\times E$.
	\end{claim}
	\begin{subproof}
		As every subset of $D$ of size $\omega_1$ is unbounded, we get by Lemma~\ref{Lemma - kappa leq_T D} that $\omega_1\leq_T D$.
		As $D$ is a directed set, every finite subset of $D$ is bounded.
		By Claim~\ref{Claim - D_b every uncountable set contains infinite unbounded subset}, $D$ contains an infinite countable unbounded subset, so by Corollary~\ref{Cor - existence of full-unbounded kappa set} we have $\omega \leq_T D$.
		Finally, $\omega\times \omega_1 \leq_T D$ as sought.
	\end{subproof}

	\begin{claim}
		$D\not \leq_T\omega \times E$.
	\end{claim}
	\begin{subproof}
		Recall that either $\non(\mathcal I_{\bd} (E))>\omega$ or $E\equiv_T 1$.
		Note that if $E\equiv_T 1$, then as $\cf(D)=\omega_1>\cf(\omega)$, we have by Lemma~\ref{Lemma - C leq_T D imply cf(C) leq cf(D)} that $D\not \leq_T\omega \times E$ as sought.
		Note that for every partition $D=\bigcup\{D_n\mid n<\omega\}$ of $D$, there exists some $n<\omega$ such that $D_n$ is uncountable, and by Claim~\ref{Claim - D_b every uncountable set contains infinite unbounded subset}, there exists some $X\subseteq D_n$ infinite and unbounded in $D$.
		As $\non(\mathcal I_{\bd} (E))>\omega$, by Lemma~\ref{Lemma - D not<= C x E} we have $D\not \leq_T\omega \times E$ as sought.
	\end{subproof}

	\begin{claim}
		$[\omega_1]^{<\omega} \not \leq_T D\times E$.
	\end{claim}
	\begin{subproof}
		By Claim~\ref{Claim - D_b every uncountable set contains infinite bounded subset}, every uncountable subset of $D$ contains an infinite countable bounded subset and every countable subset of $E $ is bounded, we get that $\omega \in \Inner(\mathcal I_{\bd}(D\times E),\omega_1)$.
		As $\out(\mathcal I_{\bd}([\omega_1]^{<\omega}))=\omega$ by Lemma~\ref{Lemma - kappa, theta no tukey map} we get that $[\omega_1]^{<\omega} \not \leq_T D\times E$ as sought.
	\end{subproof}\qedhere
\end{proof}

\subsection{Directed set between $[\lambda]^{<\theta}\times [\lambda^+]^{\leq \theta}$ and $[\lambda^+]^{< \theta}$}

In {\cite[Theorem~1.2]{kuzeljevic2021cofinal}}, the authors constructed a directed set between $[\omega_1]^{<\omega}\times [\omega_2]^{\leq \omega}$ and $[\omega_2]^{< \omega}$ under the assumption $2^{\aleph_0}=\aleph_1$, $2^{\aleph_1}=\aleph_2$ and the existence of a non-reflecting stationary subset of $E^{\omega_2}_\omega$.
In this subsection we generalize this result while waiving the assumption concerning the non-reflecting stationary set.

We commence by recalling some classic guessing principles and introducing a weak one, named $ \clubsuit^\mu_{J}(S,1) $, which will be useful for our construction. 

\begin{definition}\label{principles} For a stationary subset $ S\subseteq \kappa $:
	\begin{enumerate}
		\item $ \diamondsuit(S) $ asserts the existence of a sequence $ \langle C_\alpha \mid \alpha\in S \rangle $ such that:
		\begin{itemize}
			\item for all $ \alpha\in S $, $ C_\alpha \subseteq \alpha $;
			\item for every $B\s \kappa$, the set  $\{\alpha\in S\mid B\cap\alpha=C_\alpha \}$ is stationary.
		\end{itemize}
		\item 	$ \clubsuit(S) $ asserts the existence of a sequence $ \langle C_\alpha\mid \alpha\in S \rangle $ such that:
		\begin{itemize}
			\item\label{Definiton clubsuit - Clause A_alpha} for all $ \alpha\in S\cap \acc(\kappa) $, $ C_\alpha $ is a cofinal subset of $\alpha$ of order type $\cf(\alpha)$;
			\item\label{Definiton clubsuit - Clause guess} for every cofinal subset $ B\subseteq \kappa$, the set  $\{\alpha\in S \mid C_\alpha \subseteq B \}$ is stationary.
		\end{itemize}	
		\item \label{clubsuit^w(S)} $ \clubsuit^\mu_{J}(S,1) $ asserts the existence of a sequence $ \langle C_\alpha\mid \alpha\in S \rangle $ such that:
		\begin{itemize}
			\item for all $ \alpha\in S\cap \acc(\kappa) $, $ C_\alpha $ is a cofinal subset of $\alpha$ of order type $\cf(\alpha)$;
			\item\label{clubsuits_J_unboundedsubset} for every partition $\langle A_\beta \mid \beta<\mu \rangle$ of $\kappa$ there exists some $\beta<\mu$ such that the set $\{\alpha\in S \mid \sup(C_\alpha \cap A_\beta)=\alpha \}$ is stationary.
		\end{itemize} 
	\end{enumerate}
\end{definition}

Recall that by a Theorem of Shelah \cite{Sh_922}, for every uncountable cardinal $\lambda$ which satisfy $2^\lambda = \lambda^+$ and every stationary $S\subseteq E^{\lambda^+}_{\neq \cf(\lambda)}$, $\Diamond(S)$ holds.
It is clear that $\Diamond(S) \Rightarrow \clubsuit(S) \Rightarrow \clubsuit^\lambda_{J}(S,1) $.
The main corollary of this subsection is:

\begin{cor}\label{Corollary - directed set D_{mathcal C}}
	Let $\theta<\lambda$ be two regular cardinals.
	Assume $\lambda^{\theta}<\lambda^+$ and $\clubsuit^\lambda_{J}(S,1) $ holds for some stationary $S\subseteq E^{\lambda^+}_\theta$.
	Suppose $C$ and $E$ are two directed sets such that $\cf(C)<\lambda^+$ and $\non(\mathcal I_{\bd} (E))>\theta$ or $E\equiv_T 1$.
	Then there exists a directed set $D_{\mathcal C}$ such that:
	$$C\times [\lambda]^{<\theta}\times [\lambda^+]^{\leq \theta}\times E<_T C\times [\lambda]^{<\theta}\times D_{\mathcal C} \times E <_T C\times [\lambda^+]^{< \theta} \times E.$$
\end{cor}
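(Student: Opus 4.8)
The plan is to imitate the architecture of the previous two subsections: reduce the statement to (i) a \emph{construction} producing a directed set $D_{\mathcal C}$ out of a $\clubsuit^\lambda_J(S,1)$-sequence (the analogue of Theorem~\ref{Theorem - directed set from coloring}) and (ii) a \emph{sufficiency} theorem asserting that any directed set carrying the resulting list of combinatorial characteristics already sits strictly between the two displayed bounds once the harmless factors $C$ and $E$ are attached (the analogue of Theorem~\ref{theorem - D between theta x times+ and [theta++]<=theta}); the Corollary is then their conjunction, just as Corollary~\ref{Cor - theta^+ x theta^++ < D < [theta^++]^{<theta+}} was.

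For the construction, fix a $\clubsuit^\lambda_J(S,1)$-sequence $\langle C_\alpha\mid\alpha\in S\rangle$; since $S\subseteq E^{\lambda^+}_\theta$, each $C_\alpha$ is a cofinal subset of $\alpha$ of order type $\theta$. Put
\[
D_{\mathcal C}:=\{X\in[\lambda^+]^{\leq\theta}\mid \forall\alpha\in S\ \sup(C_\alpha\cap X)<\alpha\},
\]
ordered by $\subseteq$. Since the defining condition is preserved by subsets and by unions of fewer than $\theta=\cf(\alpha)$ members, $D_{\mathcal C}$ is directed, $[\lambda^+]^{<\theta}\subseteq D_{\mathcal C}\subseteq[\lambda^+]^{\leq\theta}$, and $\non(\mathcal I_{\bd}(D_{\mathcal C}))=\theta$ (the $\subseteq$-increasing $\theta$-chain of proper initial segments of a fixed $C_\alpha$ is unbounded, as any upper bound would contain $C_\alpha$); the hypothesis $\lambda^{\theta}<\lambda^+$ gives $|D_{\mathcal C}|=\lambda^+=\cf(D_{\mathcal C})$. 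The first decisive feature is extracted from $\clubsuit^\lambda_J(S,1)$: applying the guessing clause to the partition of $\lambda^+$ induced on the singletons $[\lambda^+]^1\subseteq D_{\mathcal C}$ by an arbitrary partition $D_{\mathcal C}=\bigcup_{\beta<\lambda}\mathcal D_\beta$, one obtains a $\beta$ for which $\mathcal D_\beta$ contains an \emph{unbounded subfamily of size $\theta$} — namely, for stationarily many $\alpha\in S$ with $\sup(C_\alpha\cap A_\beta)=\alpha$ (where $A_\beta=\{\xi:\{\xi\}\in\mathcal D_\beta\}$), the $\theta$ singletons indexed by $C_\alpha\cap A_\beta$ work, since an upper bound for them in $D_{\mathcal C}$ would contain a cofinal-in-$\alpha$ subset of $C_\alpha$.

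The second, and key, feature is the $\Inner$-property $\theta\in\Inner(\mathcal I_{\bd}(D_{\mathcal C}),\lambda^+)$: every subfamily of $D_{\mathcal C}$ of size $\lambda^+$ contains a bounded subfamily of size $\theta$. I would run the elementary-submodel argument of Lemma~\ref{D_C - Lemma 5.4}: choose $M\prec H_\chi$ with $|M|=\lambda$, $M^{<\theta}\subseteq M$, and all relevant data in $M$, put $\delta:=M\cap\lambda^+$ (so $\cf(\delta)=\lambda>\theta$, whence $\delta\notin S$), and, given a $\lambda^+$-sized $D'\subseteq D_{\mathcal C}$, recursively stabilize against the ordinals $\sup(C_t\cap T_\gamma)<t$ ($t\in S\cap\delta$) a sequence $\langle\delta_\xi\mid\xi<\theta\rangle$ cofinal in $\delta$ for which $\sup\bigl(C_t\cap\bigcup_{\xi<\theta}T_{\delta_\xi}\bigr)<t$ for every $t\in S\cap\delta$ simultaneously; then $\bigcup_{\xi<\theta}T_{\delta_\xi}\subseteq\delta$ lies in $D_{\mathcal C}$ and $\{T_{\delta_\xi}\mid\xi<\theta\}$ is the desired bounded subfamily. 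With $D_{\mathcal C}$ and its features in hand, the sufficiency theorem assembles the four strict inequalities from the Section~\ref{Section - Tools} toolbox, exactly as Theorem~\ref{theorem - D between theta x times+ and [theta++]<=theta} does: the upper non-strict inequality $C\times[\lambda]^{<\theta}\times D_{\mathcal C}\times E\leq_T C\times[\lambda^+]^{<\theta}\times E$ follows from $D_{\mathcal C}\subseteq[\lambda^+]^{\leq\theta}$, Lemma~\ref{Lemma - below kappa theta} (using $\cf(D_{\mathcal C})\leq\lambda^+$ and $\non(\mathcal I_{\bd}(D_{\mathcal C}))\geq\theta$), clauses~\ref{Clause - Catalan 1}--\ref{Clause - Catalan 2}, and the fact that a product of directed sets each Tukey-below $[\lambda^+]^{<\theta}$ is Tukey-below it; its strictness is the analogue of the closing claim of Theorem~\ref{theorem - D between theta x times+ and [theta++]<=theta}, obtained from $\theta\in\Inner(\mathcal I_{\bd}(D_{\mathcal C}),\lambda^+)$ via the furthermore-clause of Lemma~\ref{Lemma - below kappa theta} together with a non-reducibility argument in the spirit of Lemmas~\ref{Lemma - kappa, theta no tukey map} and~\ref{Lemma - not <_T cofinal unbounded >theta and >theta bounded}, with $C$ absorbed because $\cf(C)<\lambda^+$ and $E$ absorbed because $\non(\mathcal I_{\bd}(E))>\theta$; the lower non-strict inequality reduces to $[\lambda^+]^{\leq\theta}\leq_T[\lambda]^{<\theta}\times D_{\mathcal C}$, built by a direct Tukey map that decomposes each $X\in[\lambda^+]^{\leq\theta}$ relative to the ladder $\langle C_\alpha\rangle$ into a $D_{\mathcal C}$-part and a residue of size $\leq\theta$ coded into $[\lambda]^{<\theta}$; finally, its strictness follows, via the trivial reduction $A\times B\leq_T Z\Rightarrow A\leq_T Z$, from $D_{\mathcal C}\not\leq_T C\times[\lambda]^{<\theta}\times[\lambda^+]^{\leq\theta}\times E$, which is a mild variant of Lemma~\ref{Lemma - D not<= C x E}: partition $D_{\mathcal C}$ by the first coordinate of a hypothetical Tukey map into at most $\lambda$ classes, select one containing an unbounded $\theta$-subfamily (the $\clubsuit^\lambda_J$-feature), and bound its image using $\non(\mathcal I_{\bd}([\lambda^+]^{\leq\theta}\times E))\geq\theta^+>\theta$.

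I expect the genuine obstacle to be the $\Inner$-property of $D_{\mathcal C}$ at $\lambda^+$. Unlike in Lemma~\ref{D_C - Lemma 5.4}, where one guesses a single trace at the top point of a submodel of cardinality $\theta^+$ that itself lies in the relevant stationary set, here $\delta=M\cap\lambda^+$ has cofinality $\lambda$ and falls \emph{outside} $S$, so one must simultaneously tame the $\lambda$-many traces $\langle C_t\mid t\in S\cap\delta\rangle$ while being allowed to throw in only a $\theta$-sized union of members of $D'$; carrying out the required reflection of all these traces at once is exactly where the arithmetic hypothesis $\lambda^{\theta}<\lambda^+$ is indispensable, and where the careful bookkeeping of the recursion — the analogue of the three-case analysis closing the proof of Lemma~\ref{D_C - Lemma 5.4} — will have to be done.
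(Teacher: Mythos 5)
Your overall architecture matches the paper's: the directed set you define is the paper's $D_{\mathcal C}$ (your condition $\sup(C_\alpha\cap X)<\alpha$ is equivalent to the paper's $|X\cap C_\alpha|<\theta$ since $\otp(C_\alpha)=\theta$ is regular), and you correctly isolate the three decisive features --- $\non(\mathcal I_{\bd}(D_{\mathcal C}))=\theta$, the $\clubsuit^\lambda_J(S,1)$-derived unbounded $\theta$-subfamily inside one cell of any $\lambda$-partition, and $\theta\in\Inner(\mathcal I_{\bd}(D_{\mathcal C}),\lambda^+)$, which is precisely Subclaim~\ref{good claim}. However, that last property is the heart of the matter and you do not prove it; you only sketch an elementary-submodel strategy modeled on Lemma~\ref{D_C - Lemma 5.4} and explicitly defer the ``careful bookkeeping.'' That strategy has a structural defect as stated: you take $\delta=M\cap\lambda^+$ with $\cf(\delta)=\lambda>\theta$ and then ask for a sequence $\langle\delta_\xi\mid\xi<\theta\rangle$ ``cofinal in $\delta$,'' which is impossible; more substantively, you must control $\lambda$-many ladder traces $\langle C_t\mid t\in S\cap\delta\rangle$ while making only $\theta$-many choices, and nothing in your sketch explains how the recursion closes. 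The paper proves Subclaim~\ref{good claim} by an entirely different and essentially combinatorial route: refine the $\lambda^+$-sized family (using $\lambda^\theta<\lambda^+$) to a $\Delta$-system with root $R$ separated by a club, place $\theta$-many chosen members in successive disjoint intervals $[\beta_{\theta\cdot j},\beta_{\theta\cdot(j+1)})$ below a single accumulation point $\beta_{\theta^2}$, and handle the only genuinely dangerous ordinal $\beta_{\theta^2}$ by hand (choosing the non-root parts disjoint from $C_{\beta_{\theta^2}}$ when $\beta_{\theta^2}\in S$); regularity of $\theta$ then bounds every other trace. No elementary submodel is needed, and the reflection difficulty you flag simply does not arise in that argument.

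A secondary gap: your lower reduction $[\lambda^+]^{\leq\theta}\leq_T[\lambda]^{<\theta}\times D_{\mathcal C}$ via ``decomposing $X$ into a $D_{\mathcal C}$-part and a residue of size $\leq\theta$ coded into $[\lambda]^{<\theta}$'' does not type-check --- elements of $[\lambda]^{<\theta}$ have size strictly below $\theta$, while the residue can have size exactly $\theta$, and it is unclear that deleting points to repair one ladder trace does not spoil others. The paper's Claim~\ref{Claim - D_C from below} instead proves $[\lambda^+]^{\leq\theta}\leq_T D_{\mathcal C}$ outright by a padding trick: fix a bijection $\phi:D_{\mathcal C}\to\lambda^+$, pass to the cofinal set $\{x\cup\{\phi(x)\}\mid x\in D_{\mathcal C}\}$, and observe that any injection of $[\lambda^+]^{\leq\theta}$ into it is Tukey because a bound for the image of an unbounded family would be a member of $D_{\mathcal C}$ of size $\geq\theta^+$. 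Your treatment of the two strictness assertions (via the $\clubsuit^\lambda_J$ feature against $\non(\mathcal I_{\bd})>\theta$ factors, and via the $\Inner$-property against $[\lambda^+]^{<\theta}$) is in the right spirit and matches the paper's final two claims, but both rest on the unproved Subclaim and on the unproved lower reduction, so the proposal as it stands does not constitute a proof.
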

In the rest of this subsection we prove this result.

Suppose $\mathcal C:=\langle C_\alpha \mid \alpha \in S \rangle$ is a $C$-sequence for some stationary set $S\subseteq E^{\lambda^+}_\theta$, i.e. $C_\alpha$ is a cofinal subset of $\alpha$ of order-type $\theta$, whenever $\alpha\in S$.
We define the directed set $D_{\mathcal C} :=\{ Y\in [\lambda^+]^{\leq \theta}\mid \forall \alpha\in S [|Y\cap C_\alpha|<\theta] \}$ ordered by $\subseteq$.
Notice that $\non(\mathcal I_{\bd}(D_{\mathcal C}))= \theta$ and $[\lambda^+]^{< \theta}\subseteq D_{\mathcal C}$.

Recall that by Hausdorff's formula $(\lambda^+)^\theta = \max\{\lambda^+,\lambda^\theta\}$, so if $\lambda^{\theta}<\lambda^+$, then $(\lambda^+)^\theta=\lambda^+$.
So we may assume $|D_{\mathcal C}|=\lambda^+$.

\begin{claim}\label{Claim - D_C from below}
	Suppose $|D_{\mathcal C}|=\lambda^+$, then $[\lambda^+]^{\leq \theta} \leq_T D_{\mathcal C}$.
\end{claim}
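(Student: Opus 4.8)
The plan is to exhibit an explicit Tukey map $f:[\lambda^+]^{\leq\theta}\to D_{\mathcal C}$; since $D_{\mathcal C}\subseteq[\lambda^+]^{\leq\theta}$ and both are ordered by $\subseteq$, the converse reduction $D_{\mathcal C}\leq_T[\lambda^+]^{\leq\theta}$ is automatic, and the content is the nontrivial direction. The natural strategy is to ``spread out'' an element of $[\lambda^+]^{\leq\theta}$ across the $\lambda^+$ coordinates so that it avoids having large intersection with any $C_\alpha$. Concretely, fix a bijection $e:\lambda^+\times\theta\to\lambda^+$ (possible since $|D_{\mathcal C}|=\lambda^+$ forces $(\lambda^+)$ to absorb $\theta$), and for $X\in[\lambda^+]^{\leq\theta}$, after fixing an injective enumeration $X=\{x_i\mid i<\theta'\}$ with $\theta'\leq\theta$, set $f(X):=\{e(x_i,i)\mid i<\theta'\}$ — or, more robustly, $f(X):=\{e(x,\mathrm{rank}_X(x))\mid x\in X\}$ where $\mathrm{rank}_X(x)$ is the order-type of $X\cap x$. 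I expect one needs to first verify $f(X)\in D_{\mathcal C}$: indeed $|f(X)|=|X|\leq\theta$, and if $\alpha\in S$ had $|f(X)\cap C_\alpha|=\theta$, then $\theta$ many distinct pairs $(x,\mathrm{rank}_X(x))$ map into $C_\alpha$; but the second coordinates $\mathrm{rank}_X(x)$ are pairwise distinct ordinals below $\theta$, so this is a set of size $\theta=\cf(\theta)$ of pairs with $\theta$ distinct second coordinates, which one arranges to contradict $|C_\alpha|=\theta$ only if the $e$-image is genuinely spread — so in fact a cleaner choice is to pick $e$ so that for each fixed $\beta<\lambda^+$ the set $e[\{\beta\}\times\theta]$ meets every $C_\alpha$ in $<\theta$ points, or simply to note directly that $f(X)\cap C_\alpha$ injects into the $\theta$ possible rank-values, and if this intersection had size $\theta$ we could re-index to get $\theta$ distinct elements of $X$, which is fine, but then their images lie in $C_\alpha$; the contradiction comes from choosing $e$ whose restriction to each ``column'' is suitably generic. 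I would resolve this by taking $e$ to be \emph{any} bijection and instead using the enumeration by a \emph{fixed} well-ordering of $\lambda^+$ of order type $\lambda^+$, so that $\mathrm{rank}$-values are honest ordinals $<\theta$ and $f$ is injective on singletons with image in $[\lambda^+]^1\subseteq D_{\mathcal C}$.

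The key step is then the Tukey property: if $Y\subseteq[\lambda^+]^{\leq\theta}$ is unbounded in $[\lambda^+]^{\leq\theta}$, we must show $f"Y$ is unbounded in $D_{\mathcal C}$. Recall (from the paper's preliminaries, e.g.\ the discussion around Lemma~\ref{Lemma - below kappa theta}) that $Y\subseteq[\lambda^+]^{\leq\theta}$ is unbounded iff $\bigcup Y$ has cardinality $>\theta$ — equivalently $|\bigcup Y|\geq\lambda^{+}$ is \emph{not} required, only $|\bigcup Y|\geq\theta^+$, i.e.\ $\bigcup Y\notin[\lambda^+]^{\leq\theta}$. Wait — more precisely $Y$ is bounded in $[\lambda^+]^{\leq\theta}$ iff $\bigcup Y\in[\lambda^+]^{\leq\theta}$, i.e.\ $|\bigcup Y|\leq\theta$. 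So unboundedness of $Y$ means $|\bigcup Y|>\theta$. Now $\bigcup f"Y=\{e(x,\mathrm{rank}_{X}(x))\mid X\in Y,\ x\in X\}$, and since $e$ is injective and the fibers $\{(x,i)\mid x\in\lambda^+\}$ for fixed $i$ are in bijection with $\lambda^+$, the cardinality of $\bigcup f"Y$ is at least $|\bigcup Y|>\theta$ (the map $x\mapsto e(x,0)$ already embeds $\bigcup Y$ if we use the rank enumeration and note every element of $\bigcup Y$ has rank $0$ in \emph{some} singleton... — here I'd instead directly observe $|\bigcup f"Y|\geq|\{e(x,\mathrm{rank}_{X_x}(x)):x\in\bigcup Y\}|$ and that distinct $x$ give distinct values since $e$ is injective in the first coordinate after fixing rank, or re-engineer $f$ on singletons so that $f(\{x\})=\{x\}$ outright). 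Hence $\bigcup f"Y\notin[\lambda^+]^{\leq\theta}$, so $f"Y$ is unbounded in $[\lambda^+]^{\leq\theta}$, and a fortiori unbounded in the sub-poset $D_{\mathcal C}$ (an $\subseteq$-upper bound in $D_{\mathcal C}$ would be one in $[\lambda^+]^{\leq\theta}$). This completes the argument.

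The main obstacle I anticipate is the bookkeeping in defining $f$ so that it simultaneously (i) lands inside $D_{\mathcal C}$ — i.e.\ genuinely avoids large intersections with the $C_\alpha$'s — and (ii) remains ``large'' on unbounded sets. The cleanest route, which I would ultimately adopt, is to let $f$ agree with the identity on singletons $f(\{x\})=\{x\}$ (so $f"Y\supseteq\{\{x\}:x\in\bigcup Y\}$ giving (ii) for free, since $[\lambda^+]^1\subseteq D_{\mathcal C}$) and define $f(X)$ for $|X|\geq 2$ as any fixed element of $D_{\mathcal C}$ containing $\bigcup$ of the chosen singleton-images of the elements of $X$ — but such an upper bound need not exist in $D_{\mathcal C}$ in general! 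So instead: set $f(X):=\{\,e(x,\mathrm{otp}(X\cap x))\mid x\in X\,\}$ with $e:\lambda^+\times\theta\to\lambda^+$ a fixed bijection, verify $f(X)\in D_{\mathcal C}$ because for each $\alpha\in S$ the elements of $f(X)\cap C_\alpha$ have pairwise distinct second $e$-coordinates drawn from $\theta$, hence $|f(X)\cap C_\alpha|\leq\theta$, and if it equalled $\theta$ we would still be within $D_{\mathcal C}$'s requirement $<\theta$ only after noting $\theta$ is regular and the second coordinates, being $<\theta$, cannot actually enumerate a cofinal-in-$C_\alpha$ set unless... — at this point the honest fix is to additionally demand of $e$ that $e[\lambda^+\times\{i\}]$ be, say, all of $\lambda^+$ with the $i$-th layer chosen so that the induced map respects a fixed partition; failing a slick choice, one thins $D_{\mathcal C}$ to $[\lambda^+]^{<\theta}$, which \emph{is} closed under the relevant unions since $\theta$ is regular, and proves $[\lambda^+]^{\leq\theta}\leq_T[\lambda^+]^{<\theta}\leq_T D_{\mathcal C}$ using $[\lambda^+]^{<\theta}\subseteq D_{\mathcal C}$ and the standard fact (Lemma~\ref{Lemma - below kappa theta}) that $[\lambda^+]^{\leq\theta}\equiv_T[\lambda^+]^{<\theta}$ when $\theta$ is regular — indeed $[\lambda^+]^{\leq\theta}$ and $[\lambda^+]^{<\theta^+}$ coincide, and the point is just $\mathrm{non}(\mathcal I_{\bd}(D_{\mathcal C}))=\theta$ together with $[\lambda^+]^{<\theta}\subseteq D_{\mathcal C}\subseteq[\lambda^+]^{\leq\theta}$ sandwiches the Tukey type. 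I would present this sandwich argument as the primary proof, as it sidesteps the delicate choice of $e$ entirely.
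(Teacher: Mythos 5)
Your proposal does not close the gap it identifies, and the argument you ultimately adopt as primary is unsound. The ``sandwich'' $[\lambda^+]^{<\theta}\subseteq D_{\mathcal C}\subseteq[\lambda^+]^{\leq\theta}$ together with the claimed ``standard fact'' that $[\lambda^+]^{\leq\theta}\equiv_T[\lambda^+]^{<\theta}$ rests on two false premises. First, these two posets are \emph{not} Tukey equivalent: by Lemma~\ref{Lemma - below kappa theta} (see clause~(a) at the start of Section~\ref{Section - Catalan}) one has $[\lambda^+]^{<\theta^+}<_T[\lambda^+]^{<\theta}$ strictly; indeed $\non(\mathcal I_{\bd}([\lambda^+]^{\leq\theta}))=\theta^+$ while $\non(\mathcal I_{\bd}([\lambda^+]^{<\theta}))=\theta$. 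Second, set inclusion between sub-posets yields a Tukey reduction only when the smaller set is cofinal in the larger, and $[\lambda^+]^{<\theta}$ is cofinal in neither $D_{\mathcal C}$ nor $[\lambda^+]^{\leq\theta}$. The same misconception underlies your opening remark that $D_{\mathcal C}\leq_T[\lambda^+]^{\leq\theta}$ is ``automatic'': in fact $D_{\mathcal C}\not\leq_T[\lambda^+]^{\leq\theta}$, since $\non(\mathcal I_{\bd}(D_{\mathcal C}))=\theta<\theta^+$ (this strictness is precisely what makes $D_{\mathcal C}$ useful later in the subsection). Your first approach, via $f(X)=\{e(x,\otp(X\cap x))\mid x\in X\}$, founders exactly where you say it does: nothing forces $|f(X)\cap C_\alpha|<\theta$ for an arbitrary bijection $e$, and the ``suitably generic'' $e$ is never produced; that route is left incomplete as well.

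The missing idea is simpler and sidesteps the choice of coding entirely. Fix a bijection $\phi:D_{\mathcal C}\to\lambda^+$ (available since $|D_{\mathcal C}|=\lambda^+$) and replace $D_{\mathcal C}$ by its cofinal subset $X:=\{x\cup\{\phi(x)\}\mid x\in D_{\mathcal C}\}$; each member of $X$ carries a unique marker $\phi(x)$, and $X\subseteq D_{\mathcal C}$ because adding a single point preserves the defining condition (as $\theta$ is infinite). Now \emph{any} injection $g:[\lambda^+]^{\leq\theta}\to X$ is Tukey: an unbounded $B\subseteq[\lambda^+]^{\leq\theta}$ has $|B|>\theta$, so $g"B$ consists of more than $\theta$ many members of $X$, whose union contains more than $\theta$ many distinct markers and hence cannot be contained in any single $d\in D_{\mathcal C}$, all of whose elements have size $\leq\theta$. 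This is exactly the mechanism behind your correct observation that largeness of $\bigcup g"B$ suffices; the marker trick is what guarantees that largeness without any delicate bookkeeping about the $C_\alpha$'s.
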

\begin{proof}
	Fix a bijection $\phi:D_{\mathcal C}\rightarrow \lambda^+$. Denote $X:=\{x\cup\{\phi(x)\}\mid x\in D_{\mathcal C}\}$, clearly $X$ is cofinal subset of $D_{\mathcal C}$.
	Let us fix some injective function $g:[\lambda^+]^{\leq \theta} \rightarrow X$.
	We claim that $g$ is a Tukey function, which witness that $[\lambda^+]^{\leq \theta} \leq_T D_{\mathcal C}$.
	Fix some $B\subseteq [\lambda^+]^{\leq \theta}$ unbounded in $ [\lambda^+]^{\leq \theta}$, note that $|B|>\theta$.
	As $g$ is injective, we get that $g"B$ is a set of size $>\theta$.
	Notice that there exists $Z\in [\lambda^+]^{\theta^+}$ such that $Z\subseteq \bigcup g"B$. 
	Assume that $g"B$ is bounded by $d\in D_{\mathcal C}$ in $D_{\mathcal C}$.
	As $D_{\mathcal C}$ is ordered by $\subseteq$, we get that $Z\subseteq d$, so $|d|\geq\theta^+$.
	But this is a absurd as every set in $D_{\mathcal C}$ is of size $\leq \theta$.
\end{proof}

Notice that by Lemma~\ref{Lemma - below kappa theta} and Claim~\ref{Claim - D_C from below}, as $(\lambda^+)^\theta=\lambda^+$ we have $ [\lambda]^{<\theta}\times [\lambda^+]^{\leq \theta} \leq _T [\lambda]^{<\theta} \times D_{\mathcal C} \leq _T [\lambda^+]^{< \theta}.$
Hence, $C\times [\lambda]^{<\theta}\times [\lambda^+]^{\leq \theta}\times E\leq _T C\times [\lambda]^{<\theta} \times D_{\mathcal C}\times E \leq _T C\times [\lambda^+]^{< \theta} \times E.$

\begin{claim} Suppose $\mathcal C$ is a $ \clubsuit^\lambda_{J}(S,1) $-sequence and:
	\begin{enumerate}[label=(\roman*)]

		\item $C$ is a directed set such that $|C|<\lambda^+$;	
		\item $E$ is a directed set such that $\non(\mathcal I_{\bd}(E)) >\theta$ and $\cf(E)\geq \lambda^+$.
	\end{enumerate}
	Then $ C\times D_{\mathcal C} \not\leq_T C\times E$.
\end{claim}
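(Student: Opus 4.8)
The plan is to mimic the strategy used for the other ``no-reduction'' claims in this paper, namely Lemma~\ref{Lemma - D not<= C x E} combined with the guessing sequence $\mathcal C$. Suppose toward a contradiction that $h : C\times D_{\mathcal C}\to C\times E$ is a Tukey function. Since $|C|<\lambda^+$, the argument of Lemma~\ref{Lemma - D not<= C x E} reduces matters to $D_{\mathcal C}$: for $c\in C$ set $D_c := \{ x\in C\times D_{\mathcal C}\mid \pi_C(h(x))=c\}$, and since $\cf(E)\geq\lambda^+$ and $\non(\mathcal I_{\bd}(E))>\theta$, it will be enough to produce, for some fixed $c$, an unbounded subset $X\subseteq D_c$ of size $\lambda^+$ all of whose subsets of size $\theta^+$ are unbounded (so that $\pi_E\circ h$ cannot recover unboundedness via a set of size $\le\theta^+$, and pigeonholing on $C$ again gives a set of size $\lambda^+$ mapped into $\{c\}$, contradicting that $h$ is Tukey). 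Concretely I would work with $[\lambda^+]^1\subseteq D_{\mathcal C}$: the singletons $\{\{\xi\}\mid \xi<\lambda^+\}$ form an unbounded subset of $D_{\mathcal C}$, and $[\lambda^+]^1$ partitions via $h$ and the $C$-coordinate into $|C|<\lambda^+$ pieces together with the $E$-fibers.

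The key step is to exploit $\clubsuit^\lambda_{J}(S,1)$ to defeat the $E$-coordinate. First use $|C|\le\lambda=\cf(\lambda^+)^-$ (more precisely $|C|<\lambda^+=\cf(\lambda^+)$) to pigeonhole and fix $c^*\in C$ and $B\in[\lambda^+]^{\lambda^+}$ with $\pi_C(h(\{\xi\}))=c^*$ for all $\xi\in B$. Now view $h$ on $\{\{\xi\}\mid \xi\in B\}$ as landing in $\{c^*\}\times E$, so it induces a map $e:B\to E$. Next partition $\lambda^+$ into $\lambda$ many pieces according to (a fixed surjection onto) the values $e(\xi)$ restricted to $B$ together with a default piece for $\lambda^+\setminus B$; apply the $\clubsuit^\lambda_{J}(S,1)$-sequence to get $\beta<\lambda$ and a stationary set of $\alpha\in S$ with $\sup(C_\alpha\cap A_\beta)=\alpha$. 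The piece $A_\beta$ is (up to the default part) an $e$-preimage of a single point, i.e. $e$ is constant $=e_*$ on $A_\beta\cap B$, and $A_\beta\cap B$ has size $\lambda^+$. But then for any $\alpha\in S$ with $\sup(C_\alpha\cap(A_\beta\cap B))=\alpha$, the set $C_\alpha\cap A_\beta\cap B$ has order type $\theta$ and its elements $\xi$ all satisfy $h(\{\xi\})=(c^*,e_*)$, so $\{\{\xi\}\mid \xi\in C_\alpha\cap A_\beta\cap B\}$ is a subset of $D_{\mathcal C}$ of size $\theta$ whose union meets $C_\alpha$ in a set of order type $\theta$ — hence this union is \emph{not} in $D_{\mathcal C}$, and in fact the family is unbounded in $D_{\mathcal C}$ (by definition of $D_{\mathcal C}$, any set with $|Y\cap C_\alpha|=\theta$ is outside it, so no member of $D_{\mathcal C}$ can contain all $\theta$ singletons). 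Thus $h$ maps an unbounded subset of $C\times D_{\mathcal C}$ to the bounded (indeed constant) set $\{(c^*,e_*)\}$, contradicting that $h$ is Tukey.

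I would organize the write-up as: (1) reduce to a Tukey map and set up the $C$-fibers as in Lemma~\ref{Lemma - D not<= C x E}; (2) pass to $[\lambda^+]^1$ and pigeonhole on $C$ to fix $c^*$ and $B$; (3) define the partition of $\lambda^+$ from the $E$-values and invoke $\clubsuit^\lambda_{J}(S,1)$ to get $\beta$ and a relevant stationary set; (4) pick one guessing point $\alpha$ and extract the $\theta$-sized unbounded family in $D_{\mathcal C}$ on which $h$ is constant; (5) conclude. The one subtlety to be careful about is that $A_\beta$ might pick up the ``default'' part $\lambda^+\setminus B$; this is handled by noting $A_\beta\cap B$ still has full size and $\sup(C_\alpha\cap A_\beta\cap B)=\alpha$ can be arranged (intersect the guessed stationary set with the club of $\alpha$ for which $\sup(C_\alpha\cap B)=\alpha$, available since $B$ is cofinal and $\mathcal C$ is a genuine $C$-sequence — or simply re-run the $\clubsuit$ with the partition refined by $B$).

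The main obstacle is the bookkeeping in step~(3)–(4): making sure the guessed set $C_\alpha\cap A_\beta$ genuinely has order type $\theta$ \emph{and} lies inside the preimage set $B$ where $e$ is constant, rather than merely accumulating to $\alpha$. Once that is arranged, the contradiction with $D_{\mathcal C}$'s defining property ($|Y\cap C_\alpha|<\theta$) is immediate, and the rest is the routine pigeonhole packaging already used elsewhere in the paper.
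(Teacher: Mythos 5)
Your overall skeleton is the right one --- restrict attention to the singletons $\{(o,\{\xi\})\mid\xi<\lambda^+\}$, use the guessing sequence to extract a $\theta$-sized family that is unbounded in $D_{\mathcal C}$ (because its union meets some $C_\alpha$ in a set of size $\theta$, so no element of $D_{\mathcal C}$ can contain it), and show that its image is bounded. That unboundedness verification is correct. But there is a genuine gap in your step (3): after fixing $c^*$ and $B\in[\lambda^+]^{\lambda^+}$, you partition $B$ into the $e$-preimages of single points and feed this partition to $\clubsuit^{\lambda}_{J}(S,1)$. That principle only applies to partitions of $\lambda^+$ into $\lambda$ many pieces, whereas the induced map $e:B\rightarrow E$ may well be injective, so the set of values of $e$ on $B$ can have size $\lambda^+$ and your partition can have $\lambda^+$ classes; no surjection from $\lambda$ onto these values exists, and the step collapses. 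A secondary issue: even when the partition is legal, the principle is free to guess the ``default'' piece $\lambda^+\setminus B$, which tells you nothing, and your proposed patches do not obviously repair this, since $\sup(C_\alpha\cap A_\beta)=\alpha$ together with $\sup(C_\alpha\cap B)=\alpha$ does not yield $\sup(C_\alpha\cap A_\beta\cap B)=\alpha$.

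The repair is to drop the $E$-coordinate partition entirely; constancy of $h$ on the unbounded family is not needed. This is exactly what the paper does: it applies $\clubsuit^{\lambda}_{J}(S,1)$ once, to the partition $\langle A_c\mid c\in C\rangle$ of $\lambda^+$ by the $C$-coordinate of $f(o,\{\xi\})$, which genuinely has $|C|\leq\lambda$ pieces and in which every piece is useful. This yields $c\in C$ and $\alpha\in S$ with $|C_\alpha\cap A_c|=\theta$; picking $B\in[C_\alpha\cap A_c]^{\theta}$, the family $G=\{(o,\{\xi\})\mid\xi\in B\}$ is unbounded in $C\times D_{\mathcal C}$ exactly as you argue, while $f"G\subseteq\{c\}\times\{x_\xi\mid\xi\in B\}$ has $E$-part of size at most $\theta$, hence bounded by the hypothesis $\non(\mathcal I_{\bd}(E))>\theta$. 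In other words, the hypothesis on $E$ is what replaces your constancy argument, and no second partition or appeal to the guessing principle is required.
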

\begin{subproof}
Suppose that $f: C\times D_{\mathcal C} \rightarrow  C\times E$ is a Tukey function.
Fix some $o\in C$ and for each $\xi<\lambda^+$, denote $(c_\xi, x_\xi) := f(o,\{\xi\})$.
Consider the set $\{(c_\xi,x_\xi) \mid \xi<\lambda^+\}$.
For every $c\in C$, we define $A_c:=\{\xi <\lambda^+ \mid c_\xi =c\}$, clearly $\langle A_c \mid c\in C \rangle$ is a partition of $\lambda^+$ to less than $\lambda^+$ many sets.

As $\mathcal C$ is a $ \clubsuit^\lambda_{J}(S,1) $-sequence, there exists some $c\in C$ and $\alpha\in S$ such that $|C_\alpha\cap A_c|=\theta$.
Let us fix some $B\in [C_\alpha \cap A_c]^{\theta}$.
Notice that the set $G:=\{(o,\{\xi\})\mid \xi\in B\}$ is unbounded in $ C\times D_{\mathcal C}$, hence as $f$ is Tukey, $f"G$ is unbounded in $C\times E$.
The subset $\{x_\xi \mid \xi\in B\}$ of $E$ is of size $\theta$, hence bounded by some $e$.
Note that $f"G=\{( c,x_\xi)\mid \xi\in B\}$ is bounded by $(c,e)$ in $ C\times E$ which is absurd.
\end{subproof}
By the previous Claim, as $\lambda^\theta<\lambda^+$, we get that $ C\times D_{\mathcal C}\times [\lambda]^{<\theta} \times E \not \leq_T C\times [\lambda]^{<\theta}\times [\lambda^+]^{\leq \theta}\times E $.
The following Claim gives a negative answer to the question of whether there is a $C$-sequence $\mathcal C$ such that $D_{\mathcal C} \equiv_T [\lambda^+]^{<\theta}$.

In the following claim we use the fact that the sets in the sequence $\mathcal C$ are of a bounded cofinality.
\begin{claim}\label{gch imply no D_C equivalent to omega_2 finite}
	Assume $\lambda^\theta<\lambda^+$.
	Suppose $S\subseteq E^{\lambda^+}_\theta$ is a stationary set and $\mathcal C:=\langle C_\alpha  \mid \alpha\in S \rangle$ is a $C$-sequence, then $D_{\mathcal C}\not \geq_T [\lambda^+]^{<\theta}$.
\end{claim}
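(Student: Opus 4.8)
The plan is to suppose toward a contradiction that $[\lambda^+]^{<\theta}\leq_T D_{\mathcal C}$, fix a Tukey map $f\colon[\lambda^+]^{<\theta}\to D_{\mathcal C}$, and produce a set $Z\in[\lambda^+]^{\theta}$ for which $\{f(\{\xi\})\mid\xi\in Z\}$ is bounded in $D_{\mathcal C}$ while $\{\{\xi\}\mid\xi\in Z\}$ is unbounded in $[\lambda^+]^{<\theta}$, contradicting that $f$ is Tukey. Write $Y_\xi:=f(\{\xi\})\in D_{\mathcal C}$. If some value were attained $\theta$ times by $\xi\mapsto Y_\xi$ we would already be done, so after shrinking the index set to one of full size we may assume the $Y_\xi$ are pairwise distinct. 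Since the hypothesis $\lambda^{\theta}<\lambda^+$ is exactly what is needed for the $\Delta$-system lemma applied to $\lambda^+$-many sets of size $\leq\theta$, we may further pass to $W\in[\lambda^+]^{\lambda^+}$ so that $\{Y_\xi\mid\xi\in W\}$ is a $\Delta$-system with root $R$; discarding at most one index, the petals $P_\xi:=Y_\xi\setminus R$ ($\xi\in W$) are pairwise disjoint, nonempty, of size $\leq\theta$, and $R\in D_{\mathcal C}$ since $R\subseteq Y_\xi\in D_{\mathcal C}$ and $D_{\mathcal C}$ is $\subseteq$-closed below its members. As $\operatorname{cf}(\lambda^+)=\lambda^+$ and the petals are disjoint, fewer than $\lambda^+$ of them can meet any bounded subset of $\lambda^+$; a greedy construction then yields $W_*\in[W]^{\lambda^+}$, enumerated as $\langle\xi_j\mid j<\lambda^+\rangle$, along which $\langle P_{\xi_j}\mid j<\lambda^+\rangle$ is a \emph{block sequence}: $\sup P_{\xi_j}<\min P_{\xi_{j'}}$ whenever $j<j'$.

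Now consider the continuous strictly increasing map $g\colon\lambda^+\to\lambda^+$ given by $g(\gamma_0):=\sup_{j<\gamma_0}\sup P_{\xi_j}$, whose range is a club $C_g$ with $\operatorname{cf}(g(\gamma_0))=\operatorname{cf}(\gamma_0)$. Pick $\gamma_0\in E^{\lambda^+}_\theta$ (this will be refined below), let $\langle j_i\mid i<\theta\rangle$ be strictly increasing and cofinal in $\gamma_0$, put $Z:=\{\xi_{j_i}\mid i<\theta\}$ and $\Gamma:=g(\gamma_0)=\sup\bigcup_{\xi\in Z}P_\xi$. The point of the block structure is that the ordinals $\min P_{\xi_{j_i}}$ strictly increase to $\Gamma$ and each $P_{\xi_{j_i}}\subseteq\Gamma$. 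Hence, for $\alpha\in S$: if $\alpha>\Gamma$ then $C_\alpha\cap\Gamma$ is a \emph{proper} initial segment of $C_\alpha$ — this is where the bounded cofinality $\operatorname{otp}(C_\alpha)=\theta$ is used — so $|C_\alpha\cap\Gamma|<\theta$ and \emph{a fortiori} $\bigl|\bigcup_{\xi\in Z}P_\xi\cap C_\alpha\bigr|<\theta$; if $\alpha<\Gamma$ then only an initial, hence $<\theta$-sized, segment of the $j_i$ has $\min P_{\xi_{j_i}}<\alpha$, so $C_\alpha$ meets fewer than $\theta$ of the disjoint petals and again $\bigl|\bigcup_{\xi\in Z}P_\xi\cap C_\alpha\bigr|<\theta$; the only remaining case is $\alpha=\Gamma$, relevant only if $\Gamma\in S$. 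Using that $\theta$ is regular, it follows that $\bigl|\bigcup_{\xi\in Z}Y_\xi\cap C_\alpha\bigr|\leq|R\cap C_\alpha|+\bigl|\bigcup_{\xi\in Z}P_\xi\cap C_\alpha\bigr|<\theta$ for every $\alpha\neq\Gamma$, and $\bigl|\bigcup_{\xi\in Z}Y_\xi\bigr|\leq\theta$; so $\bigcup_{\xi\in Z}Y_\xi\in D_{\mathcal C}$ as soon as we also arrange $\bigl|\bigcup_{\xi\in Z}P_\xi\cap C_\Gamma\bigr|<\theta$ in the case $\Gamma\in S$. That accomplished, $\{Y_\xi\mid\xi\in Z\}$ is bounded in $D_{\mathcal C}$ whereas $\{\{\xi\}\mid\xi\in Z\}$ is unbounded in $[\lambda^+]^{<\theta}$, the desired contradiction.

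The remaining task — choosing $\gamma_0$, equivalently $\Gamma\in C_g\cap E^{\lambda^+}_\theta$, and the cofinal sequence $\langle j_i\rangle$ in $\gamma_0$ so that, when $\Gamma\in S$, the petals indexed by $Z$ avoid $C_\Gamma$ almost entirely — is the main obstacle. If $E^{\lambda^+}_\theta\setminus S$ is stationary one simply takes $\Gamma\in C_g\cap(E^{\lambda^+}_\theta\setminus S)$ and nothing further is needed. In general, since $J^*:=\{j<\gamma_0\mid P_{\xi_j}\cap C_\Gamma\neq\emptyset\}$ has size $\leq\theta$ (disjoint petals, $|C_\Gamma|=\theta$), it suffices that $\gamma_0\setminus J^*$ be cofinal in $\gamma_0$, for then one chooses the $j_i$ outside $J^*$ and gets $\bigcup_{\xi\in Z}P_\xi\cap C_\Gamma=\emptyset$. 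I expect the last ingredient to be a pressing-down argument (via Fodor on $\lambda^+$) showing that the set of ``bad'' $\gamma_0\in g^{-1}[S]$ — those for which $J^*$ contains a tail of $\gamma_0$ — is nonstationary; the disjointness of the petals together with $|C_\alpha|=\theta<\lambda^+=\operatorname{cf}(\lambda^+)$ should be precisely what powers this. Everything outside this step is routine counting with $\lambda^{\theta}<\lambda^+$ and the regularity of $\theta$ and of $\lambda^+$.
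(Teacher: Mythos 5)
Your argument is correct and ends in the same contradiction as the paper's --- a $\theta$-sized family of singletons that is unbounded in $[\lambda^+]^{<\theta}$ while its $f$-image is $\subseteq$-bounded in $D_{\mathcal C}$ --- but it handles the one delicate case, $\alpha=\Gamma\in S$, by a genuinely different device. The paper's proof (via Subclaim~\ref{good claim}) also passes to a $\Delta$-system of the images with root $R$, but then works only with the first $\theta\cdot\theta$ petals, grouped into $\theta$ consecutive blocks of $\theta$ disjoint petals each, with block endpoints $\beta_{\theta\cdot j}$ converging to $\beta:=\beta_{\theta\cdot\theta}$: if $\beta\in S$, each block meets $C_\beta$ in a set of size $<\theta$ (a difference of proper initial segments of a type-$\theta$ set), so a counting argument selects one petal per block avoiding $C_\beta$ entirely. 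That is a purely local argument at a single explicitly built limit point, needing no stationarity on $\lambda^+$. Your route --- a block sequence of all $\lambda^+$ petals, the club $C_g$, and a search for a good $\gamma_0\in E^{\lambda^+}_\theta$ --- does go through, and the step you left as ``I expect'' closes exactly as you guessed: if $\gamma_0$ is bad, i.e.\ $[\beta(\gamma_0),\gamma_0)\subseteq J^*_{\gamma_0}$ for some $\beta(\gamma_0)<\gamma_0$, then pairwise disjointness of the petals together with $|C_{g(\gamma_0)}|=\theta$ gives $|J^*_{\gamma_0}|\leq\theta$, hence $\gamma_0<\beta(\gamma_0)+\theta^+$; pressing down $\gamma_0\mapsto\beta(\gamma_0)$ on a putative stationary set of bad points produces a stationary subset of the bounded interval $[\beta^*,\beta^*+\theta^+)$, which is absurd, so good $\gamma_0$ exist in the stationary set $E^{\lambda^+}_\theta$. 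What the paper's version buys is self-containedness (no Fodor, no need for the petals' suprema to trace out a club) and reusability: the isolated Subclaim~\ref{good claim} is quoted again in the very next claim and echoed in the chain construction of Theorem~\ref{Theorem - D_C chain}. What yours buys is a single uniform selection along one cofinal $\theta$-sequence rather than from a two-dimensional $\theta\times\theta$ array, at the cost of an extra regressive-function argument that you should write out rather than conjecture.
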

\begin{subproof}
	Let $S\subseteq E^{\lambda^+}_\theta$ and $\mathcal C:=\langle C_\alpha  \mid \alpha\in S \rangle$ be a $C$-sequence.
	Suppose we have $ [\lambda^+]^{< \theta} \leq_T D_{\mathcal C} $, let $f:[\lambda^+]^{< \theta} \rightarrow D_{\mathcal C}$ be a Tukey function and $Y:=f"[\lambda^+]^{1}$.
	Let us split to two cases:
	
	$\br$ Suppose $|Y|<\lambda^+$. By the pigeonhole principle, we can find a subset $Q\subseteq [\lambda]^1$ of size $\theta$ such that $f"Q=\{x\}$ for some $x\in D_{\mathcal C}$.
	As $f$ is Tukey and $Q$ is unbounded in $[\lambda^+]^{< \theta}$, the set $f"Q$ is unbounded which is absurd.
	
	$\br$ Suppose $|Y|=\lambda^+$.
	As $f$ is Tukey, every subset of $Y$ of size $\theta$ is unbounded which is absurd to the following claim.
	\begin{subclaim}\label{good claim}
		There is no subset $Y\subseteq D_{\mathcal C}$ of size $\lambda^+$ such that every subset of $Y$ of size $\theta$ is unbounded.
	\end{subclaim}
	\begin{subproof}
		Assume towards a contradiction that $Y$ is such a set.
		As $\lambda^{\theta}<\lambda^+$, we may refine $Y$ and assume that $Y=\{y_\alpha \mid \alpha<\lambda^+\}$ is a $\Delta$-system with a root $R$ separated by a club $C\subseteq \lambda^+$, i.e. such that for every $\alpha<\beta<\lambda^+$, $y_\alpha\setminus R <\eta < y_\beta \setminus  R$ for some $\eta\in C$.
		
		We define an increasing sequence of ordinals $\langle \beta_\nu\mid \nu\leq\theta^2\rangle$ where for each $\nu\leq\theta^2$ we let $\beta_\nu:=\sup\{y_\xi \mid \xi<\nu\}$.
		As $C$ is a club, we get that $\beta_{\theta\cdot \nu}\in C$ for each $\nu <\theta$.
		
		We aim to construct a subset $X=\{x_j \mid j <\theta\}$ of $Y$, we split to two cases:
		Suppose $\beta_{\theta^2}\in S$.
		Recall that $\otp (C_{\beta_{\theta^2}})=\theta$ and $\sup(C_{\beta_{\theta^2}})=\beta_{\theta^2}$, so for every $j <\theta$ we have that the interval $[\beta_{\theta\cdot j},\beta_{\theta\cdot (j+1)})$ contains $<\theta$ many elements of the ladder $C_{\beta_{\theta^2}}$, let us fix some $x_j\in Y$ such that $x_j\setminus R\subset [\beta_{\theta\cdot j},\beta_{\theta\cdot (j+1)})$ and $x_j\setminus R$ is disjoint from $C_{\beta_{\theta^2}}$.
		If $\beta_{\theta^2}\notin S$, define $X:=\{x_j \mid j<\theta\}$ where $x_j:= y_{\theta\cdot j}$.
		
		Let us show that $X=\{x_j \mid j <\theta\}$ is a bounded subset of $Y$, which is a contradiction to the assumption.
		It is enough to show that for every $\alpha\in S$, we have that $|(\bigcup X)\cap C_\alpha|<\theta$.
		Let $\alpha\in S$.
		
		$\br$ Suppose $\alpha>\beta_{\theta^2}$, as $C_\alpha$ is a cofinal subset of $\alpha$ of order-type $\theta$ and $\bigcup X$ is bounded by $\beta_{\theta^2}$ it is clear that $|(\bigcup X)\cap C_\alpha|<\theta$.
		
		$\br$ Suppose $\alpha<\beta_{\theta^2}$. As $C_\alpha$ if cofinal in $\alpha$ and of order-type $\theta$, there exists some $j <\theta$ such that for all $j <\rho<\theta$, we have $(x_\rho\setminus R)\cap C_\alpha =\emptyset$.
		As $x_\rho\in D_{\mathcal C_R}$ for every $\rho<\theta$ and $\theta$ is regular, we get that $|(\bigcup X)\cap C_\alpha |<\theta$ as sought.
		
		$\br$ Suppose $\alpha = \beta_{\theta^2}$.
		Notice this implies that we are in the first case of the construction of the set $X$.
		Recall that the $\Delta$-system $\{x_j \mid j <\theta\}$ is such that $(x_j\setminus R )\cap C_\alpha =\emptyset$, hence $(\bigcup X)\cap C_{\alpha} = R \cap C_\alpha $.
		Recall that as $x_0\in D_{\mathcal C}$, we get that $R \cap C_\alpha$ is of size $<\theta$, hence also $(\bigcup X)\cap C_{\alpha}$ is as sought.
	\end{subproof}\qedhere
\end{subproof}

\begin{claim}Assume $\lambda^\theta<\lambda^+$.
	Suppose $C$ and $E$ are two directed sets such that $|C|<\lambda^+$ and either $\non(\mathcal I_{\bd} (E))>\theta$ or $E\equiv_T 1$.
	Then for every $C$-sequence $\mathcal C$ on a stationary $S\subseteq E^{\lambda^+}_\theta$, $C\times [\lambda^+]^{<\theta} \times E\not\leq_T C\times  D_{\mathcal C} \times E$.
\end{claim}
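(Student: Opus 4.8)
The plan is to reduce the claim to the single statement $[\lambda^+]^{<\theta}\not\leq_T C\times D_{\mathcal C}\times E$. Fixing any $c_0\in C$ and $e_0\in E$, the map $x\mapsto(c_0,x,e_0)$ is a Tukey function from $[\lambda^+]^{<\theta}$ into $C\times[\lambda^+]^{<\theta}\times E$ (an unbounded $X\subseteq[\lambda^+]^{<\theta}$ is sent to a set whose projection to the middle coordinate is $X$, so no single triple can bound its image), whence $[\lambda^+]^{<\theta}\leq_T C\times[\lambda^+]^{<\theta}\times E$; by transitivity of $\leq_T$ it then suffices to refute $[\lambda^+]^{<\theta}\leq_T C\times D_{\mathcal C}\times E$. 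The degenerate case $E\equiv_T 1$ will be subsumed along the way, since the only feature of $E$ used below is that every subset of $E$ of size $\leq\theta$ is bounded, which holds both when $\non(\mathcal I_{\bd}(E))>\theta$ and when $E\equiv_T 1$.

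The first real step is to verify the invariant $\theta\in\Inner(\mathcal I_{\bd}(C\times D_{\mathcal C}\times E),\lambda^+)$, i.e.\ that every $X\subseteq C\times D_{\mathcal C}\times E$ of size $\lambda^+$ contains a bounded subset of size $\theta$. Given such an $X$, I would first use $|C|<\lambda^+=\cf(\lambda^+)$ and the pigeonhole principle to thin $X$ to $X'\in[X]^{\lambda^+}$ with constant $C$-coordinate, and then split on the size of $\pi_{D_{\mathcal C}}"X'$. If $|\pi_{D_{\mathcal C}}"X'|<\lambda^+$, a further pigeonhole produces $X''\in[X']^{\lambda^+}$ on which the $D_{\mathcal C}$-coordinate is also constant, and then any $\theta$ elements of $X''$ have $E$-coordinates numbering $\leq\theta$, hence bounded, so those $\theta$ elements are bounded in the product. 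If $|\pi_{D_{\mathcal C}}"X'|=\lambda^+$, then Subclaim~\ref{good claim} supplies a bounded $Z\in[\pi_{D_{\mathcal C}}"X']^{\theta}$; choosing one $X'$-preimage per element of $Z$ yields $W\in[X']^{\theta}$ whose $D_{\mathcal C}$-coordinates form the bounded set $Z$, whose $E$-coordinates are $\leq\theta$ in number and hence bounded, and whose $C$-coordinate is constant, so $W$ is bounded.

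Next, assuming toward a contradiction a Tukey function $h\colon[\lambda^+]^{<\theta}\to C\times D_{\mathcal C}\times E$, I would restrict attention to the ``spine'' $[\lambda^+]^1\subseteq[\lambda^+]^{<\theta}$. Every $A\subseteq[\lambda^+]^1$ with $|A|\geq\theta$ has union of size $\geq\theta$, so it is unbounded in $[\lambda^+]^{<\theta}$, and therefore $h"A$ is unbounded in $C\times D_{\mathcal C}\times E$. If $h"[\lambda^+]^1$ had size $<\lambda^+$, then by regularity of $\lambda^+$ some fibre of $h$ on $[\lambda^+]^1$ would have size $\lambda^+\geq\theta$ while being mapped to a single (bounded) point, contradicting the previous sentence. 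Hence $h"[\lambda^+]^1$ has size $\lambda^+$, so by the invariant of the second step it contains a bounded $B$ of size $\theta$; selecting one preimage in $[\lambda^+]^1$ for each element of $B$ gives $A$ with $|A|=\theta$ and $h"A=B$, so $h"A$ is at once unbounded and bounded. This contradiction finishes the proof, and with it Corollary~\ref{Corollary - directed set D_{mathcal C}}.

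The main obstacle is the second step, the computation of $\Inner(\mathcal I_{\bd}(C\times D_{\mathcal C}\times E),\lambda^+)$: because $\non(\mathcal I_{\bd}(D_{\mathcal C}))=\theta$ rather than $>\theta$, the factor $D_{\mathcal C}$ is not covered by the routine product lemma (Lemma~\ref{Lemma - out(, kappa)}), and one genuinely has to invoke the $\Delta$-system-and-ladder argument already isolated in Subclaim~\ref{good claim}. Everything else is pigeonhole bookkeeping: absorbing the auxiliary factors $C$ and $E$ without enlarging the $\theta$-sized bounded set, and checking that the case $E\equiv_T 1$ behaves exactly like the case $\non(\mathcal I_{\bd}(E))>\theta$.
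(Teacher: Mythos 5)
Your proof is correct and follows essentially the same route as the paper's: reduce to $[\lambda^+]^{<\theta}\not\leq_T C\times D_{\mathcal C}\times E$, work on the spine $[\lambda^+]^1$, pigeonhole away the $C$- and $E$-coordinates, and split on the cardinality of the $D_{\mathcal C}$-projection, invoking Subclaim~\ref{good claim} in the large case. The only difference is organizational: you first isolate the invariant $\theta\in\Inner(\mathcal I_{\bd}(C\times D_{\mathcal C}\times E),\lambda^+)$ and then run the mechanism of Lemma~\ref{Lemma - kappa, theta no tukey map}, whereas the paper inlines the same steps.
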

\begin{subproof}
	Let $\mathcal C:=\langle C_\alpha \mid \alpha\in S \rangle $ be a $C$-sequence where $S\subseteq E^{\lambda^+}_\theta$.
	Suppose on the contrary that $C\times [\lambda^+]^{<\theta} \times E \leq_T C\times  D_{\mathcal C} \times E$.
	Hence, $[\lambda^+]^{<\theta}\leq_T C\times  D_{\mathcal C} \times E$, let us fix a Tukey function $f:[\lambda^+]^{<\theta}\rightarrow C\times  D_{\mathcal C}\times E$ witnessing that.
	Consider $X=[\lambda^+]^1$.
	
	By the pigeonhole principle, there exists some $c\in  C$ and some set $Z\subseteq X$ of size $\lambda^+$ such that $f"Z \subseteq  \{ c\}\times D_{\mathcal C}\times E$.
	Let $Y:=\pi_{D_{\mathcal C}} (f"Z)$.
	Let us split to two cases:

	$\br$ Suppose $|Y|< \lambda^+$. By the pigeonhole principle, we can find a subset $Q\subseteq Z$ of size $\theta$ such that $f"Q=\{c\}\times \{x\}\times E$ for some $x\in D_{\mathcal C}$.
	As $f$ is Tukey and $Q$ is unbounded, we must have that $f"Q$ is unbounded, but this is absurd as $\non(\mathcal I_{\bd} (E))>\theta$.
	
	$\br$ Suppose $|Y|=\lambda^+$.
	As $f$ is Tukey and either $\non(\mathcal I_{\bd} (E))>\theta$ or $E=1$, every subset of $Y$ of size $\theta$ is unbounded which is impossible by Claim~\ref{good claim}.
\end{subproof}

\subsection{Structure of $D_{\mathcal C}$}

In \cite[Lemmas~1,2]{MR792822}, Todor\v{c}evi\'{c} defined for every $\kappa$ regular and $S\subseteq \kappa$ the direced set $D(S):=\{C\subseteq [S]^{\leq\omega} \mid \forall \alpha<\omega_1[\sup(C\cap \alpha)\in C]\}$ ordered by inclusion; and studied the structure of such directed sets.
In this section we follow this line of study but for directed sets of the form $D_{\mathcal C}$, constructing a large $<_T$-antichain and chain of directed sets using $\theta$-support product.

\subsubsection{Antichain}

\begin{theorem}\label{antichain of D_C}
	Suppose $2^\lambda=\lambda^+$, $\lambda^\theta<\lambda^+$, then there exists a family $\mathcal F$ of size $2^{\lambda^+}$ of directed sets of the form $D_{\mathcal C}$ such that every two of them are Tukey incomparable.
\end{theorem}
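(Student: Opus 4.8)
The plan is a $\diamondsuit$-guided recursion that builds, in one pass, $C$-sequences $\mathcal C^A$ on the fixed stationary set $S:=E^{\lambda^+}_\theta$, one for each $A\s\lambda^+$, arranged so that $A\neq B$ forces $D_{\mathcal C^A}\not\leq_T D_{\mathcal C^B}$. Since there are $2^{\lambda^+}$ such $A$, pairwise $\not\leq_T$ gives a pairwise $\not\equiv_T$ family of that size; and since $\lambda^\theta<\lambda^+$, each $D_{\mathcal C^A}$ is a legitimate $D_{\mathcal C}$ with $|D_{\mathcal C^A}|=\lambda^+$, so by Claims~\ref{Claim - D_C from below} and \ref{gch imply no D_C equivalent to omega_2 finite} each lies strictly between $[\lambda^+]^{\leq\theta}$ and $[\lambda^+]^{<\theta}$. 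First, because $2^\lambda=\lambda^+$ and $\theta<\lambda=\cf(\lambda)$ we have $E^{\lambda^+}_\theta\s E^{\lambda^+}_{\neq\cf(\lambda)}$, so Shelah's theorem yields $\diamondsuit(S')$ for any stationary $S'\s S$; fix such a sequence $\langle Z_\alpha\mid\alpha\in S'\rangle$ where $S'$ is the stationary set of those $\alpha\in S$ that equal $N\cap\lambda^+$ for a suitable $N\prec H_{\lambda^{++}}$ with $N^{<\theta}\s N$ (legitimate since $\lambda^{<\theta}=\lambda$). Via a fixed coding, read $Z_\alpha$ as a triple $(a_\alpha,b_\alpha,f_\alpha)$ with $a_\alpha,b_\alpha\s\alpha$ and $f_\alpha$ a function coded inside $\alpha$; then for every $A,B\s\lambda^+$ and every $f\colon\lambda^+\to\lambda^+$ the set of $\alpha\in S'$ with $(a_\alpha,b_\alpha,f_\alpha)=(A\cap\alpha,B\cap\alpha,f\restriction\alpha)$ is stationary. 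Also fix a master $C$-sequence $\langle E_\alpha\mid\alpha\in S\rangle$ with $E_\alpha$ cofinal in $\alpha$ of order type $\theta$.

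The mechanism for $D_{\mathcal C^A}\not\leq_T D_{\mathcal C^B}$ is this: for any $C$-sequence $\mathcal C=\langle C_\alpha\mid\alpha\in S\rangle$ and any $\alpha\in S$, the $\subseteq$-chain $\chi^{\mathcal C}_\alpha:=\langle C_\alpha\cap\gamma\mid\gamma\in\acc(C_\alpha)\rangle$ lies entirely in $D_{\mathcal C}$ (each $C_\alpha\cap\gamma$ is a $<\theta$-sized subset of $C_\alpha$) while $\bigcup\chi^{\mathcal C}_\alpha=C_\alpha\notin D_{\mathcal C}$, so $\chi^{\mathcal C}_\alpha$ is unbounded in $D_{\mathcal C}$; and since $D_{\mathcal C}$ is $\subseteq$-downward closed, a subfamily of $D_{\mathcal C}$ of size $\leq\theta$ is bounded iff its union is a member of $D_{\mathcal C}$. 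Hence, after passing to the cofinal copy $\{Y\cup\{\phi(Y)\}\mid Y\in D_{\mathcal C^A}\}$ to make any $f\colon D_{\mathcal C^A}\to D_{\mathcal C^B}$ total, it suffices to guarantee: \emph{for every $f$ there is $\alpha\in S$ with $\bigcup f[\chi^{\mathcal C^A}_\alpha]\in D_{\mathcal C^B}$} --- for then $f$ sends the unbounded set $\chi^{\mathcal C^A}_\alpha$ to a bounded one, so $f$ is not Tukey. (Record also $|D_{\mathcal C^A}\cap\mathcal P(\alpha)|\leq\lambda^\theta=\lambda$ for every $\alpha<\lambda^+$.)

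Now define $\langle C^A_\alpha\mid\alpha\in S,\ A\s\lambda^+\rangle$ by recursion on $\alpha$, keeping a ``protected'' family $\mathcal W_\alpha$ of $\leq\theta$-sized subsets of $\alpha$ with $|\mathcal W_\alpha|\leq|\alpha|\leq\lambda$, so that $C^A_\alpha$ depends only on $A\cap\alpha$, the earlier part of the construction, and $Z_\alpha$. At stage $\alpha$: if $\alpha\in S'$, $a_\alpha\neq b_\alpha$, and $(a_\alpha,b_\alpha,f_\alpha)$ assembles with the part built so far into the restriction to $\mathcal P(\alpha)$ of a Tukey map, with $f_\alpha$ mapping $\mathcal P(\alpha)$-sets to $\mathcal P(\alpha)$-sets --- the \emph{active case} --- then for all $A$ with $A\cap\alpha=a_\alpha$ we choose the \emph{same} cofinal $C^A_\alpha\s\alpha$ of order type $\theta$, by an auxiliary length-$\theta$ recursion carried out inside the witnessing model $N$, so that $W:=\bigcup f_\alpha[\chi^{\mathcal C^A}_\alpha]$ meets every set in $\mathcal W_\alpha\cup\{C^B_\gamma\mid\gamma\in S\cap\alpha\}\cup\{E_\alpha\}$ in fewer than $\theta$ points (here $C^B_\gamma$ denotes the already-built $C_\gamma$ of any $B$ with $B\cap\alpha=b_\alpha$, which agrees for all such $B$); then add $W$ to $\mathcal W_{\alpha+1}$. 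In every other case set $C^A_\alpha:=E_\alpha$. For verification, given $A\neq B$ and $f\colon D_{\mathcal C^A}\to D_{\mathcal C^B}$, use $\diamondsuit$ to pick $\alpha\in S'$ above $\min(A\triangle B)$ with $(a_\alpha,b_\alpha,f_\alpha)=(A\cap\alpha,B\cap\alpha,f\restriction(D_{\mathcal C^A}\cap\mathcal P(\alpha)))$ and $\alpha=N\cap\lambda^+$; by elementarity $f$ maps $D_{\mathcal C^A}\cap\mathcal P(\alpha)$ into $\mathcal P(\alpha)$, so $\alpha$ is active, $\chi^{\mathcal C^A}_\alpha$ is unbounded in $D_{\mathcal C^A}$, and $\bigcup f[\chi^{\mathcal C^A}_\alpha]=W$; since later special ladders are built to dodge $\mathcal W\ni W$, since any order-type-$\theta$ cofinal ladder in a $\gamma>\sup(W)$ meets $W$ in $<\theta$ points, and since we dodged the remaining $C^B_\gamma$ at stage $\alpha$, we get $W\in D_{\mathcal C^B}$ --- so $f$ is not Tukey. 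By symmetry also $D_{\mathcal C^B}\not\leq_T D_{\mathcal C^A}$.

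The main obstacle is precisely the active-case step: choosing $C^A_\alpha$ so that $W=\bigcup f_\alpha[\chi^{\mathcal C^A}_\alpha]$ dodges all $\leq\lambda$ forbidden $\theta$-sized sets, when $f_\alpha$ is fixed and uncontrollable and the relevant $C^B_\gamma$ $(\gamma<\alpha)$ are frozen. Each individual value $f_\alpha(C^A_\alpha\cap\gamma)$ meets each $C^B_{\gamma'}$ in $<\theta$ points (it is a member of the target directed set), and since $f_\alpha\in N$ and $N^{<\theta}\s N$ one has $W\s N\cap\lambda^+=\alpha$ automatically; the danger is only that the union over the $\leq\theta$ many $\gamma\in\acc(C^A_\alpha)$ accumulates to size $\theta$ inside some $C^B_{\gamma'}$ or inside $E_\alpha$. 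Resolving this requires running the length-$\theta$ construction of $C^A_\alpha$ inside $N$: at step $\nu<\theta$, with $C^A_\alpha\cap\gamma$ chosen, $f_\alpha(C^A_\alpha\cap\gamma)$ is an element of the target that $N$ computes, and one continues choosing the next block of $C^A_\alpha$ so as to bound, for each forbidden $Q$, the stage past which new contributions to $W$ stop entering $Q$ --- using that $\theta=\cf(\alpha)$ is regular and that the ladders (and $E_\alpha$) have order type exactly $\theta$, so that bounded pieces are automatically met in $<\theta$ points, exactly the leverage used in Claim~\ref{gch imply no D_C equivalent to omega_2 finite}. This bookkeeping, together with checking that the unmodified master ladders on $S\setminus S'$ still leave the constructed $D_{\mathcal C^A}$ in the required interval, is where essentially all the work lies; an alternative, possibly cleaner route would be to isolate a Tukey-invariant combinatorial feature of $\mathcal C$ and realize $2^{\lambda^+}$ of its values, but I expect the diagonalization above to be what is needed.
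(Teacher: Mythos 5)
Your overall architecture --- a $\diamondsuit$-guided diagonalization building all the ladder systems in one recursion so as to kill every candidate Tukey map --- is a genuinely different route from the paper's, which instead partitions $E^{\lambda^+}_\theta$ into $\lambda^+$ stationary pieces each carrying a $\clubsuit$-sequence, takes $2^{\lambda^+}$ unions of pieces any two of which differ by a whole piece, and proves the transfer lemma that $D_{\mathcal C_T}\not\leq_T D_{\mathcal C_R}$ whenever $\langle C_\beta\mid\beta\in T\setminus R\rangle$ is a $\clubsuit$-sequence. Unfortunately your argument has a genuine gap at exactly the step you flag as the main obstacle, and I do not believe it can be repaired as described. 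The only witnesses of non-Tukeyness your mechanism can produce are $\subseteq$-chains $\chi^{\mathcal C^A}_\alpha$ of order type $\theta$, and a candidate function can immunize itself against all such witnesses at no cost. Concretely: fix $\gamma_0\in S$, write $R:=C^B_{\gamma_0}=\{r_\xi\mid\xi<\theta\}$, and consider any $f$ satisfying $f(x)\supseteq\{r_\xi\mid\xi<\otp(x)\}$ for every $x$ of size $<\theta$. Each such value still lies in $D_{\mathcal C^B}$ (it adds only an initial segment of $R$ of size $<\theta$), and enlarging the values of a map pointwise under $\subseteq$ cannot destroy the Tukey property, so this modification is available to any candidate. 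But then for \emph{every} cofinal $C\subseteq\alpha$ of order type $\theta$ with $\alpha>\gamma_0$, the ordinals $\otp(C\cap\gamma)$ for $\gamma\in\acc(C)$ are cofinal in $\theta$, hence $\bigcup f[\chi_C]\supseteq R$, which meets $C^B_{\gamma_0}$ in $\theta$ points and so is not in $D_{\mathcal C^B}$. No choice of $C^A_\alpha$ at any active stage above $\gamma_0$ can make $W$ land in $D_{\mathcal C^B}$, so your verification derives no contradiction from such an $f$. The failure is not an accumulation that can be dodged by choosing the next block carefully inside $N$: the function reads off the order type of its input, which no choice of ladder can control.

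There is also a quantitative issue even for tamer $f_\alpha$: at stage $\alpha$ you must dodge the $\lambda$-many frozen ladders $C^B_\gamma$ ($\gamma\in S\cap\alpha$) using a recursion of length only $\theta<\lambda$, and no argument is given for why stopping stages for all $\lambda$-many constraints can be realized simultaneously. The paper sidesteps both problems by using witnesses of a different shape: an unbounded $\theta$-sized family of pairwise disjoint (modulo a root) sets selected by a guessing ladder $C_\beta$ with $\beta\in T\setminus R$, refined to a $\Delta$-system separated by a club. Then each ladder of $\mathcal C_R$ at a point $\gamma<\beta$ meets only $<\theta$ of the image pieces, each in $<\theta$ points, points $\gamma>\beta$ are handled because the whole image is bounded by $\beta$, and the critical accumulation point $\beta$ itself carries no ladder of $\mathcal C_R$ precisely because $\beta\notin R$. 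Any repair of your diagonalization would need to replace chains by such spread-out families, at which point one has essentially rediscovered the paper's argument.
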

\begin{proof}
	As $2^\lambda=\lambda^+$ holds, by Shelah's Theorem we get that $\diamondsuit(S)$ holds for every $S\subseteq E^{\lambda^+}_\theta$ stationary subset.
	Let us fix some stationary subset $S\subseteq E^{\lambda^+}_\theta$ and a partition of $S$ into $\lambda^+$-many stationary subsets $\langle S_\alpha \mid \alpha<\lambda^+ \rangle$.
	For each $S_\alpha$ we fix a $\clubsuit(S_\alpha)$ sequence $\langle C_\beta \mid \beta\in S_\alpha \rangle$.
	
	Let us fix a family $\mathcal F$ of size $2^{\lambda^+}$ of subsets of $S $ such that for every two $R,T\in \mathcal F$ there exists some $S_\alpha$ such that $R\setminus T \supseteq S_\alpha$.
	For each $T\in \mathcal F$ let us define a $C$-sequence $\mathcal C_T:= \langle C_\alpha \mid \alpha\in T \rangle$.
	Clearly the following Lemma shows the family $\{ D_{\mathcal C_T} \mid T\in \mathcal F \}$ is as sought.
	
	\begin{claim}\label{Usefull lemma}
		Suppose $\mathcal C_T:=\langle C_\beta \mid \beta \in T \rangle$ and $\mathcal C_R:=\langle C_\beta \mid \beta \in R \rangle$ are two $C$-sequences such that $T,R\subseteq E^{\lambda^+}_\theta$ are stationary subsets.
		Then if $\langle C_\beta \mid \beta \in T\setminus R \rangle$ is a $\clubsuit$-sequence, then $D_{\mathcal C_T}\not \leq_T D_{\mathcal C_R}$. 
	\end{claim}
	\begin{subproof}
		Suppose $f:D_{\mathcal C_T} \rightarrow D_{\mathcal C_R}$ is a Tukey function. 
		Fix a subset $W\subseteq [\lambda^+]^1\subseteq D_{\mathcal C_T}$ of size $\lambda^+$, we split to two cases.
		
		$\br$ Suppose $f"W \subseteq [\alpha]^{\theta}$ for some $\alpha<\lambda^+$.
		As $\lambda^\theta<\lambda^+$, by the pigeonhole principle we can find a subset $X\subseteq W $ of size $\lambda^+$ such that $f"X = \{z\}$ for some $z\in D_{\mathcal C_R}$.
		As $\langle C_\beta \mid \beta \in T\setminus R \rangle$ is a $\clubsuit$-sequence and $\bigcup X \in [\lambda^+]^{\lambda^+}$, there exists some $\beta \in T\setminus R$ such that $C_\beta \subseteq \bigcup X$. 
		So $X$ is an unbounded subset of $\mathcal C_T$ such that $f"X$ is bounded in $\mathcal C_R$ which is absurd.
		
		$\br$ As $|f"W|=\lambda^+$, using $\lambda^\theta<\lambda^+$ we may fix a subset $Y=\{y_\beta \mid \beta <\lambda^+\}\subseteq f"W$ which forms a $\Delta$-system with a root $R_1$.
		In other words, for $\alpha<\beta<\lambda^+$ we have $y_\alpha \setminus R_1 < y_\beta\setminus  R_1 $ and $y_\alpha\cap y_\beta =R_1$.
		For each $\alpha<\lambda^+$, we fix $x_\alpha \in W$ such that $f(x_\alpha)=y_\alpha$.
		Finally, without loss of generality we may use the $\Delta$-system Lemma again and refine our set $Y$ to get that there exists a club $E\subseteq \lambda^+$ such that, for all $\alpha<\beta<\lambda^+$ we have:
		\begin{itemize}
			\item $x_\alpha \cap x_\beta = \emptyset$;
			\item $y_\alpha\cap y_\beta = R_1 $;
			\item there exists some $\gamma \in E$ such that $x_\alpha < \gamma < x_\beta  $ and 
			$y_\alpha\setminus R_1 < \gamma < y_\beta \setminus R_1 $;
			\item $f(x_\alpha) = y_\alpha$.
		\end{itemize}
		Furthermore we may assume that between any two elements of $\xi<\eta$ in $E$ there exists a unique $\alpha<\lambda^+$ such that $\xi< x_\alpha \cup (y_\alpha\setminus R_1)<\eta$.
		
		As $\langle C_\beta \mid \beta \in T\setminus R \rangle$ is a $\clubsuit$-sequence, there exists some $\beta \in (T\setminus R)\cap \acc(E)$ such that $C_\beta \subseteq \bigcup \{x_\alpha \mid \alpha<\lambda^+\}$. 
		Construct by recursion an increasing sequence $\langle \beta_\nu \mid \nu<\theta \rangle\subseteq C_\beta$ and a sequence $\langle z_\nu \mid \nu<\theta \rangle \subseteq \{x_\alpha \mid \alpha<\lambda^+\}$ such that $\beta_\nu\in z_\nu<\beta$.
		
		Clearly, $\{z_\nu\mid \nu<\theta\}$ is unbounded in $ D_{\mathcal C_T}$, so the following Claim proves $f$ is not a Tukey function.
		\begin{subclaim}
			The subset $\{ f(z_\nu)\mid \nu<\theta \}$ is bounded in $D_{\mathcal C_R}$.
		\end{subclaim}
		\begin{subproof}
			Let $Y:=\bigcup f(z_\nu)$ and $\mathcal C_R:=\langle C_\beta \mid \beta \in R \rangle$, we will show that for every $\alpha\in R$, we have $|Y\cap C_\alpha |<\theta$.
			By the refinement we did previously it is clear that $\{f(z_\nu)\setminus R_1\mid \nu<\theta\}$ is a pairwise disjoint sequence, where for each $\nu<\theta$ we have some element $\gamma_\nu \in E$ such that $f(z_\nu)\setminus R_1 < \gamma_\nu < f(z_{\nu+1}) \setminus R_1<\beta $.
			Let $\alpha\in R$.
			
			$\br$ Suppose $\alpha>\beta$. As $C_\alpha$ is cofinal in $\alpha$ and of order-type $\theta$, then $|Y\cap C_\alpha |<\theta$.
			
			$\br$ Suppose $\alpha<\beta$. As $C_\alpha$ is cofinal in $\alpha$ and of order-type $\theta$, there exists some $\nu<\theta$ such that for all $\nu<\rho<\theta$, we have $(f(z_\rho)\setminus R_1)\cap C_\alpha =\emptyset$.
			As $f(z_\rho)\in D_{\mathcal C_R}$ for every $\rho<\theta$ and $\theta$ is regular, we get that $|Y\cap C_\alpha |<\theta$ as sought.
			
			As $\beta \notin R$ there are no more cases to consider.\qedhere
		\end{subproof}\qedhere
	\end{subproof}\qedhere
\end{proof}

\begin{cor}
	Suppose $2^\lambda=\lambda^+$, $\lambda^\theta<\lambda^+$ and $S\subseteq E^{\lambda^+}_\theta$ is a stationary subset.
	Then there exists a family $\mathcal F$ of directed sets of the form $D_{\mathcal C}\times[\lambda]^{<\theta}$ of size $2^{\lambda^+}$ such that every two of them are Tukey incomparable.
\end{cor}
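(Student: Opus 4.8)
The plan is to recycle the construction from the proof of Theorem~\ref{antichain of D_C} and show that the incomparability it yields is robust under multiplying by $[\lambda]^{<\theta}$ on both sides. So, exactly as there (using $2^\lambda=\lambda^+$ and Shelah's theorem), fix a stationary $S\subseteq E^{\lambda^+}_\theta$, a partition $\langle S_\alpha\mid\alpha<\lambda^+\rangle$ of $S$ into stationary sets, a $\clubsuit(S_\alpha)$-sequence on each $S_\alpha$, and a family $\mathcal A$ of $2^{\lambda^+}$ subsets of $S$ such that for all distinct $R,T\in\mathcal A$ both $T\setminus R$ and $R\setminus T$ contain one of the $S_\alpha$'s; for $T\in\mathcal A$ let $\mathcal C_T:=\langle C_\beta\mid\beta\in T\rangle$ be assembled from these $\clubsuit$-sequences. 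As in the proof of Theorem~\ref{antichain of D_C}, $\langle C_\beta\mid\beta\in T\setminus R\rangle$ is a $\clubsuit$-sequence whenever $T\setminus R$ contains some $S_\alpha$, since it extends the chosen $\clubsuit(S_\alpha)$-sequence on a stationary set. I claim the family $\{D_{\mathcal C_T}\times[\lambda]^{<\theta}\mid T\in\mathcal A\}$ is as required; it then suffices to prove that for distinct $R,T\in\mathcal A$, if $\langle C_\beta\mid\beta\in T\setminus R\rangle$ is a $\clubsuit$-sequence then $D_{\mathcal C_T}\times[\lambda]^{<\theta}\not\leq_T D_{\mathcal C_R}\times[\lambda]^{<\theta}$; by symmetry this gives pairwise Tukey incomparability, hence pairwise inequivalence, hence the family has size $2^{\lambda^+}$.

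For the displayed non-reducibility, suppose toward a contradiction that $f$ is a Tukey function from $D_{\mathcal C_T}\times[\lambda]^{<\theta}$ to $D_{\mathcal C_R}\times[\lambda]^{<\theta}$. Restrict attention to the copy $\{(\{\xi\},\emptyset)\mid\xi<\lambda^+\}$ of $[\lambda^+]^1$, which lies in $D_{\mathcal C_T}\times[\lambda]^{<\theta}$ since $[\lambda^+]^{<\theta}\subseteq D_{\mathcal C_T}$. Since $|[\lambda]^{<\theta}|=\lambda^{<\theta}\le\lambda^\theta<\lambda^+=\cf(\lambda^+)$, the pigeonhole principle yields a set $W\subseteq[\lambda^+]^1$ of size $\lambda^+$ and a single $s\in[\lambda]^{<\theta}$ with $\pi_{[\lambda]^{<\theta}}(f(w,\emptyset))=s$ for all $w\in W$; put $y_w:=\pi_{D_{\mathcal C_R}}(f(w,\emptyset))$. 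Now run the argument of the proof of Claim~\ref{Usefull lemma} verbatim, but feeding it the auxiliary map $w\mapsto y_w$ on $W$ in place of the Tukey function used there. That argument---the dichotomy on whether $\bigcup_{w\in W}y_w$ is bounded below $\lambda^+$, the $\Delta$-system refinement in the unbounded case (legitimate as $\lambda^\theta<\lambda^+$), and the extraction, via the $\clubsuit$-sequence $\langle C_\beta\mid\beta\in T\setminus R\rangle$, of a subsequence guessed by some $C_\beta$---makes no use of the Tukey property of the map it is handed, and it produces $X\subseteq W$ that is unbounded in $D_{\mathcal C_T}$ (because $\bigcup X$ contains a $\theta$-sized subset of $C_\beta$ for some $\beta\in T$, so no member of $D_{\mathcal C_T}$ can bound $X$) while $\{y_w\mid w\in X\}$ is bounded in $D_{\mathcal C_R}$, say by $d$. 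Then $X\times\{\emptyset\}$ is unbounded in $D_{\mathcal C_T}\times[\lambda]^{<\theta}$, since its first coordinate projection is the unbounded set $X$, whereas $f"(X\times\{\emptyset\})=\{(y_w,s)\mid w\in X\}$ is bounded by $(d,s)$ --- contradicting that $f$ is Tukey. This establishes the required incomparability, and with it the Corollary.

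I expect the only real obstacle to be a careful check that the proof of Claim~\ref{Usefull lemma} is genuinely indifferent to whether the function it processes is a Tukey function, so that substituting $w\mapsto y_w$ is legitimate; inspecting that proof, the Tukey-ness of the map is invoked solely at the very end of each case to reach the contradiction, whereas the combinatorial construction of the witnessing unbounded set and the verification that its image is bounded depend only on $\clubsuit$ and on $\lambda^\theta<\lambda^+$. The remaining ingredients --- the pigeonhole on the $[\lambda]^{<\theta}$-coordinate and the fact that unboundedness of a single coordinate projection forces unboundedness in the product --- are routine. (The final Corollary about $D_{\mathcal C}\times[\lambda]^{<\theta}$ in the statement is then immediate, being literally this family.)
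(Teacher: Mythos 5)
Your proposal is correct and follows essentially the same route as the paper: reduce to the singletons $[\lambda^+]^1\times\{\emptyset\}$, pigeonhole on the $[\lambda]^{<\theta}$-coordinate using $\lambda^{<\theta}\le\lambda^\theta<\lambda^+$, and then rerun the argument of Claim~\ref{Usefull lemma} on the $D_{\mathcal C_R}$-coordinate to produce an unbounded set with bounded image. Your explicit check that the combinatorial core of Claim~\ref{Usefull lemma} never uses the Tukey property of the map it is handed (only invoking it at the final contradiction) is exactly the point the paper leaves implicit in the phrase ``we may continue with the same proof,'' so the write-up is sound.
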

\begin{proof}
	Clearly by the same arguments of Theorem~\ref{antichain of D_C} the following Lemma is suffices to get the wanted result.
	\begin{claim}
		Suppose $\mathcal C_T:=\langle C_\beta \mid \beta \in T \rangle$ and $\mathcal C_R:=\langle C_\beta \mid \beta \in R \rangle$ are two $C$-sequences such that $T,R\subseteq E^{\lambda^+}_\theta$ are stationary subsets such that $T\setminus R$ is stationary.
		Then if $\langle C_\beta \mid \beta \in T\setminus R \rangle$ is a $\clubsuit$-sequence, then $ D_{\mathcal C_T}\times [\lambda]^{<\theta }\not \leq_T D_{\mathcal C_R}\times [\lambda]^{<\theta }$. 
	\end{claim}
	\begin{subproof}
		Suppose $f: D_{\mathcal C_T} \times [\lambda]^{<\theta }\rightarrow  D_{\mathcal C_R}\times [\lambda]^{<\theta }$ is a Tukey function. 
		Consider $Q=f"([\lambda^+]^1\times \{\emptyset\} )$, let us split to two cases:
		
		$\br$ If $|Q|<\lambda^+$, then by the pigeonhole principle, there exists $x\in D_{\mathcal C_R}$, $F\in [\lambda]^{<\theta }$ and a set $W\subseteq [\lambda^+]^1$ of size $\lambda^+$ such that $f"( W\times \{\emptyset\}) =\{(x,F)\}$.
		As $\langle C_\beta \mid \beta \in T\setminus R \rangle$ is a $\clubsuit$-sequence and $\bigcup W \in [\lambda^+]^{\lambda^+}$, we may fix some $\beta\in T\setminus R$ such that $C_\beta \subseteq \bigcup  W$.
		Hence $W\times \{\emptyset\}$ is unbounded in $D_{\mathcal C_T}\times [\lambda]^{<\theta }$ but $f"(W\times \{\emptyset\})$ is bounded in $D_{\mathcal C_R}\times [\lambda]^{<\theta }$ which is absurd as $f$ is Tukey.
		
		$\br$ If $|Q|=\lambda^+$, then by the pigeonhole principle there exists some $F\in [\lambda]^{<\theta }$ and a set $W\subseteq [\lambda^+]^1$ of size $\lambda^+$ such that, $f"(W\times \{\emptyset\}) \subseteq D_{\mathcal C_R}\times\{F\}   $.
		Let $Y:=\pi_0(f"(W\times \{\emptyset\}))$.
		Next we may continue with the same proof as in Lemma~\ref{Usefull lemma}.
	\end{subproof}
\end{proof}

\subsubsection{Chain}

\begin{theorem}\label{Theorem - D_C chain}
	Suppose $2^\lambda=\lambda^+$, $\lambda^\theta<\lambda^+$.
	Then there exists a family $\mathcal F=\{D_{\mathcal C_\xi} \mid \xi<\lambda^+\}$ of Tukey incomparable directed sets of the form $D_{\mathcal C}$ such that $\langle \prod^{\leq\theta}_{\zeta <\xi} D_{\mathcal C_\zeta} \mid \xi<\lambda^+\rangle$ is a $<_T$-increasing chain.
\end{theorem}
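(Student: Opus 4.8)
The plan is to reuse the antichain machinery of Theorem~\ref{antichain of D_C} and then analyse the $\theta$-support product coordinate by coordinate. Since $2^\lambda=\lambda^+$, Shelah's theorem (invoked exactly as in the proof of Theorem~\ref{antichain of D_C}) gives $\clubsuit(S)$ for every stationary $S\subseteq E^{\lambda^+}_\theta$. Fix a partition $\langle S_\xi\mid\xi<\lambda^+\rangle$ of $E^{\lambda^+}_\theta$ into stationary sets and, for each $\xi$, a $\clubsuit(S_\xi)$-sequence $\mathcal C_\xi:=\langle C_\alpha\mid\alpha\in S_\xi\rangle$ (so $C_\alpha$ is cofinal in $\alpha$ of order type $\theta$); put $\mathcal F:=\{D_{\mathcal C_\xi}\mid\xi<\lambda^+\}$, which is manifestly a family of directed sets of the form $D_{\mathcal C}$. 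For $\xi\neq\eta$, the set $S_\xi\setminus S_\eta=S_\xi$ is stationary and $\langle C_\alpha\mid\alpha\in S_\xi\setminus S_\eta\rangle=\mathcal C_\xi$ is a $\clubsuit$-sequence, so Claim~\ref{Usefull lemma} applied with $T=S_\xi$ and $R=S_\eta$ gives $D_{\mathcal C_\xi}\not\leq_T D_{\mathcal C_\eta}$; by symmetry $\mathcal F$ is a $<_T$-antichain. For the chain, note that for $\xi\le\eta$ the map that extends $v\in\prod^{\leq\theta}_{\zeta<\xi}D_{\mathcal C_\zeta}$ by $\emptyset$ on the coordinates in $[\xi,\eta)$ is a Tukey function into $\prod^{\leq\theta}_{\zeta<\eta}D_{\mathcal C_\zeta}$ (a bound for the image restricts to a bound for the original set), so the sequence of products is $\le_T$-increasing; and for $\xi<\eta$ the map $Y\mapsto$ (the element supported on $\{\xi\}$ with value $Y$) Tukey-reduces $D_{\mathcal C_\xi}$ into $\prod^{\leq\theta}_{\zeta<\eta}D_{\mathcal C_\zeta}$. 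Hence $\prod^{\leq\theta}_{\zeta<\eta}D_{\mathcal C_\zeta}\le_T\prod^{\leq\theta}_{\zeta<\xi}D_{\mathcal C_\zeta}$ would force $D_{\mathcal C_\xi}\le_T\prod^{\leq\theta}_{\zeta<\xi}D_{\mathcal C_\zeta}$, so it suffices to prove, for every $\xi<\lambda^+$, the non-reduction $D_{\mathcal C_\xi}\not\le_T\prod^{\leq\theta}_{\zeta<\xi}D_{\mathcal C_\zeta}$.

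To prove this I follow the proof of Claim~\ref{Usefull lemma}, with the target replaced by the product. Assume $f:D_{\mathcal C_\xi}\to\prod^{\leq\theta}_{\zeta<\xi}D_{\mathcal C_\zeta}$ is Tukey and look at $W:=[\lambda^+]^1\subseteq D_{\mathcal C_\xi}$. If $|f"W|<\lambda^+$, a pigeonhole produces $X\in[W]^{\lambda^+}$ on which $f$ is constant; then $X$ is unbounded in $D_{\mathcal C_\xi}$ (a bound would be a set of size $\le\theta$ containing $\lambda^+$ singletons) while $f"X$ is a singleton, contradicting that $f$ is Tukey. So assume $|f"W|=\lambda^+$. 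Attach to each $v$ in the product its \emph{content} $\widehat v:=\supp(v)\cup\bigcup_{\zeta\in\supp(v)}v(\zeta)\in[\lambda^+]^{\leq\theta}$, fix $\langle\{\gamma_\alpha\}\mid\alpha<\lambda^+\rangle$ in $W$ with the values $v_\alpha:=f(\{\gamma_\alpha\})$ distinct (hence the $\gamma_\alpha$ distinct), and, using $\lambda^\theta=\lambda$ — which both legitimates $\Delta$-systems of $\theta$-sized sets drawn from a $\lambda^+$-sized family and gives $2^\theta\le\lambda$ — thin down to a family of size $\lambda^+$ such that: $\{\widehat{v_\alpha}\}$ is a $\Delta$-system with root $R_1$, the ``private parts'' $\widehat{v_\alpha}\setminus R_1$ are pairwise disjoint, strictly increasing, contained in $(\sup R_1,\lambda^+)$, and separated by a club $E$ which also isolates the $\gamma_\alpha$; and, by a further pigeonhole over $\le 2^\theta\le\lambda$ options, both $\supp(v_\alpha)\cap R_1$ and $\langle v_\alpha(\zeta)\cap R_1\mid\zeta\in R_1\rangle=:\langle u(\zeta)\mid\zeta\in R_1\rangle$ are independent of $\alpha$. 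Applying $\clubsuit(S_\xi)$ to the cofinal set $\{\gamma_\alpha\mid\alpha<\lambda^+\}$ yields $\beta\in S_\xi\cap\acc(E)$ with $\beta>\sup R_1$ and $C_\beta\subseteq\{\gamma_\alpha\mid\alpha<\lambda^+\}$; writing $C_\beta=\{\gamma_{\alpha_\nu}\mid\nu<\theta\}$ increasingly, the set $\{\{\gamma_{\alpha_\nu}\}\mid\nu<\theta\}$ is unbounded in $D_{\mathcal C_\xi}$ (its union is $C_\beta\notin D_{\mathcal C_\xi}$), whereas $\{v_{\alpha_\nu}\mid\nu<\theta\}$ is bounded in the product by $v^*:=\bigcup_\nu v_{\alpha_\nu}$: off $R_1$ the supports are pairwise disjoint, so $v^*(\zeta)$ equals some $v_{\alpha_\nu}(\zeta)\in D_{\mathcal C_\zeta}$; on $\zeta\in R_1$, $v^*(\zeta)=u(\zeta)\cup\bigcup_\nu\bigl(v_{\alpha_\nu}(\zeta)\setminus R_1\bigr)$ has size $\le\theta$, and $|v^*(\zeta)\cap C_\alpha|<\theta$ for every $\alpha\in S_\zeta$ by cases — if $\alpha>\beta$ then $v^*(\zeta)\subseteq\beta$ meets $C_\alpha$ in a proper initial segment; if $\alpha<\beta$ the increasing private parts eventually miss $C_\alpha\subseteq\alpha$ while $u(\zeta)\cap C_\alpha$ is small because $u(\zeta)\subseteq v_{\alpha_0}(\zeta)\in D_{\mathcal C_\zeta}$; and $\alpha=\beta$ cannot occur since $\beta\in S_\xi$ and $S_\xi\cap S_\zeta=\emptyset$. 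This contradicts that $f$ is Tukey, and the theorem follows.

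The crux — and the only place where an idea beyond Theorem~\ref{antichain of D_C} is needed — is this last boundedness check. Unlike in Claim~\ref{Usefull lemma}, where each target element is a single set avoiding every ladder, an element of a $\theta$-support product distributes its content over up to $\theta$ coordinates, so a single element's content can meet one ladder $C_\alpha$ in a set of full size $\theta$, which would be fatal for a naive coordinate-wise union. The resolution is the secondary pigeonhole forcing $\langle v_\alpha(\zeta)\cap R_1\mid\zeta\in R_1\rangle$ to be independent of $\alpha$: this ensures that the only coordinates on which $v^*$ is a genuine union, namely those in the root $R_1$, inherit their $R_1$-part from one honest member $v_{\alpha_0}(\zeta)$ of $D_{\mathcal C_\zeta}$, so ladder-avoidance survives the union. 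The remaining bookkeeping (isolating the $\gamma_\alpha$ by club points, pushing $\beta$ above $\sup R_1$, discarding the $\le\lambda$ indices whose private parts dip below $\sup R_1$) is routine and parallels the proof of Theorem~\ref{antichain of D_C}.
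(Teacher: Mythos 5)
Your proposal is correct and follows essentially the same route as the paper's proof: the same partition of $E^{\lambda^+}_\theta$ into stationary sets carrying $\clubsuit$-sequences, the same reduction of the chain statement to $D_{\mathcal C_\xi}\not\leq_T\prod^{\leq\theta}_{\zeta<\xi}D_{\mathcal C_\zeta}$, the same two-case split on $|f"[\lambda^+]^1|$, and the same $\Delta$-system-plus-ladder argument with the coordinate-wise union serving as the bound. The only divergence is bookkeeping: the paper transports each product element to a subset of $\lambda^+\times A$ via a bijection so that the $\Delta$-system lemma yields a root $R_\zeta$ on each coordinate simultaneously, whereas you flatten supports and values into a single subset of $\lambda^+$ and recover coordinate-wise stability on the root by an extra pigeonhole over $2^\theta\leq\lambda$ options --- both are valid and the verification of boundedness is identical in substance.
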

\begin{proof}
As in Theorem~\ref{antichain of D_C}, we fix a partition $\langle S_\zeta \mid \zeta<\lambda^+\rangle$ of $E^{\lambda^+}_\theta$ to stationary subsets such that there exists a $\clubsuit(S_\zeta) $-sequence $\mathcal C_\zeta$ for $\zeta<\lambda^+$.
Note that for every $A\in [\lambda^+]^{<\lambda^+}$, we have $|\prod^{\leq \theta}_{\zeta\in A} D_{\mathcal C_\zeta}|=\lambda^+$.
Note that for every $A,B \in [\lambda^+]^{<\lambda^+}$ such that $A\subset B$, we have $\prod^{\leq \theta}_{\zeta\in A} D_{\mathcal C_\zeta} \leq_T \prod^{\leq \theta}_{\zeta\in B} D_{\mathcal C_\zeta}$.
The following Claim gives us the wanted result.

\begin{claim}
	Suppose $A\in [\lambda^+]^{<\lambda^+}$ and $\xi\in \lambda^+\setminus A$, then $ D_{\mathcal C_\xi} \not \leq_T \prod^{\leq \theta}_{\zeta\in A} D_{\mathcal C_\zeta} $.
	In particular, $\prod^{\leq \theta}_{\zeta\in A} D_{\mathcal C_\zeta} <_T \prod^{\leq \theta}_{\zeta\in A} D_{\mathcal C_\zeta}\times  D_{\mathcal C_\xi}$.
\end{claim}

\begin{subproof}
	Let $D:=D_{\mathcal C_\xi}$ and $E:=\prod^{\leq \theta}_{\zeta\in A} D_{\mathcal C_\zeta}$.
	Note that as $2^\lambda=\lambda^+$, then $(\lambda^+)^\lambda= \lambda^+$, so $|E|=\lambda^+$.
	Suppose $f:D \rightarrow  E$ is a Tukey function.
	Consider $Q=f"[\lambda^+]^1$, let us split to cases:
	
	$\br$ Suppose $|Q|< \lambda^+$, then by pigeonhole principle, there exists $e\in E$ and a subset $X\subseteq D$ of size $\lambda^+$ such that $f"X =\{e\}$.
	As $\langle C_\beta \mid \beta \in S_\xi \rangle$ is a $\clubsuit$-sequence, there exists some $\beta \in S_\xi$ such that $C_\beta \subseteq \bigcup X$. 
	So $X$ is an unbounded subset of $D$ such that $f"X$ is bounded in $E$ which is absurd.

	$\br$ Suppose $|Q|=\lambda^+$.
	Let us enumerate $Q:=\{q_\alpha \mid \alpha<\lambda^+\}$.
	Recall that for every $\zeta\in A$, $D_{\mathcal C_{\zeta}}\subseteq [\lambda^+]^{\leq \theta}$.
	Let $z_\alpha:= \bigcup \{ q_\alpha(\zeta) \times \{\zeta\} \mid \zeta\in A, ~q_\alpha(\zeta)\neq 0_{D_{\mathcal C_\zeta}}\}$, notice that $z_\alpha \in [\lambda^+ \times A]^{\leq\theta}$.
	We fix a bijection $\phi:\lambda^+\times A \rightarrow \lambda^+$.
	
	As $\{\phi"z_\alpha\mid \alpha<\lambda^+\}$ is a subset of $[\lambda^+]^{\leq \theta}$ of size $\lambda^+$ and $\lambda^\theta<\lambda^+$, by the $\Delta$-system Lemma, we may refine our sequence $Q$ and re-index such that $\{\phi "z_\alpha \mid \alpha<\lambda^+\}$ will be a $\Delta$-system with root $R'$.
	
	For each $\alpha<\lambda^+$ and $\zeta\in A$, let $y_{\alpha,\zeta}:=\{ \beta<\lambda^+ \mid \beta\in q_\alpha(\zeta)\}$.
	We claim that for each $\zeta\in A$, the set $\{y_{\alpha,\zeta}\mid \zeta \in A\}$ is a $\Delta$-system with root $R_{\zeta}:=\{\beta<\lambda^+\mid (\beta,\zeta)\in \phi^{-1}[R'] \}$.
	Let us show that whenever $\alpha<\beta<\lambda^+$, we have $y_{\alpha,\zeta}\cap y_{\beta,\zeta} = R_{\zeta} $.
	Notice that $\delta\in y_{\alpha,\zeta}\cap y_{\beta,\zeta} \iff \delta\in q_\alpha(\zeta)\cap q_\beta(\zeta) \iff (\delta,\zeta)\in z_\alpha\cap z_\beta \iff \phi(\delta,\zeta) \in \phi"(z_\alpha\cap z_\beta)=\phi"z_\alpha\cap \phi"z_\beta = R' \iff (\delta,\zeta)\in \phi^{-1}R' \iff \delta\in R_{\zeta}$.
	For each $\alpha<\lambda^+$, we fix $x_\alpha \in [\lambda^+]^1$ such that $f(x_\alpha)=q_\alpha$.
	
	We use the $\Delta$-system Lemma again and refine our sequence such that there exists a club $C\subseteq \lambda^+$ and for all $\alpha<\beta<\lambda^+$ we have:
	\begin{enumerate}
		\item for every $\zeta\in A$, we have $y_{\alpha,\zeta}\cap y_{\beta,\zeta} = R_{\zeta} $;
		\item $x_\alpha \cap x_\beta= \emptyset$;
		\item there exists some $\gamma \in C$ such that $x_\alpha \cup(\bigcup_{\zeta\in A} (y_{\alpha,\zeta}\setminus R_\zeta ))< \gamma < x_\beta \cup(\bigcup_{\zeta\in A } (y_{\beta,\zeta}\setminus R_\zeta) )$.
	\end{enumerate} 
	Furthermore, we may assume that between any two elements of $\gamma<\delta$ in $C$ there exists some $\alpha<\lambda^+$ such that $\gamma<x_\alpha\cup(\bigcup_{\zeta\in A } (y_{\alpha,\zeta}\setminus R_\zeta ))<\delta$.
	We continue in the spirit of Claim~\ref{good claim}.
	
	As $\langle C_\beta \mid \beta \in S_\xi \rangle$ is a $\clubsuit$-sequence, there exists some $\beta \in S_\xi\cap \acc(C)$ such that $C_\beta \subseteq \bigcup \{x_\alpha \mid \alpha<\lambda^+\}$. 
	Construct by recursion an increasing sequence $\langle \beta_\nu\mid \nu<\theta \rangle\subseteq C_\beta$ and a sequence $\langle w_\nu \mid \nu<\theta \rangle \subseteq \{x_\alpha \mid \alpha<\lambda^+\}$ such that $\beta_\nu\in w_\nu<\beta$.
	
	Clearly, $\{w_\nu\mid \nu<\theta\}$ is unbounded in $ D_{\mathcal C_\xi}$, so the following Claim proves $f$ is not a Tukey function.
	\begin{subclaim}
		The subset $\{ f(w_\nu)\mid \nu<\theta \}$ is bounded in $E$.
	\end{subclaim}
	\begin{subproof}
		For each $\zeta\in A$, let $W_\zeta:= \bigcup_{\nu<\theta} f(w_\nu)(\zeta)$.
		We will show that $W_\zeta \in D_{\mathcal C_\zeta}$, as  $|\{\zeta\in A\mid W_\zeta\neq \emptyset\}|\leq \theta$ this will imply that $\prod^{\leq \theta}_{\zeta \in A} W_\zeta$ is well defined and an element of $E$.
		Clearly $f(w_\nu)\leq_E \prod^{\leq \theta}_{\zeta \in A} W_\zeta$ for every $\nu<\theta$, so the set $\{ f(w_\nu)\mid \nu<\theta \}$ is bounded in $E$ as sought.
		
		Let $\mathcal C_{\zeta}:=\langle C_\alpha \mid \alpha \in S_\zeta \rangle$, we will show that for every $\alpha\in S_\zeta$, we have $|W_\zeta \cap C_\alpha |<\theta$.
		By the refinement we did previously it is clear that $\{f(w_\nu)(\zeta)\setminus R_\zeta\mid \nu<\theta\}$ is a pairwise disjoint sequence, where for each $\nu<\theta$ we have some element $\gamma_\nu \in C$ such that $f(w_\nu)(\zeta)\setminus R_\zeta < \gamma_\nu < f(w_{\nu+1})(\zeta)\setminus R_\zeta $.
		Furthermore, $f(w_\nu)(\zeta)\subseteq \beta$ for every $\nu<\theta$.
		Let $\alpha\in S_\zeta$.
		
		$\br$ Suppose $\alpha>\beta$. As $C_\alpha$ if cofinal in $\alpha$ and of order-type $\theta$, then $|W_\zeta\cap C_\alpha |<\theta$.
		
		$\br$ Suppose $\alpha<\beta$. As $C_\alpha$ if cofinal in $\alpha$ and of order-type $\theta$, there exists some $\nu<\theta$ such that for all $\nu<\rho<\theta$, we have $(f(w_\rho)(\zeta)\setminus R_\zeta )\cap C_\alpha =\emptyset$.
		As $f(w_\rho)(\zeta)\in D_{\mathcal C_\zeta}$ for every $\rho<\theta$, we get that $|W_\zeta\cap C_\alpha |<\theta$ as sought.
		
		As $\beta \notin S_\zeta$ there are no more cases to consider.\qedhere
	\end{subproof}\qedhere
\end{subproof}
\end{proof}

\section{Concluding remarks}\label{Section - 6}

A natural continuation of this line of research is analysing the class $\mathcal D_{\kappa}$ for cardinals $\kappa\geq \aleph_\omega$. 
As a preliminary finding we notice that the poset $(\mathcal P(\omega),\subset)$ can be embdedded by a function $\mathfrak F$ into the class $\mathcal D_{\aleph_\omega}$ under the Tukey order.
Furthermore, for every two successive elements $A,B$ in the poset $(\mathcal P(\omega),\subset)$, i.e. $A\subset B $ and $|B\setminus A|=1$, there is no directed set $D$ such that $\mathfrak F(A)<_T D <_T \mathfrak F(B)$.
The embedding is defined via $\mathfrak F(A):= \prod^{<\omega}_{n\in A} \omega_{n+1}$, and the furthermore part can be proved by Lemma~\ref{Lemma - 2nd gap}.
As a corollary, we get that in $\zfc$ the cardinality of $\mathcal D_{\aleph_\omega}$ is at least $2^{\aleph_0}$.

\section*{Acknowledgments}
This paper presents several results from the author’s PhD research at Bar-Ilan University under the
supervision of Assaf Rinot to whom he wishes to express his deep appreciation.
The author is supported by the European Research Council (grant agreement ERC-2018-StG 802756).
	
Our thanks go to Tanmay Inamdar for many illuminating discussions.
We thank the referee for their effort and for writing a detailed thoughtful report that improved this paper.

\newpage
\renewcommand{\thesection}{A}
\section{Appendix: Tukey ordering of simple elements of the class $\mathcal D_{\aleph_2}$ and $\mathcal D_{\aleph_3}$}

We present each of the posets $(\mathcal T_{2},<_T)$ and $(\mathcal T_{3},<_T)$ in a diagram.
In both diagrams below, for any two directed sets $A$ and $B$, an arrow $A\rightarrow B$, represents the fact that $A<_T B$.
If the arrow is dashed, then under $\gch$ there exists a directed set in between.
If the arrow is not dashed, then there is no directed set in between $A$ and $B$.
Every two directed sets $A$ and $B$ such that there is no directed path (in the obvious sense) from $A$ to $B$, are such that $A\not\leq_T B$.
Note that this implies that any two directed sets on the same horizontal level are incompatible in the Tukey order.

\begin{figure}[h!]\label{Figure T_2}
	\centering
	\begin{tikzcd}
		& {[\omega_2]^{<\omega}}                                          &                                                    \\
		& {[\omega_2]^{\leq\omega} \times [\omega_1]^{<\omega}} \arrow[dashed, u] &                                                    \\
		{\omega_2 \times [\omega_1]^{<\omega}} \arrow[dashed, ru] &                                                                 & {\omega \times [\omega_2]^{\leq\omega}} \arrow[dashed, lu] \\
		{[\omega_1]^{<\omega}} \arrow[black, u]                  & \omega \times \omega_1 \times \omega_2 \arrow[dashed,ru] \arrow[dashed, lu]    & {[\omega_2]^{\leq\omega}} \arrow[black, u]                \\
		\omega \times \omega_1 \arrow[dashed, u] \arrow[black, ru]       & \omega \times \omega_2 \arrow[black, u]                                & \omega_1 \times \omega_2 \arrow[dashed, u] \arrow[black, lu]      \\
		\omega \arrow[black, u] \arrow[black, ru]                       & \omega_1 \arrow[black, lu] \arrow[black, ru]                                  & \omega_2 \arrow[black, lu] \arrow[black, u]                      \\
		& 1 \arrow[black, lu] \arrow[black, u] \arrow[black, ru]                               &                                                   
		\\
	\end{tikzcd}
	\caption{Tukey ordering of $(\mathcal T_{2},<_T)$}
\end{figure}
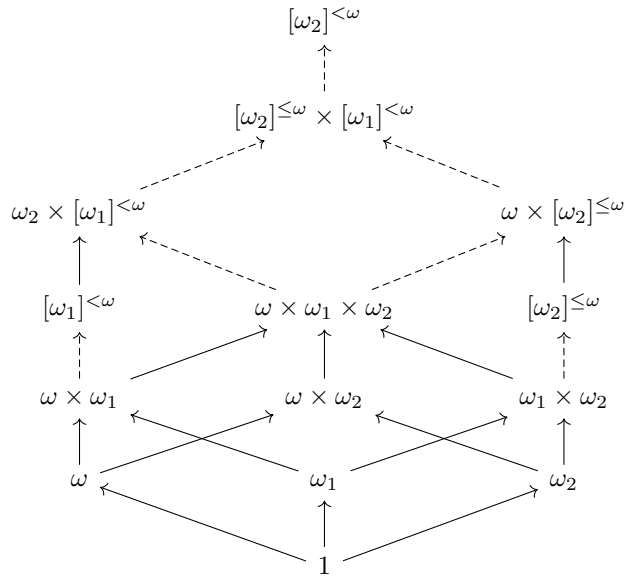
\newpage

\begin{landscape}
	\begin{figure}[!h]\label{Figure T_3}
		\centering
		\begin{tikzcd}[column sep=0, row sep=20]
			&                                                                                     &                                                                                     &                                                                                     &                                {[\omega_3]^{<\omega}}                                &                                                                                     &                                                                                     &                                                                                      \\
			&                                                                                     &                                                                                     &                                                                                     &                {[\omega_3]^{\leq\omega} \times [\omega_2]^{<\omega}}                 \arrow[dashed, u]&                                                                                     &                                                                                     &                                                                                      \\
			&                                                                                     &                                                                                     &               {[\omega_3]^{\leq\omega_1} \times [\omega_2]^{<\omega}}                \arrow[dashed, ru]&                                                                                     &                {[\omega_3]^{\leq\omega} \times [\omega_1]^{<\omega}}                 \arrow[dashed, lu]&                                                                                     &                                                                                      \\
			&                                                                                     &                                                                                     &                        {\omega_3 \times [\omega_2]^{<\omega}}                        \arrow[dashed, u]& {[\omega_3]^{\leq\omega_1} \times [\omega_2]^{\leq\omega} \times [\omega_1]^{<\omega}} \arrow[dashed, lu]\arrow[dashed, ru]&                       {\omega \times [\omega_3]^{\leq\omega}}                        \arrow[dashed, u]&                                                                                     &                                                                                      \\
			&                                                                                     &                                {[\omega_2]^{<\omega}}                                \arrow[black, ru]&        {\omega_3 \times [\omega_2]^{\leq\omega} \times [\omega_1]^{<\omega}}         \arrow[dashed, u]\arrow[dashed, ru]&               {[\omega_3]^{\leq\omega_1} \times [\omega_1]^{<\omega}}                \arrow[dashed, u]&       {\omega \times [\omega_3]^{\leq\omega_1} \times [\omega_2]^{\leq\omega}}       \arrow[dashed, u]\arrow[dashed, lu]&                              {[\omega_3]^{\leq\omega}}                               \arrow[dashed, black, lu]&                                                                                      \\
			&                                                                                     &                {[\omega_2]^{\leq\omega} \times [\omega_1]^{<\omega}}                 \arrow[dashed, u]\arrow[black, ru]&                {\omega_2 \times \omega_3 \times [\omega_1]^{<\omega}}                \arrow[dashed, ru]\arrow[dashed, u]&               {\omega \times \omega_3 \times [\omega_2]^{\leq\omega}}                \arrow[dashed, ru]\arrow[dashed, lu]&              {\omega \times \omega_1 \times [\omega_3]^{\leq\omega_1}}               \arrow[dashed, lu]\arrow[dashed, u]&              {[\omega_3]^{\leq\omega_1} \times [\omega_2]^{\leq\omega}}              \arrow[dashed, u]\arrow[black, lu]&                                                                                      \\
			&                        {\omega_2 \times [\omega_1]^{<\omega}}                        \arrow[dashed, ru]\arrow[black, rru]&                       {\omega \times [\omega_2]^{\leq\omega}}                        \arrow[dashed, u]\arrow[black, rru]&                        {\omega_3 \times [\omega_1]^{<\omega}}                        \arrow[black, u]&               {\omega \times \omega_1 \times \omega_2 \times \omega_3}               \arrow[dashed, ru]\arrow[dashed, u]\arrow[dashed, lu]&                      {\omega \times [\omega_3]^{\leq\omega_1}}                       \arrow[black, u]&                      {\omega_3 \times [\omega_2]^{\leq\omega}}                       \arrow[dashed, u]\arrow[black, llu]&                     {\omega_1 \times [\omega_3]^{\leq\omega_1}}                      \arrow[dashed, lu]\arrow[black, llu] \\
			&                                {[\omega_1]^{<\omega}}                                \arrow[black, u]\arrow[black, rru]&                       {\omega \times \omega_1 \times \omega_2}                       \arrow[dashed, u]\arrow[dashed, lu]\arrow[black, rru]&                       {\omega \times \omega_1 \times \omega_3}                       \arrow[dashed, u]\arrow[black, ru]&                              {[\omega_2]^{\leq\omega}}                               \arrow[black, llu]\arrow[black, rru]&                       {\omega \times \omega_2 \times \omega_3}                       \arrow[dashed, u]\arrow[black, lu]&                      {\omega_1 \times \omega_2 \times \omega_3}                      \arrow[dashed, ru]\arrow[dashed, u]\arrow[black, llu]&                             {[\omega_3]^{\leq\omega_1}}                              \arrow[black, llu]\arrow[black, u] \\
			&                               {\omega \times \omega_1}                               \arrow[dashed, u]\arrow[black, ru]\arrow[black, rru]&                               {\omega \times \omega_2}                               \arrow[black, u]\arrow[black, rrru]&                               {\omega \times \omega_3}                               \arrow[black, u]\arrow[black, rru]&                              {\omega_1 \times \omega_2}                              \arrow[dashed, u]\arrow[black, llu]\arrow[black, rru]&                              {\omega_1 \times \omega_3}                              \arrow[black, llu]\arrow[black, ru]&                              {\omega_2 \times \omega_3}                              \arrow[dashed, ru]\arrow[black, lu]\arrow[black, u]&                                                                                      \\
			&                                                                                     &                                       {\omega}                                       \arrow[black, lu]\arrow[black, u]\arrow[black, ru]&                                      {\omega_1}                                      \arrow[black, llu]\arrow[black, ru]\arrow[black, rru]&                                      {\omega_2}                                      \arrow[black, llu]\arrow[black, u]\arrow[black, rru]&                                      {\omega_3}                                      \arrow[black, llu]\arrow[black, u]\arrow[black, ru]&                                                                                     &                                                                                      \\
			&                                                                                     &                                                                                     &                                                                                     &                                         {1}                                          \arrow[black, llu]\arrow[black, lu]\arrow[black, u]\arrow[black, ru]&                                                                                     &                                                                                     &                                                                                      \\
		\end{tikzcd}
		\caption{Tukey ordering of $(\mathcal T_{3},<_T)$}
	\end{figure}
\end{landscape}

\end{document}